\theoremstyle{plain}
\newtheorem{thm}{\protect\theoremname}[section]
  \theoremstyle{definition}
  \newtheorem{defn}[thm]{\protect\definitionname}
  \theoremstyle{plain}
  \newtheorem{cor}[thm]{\protect\corollaryname}
  \theoremstyle{plain}
  \newtheorem{conjecture}[thm]{\protect\conjecturename}
  \theoremstyle{remark}
  \newtheorem{rem}[thm]{\protect\remarkname}
  \theoremstyle{plain}
  \newtheorem{prop}[thm]{\protect\propositionname}
  \theoremstyle{plain}
  \newtheorem{lem}[thm]{\protect\lemmaname}
  \theoremstyle{remark}
  \newtheorem*{claim*}{\protect\claimname}
 \newcommand\thmsname{\protect\theoremname}
 \newcommand\nm@thmtype{theorem}
 \theoremstyle{plain}
 \newenvironment{namedthm}[1][Undefined Theorem Name]{
   \ifx{#1}{Undefined Theorem Name}\renewcommand\nm@thmtype{theorem*}
   \else\renewcommand\thmsname{#1}\renewcommand\nm@thmtype{namedtheorem}
   \fi
   \begin{\nm@thmtype}}
   {\end{\nm@thmtype}}
  \theoremstyle{remark}
  \newtheorem{claim}[thm]{\protect\claimname}
  \theoremstyle{remark}
  \newtheorem*{rem*}{\protect\remarkname}
\theoremstyle{plain}
\newtheorem{obs}[thm]{Observation}
\numberwithin{equation}{section}
\DeclareMathOperator{\diam}{diam\!}
  \providecommand{\claimname}{Claim}
  \providecommand{\conjecturename}{Conjecture}
  \providecommand{\corollaryname}{Corollary}
  \providecommand{\definitionname}{Definition}
  \providecommand{\lemmaname}{Lemma}
  \providecommand{\propositionname}{Proposition}
  \providecommand{\remarkname}{Remark}
  \providecommand{\theoremname}{Theorem}
\providecommand{\theoremname}{Theorem}
\begin{document}

\title{Frozen percolation in two dimensions}

\author{Demeter Kiss%
\thanks{research supported by NWO%
}}
\maketitle
\begin{abstract}
Aldous \cite{Aldous2000} introduced a modification of the bond percolation
process on the binary tree where clusters stop growing (freeze) as
soon as they become infinite.  We investigate the site version of
this process on the triangular lattice where clusters freeze as soon
as they reach $L^{\infty}$ diameter at least $N$ for some parameter
$N.$ We show, informally speaking, that in the limit $N\rightarrow\infty,$
the clusters only freeze in the critical window of site percolation
on the triangular lattice. Hence the fraction of vertices that eventually
(i. e. at time $1$) are in a frozen cluster tends to $0$ as $N$
goes to infinity. We also show that the diameter of the open cluster
at time $1$ of a given vertex is, with high probability, smaller
than $N$ but of order $N.$ This shows that the process on the triangular
lattice has a behaviour quite different from Aldous' process. We also
indicate which modifications have to be made to adapt the proofs to
the case of the $N$-parameter frozen bond percolation process on
the square lattice. This extends our results to the square lattice,
and answers the questions posed by van den Berg, de Lima and Nolin
in \cite{Nolin2008}.
\end{abstract}
\emph{Keywords and phrases:} frozen cluster, critical percolation,
near critical percolation, correlation length.\emph{}\\
\emph{AMS 2000 classifications.} Primary: 60K35; Secondary: 82B43.

\tableofcontents{}

\section{\label{sec: introduction}Introduction}

Stochastic processes where small fragments merge and form larger ones
are quite useful tools to model physical phenomena at scales ranging
from molecular \cite{1943JChPh..11...45S} to astronomical ones \cite{Wetherill1990336}.
The majority of the mathematical literature on such coagulation processes
treats mean field models: The rate at which the fragments (clusters)
merge is governed only by their sizes - neither the physical location
nor their shape affect this rate. See \cite{MR2253162} for a review.
Stockmayer \cite{1943JChPh..11...45S}, introduced a mean field model
for polymerization where small clusters (sol) merge, however, as soon
as a large cluster (gel) forms, it stops growing. In contrast to the
mean field models, we consider a model which takes the geometry of
the space and the shape of the clusters into account. Following van
den Berg, de Lima and Nolin \cite{Berg2012a}, and Aldous \cite{Aldous2000},
we introduce the following adaptation of Stockmayer's model. Let $G=\left(V,E\right)$
be a graph which represents the underlying geometry and $N\in\mathbb{N}.$
For every vertex $v\in V,$ independently from each other, we assign
a random time $\tau_{v}$ which is uniformly distributed on $\left[0,1\right]$.
At time $t=0,$ all of the vertices of $G$ are closed. As time increases,
a vertex $v$ tries to become open at time $t=\tau_{v}.$ It succeeds
if and only if all of its neighbours' open clusters (open connected
components) at time $t$ have size less than $N.$ Note that as soon
as the diameter of a cluster reaches $N,$ it stops growing, i.e freezes.
Hence the name $N$-parameter frozen percolation. Note that we can
also consider an edge (bond) version of the model above where edges
turn open from closed. This edge version of the process was introduced
by van den Berg et al \cite{Berg2012a}.

We are particularly interested in the $N$-parameter frozen percolation
models for large $N$ on graphs such as $d$ dimensional lattices,
since they are discrete approximations of the space $\mathbb{R}^{d}.$
Herein we restrict to the case where $d=2.$ We will mainly work on
the triangular lattice. We will see that the behaviour of this model
is rich and interesting too, but in a very different way from the
model studied by Aldous \cite{Aldous2000}.

Let us turn to the model introduced and constructed by Aldous \cite{Aldous2000}.
It is the edge version of the model on the binary tree where we replace
the parameter $N$ by $\infty$ in the description above. An edge
$e$ of the binary tree opens at time $\tau_{e}$ as long as the open
clusters of the endpoints of $e$ are finite. In view of this model,
one could also try to construct a similar, so called $\infty$-parameter,
model on the triangular lattice. However Benjamini and Schramm \cite{Benjamini1999}
showed that it is impossible. Exactly this non-existence result motivated
van den Berg, de Lima and Nolin \cite{Berg2012a} to extend the model
of Aldous for finite parameter $N$: in this case, the $N$-parameter
frozen percolation process (both the vertex and the edge version)
is a finite range interacting particle system, hence the general theory
\cite{Liggett2005} gives existence. One could ask if the $N$-parameter
processes for large but finite $N$ provide a reason for the existence
of the $\infty$-parameter frozen bond percolation on the binary tree
and the non-existence of the $\infty$-parameter frozen site percolation
on the triangular lattice. Before we answer this question, let us
specify the two dimensional model which plays a central role in this
paper.

We work on the triangular lattice $\mathbb{T}=\left(V,E\right)$ with
its usual embedding in the plane $\mathbb{R}^{2}.$ That is, the vertex
set $V$ is the lattice generated by the vectors $\underline{e}_{1}=\left(1,0\right)$
and $\underline{e}_{2}=\left(\cos\left(\pi/3\right),\sin\left(\pi/3\right)\right):$
\begin{equation}
V:=\left\{ a\underline{e}_{1}+b\underline{e}_{2}\left|a,b\in\mathbb{Z}\right.\right\} .\label{eq: vertex set}
\end{equation}
The vertices $u$ and $v$ are neighbours, i.e $\left(u,v\right)\in E$
or $u\sim v$ if their $L^{2}$ distance is $1.$ We consider the
model where we freeze clusters as soon as they reach $L^{\infty}$
diameter (inherited from $\mathbb{R}^{2}$) at least $N.$ For the
case where the underlying lattice is $\mathbb{Z}^{2}$ and for different
choices for diameters of clusters see the discussion below Conjecture
\ref{conj: scaling limit}. 

Van den Berg, Kiss and Nolin \cite{ECP1694} investigated the edge
version of the $N$-parameter process on the binary tree. They found
that as $N\rightarrow\infty,$ the $N$-parameter process on the binary
tree converges to the $\infty$-parameter process in some weak sense.
This result raises the question if there is a limit of the $N$-parameter
frozen percolation processes on the triangular lattice as $N$ goes
to infinity. The non-existence of the $\infty$-parameter process
suggests that the $N$-parameter model may have a remarkable (anomalous)
behaviour in the limit $N\rightarrow\infty.$ It turns out that there
is a limiting process, but this process is, in some sense, trivial: 
\begin{thm}
\label{thm: origin do not freeze}As $N\rightarrow\infty$ the probability
that in the $N$-parameter frozen percolation process the open cluster
of the origin freezes goes to $0.$
\end{thm}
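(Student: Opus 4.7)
The plan is to pick an intermediate scale $M = M(N)$ with $M \to \infty$ and $M / N \to 0$, and to show that with probability tending to $1$ by some random time $t^{*} < 1$ both of the following hold: (i) an open circuit around the origin in the annulus $A_M = \{x : M \le \|x\|_\infty \le 2M\}$ is part of a cluster that has frozen, and (ii) the open cluster of the origin has $L^{\infty}$-diameter less than $M$. The conjunction of (i) and (ii) traps the origin's cluster in the bounded region enclosed by the frozen circuit, which has diameter $O(M) \ll N$, so it cannot reach diameter $N$ and cannot freeze.

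The first step would be to pin down the relevant time window. Let $p_c = 1/2$ be the critical parameter of site percolation on $\mathbb{T}$, let $L(\cdot)$ denote Kesten's correlation length, and define $p_N$ by $L(p_N) = N$, so that $p_N \downarrow p_c$. Standard RSW and sharpness arguments show that for $t$ with $L(t) \gg N$, the probability that any cluster in a box of diameter $N \log N$ around the origin has reached diameter $N$ is $o(1)$, and hence no freezing has occurred in that box; the frozen process still coincides there with ordinary Bernoulli($t$) percolation. Conversely, at time $p_N$ macroscopic clusters of diameter $\sim N$ are already abundant. The freezing window therefore sits inside $[p_c, p_N]$ and shrinks as $N \to \infty$.

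The main step, and the main obstacle, is item (i). Existence of an open circuit in $A_M$ at time $p_N$ follows from RSW in the near-critical regime, which remains effective at scales up to $L(p_N) = N$; iterating over a dyadic family of disjoint annuli between $M$ and $\sqrt{MN}$ makes the probability of at least one open circuit tend to $1$. Promoting such a circuit to one that is frozen requires extendability and arm-separation arguments: a typical macroscopic circuit at scale $M$ is contained, in ordinary near-critical percolation at time $p_N$, in a cluster extending beyond scale $N$, and as long as this cluster itself reaches diameter $N$ before any neighbouring frozen cluster can block its growth, the freezing events in the frozen and ordinary processes agree. Setting up this ``first-to-freeze'' coupling carefully is the delicate part. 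For item (ii), the one-arm probability $\pi_1(M)$ tends to $0$ as $M \to \infty$, and a standard comparison with near-critical percolation at $t^{*} \leq p_N$ (where the correlation length is $\geq N \gg M$) keeps the analogous one-arm estimate $o(1)$, so the origin's cluster has diameter less than $M$ at time $t^{*}$ with probability tending to $1$.

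Finally, once (i) and (ii) hold at time $t^{*}$, any vertex of the interior adjacent to the frozen circuit that attempts to open at a later time fails, because one of its neighbour clusters has size $\geq N$. Hence the origin's cluster cannot extend across the circuit and remains inside the bounded region enclosed by it, which has $L^{\infty}$-diameter at most $4M \ll N$. Therefore the origin's cluster never reaches diameter $N$, and the probability that it freezes is bounded above by the probability that (i) or (ii) fails, which tends to $0$.
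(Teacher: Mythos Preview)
Your approach has a genuine gap, and in fact your key claim (i) is false as stated. You want a frozen open circuit in the annulus $A_M$ at scale $M=o(N)$; if such a circuit existed, the origin's cluster at time~$1$ would indeed be trapped in a region of diameter $O(M)=o(N)$. But the paper proves (Corollary~\ref{cor: macroscopic clusters}) that with probability tending to~$1$ the cluster $C(1)$ of the origin has diameter at least $aN$ for some fixed $a>0$. These two statements are incompatible: once a frozen circuit surrounds the origin, no vertex interior to the circuit and adjacent to it can ever open, so the interior is permanently isolated. Hence there is, with high probability, \emph{no} frozen circuit around the origin at any scale $o(N)$. The picture you describe, with the origin trapped at a mesoscopic scale, is not what actually happens; the frozen clusters leave holes of diameter of order $N$, not $o(N)$, and the origin sits in one such hole.

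The step you yourself flag as ``the delicate part'' --- promoting an open circuit in ordinary percolation to a frozen circuit via a ``first-to-freeze'' coupling --- is precisely where the argument breaks. You assume that a near-critical circuit at scale $M$ is typically contained in an ordinary-percolation cluster of diameter $\geq N$ \emph{and} that this cluster reaches diameter $N$ in the frozen process before being blocked. Neither is justified, and the second is essentially the whole content of the theorem. More concretely, a circuit at scale $M\ll L(p_N)=N$ has, even in ordinary percolation at $p_N$, only a small probability of lying in a cluster of diameter $\geq N$ (this is governed by a one-arm probability from scale $M$ to scale $N$, which is $o(1)$), so the extendability claim fails already before one worries about interference from freezing.

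The paper's route is entirely different. It first establishes the much stronger Theorem~\ref{thm: main}: all freezing in $B(KN)$ occurs before time $p_\lambda(N)$ for some finite $\lambda=\lambda(\varepsilon,K)$, with probability at least $1-\varepsilon$. Given this, the event that the origin freezes forces a $p_\lambda(N)$-open path from the origin to distance $N/2$; by the near-critical arm comparison (Lemma~\ref{lem: near critical arms}) this has probability at most $c(\lambda)\,\pi_{1}(1,N/2)$, which tends to $0$ by the one-arm exponent. The heavy lifting is entirely in Theorem~\ref{thm: main}, whose proof (via Propositions~\ref{prop: active diameter} and~\ref{prop: big then freeze}) is an iterative argument controlling the diameters of active regions through the critical window; there is no shortcut via a trapping circuit at a smaller scale.
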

To get some intuition for the behaviour of the process, let us for
the moment forget about freezing, and call the resulting process the
percolation process.  That is, at time $\tau_{v}$ the vertex $v$
becomes open no matter how big are the open clusters of its neighbours.
Thus at time $t,$ a vertex $v$ is open with probability $t$ independently
from the other vertices. Hence at time $t$ we see ordinary site percolation
with parameter $t.$ Recall from \cite{Russo1981} that the critical
parameter for site percolation on the triangular lattice is $p_{c}=1/2.$
So at each time $t\leq1/2$ there is no open infinite cluster, and
there is a unique infinite open cluster when $t>1/2.$ Moreover, by
\cite{Aizenmann1987} at time $t<1/2,$ the distribution of the size
of the open clusters has an exponential decay. Note that if a site
is open in the $N$-parameter frozen percolation process at time $t,$
then it is also open in the percolation process at time $t.$ Hence
at time $t<1/2$ the $N$-parameter frozen percolation process and
the percolation process does not differ too much when $N$ is large:
even without freezing, for all $K>0$ the probability that there is
an open cluster with diameter at least $N$ in a box with side length
$KN$ goes to $0$ as $N\rightarrow\infty.$ To our knowledge, there
is no simple argument showing that, roughly speaking, freezing does
not take place at times that are essentially bigger than $1/2,$ which
is one of our main results:
\begin{thm}
\label{thm: no supercritical feezing}For all $K>0$ and $t>1/2,$
the probability that after time $t$ a frozen cluster forms which
intersects a given box with side length $KN$ goes to $0$ as $N\rightarrow\infty.$
\end{thm}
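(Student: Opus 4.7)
The plan is to couple the frozen process with the Bernoulli site percolation process $(\omega_s)_{s\in[0,1]}$ on $\mathbb{T}$ defined by $\omega_s(v):=\mathbf{1}\{\tau_v\leq s\}$. Because freezing only prevents openings, at every time $s$ every frozen-process cluster is contained in a cluster of $\omega_s$. Fix $K>0$ and $t>1/2$, pick $t_0\in(1/2,t)$, and let $B^{+}\supset B$ be a concentric box of side $3KN$. Classical supercritical estimates on the triangular lattice (in particular the exponential decay of the radius of finite open clusters together with uniqueness of the infinite cluster) provide a high-probability event $E_N$ on which, for every $s\in[t_0,1]$ simultaneously, every open cluster of $\omega_s$ intersecting $B^{+}$ has diameter either less than $N/10$ or greater than $2KN$.

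It suffices to show that the probability $p_N$ that some frozen cluster meeting $B^{+}$ has freezing time larger than $t$ tends to $0$. Let $\tau^{\ast}$ be the smallest such freezing time, with $\tau^{\ast}=\infty$ if none exists. On $\{\tau^{\ast}<\infty\}\cap E_N$, let $C^{\ast}$ be the corresponding frozen cluster and fix $v\in C^{\ast}$. The cluster $C^{\ast}$ has $L^{\infty}$-diameter between $N$ and $N+O(1)$ at time $\tau^{\ast}$, so its enclosing $\omega_{\tau^{\ast}}$-cluster $\mathcal{G}$, which contains $C^{\ast}$ and thus has diameter at least $N$, must by $E_N$ have diameter at least $2KN$. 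Consequently $C^{\ast}\subsetneq\mathcal{G}$, and $\mathcal{G}$ contains at least one site that is open in $\omega_{\tau^{\ast}}$ but blocked in the frozen process at time $\tau^{\ast}$. A planar connectedness argument then produces a separator in $\mathcal{G}$ between $C^{\ast}$ and the rest of $\mathcal{G}$, all of whose sites are such blocked sites. Each blocked site $u$ failed to open at its time $\tau_u$ because a neighbour of $u$ already lay in a frozen cluster of diameter at least $N$; by the minimality of $\tau^{\ast}$, any such blocking frozen cluster that meets $B^{+}$ must have frozen at time at most $t$. Tracing the separator and thickening by one lattice step yields a closed barrier inside $B^{+}$, surrounding $v\in B^{+}$, that is covered by a union of pre-$t$ frozen clusters.

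It remains to bound the probability of this frozen enclosure. Each participating frozen cluster was, at its random freezing instant $s_i\leq t$, a diameter-$N$ cluster of the configuration $\omega_{s_i}$; because percolation with parameter strictly below $1/2$ is subcritical with exponentially decaying cluster radii, the relevant $s_i$ all lie in the near-critical window $[1/2-o(1),\,t]$. A deterministic topological argument reduces the enclosure event to the existence of a bounded number (depending only on $K$) of macroscopic near-critical clusters whose union contains a circuit surrounding some point of $B^{+}$. Standard RSW estimates together with polychromatic arm-exponent bounds on the triangular lattice then bound the probability of such a configuration by $N^{-\gamma}$ for some $\gamma=\gamma(K,t)>0$. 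Combined with $\mathbb{P}(E_N^c)\to 0$ this gives $p_N\to 0$, proving the theorem.

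I expect the main obstacle to be this last step: the joint law of the frozen clusters at time $t$ is far from product, so RSW cannot be applied directly to their positions. The crux will be to exploit the sequential, dynamical nature of the freezings --- distinct freezing times decouple the respective clusters when analysed in chronological order --- in order to reduce the enclosure event to a product of near-critical percolation events that can be bounded by the standard polychromatic arm-exponent machinery.
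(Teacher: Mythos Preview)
Your approach is genuinely different from the paper's, and the gap you flag in your last paragraph is real and, as far as I can see, not closable along the lines you propose.

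The paper does not prove Theorem~\ref{thm: no supercritical feezing} on its own; it is an immediate corollary of the stronger Theorem~\ref{thm: main}, which asserts that all freezing in $B(KN)$ is over by a near-critical time $p_{\lambda}(N)$ with $\lambda=\lambda(\varepsilon,K)$. That theorem is proved by an induction on the number of frozen clusters. One starts at $\lambda_1=0$; Proposition~\ref{prop: active diameter} says that with high probability no active cluster at time $p_{\lambda_i}(N)$ has diameter in a thin window around $N$, so any surviving active cluster of diameter $\ge N$ actually has diameter $\ge(1+\theta_i)N$; Proposition~\ref{prop: big then freeze} then forces a new freezing by some later $p_{\lambda_{i+1}}(N)$. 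Since the total number of frozen clusters meeting $B((K+2)N)$ is tight (Lemma~\ref{lem: few frozen clusters}), after boundedly many steps no active cluster can have diameter $\ge N$ and freezing stops. The essential feature is that every probabilistic estimate is pushed back, via the closed-net coupling (Observation~\ref{obs: closed grid-> no freezing}), to ordinary Bernoulli percolation at a \emph{deterministic} near-critical parameter, where RSW and arm-event machinery applies cleanly.

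Your route tries instead to work at a fixed supercritical time and argue that a late freezing forces an enclosure by earlier frozen material. There is already a wrinkle before your acknowledged gap: the boundary $\partial C^*$ is not composed solely of blocked sites. It also contains genuinely $\omega_{\tau^*}$-closed vertices (those with $\tau_u>\tau^*$), and these are not adjacent to any pre-$t$ frozen cluster, so the barrier you trace is not literally ``covered by a union of pre-$t$ frozen clusters''. One would need an extra argument that the $\omega$-closed arcs are short and are bridged by the macroscopic frozen pieces. More seriously, even granting a clean enclosure picture, bounding its probability requires control of the \emph{joint} law of all frozen clusters present at time $t$. Your suggested chronological decoupling does not reduce this to a product of Bernoulli events: conditioning on the first freezing fixes a random region whose complement then evolves under the \emph{frozen} dynamics, not under Bernoulli percolation, so the second freezing is again not a Bernoulli-percolation event, and so on. This is precisely the circularity the paper avoids by never analysing the collection of frozen clusters directly; it advances the near-critical parameter step by step, uses the closed net to identify the frozen and Bernoulli pictures locally at each step, and lets the tightness of the frozen-cluster count terminate the iteration.
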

Compare Theorem \ref{thm: no supercritical feezing} with \cite{Aldous2000,Berg2012a}
where it was shown that clusters freeze throughout the time horizon
$\left[1/2,1\right]$ for $N\in\mathbb{N}\cup\left\{ \infty\right\} $
in the edge version of the $N$-parameter frozen percolation process
on the binary tree. (Note that the critical parameter is $1/2$ for
site percolation on the binary tree.) As it turns out, our method
provides a much stronger result than Theorem \ref{thm: no supercritical feezing}.
To state it we need some more notation.

Let $\mathbb{P}$ denote the probability measure corresponding to
the percolation process. For a fixed $p\in\left[0,1\right],$ we call
a vertex $v\in V$ $p$-open ($p$-closed), if its $\tau$ value is
less (greater) than $p.$ We denote by $\mathbb{P}_{p}$ the distribution
of $p$-open vertices.

We borrow some of the notation from \cite{Nolin2008}. Recall the
definition of $V$ from (\ref{eq: vertex set}). The $L^{\infty}$
distance of vertices in $\mathbb{T}$ is the $L^{\infty}$ distance
inherited from $\mathbb{R}^{2}.$ That is, for $v,w\in V$ the distance
$d\left(v,w\right)$ between $v=\left(v_{1},v_{2}\right)$ and $w=\left(w_{1},w_{2}\right)$
is 
\begin{align*}
d\left(v,w\right) & =\left\Vert v-w\right\Vert _{\infty}\\
 & =\max\left\{ \left|v_{1}-w_{1}\right|,\left|v_{2}-w_{2}\right|\right\} .
\end{align*}
For $a,b,c,d\in\mathbb{R},$ with $a<b,c<d$ we define the parallelogram
\[
\left[a,b\right]\boxtimes\left[c,d\right]:=\left\{ k\underline{e}_{1}+l\underline{e}_{2}\,|\, k\in\left[a,b\right]\cap\mathbb{Z},\, l\in\left[c,d\right]\cap\mathbb{Z}\right\} .
\]
We denote the outer boundary of a set of vertices $S\subseteq V$
by 
\begin{equation}
\partial S:=\left\{ v\in V\setminus S\,|\,\exists u\in S:\, u\sim v\right\} .\label{eq: def outer boundary}
\end{equation}
Let $cl\left(S\right)=S\cup\partial S$ denote the closure of $S.$
For the parallelogram centred around the vertex $v$ with radius $a>0$
we write 
\[
B\left(v;a\right):=\left[-a,a\right]\boxtimes\left[-a,a\right]+v.
\]
We denote the annulus centred around $v\in V$ with inner radius $a>0$
and outer radius $b>a$ by 
\[
A\left(v;a,b\right):=B\left(v;b\right)\backslash B\left(v;a\right).
\]
We call $B\left(v;a\right)$ the inner, $B\left(v;b\right)$ the outer
parallelogram of $A\left(v;a,b\right).$ 

We say that there is an open (closed) arm in an annulus $A\left(v;a,b\right)$
if there is an open (closed) path from $\partial B\left(v;a\right)$
to $\partial B\left(v;b\right)$ in $A\left(v;a,b\right).$ We write
$o$ for open and $c$ for closed. A colour sequence of length $k$
is an element of $\left\{ o,c\right\} ^{k}.$ For $\sigma\in\left\{ o,c\right\} ^{k},$
we denote by $\mathcal{A}_{k,\sigma}\left(v;a,b\right)$ the event
that there are $k$ disjoint arms in $A\left(v;a,b\right)$ such that
the vertices of each of the arms are either all open or all closed,
moreover, if we take a counter-clockwise ordering of these arms, then
their colours follow a cyclic permutation of $\sigma.$

In the case where $v=\underline{0}=\left(0,0\right)$ we omit the
first argument in our notation, that is $B\left(a\right)=B\left(\underline{0};a\right)$
etc. For the critical arm probabilities we use the notation
\begin{equation}
\pi_{k,\sigma}\left(a,b\right):=\mathbb{P}_{1/2}\left(\mathcal{A}_{k,\sigma}\left(a,b\right)\right).\label{eq:crit arm prob}
\end{equation}

In the following we use the near critical parameter scale which was
introduced in \cite{Garban2010}. For a positive parameter $N$ and
$\lambda\in\mathbb{R}$ it is defined as 
\begin{equation}
p_{\lambda}\left(N\right):=\frac{1}{2}+\lambda\frac{N^{-2}}{\pi_{4,alt}\left(1,N\right)}\label{eq:def p lambda}
\end{equation}
where $alt$ denotes the colour sequence $\left(o,c,o,c\right).$

Before we proceed, let us stop here and let us briefly explain the
formula (\ref{eq:def p lambda}). Suppose that a vertex $v$ is a
closed pivotal vertex, i.e. it is on the boundary of two different
open cluster with diameter at least $N.$ The two open clusters provide
two disjoint open arms starting from neighbouring vertices of $v.$
Since the open clusters are different, they have to be separated by
closed paths, which provide two disjoint closed arms starting from
$v.$ Hence the event $\mathcal{A}_{4,alt}\left(v;1,N\right)$ occurs.
By (\ref{eq:crit arm prob}), we get that the expected number of pivotal
vertices in $B\left(N\right)$ is $O\left(N^{2}\pi_{4,alt}\left(1,N\right)\right).$
Let $\lambda>0$. Let us look at the percolation process in the parallelogram
$B\left(N\right)$ in the time interval $\left[1/2,p_{\lambda}\left(N\right)\right].$
The probability that a vertex opens in this time interval is $p_{\lambda}\left(N\right)-1/2.$
By a combination of (\ref{eq:crit arm prob}) and (\ref{eq:def p lambda})
we see that the expected number of pivotal vertices which open in
this interval is $O\left(1\right).$ Hence the parameter scale in
(\ref{eq:def p lambda}) corresponds to the time scale where open
clusters of diameter $O\left(N\right)$ merge. See \cite{Garban2010,Garban}
for more details.

The considerations above suggest that the parameter scale (\ref{eq:def p lambda})
is indeed useful for investigating the $N$-parameter frozen percolation
process. We write $\mathbb{P}_{N}$ for the probability measure corresponding
to the $N$-parameter frozen percolation process. The following stronger
version of Theorem \ref{thm: no supercritical feezing} is our main
result.
\begin{thm}
\label{thm: main}For any $\varepsilon,K>0$ there exists $\lambda=\lambda\left(\varepsilon,K\right)$
and $N_{0}=N_{0}\left(\varepsilon,K\right)$ such that 
\[
\mathbb{P}_{N}\left(\mbox{a cluster intersecting }B\left(KN\right)\mbox{ freezes after time }p_{\lambda}\left(N\right)\right)<\varepsilon
\]
for all $N\geq N_{0}.$
\end{thm}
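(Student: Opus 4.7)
The plan is to exploit the following key observation: once a cluster freezes, every vertex adjacent to it is locked closed forever, since opening it would merge it into a cluster of diameter $\geq N$, which is forbidden by the rule. Hence if, at time $p_{\lambda}(N)$, the frozen region $\mathcal{F}$ together with its closed boundary $\partial \mathcal{F}$ chops $B((K+1)N)$ into connected components of diameter strictly less than $N$, then no new cluster intersecting $B(KN)$ can ever reach diameter $N$, so no freezing can occur in $B(KN)$ after time $p_{\lambda}(N)$. The theorem therefore reduces to proving that, for $\lambda$ large, this ``chopping'' event has probability at least $1-\varepsilon$.

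To prove the chopping event I would establish a stronger local statement: for any $\delta>0$, if $\lambda$ is large enough (depending on $\varepsilon,K,\delta$), then with probability at least $1-\varepsilon$ every sub-parallelogram $B(v;\delta N)\subseteq B((K+1)N)$ meets $\mathcal{F}$ at time $p_{\lambda}(N)$. Combined with the closed moat $\partial\mathcal{F}$, this forces every connected component of $B((K+1)N)\setminus(\mathcal{F}\cup\partial\mathcal{F})$ to have diameter of order $\delta N$; choosing $\delta$ small enough concludes. The first ingredient is a ``supercritical on scale $N$'' percolation input: by the definition of the near-critical scale \eqref{eq:def p lambda} and the correlation-length machinery invoked in the motivation of that display, the correlation length at $p_{\lambda}(N)$ is of order $N/\phi(\lambda)$ with $\phi(\lambda)\to\infty$, so RSW yields that in the (unfrozen) percolation process at $p_{\lambda}(N)$ every sub-box $B(v;\delta N)$ contains, simultaneously, an open cluster of diameter $\geq N$, except with probability $h(\lambda,\delta)\to 0$ as $\lambda\to\infty$.

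The second, harder ingredient is to transfer this information from the percolation process to the $N$-parameter frozen process. Coupling the two via the common $\tau$-values, every vertex open in the frozen process is also open in the percolation process; but a large percolation cluster may be broken up in the frozen process if some intermediate merging event was blocked by an earlier freezing. Precisely this scenario forces a frozen cluster to exist nearby: if the frozen-process cluster of some vertex $u\in B(v;\delta N)$ is much smaller than its percolation-process cluster, then the discrepancy has to be explained by the presence of a frozen cluster close to $u$, which is what we want. Making this quantitative requires a near-critical four-arm bound built on $\pi_{4,alt}(1,N)$ together with Kesten-type separation-of-arms and quasi-multiplicativity estimates, applied inside the window $[1/2,p_{\lambda}(N)]$. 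This is the main obstacle: the freezing events depend on the entire past of the process, and the dependence between different sub-boxes must be handled, most likely by induction over a geometric sequence of near-critical times $p_{\lambda_{0}}(N)<p_{\lambda_{1}}(N)<\dots<p_{\lambda}(N)$, building up frozen clusters scale-by-scale and absorbing the accumulated error in a final union bound over the $O(K^{2}/\delta^{2})$ sub-boxes.
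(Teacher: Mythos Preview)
Your opening reduction is correct: if at time $p_{\lambda}(N)$ every connected component of the active region in $B((K+1)N)$ has diameter strictly less than $N$, then no cluster intersecting $B(KN)$ can freeze later. The instinct that an inductive argument over a sequence of near-critical times is needed is also right, and matches the paper's scheme.

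The gap is in the next reduction. You claim that if $\mathcal{F}$ meets every sub-box $B(v;\delta N)\subseteq B((K+1)N)$, then each component of $B((K+1)N)\setminus(\mathcal{F}\cup\partial\mathcal{F})$ has diameter $O(\delta N)$. This implication is false. Take $\mathcal{F}$ to be a union of horizontal ``lines'' at heights $k\lfloor\delta N\rfloor$, $k\in\mathbb{Z}$ (each such line, restricted to the box, has diameter $\geq N$ and is a perfectly admissible frozen cluster shape). Every $\delta N$-box meets $\mathcal{F}$, yet the complement consists of horizontal strips of width $\sim\delta N$ and length $\sim 2(K+1)N\gg N$. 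Density of the frozen set simply does not control the diameter of complementary components; one would need something like a \emph{net} of frozen crossings in both directions, and nothing in your argument produces horizontal and vertical frozen crossings simultaneously. The ``closed moat'' $\partial\mathcal{F}$ does not help, since it is just as thin as $\mathcal{F}$ itself.

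The paper avoids this trap by never trying to prove density of the frozen region. Instead it works directly with the diameters of active clusters and runs the following iteration. Start at $\lambda_{1}=0$. If every active cluster meeting $B(KN)$ has diameter $<N$, stop. Otherwise some active cluster has diameter $\geq N$; a delicate anti-concentration result (Proposition~\ref{prop: active diameter}) shows that with high probability its diameter is in fact $\geq(1+\theta)N$ for some small $\theta>0$. A second result (Proposition~\ref{prop: big then freeze}) then shows that such a cluster contains a ``thick path'' of mesoscopic width, which forces a new freezing event before some later time $p_{\lambda_{2}}(N)$. Each pass of the loop increases the number of frozen clusters meeting $B((K+2)N)$ by at least one, and a separate tightness lemma (Lemma~\ref{lem: few frozen clusters}) bounds that number by some $L$ with high probability. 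After at most $L$ iterations the first alternative must hold, giving the final $\lambda=\lambda_{L+1}$. The genuinely hard ingredient is Proposition~\ref{prop: active diameter}: showing that $\mathrm{diam}(\mathcal{C}_{a}(v;\lambda))$ does not concentrate near $\alpha N$ for any fixed $\alpha$, which requires a careful analysis of the lowest point of the lowest closed crossing in near-critical percolation. None of this is visible from your density heuristic.
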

In \cite{Berg2012a} the authors investigated the diameter of the
open cluster of the origin at time $1.$ Their main result is the
following.
\begin{defn}
\label{def: open cluster}For $t\in\left[0,1\right]$ let $C\left(v;t\right)$
denote the open cluster of $v\in V$ at time $t\in\left[0,1\right].$
We set $C\left(t\right):=C\left(\underline{0};t\right).$
\end{defn}

\begin{defn}
For $C\subset V,$ let $\diam\left(C\right)$ denote the $L^{\infty}$-diameter
of $C.$\end{defn}
\begin{thm}
[Theorem 1.1 of \cite{Berg2012a}] \label{thm: Rob, Bernardo, Pierre}For
the bond version of the $N$-parameter frozen percolation on the square
lattice we have

\[
\liminf_{N\rightarrow\infty}\mathbb{P}_{N}\left(\diam\left(C\left(1\right)\right)\in\left(aN,bN\right)\right)>0
\]
for $a,b\in\left(0,1\right)$ with $a<b.$
\end{thm}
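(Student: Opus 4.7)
The plan is to construct, uniformly in $N$, a positive-probability event on which $\mathrm{diam}(C(1)) \in (aN, bN)$. The whole argument hinges on a ``frozen barrier'' around the origin at a fixed intermediate scale. Pick constants $a/2 < r < \rho_0 < \rho_1 < b/2 < 1/2$ (possible since $a<b<1$); note $2\rho_1 N < bN < N$, so any subset of $B(\rho_1 N)$ has $L^{\infty}$-diameter less than $N$. On the target event, an open cluster $C_\Gamma$ containing a circuit $\Gamma$ around the origin in the annulus $A(\rho_0 N, \rho_1 N)$ freezes at some $t_* < 1$; the origin remains disconnected from $C_\Gamma$ throughout the process; and the ball $B(rN)$ is entirely contained in $C(1)$. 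After $t_*$, any vertex adjacent to $C_\Gamma$ fails to open -- such an opening would merge with a cluster of diameter $\ge N$ -- so $C(1)$ is trapped in the interior of $\Gamma$ and hence in $B(\rho_1 N)$, giving $\mathrm{diam}(C(1)) \le 2\rho_1 N < bN$; while $C(1) \supseteq B(rN)$ gives $\mathrm{diam}(C(1)) \ge 2rN > aN$.

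To build this event I would combine three RSW blocks with a near-critical growth estimate. First, by Russo--Seymour--Welsh at $p_c$, there is uniform positive probability of a $p_c$-open circuit around the origin in $A(\rho_0 N, \rho_1 N)$; this is the candidate $\Gamma$. Its cluster already has $L^\infty$-diameter at least $2\rho_0 N$, a fixed positive fraction of $N$. The near-critical arm analysis that drives Theorem~\ref{thm: main}, organised around the four-arm probabilities $\pi_{4,alt}$ and the parameter scale \eqref{eq:def p lambda}, then shows that for $\lambda$ sufficiently large the cluster of $\Gamma$ in the ordinary percolation process reaches diameter $\ge N$ by time $p_\lambda(N) < 1$ with conditional probability close to $1$. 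Equivalently, in the frozen process $C_\Gamma$ freezes strictly before time $1$.

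Second, to enforce the isolation of the origin, I would add a $p_c$-closed circuit around the origin in a thin annulus $A(\rho_0 N - \varepsilon N, \rho_0 N)$ (by the dual circuit estimate), and, to guarantee $C(1) \supseteq B(rN)$, a $p_c$-open circuit around the origin in $A((r+\varepsilon)N, (r+2\varepsilon)N)$. Both extras have probability bounded below uniformly in $N$, and FKG then gives positive joint probability for the barrier event together with these two RSW inserts. On this joint event one has to show that the closed separator remains effective until the barrier freezes: the expected number of pivotal openings on that separator during $(p_c, p_\lambda(N))$ that would bridge the origin to $\Gamma$ is controlled by the same four-arm sums underlying \eqref{eq:def p lambda} and can be made arbitrarily small by taking $\lambda$ large. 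Once $C_\Gamma$ has frozen with the origin still insulated, every vertex in $B(rN)$ will successfully open -- its attempted cluster lives inside $B(\rho_1 N)$, of diameter less than $N$, so no freezing constraint applies -- and the inner open circuit from (iii) ties $B(rN)$ together into $C(1)$.

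The main obstacle I expect is exactly this ``isolation-until-freezing'' step: quantifying that with positive joint probability the outer cluster $C_\Gamma$ reaches diameter $N$ and freezes before any pivotal opening on the inner closed separator connects the origin to $\Gamma$. This is a near-critical four-arm book-keeping of the same flavor as the proof of Theorem~\ref{thm: main}, and once it is in hand the two deterministic diameter bounds $\mathrm{diam}(C(1)) < bN$ and $\mathrm{diam}(C(1)) > aN$ are immediate from the geometric setup above.
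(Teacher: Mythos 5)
The paper does not actually prove this statement --- it is quoted verbatim as Theorem 1.1 of \cite{Berg2012a} --- so there is no in-paper proof to compare against. Your high-level plan (build a frozen annular barrier around the origin, isolate the origin from it, and then show every site of $B(rN)$ opens by time $1$) is the right structure and is, as far as I know, the same blueprint as in \cite{Berg2012a}. But as written there are two genuine gaps, one of which is serious.

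The serious one is the sentence ``Equivalently, in the frozen process $C_\Gamma$ freezes strictly before time $1$.'' This is not an equivalence. That the cluster of $\Gamma$ in the \emph{ordinary} $p_{\lambda}(N)$-percolation process reaches diameter $\ge N$ tells you nothing directly about the frozen process, because the two dynamics decouple exactly when large clusters appear: a different cluster can reach diameter $N$ first, freeze, and then block sites on the growth path of $C_\Gamma$; worse, since $\Gamma$ has diameter only $2\rho_0 N < N$, the sites of $\Gamma$ itself open one at a time and may land in several distinct partial clusters before any of them reaches diameter $N$, so the frozen cluster eventually forming near $\Gamma$ need not contain all of $\Gamma$, and the remaining sites of $\Gamma$ can end up closed, in another frozen cluster, or even in $C(1)$ itself. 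Consequently your geometric conclusion ``$C(1)$ is trapped in the interior of $\Gamma$'' does not follow without additional control: you must first establish that, on the event you construct, there is at time $1$ a genuine separating circuit in $A(\rho_0 N,\rho_1 N)$ made only of closed or frozen sites. This is exactly the kind of bookkeeping that occupies the bulk of the argument in \cite{Berg2012a}; it cannot be waved through with ``near-critical growth plus equivalently.''

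The secondary gap is the appeal to FKG for ``positive joint probability'': the events you combine are not all increasing (the closed inner circuit is decreasing, the rest are increasing), so FKG does not give a product lower bound as stated. This one is fixable --- put the closed circuit in an annulus disjoint from the one carrying $\Gamma$ and use independence on disjoint regions plus FKG only among the increasing events, or swap to a conditioning argument --- but as written the inequality is not justified. Finally, the near-critical control you reach for (pivotal counts on the separator, four-arm sums) is applied a bit loosely: what you actually need is a $p_{\lambda}(N)$-closed circuit in a macroscopic annulus, whose probability stays bounded below for fixed $\lambda$ by RSW (Corollary~\ref{cor: posprob of crossing}), rather than an expected-pivotal bound, and this is simpler than what you sketch. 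None of these repairs is conceptually new, but the first gap is a real missing argument, not a routine detail.
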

Analogous result holds for the (site version of) $N$-parameter process
on the triangular lattice. In the following corollary we supplement
this result. It is an extension of Theorem \ref{thm: origin do not freeze}.
\begin{cor}
\label{cor: macroscopic clusters}For any $\varepsilon>0$ there exists
$a=a\left(\varepsilon\right),b=b\left(\varepsilon\right)\in\left(0,1\right)$
with $a<b$ and $N_{0}=N_{0}\left(\varepsilon\right)$ such that
\[
\mathbb{P}_{N}\left(\diam\left(C\left(1\right)\right)\in\left(aN,bN\right)\right)>1-\varepsilon
\]
for all $N\geq N_{0}.$ 
\end{cor}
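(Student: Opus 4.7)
Fix $\varepsilon>0$ and $K=2$. Let $\mathcal G$ denote the intersection of the events ``the origin does not freeze'' (from Theorem \ref{thm: origin do not freeze}) and ``no cluster intersecting $B(KN)$ freezes after $p_\lambda(N)$'' (from Theorem \ref{thm: main}). Choosing $\lambda=\lambda(\varepsilon)$ large and $N_0=N_0(\varepsilon)$, a union bound gives $\mathbb{P}_N(\mathcal G)\geq 1-\varepsilon/2$ for $N\geq N_0$. On $\mathcal G$, the cluster $C(1)$ is not frozen, hence $diam(C(1))<N$, and $C(1)$ coincides with the connected component of $\underline 0$ in $V\setminus(F\cup\partial F)$, where $F$ is the frozen set at time $p_\lambda(N)$.

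\textbf{Lower bound.} The key observation is that on $\mathcal G$, if no $p_\lambda(N)$-percolation cluster of diameter at least $N$ intersects $cl(B(aN))$, then no vertex of $B(aN)$ is either frozen or blocked at time $\tau_v$, so $B(aN)\subseteq C(1)$ and $diam(C(1))\geq 2aN$. Since the correlation length at parameter $p_\lambda(N)$ is of order $N$, a standard near-critical one-arm estimate lets us choose $a=a(\varepsilon)\in(0,1/2)$ making the probability that such a large cluster meets $cl(B(aN))$ at most $\varepsilon/4$.

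\textbf{Upper bound.} To obtain $diam(C(1))<bN$ for some $b<1$, the plan is to construct a frozen cage enclosing the origin inside $B(bN/2)$. Place four parallelograms of dimensions $N\times\alpha N$ (with $\alpha$ small but fixed) tangent to the four sides of $B(bN/2)$; after possibly enlarging $\lambda$, by near-critical RSW/FKG applied at parameter $p_\lambda(N)$ each of these parallelograms is crossed lengthwise by a $p_\lambda(N)$-open path with probability at least $1-\varepsilon/16$. Such a crossing has $L^\infty$-diameter at least $N$ and hence lies in a cluster of diameter at least $N$; on $\mathcal G$ this cluster must have frozen by time $p_\lambda(N)$. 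A gluing step at the four corners (again via RSW and arm-separation) combines these four frozen crossings into a closed frozen circuit surrounding $\underline 0$, yielding $C(1)\subseteq B(bN)$.

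A union bound over the events above completes the proof. The delicate step is the upper bound: the corner gluing requires arm-separation estimates at the near-critical scale $p_\lambda(N)$ to ensure that the four independent crossings combine into a genuine closed frozen cage around the origin rather than four disjoint arcs.
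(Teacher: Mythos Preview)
Your lower bound is essentially the paper's argument: on the event that no freezing occurs after $p_{\lambda}(N)$ near the origin, a small active cluster forces a near-critical one-arm event from scale $aN$ to scale $N/2$, and Lemma~\ref{lem: near critical arms} makes this probability small for $a$ small.

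The upper bound, however, has a genuine gap. You assert that a $p_{\lambda}(N)$-open crossing of a long thin parallelogram ``lies in a cluster of diameter at least $N$'' in the frozen process and is therefore frozen. This is not justified. A vertex $v$ with $\tau_v\le p_{\lambda}(N)$ is, in the frozen process, either open or blocked by a frozen neighbour; but if it is open there is no reason its \emph{frozen-percolation} cluster has diameter $\ge N$. Concretely, a frozen cluster formed earlier can cut your $p_{\lambda}(N)$-open crossing into several pieces, each of which may have frozen-percolation diameter $<N$ and hence be \emph{active}. The active cluster $\mathcal{C}_a(\underline 0;\lambda)$ can then contain such a piece and pass straight through your intended ``cage''. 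The corner gluing you flag as delicate is a secondary issue; the primary one is that $p_{\lambda}(N)$-open paths are not frozen barriers. RSW/FKG at parameter $p_{\lambda}(N)$ controls ordinary percolation, not the highly non-monotone frozen process, so arm-separation cannot rescue this step.

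The paper does not attempt any cage construction. Its upper bound is exactly Proposition~\ref{prop: active diameter} applied at $\alpha=1$: one shows directly that
\[
\mathbb{P}_N\bigl(diam(\mathcal{C}_a(\lambda))\in[bN,N)\bigr)<\varepsilon/3
\]
for $b=1-\theta$ close to $1$. Proving this proposition is the most technical part of the paper (all of Section~\ref{sec: pf of prop active diameter}): one localises the extremal boundary point $x$ of $\mathcal{C}_a(\lambda)$, builds an outermost open--closed circuit around $x$ whose interior is decoupled from the exterior configuration, reduces to an $\left(\alpha_3 N,\alpha_2 N\right)$-regular pair $(\tilde{\mathcal R},\tilde r_c)$, and finally invokes Corollary~\ref{cor: lowest of lowest regular regions} to show that the height of $x$ is not concentrated on any mesoscopic scale. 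There is no known shortcut; the ``frozen cage'' heuristic is precisely what the non-monotonicity of frozen percolation prevents.
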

The results above suggest the following intuitive and informal description
of the behaviour of $N$-parameter frozen percolation processes on
the triangular lattice for large $N$: At time $0$ all the vertices
are closed. Then they open independently from each other as in the
percolation process till time close to $1/2.$ Then in the scaling
window (\ref{eq:def p lambda}), frozen clusters form, and by the
end of the window, they give a tiling of $\mathbb{T}$ such that all
the holes (non-frozen connected components) have diameter less than
$N$ but, typically, of order $N.$ After the window, the closed vertices
in these holes open as in the percolation process restricted to these
holes. At time $1$ the non-frozen vertices are all open.

Hence the interesting time scale is (\ref{eq:def p lambda}), moreover
it raises the question if there is some kind of limiting process which
governs the behaviour of the $N$-parameter frozen percolation processes
as $N\rightarrow\infty$ in the scaling window (\ref{eq:def p lambda}).
We have the following, somewhat informal, conjecture:
\begin{conjecture}
\label{conj: scaling limit}When we scale space by $N$ and time according
to (\ref{eq:def p lambda}), we get a non-trivial scaling limit, which
is measurable with respect to the near critical ensemble of \cite{Garban2010,Garban}.
Moreover, the scaling limit completely describes the frozen clusters
of the $N$-parameter frozen percolation as $N\rightarrow\infty.$
\end{conjecture}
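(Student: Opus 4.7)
The plan is to construct a candidate limit process as a measurable functional of the Garban--Pete--Schramm near-critical ensemble $\omega_\infty^{\lambda}$ (indexed by $\lambda\in\mathbb{R}$), and then to show that the rescaled $N$-parameter process converges to it in an appropriate topology, e.g.\ the Schramm--Smirnov quad-crossing topology extended to a c\`adl\`ag space in the time variable $\lambda$. Theorem \ref{thm: main} and Theorem \ref{thm: origin do not freeze} together say that, up to negligible probability, all freezing in $B(KN)$ happens in the window $p_\lambda(N)$ for $\lambda$ in some bounded interval $[-\Lambda,\Lambda]$ depending on $\varepsilon,K$. Before the window, the process is indistinguishable from independent percolation below $p_c=1/2$, so its scaling limit is trivial; after the window, Corollary \ref{cor: macroscopic clusters} tells us that each non-frozen ``hole'' has diameter of order $N$ and is filled in according to independent percolation conditioned on those boundary conditions, which again has a known scaling limit. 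Thus the nontrivial content of the limit lives in $\lambda\in[-\Lambda,\Lambda]$.

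In the window I would define the candidate limit inductively. Using the near-critical dynamics of \cite{Garban2010,Garban}, macroscopic clusters evolve by merging as $\lambda$ increases; at the first $\lambda$ at which a cluster attains rescaled $L^\infty$-diameter $\geq 1$, declare it frozen and from then on remove the open pivotal switches internal to that cluster's neighbourhood that would make it grow further. The resulting ``near-critical ensemble with freezing'' is a functional of $\omega_\infty^\lambda$ provided the freezing times and cluster shapes are continuous enough in the quad-crossing topology. Convergence of the discrete process to this object should follow from: (i) tightness in the appropriate path space, inherited from tightness of the near-critical ensemble together with Theorem \ref{thm: main} which rules out mass escaping to $\lambda=+\infty$; (ii) identification of subsequential limits by a coupling argument that compares, step by step, the discrete $N$-parameter dynamics to the dynamics of $\omega_\infty^\lambda$ restricted to the non-frozen region. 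Non-triviality of the limit is exactly Corollary \ref{cor: macroscopic clusters}, which guarantees that the frozen tiling is neither empty nor degenerate.

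The main obstacle, in my view, is the lack of spatial Markov property after freezing: a frozen cluster imposes a hard boundary condition on the surrounding region, and the geometry of that boundary is itself a random fractal-like object in the limit. To identify the subsequential limit one has to control how critical and near-critical arm estimates behave near such boundaries, uniformly over all realizations of the frozen cluster landscape. Equivalently, one needs a version of Kesten's scaling relations that is stable under the hard-wall conditioning created by frozen clusters of macroscopic size, including the ``ultracritical'' phase (outside the window) where RSW estimates break down. A related and possibly more subtle point is continuity of the freezing time itself: one must show that, as $N\to\infty$, the first $\lambda$ at which a given macroscopic quad is crossed in $\omega_\infty^\lambda$ coincides with the limit of the corresponding discrete first-freezing times, and that no two clusters freeze simultaneously in the limit. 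This continuity seems to require quantitative stability estimates for the near-critical ensemble that go beyond what is currently established, which is very likely why the statement is left as Conjecture \ref{conj: scaling limit} rather than a theorem.
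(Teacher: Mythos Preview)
The statement you are addressing is Conjecture \ref{conj: scaling limit}, not a theorem: the paper explicitly labels it a ``somewhat informal'' conjecture and offers no proof whatsoever. There is therefore nothing in the paper to compare your proposal against.

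Your write-up is a reasonable heuristic roadmap for how one might attack the conjecture, and you correctly recognise at the end that the obstacles you list (stability of arm estimates under the random hard-wall conditioning imposed by frozen clusters, continuity of freezing times, ruling out simultaneous freezing in the limit) are exactly the reasons the author leaves this as an open problem. But as a \emph{proof} it is not one: every step you describe (``should follow from'', ``provided the freezing times and cluster shapes are continuous enough'', ``one has to control'') is a research programme rather than an argument. In particular, the inductive construction of the limiting object as a functional of the near-critical ensemble is not obviously well-defined, since removing pivotal switches adjacent to a frozen cluster changes the ensemble in a way that may feed back into which clusters subsequently reach diameter $1$, and establishing that this feedback is measurable and consistent across scales is precisely the missing ingredient.
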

Let us mention some generalizations of our results. We considered
the site version of the $N$-parameter frozen percolation on the triangular
lattice above. Straightforward adaptations of the proofs give the
same results for the bond version of the $N$-parameter frozen percolation
on the square lattice. See Remark \ref{rem: mod to square} for more
details. Our results remain valid when use some different distance
instead of the $L^{\infty}$ distance in the definition of the $N$-parameter
frozen percolation process, as long as the used distance resembles
the $L^{\infty}$ distance. Examples of such distances include the
$L^{p}$ distances for some $p\geq1,$ or when we rotate the lattice
$\mathbb{T}.$ Finally let us mention that when we freeze clusters
when their \emph{volume} (number of its vertices) reach $N,$ we get
a quite different process.

\medskip

Let us briefly discuss some related results. A version of the $N$-parameter
frozen percolation process on $\mathbb{Z}$ and the binary tree were
investigated in \cite{Brouwer2005}. We already referred to \cite{Aldous2000}
where Aldous introduced the $\infty$-parameter frozen percolation
process on the binary tree. However, we did not mention that this
model has another interesting, so called self organized critical (SOC),
behaviour: For all $t>1/2,$ the distribution of the active clusters
at time $t$ have the same distribution as critical clusters. Clearly,
the $N$-parameter frozen percolation process on the triangular lattice
does not have this property. A mean field version of the frozen percolation
model on the complete graph was investigated by R\'{a}th in \cite{Rath2009}.
He showed that this model has similar SOC properties. Let us mention
some results on another closely related model, the so called self-destructive
percolation. Van den Berg and Brouwer \cite{Berg2004} introduced
the model and investigated its properties in the cases where the underlying
graph is the binary tree and the square lattice $\mathbb{Z}^{2}.$
Recently, the model on $\mathbb{Z}^{d}$ for large $d$ \cite{Ahlberg2013a}
and on non-amenable graphs \cite{Ahlberg2013} was investigated. Finally,
we refer to \cite{Bertoin2010} where a dynamics similar to frozen
percolation was investigated on uniform Cayley trees.

\medskip

The organization of the paper is the following. In Section \ref{sec:notation and preliminary res},
we introduce some more notation, and briefly discuss the results from
percolation theory required to prove our main result: We start with
some classical correlation inequalities in Section \ref{sub: corr ineq}.
In Section \ref{sub: mixed arms} we introduce mixed arm events where
some of the arms can use only the upper half of the annulus, while
others can use the whole annulus. Here we also recall some of their
well-known properties and discuss some new ones. In particular, we
note that the exponent of the arm events increases when we increase
the number of arms which have to stay in the upper half plane. The
proof of this statement is postponed to Section \ref{sub: winding}
of the Appendix. In Section \ref{sub: near crit scale - corr length}
we describe the connection between the correlation length with the
near critical scaling (\ref{eq:def p lambda}). We prove Theorem \ref{thm: main}
and Corollary \ref{cor: macroscopic clusters} in Section \ref{sec: pf main}
assuming two technical results Proposition \ref{prop: active diameter}
and \ref{prop: big then freeze}. In Section \ref{sec: pf prop big then freeze}
we introduce some more notation and the notion of thick paths. There
we prove Proposition \ref{prop: big then freeze}. In this proof a
deterministic (combinatorial/geometric) result, Lemma \ref{lem: det gridpath},
plays an important role. The proof of this lemma is postponed to Section
\ref{sub: there are thick paths} of the Appendix. The most technical
part of the paper is Section \ref{sec: pf of prop active diameter}
where we prove Proposition \ref{prop: active diameter}. In Section
\ref{sub: lowest of lowest in parallelograms} and \ref{sub: lowesr of lowest in reular regions}
we investigate the vertical position of the lowest point of the lowest
closed crossing in regions with half open half closed boundary conditions.
We combine these results with the ones in Section \ref{sec:notation and preliminary res}
and conclude the proof of Proposition \ref{prop: active diameter}
in Section \ref{sec: pf prop big then freeze}. This finishes the
proof of the main result.

\section*{Acknowledgement}

The author thanks Jacob van den Berg, Federico Camia, Pierre Nolin
and G\'{a}bor Pete for fruitful discussions. He thanks G\'{a}bor
Pete and Art\"{e}m Sapozhnikov for calling his attention to Lemma
\ref{lem: near critical arms}, which was instrumental in the proof
of the main result. He is grateful to Jacob van den Berg, Ren\'{e}
Conijn and Art\"{e}m Sapozhnikov for their numerous comments on the
earlier versions of the paper. In particular, the author thanks Art\"{e}m
Sapozhnikov for his remarks which led to a simpler proof of Lemma
\ref{lem: lowest of lowest}.

\section{\label{sec:notation and preliminary res}Preliminary results on near
critical percolation}

We recall some classical results from percolation theory in this section.
With suitable modifications, the results of this section also hold
for bond percolation on the square lattice unless it is indicated
otherwise.

\subsection{\label{sub: corr ineq}Correlation inequalities}

We use the following two inequalities throughout the paper. See Section
2.2 and 2.3 of \cite{Grimmett1999} for more details. We refer to
the first theorem as FKG, and as BK for the second.
\begin{defn}
\label{def: inc events}Let $A\subset\left\{ o,c\right\} ^{V}$ and
$U\subseteq V.$ We say that an event $A\subset\left\{ o,c\right\} ^{V}$
is increasing (decreasing) in the configuration in $U,$ if for all
$\omega\in A$ we have $\omega'\in A$ where 
\[
\omega'_{v}=\begin{cases}
\omega_{v}\mbox{ or }o\mbox{ (}c\mbox{)} & \mbox{for }v\in U\\
\omega_{v} & \mbox{for }v\in V\setminus U.
\end{cases}
\]
That is, turning some closed (open) vertices in $U$ into open (closed)
ones can only help the occurrence of $A.$ In the case where $U=V$
we simply say that $A$ is an increasing (decreasing) event. \end{defn}
\begin{thm}
[FKG] \label{thm: FKG}For any pair of increasing events $A,B$ we
have
\[
\mathbb{P}_{p}\left(A\cap B\right)\geq\mathbb{P}_{p}\left(A\right)\mathbb{P}_{p}\left(B\right).
\]

\end{thm}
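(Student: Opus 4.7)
The plan is to follow the standard route of proving FKG: reduce the statement about events to a covariance inequality for increasing real-valued functions on the product space $\{o,c\}^V$ (with $V$ finite; the infinite-volume case then follows by a standard limiting argument using increasing cylinder approximations), and then induct on the number of coordinates. Throughout, put $o > c$ so that ``increasing'' in the sense of Definition \ref{def: inc events} coincides with coordinatewise monotonicity of the associated indicator functions $\mathbf{1}_A,\mathbf{1}_B$.

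First I would state the lemma: for any two increasing functions $f,g\colon\{o,c\}^n\to\mathbb{R}$ one has $\mathbb{E}_p[fg]\geq\mathbb{E}_p[f]\,\mathbb{E}_p[g]$. The base case $n=1$ is the key algebraic input: for any two increasing $f,g$ on the two-point space, and any $\omega,\omega'\in\{o,c\}$,
\[
\bigl(f(\omega)-f(\omega')\bigr)\bigl(g(\omega)-g(\omega')\bigr)\geq 0,
\]
because both factors share the same sign. Taking expectations of this quantity under two independent copies of the Bernoulli measure and expanding yields $2\bigl(\mathbb{E}_p[fg]-\mathbb{E}_p[f]\mathbb{E}_p[g]\bigr)\geq 0$, which is the desired inequality.

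For the induction step from $n-1$ to $n$, I would condition on the last coordinate. Writing $\omega=(\omega_1,\dots,\omega_{n-1},\omega_n)$ and defining
\[
F(\omega_n):=\mathbb{E}_p\bigl[f\,\big|\,\omega_n\bigr],\qquad G(\omega_n):=\mathbb{E}_p\bigl[g\,\big|\,\omega_n\bigr],
\]
the induction hypothesis applied to $f(\cdot,\omega_n)$ and $g(\cdot,\omega_n)$, for each fixed $\omega_n$, gives $\mathbb{E}_p[fg\mid\omega_n]\geq F(\omega_n)G(\omega_n)$. Since $f$ and $g$ are increasing in all coordinates, $F$ and $G$ are increasing functions of the single variable $\omega_n$, so the $n=1$ case applied to $F$ and $G$ yields $\mathbb{E}_p[FG]\geq\mathbb{E}_p[F]\,\mathbb{E}_p[G]=\mathbb{E}_p[f]\,\mathbb{E}_p[g]$. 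Chaining the two bounds completes the induction.

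The conclusion of the theorem is then immediate by applying the lemma with $f=\mathbf{1}_A$ and $g=\mathbf{1}_B$, which are increasing because $A$ and $B$ are. There is no real obstacle here; the only mildly subtle point is ensuring that the two-copy symmetrization in the $n=1$ step is set up correctly, and that the infinite-volume $V$ is handled by monotone approximation by finite subsets (this is harmless since the events of interest in later sections are cylindrical or limits of cylindrical ones).
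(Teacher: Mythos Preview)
Your proof is correct: this is the standard Harris argument for product measures, and the induction you outline goes through cleanly. The paper itself does not give a proof of Theorem~\ref{thm: FKG}; it simply states FKG as a classical fact and refers the reader to Grimmett's book \cite{Grimmett1999}, Sections~2.2--2.3. So there is no ``paper's own proof'' to compare against---you have supplied one where the paper chose to cite.

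One minor remark: in the infinite-volume step you should be a little more careful about what class of increasing events the statement is meant to cover. For cylinder events (and hence for all events in the sigma-algebra they generate, via monotone-class arguments) your approximation is fine, and that is all the paper ever uses.
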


\begin{thm}
[BK]\label{thm: BK}Let $A,B$ be increasing events, then
\[
\mathbb{P}_{p}\left(A\square B\right)\leq\mathbb{P}_{p}\left(A\right)\mathbb{P}_{p}\left(B\right),
\]
where $A\square B$ denotes the disjoint occurrence of the events
$A$ and $B.$
\end{thm}

\subsection{\label{sub: mixed arms}Mixed arm events, critical arm exponents}

Recall the definition of arm events from the introduction. There the
arms were allowed to use the whole annulus. We introduce the mixed
arm events, where some of the arms lie in the upper half of the annulus,
while others can use the whole annulus:
\begin{defn}
\label{def: full plane mixed arms}Let $l,k\in\mathbb{N}$ with $0\leq l\leq k,$
and a colour sequence $\sigma$ of length $k.$ Let $v\in V$ and
$a,b\in\left(1,\infty\right)$ with $a<b.$ The full plane $k,l$
mixed arm event with colour sequence $\sigma$ in the annulus $A\left(v;a,b\right)$
is denoted by $\mathcal{A}_{k,l,\sigma}\left(v;a,b\right).$ It is
the normal $k$ arm event $\mathcal{A}_{k,\sigma}\left(v;a,b\right)$
of the Introduction with the extra condition that there is a counter-clockwise
ordering of the arms such that the colour of the arms follow $\sigma,$
and the first $l$ arms lie in the half annulus $A\left(v;a,b\right)\cap\left(\mathbb{Z}\boxtimes\left[0,\infty\right)+v\right)$.
When $v=0,$ we omit the first argument from these notations.

We extend the definition (\ref{eq:crit arm prob}) for mixed arm events
by defining 
\[
\pi_{k,l,\sigma}\left(a,b\right):=\mathbb{P}_{1/2}\left(\mathcal{A}_{k,l,\sigma}\left(a,b\right)\right).
\]
\end{defn}
\begin{rem}
In the case $k=l,$ we get the so called half plane arm events. 
\end{rem}
We fix $n_{0}\left(k\right)=10k$ for $k\in\mathbb{N}.$ Note that
the event $\mathcal{A}_{k,l,\sigma}\left(n,N\right)$ is non-empty
whenever $n_{0}\left(k\right)<n<N.$ Let us summarize the known critical
arm exponents for site percolation on the triangular lattice. To our
knowledge, Theorem \ref{thm: arm exponents} in its generality is
not known to hold for bond percolation on $\mathbb{Z}^{2}.$
\begin{thm}
[Theorem 3 and 4 of \cite{Smirnov2001}]\label{thm: arm exponents}
Let $l,k\in\mathbb{N}$ and $\sigma$ be a colour sequence of length
$k.$ We define $a_{k,l}\left(\sigma\right)$ 
\begin{itemize}
\item for $k=1,\,$$l=0$ and any colour sequence $\sigma$ as 
\[
\alpha_{1,0}\left(\sigma\right):=\frac{5}{48},
\]

\item for $k>1$ and $l=0,$ when $\sigma$ contains both colours, as 
\[
\alpha_{k,0}\left(\sigma\right):=\frac{k^{2}-1}{12},
\]

\item for $k=l\geq1$ and any colour sequence $\sigma$ as 
\[
\alpha_{k,k}\left(\sigma\right):=\frac{k\left(k+1\right)}{6}.
\]

\end{itemize}
In these cases we have 
\[
\pi_{k,l,\sigma}\left(n_{0}\left(k\right),N\right)=N^{-\alpha_{k,l}\left(\sigma\right)+o\left(1\right)}
\]
as $N\rightarrow\infty,$ 
\end{thm}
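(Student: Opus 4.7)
The plan is to combine Smirnov's proof of conformal invariance for critical site percolation on the triangular lattice with the Lawler--Schramm--Werner computations of arm exponents for $SLE_6$, following the scheme of Smirnov--Werner. Since this is a known deep result, my proof would be a road map: the discrete arm probability is compared with an SLE quantity, and the exponent comes from the SLE calculation, with discrete-to-continuum transfer handled by a priori estimates.

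First, I would recall that by Smirnov's theorem, Cardy's formula holds for $\mathbb{P}_{1/2}$ on $\mathbb{T}$, and the exploration interface of critical site percolation (started from the boundary of a nice simply connected domain with half-open/half-closed boundary conditions) converges in the scaling limit to chordal $SLE_6$. Radial versions yield radial $SLE_6$ as the scaling limit of the interface traced around a target point. This convergence is what turns discrete $k$-arm events into continuous quantities about $SLE_6$ paths.

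Second, for each of the three cases I would reformulate $\mathcal{A}_{k,l,\sigma}(n_0(k),N)$ in terms of crossings by multiple $SLE_6$-type paths in an annulus, using boundary conditions adapted to the mixed half-plane/full-plane setting. The half-plane events $k=l$ are cleanest: they become the event that $l$ chordal interfaces all cross a thin half-annulus, and by standard $SLE$ commutation/restriction arguments their probability scales as $N^{-k(k+1)/6}$. The full-plane polychromatic case $l=0$ with $\sigma$ containing both colours can be reduced, via Kesten's colour-switching lemma together with quasi-multiplicativity of arm events (both known to hold unconditionally for critical site percolation on $\mathbb{T}$), to a canonical alternating sequence; one then computes the exponent from radial $SLE_6$ with $k-1$ additional forced boundary changes, yielding $(k^2-1)/12$. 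The one-arm case $k=1,l=0$ is the Lawler--Schramm--Werner computation of the radial $SLE_6$ disconnection exponent, giving $5/48$.

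Third, to pass from the continuum identity to the claim $\pi_{k,l,\sigma}(n_0(k),N)=N^{-\alpha_{k,l}(\sigma)+o(1)}$, I would use the standard toolbox: (i) a priori polynomial bounds on arm probabilities (for example Russo--Seymour--Welsh estimates and the BK inequality from Section \ref{sub: corr ineq}), (ii) quasi-multiplicativity
\[
\pi_{k,l,\sigma}(n_0(k),N) \asymp \pi_{k,l,\sigma}(n_0(k),M)\,\pi_{k,l,\sigma}(M,N),
\]
which lets us extract the exponent from probabilities at dyadic scales, and (iii) separation of arms, which ensures that the discrete event has a positive conditional probability of matching any prescribed configuration on the boundary of the annulus, so that the scaling limit probability and the discrete probability agree up to multiplicative constants at each scale.

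The genuinely hard step, and the one that occupies most of Smirnov--Werner, is (iii) combined with the actual $SLE_6$ exponent computation in the full-plane polychromatic case: one must justify that the scaling limit of the $k$-arm event really is the event that $k$ independent (after decorrelation) $SLE_6$ strands cross the annulus with the correct colour pattern, and then carry out the $SLE$ calculation. I would treat the SLE computations as a black box from \cite{Smirnov2001} and the references therein, and restrict my contribution to assembling the discrete ingredients (BK, FKG, RSW, quasi-multiplicativity, colour switching) needed to reduce the statement to those SLE inputs.
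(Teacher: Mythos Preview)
The paper does not give its own proof of this statement: Theorem~\ref{thm: arm exponents} is quoted verbatim as Theorems~3 and~4 of \cite{Smirnov2001} and used as a black box. So there is nothing to compare your proposal against here; the paper simply cites the result.

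That said, your roadmap is an accurate high-level summary of the Smirnov--Werner argument: convergence of the exploration process to (radial or chordal) $SLE_6$ via Smirnov's conformal invariance, the Lawler--Schramm--Werner exponent computations for $SLE_6$, and the transfer to the discrete side using RSW, quasi-multiplicativity, arm separation, and colour switching. One small correction: in the full-plane polychromatic case one does not need $\sigma$ to be alternating, but colour switching only allows you to permute consecutive arms of the same colour, so the reduction is to any fixed sequence containing both colours rather than specifically to an alternating one; the exponent $(k^2-1)/12$ is then independent of which such $\sigma$ you pick. Also note that the paper explicitly remarks (Remark~\ref{rem: arms with exponent 2}) that it does not actually need the exact values of these exponents, only that certain combinations exceed $2$, and that the analogue on $\mathbb{Z}^2$ is not known in this generality.
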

To our knowledge, for general $k$ and $l,$ neither the value, nor
the existence of the exponents is known. We expect that the exponents
do exist. We will see in Proposition \ref{prop: gen mixed arm exp},
that if $\alpha_{k,l}\left(\sigma\right)$ and $\alpha_{k,m}\left(\sigma\right)$
exists for some $k,l,m\in\mathbb{N}$ and $\sigma\in\left\{ o,c\right\} ^{k}$
with $m<l$ , then $\alpha_{k,m}\left(\sigma\right)<\alpha_{k,l}\left(\sigma\right).$
Since we do not need such general result, we only prove the following
proposition in detail.
\begin{prop}
\label{prop: mixed arm exp}For any $k\geq1,$ there are positive
constants $c=c\left(k\right),\,\varepsilon=\varepsilon\left(k\right)$
such that
\begin{equation}
\pi_{k,l,\sigma}\left(n_{0}\left(k\right),N\right)\leq cN^{-\varepsilon}\pi_{k,0,\sigma}\left(n_{0}\left(k\right),N\right)\label{eq: mixed arm exp}
\end{equation}
for $l=1,2,\ldots,k$ uniformly in $N$ and in the colour sequence
$\sigma.$\end{prop}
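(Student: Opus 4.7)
The plan is to reduce to the case $l=1$ and then to combine a uniform one-scale gain over $\asymp\log_{2}N$ dyadic scales. Since an ordering witnessing $\mathcal{A}_{k,l,\sigma}(n_{0}(k),N)$ with $l\ge 1$ automatically witnesses $\mathcal{A}_{k,1,\sigma}(n_{0}(k),N)$ (we simply retain the condition that the first arm is in the upper half-annulus), we have $\mathcal{A}_{k,l,\sigma}\subseteq\mathcal{A}_{k,1,\sigma}$ and hence $\pi_{k,l,\sigma}\le\pi_{k,1,\sigma}$ for every $l\ge 1$. It therefore suffices to establish the proposition in the case $l=1$.

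The heart of the proof is the following \emph{single-scale estimate}: there exists $\rho=\rho(k)<1$ such that for every $r\ge n_{0}(k)$ and every colour sequence $\sigma$,
\begin{equation}
\pi_{k,1,\sigma}(r,2r)\;\le\;\rho\,\pi_{k,0,\sigma}(r,2r).\label{eq:one-scale gain}
\end{equation}
The intuition behind \eqref{eq:one-scale gain} is that, once $k$ disjoint arms of colours prescribed by $\sigma$ are forced to cross the dyadic annulus, requiring one designated arm to be confined to the upper half-annulus carries a strictly positive cost. I would establish \eqref{eq:one-scale gain} by a standard separation-of-arms argument in the spirit of \cite{Nolin2008}: conditionally on $\mathcal{A}_{k,0,\sigma}(r,2r)$, with probability bounded away from $0$ the arms can be taken with endpoints on $\partial B(r)$ and $\partial B(2r)$ lying in well-separated angular sectors; the approximate rotational symmetry of $\mathbb{T}$, together with RSW surgery to reroute arms through the lower half of the annulus, then bounds the conditional probability that a prescribed arm stays inside the upper half-annulus by a constant $\rho<1$ that is uniform in $r$ and in $\sigma$.

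To propagate the single-scale bound to the full annulus $A(n_{0}(k),N)$, I would invoke quasi-multiplicativity of arm probabilities. For the full-plane event it is classical that, with $r_{i}=2^{i}n_{0}(k)$ and $m=\lfloor\log_{2}(N/n_{0}(k))\rfloor$,
\[
\pi_{k,0,\sigma}(n_{0}(k),N)\;\asymp\;\prod_{i=0}^{m-1}\pi_{k,0,\sigma}(r_{i},r_{i+1}).
\]
Only the one-sided bound $\pi_{k,1,\sigma}(n_{0}(k),N)\lesssim\prod_{i=0}^{m-1}\pi_{k,1,\sigma}(r_{i},r_{i+1})$ is needed for the mixed event, which follows from a BK/gluing argument respecting the half-plane constraint (the constraint is preserved under concatenation since the upper half-annulus is itself nested across scales). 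Applying \eqref{eq:one-scale gain} to each factor produces a surplus $\rho^{m}\asymp N^{-\log_{2}(1/\rho)}$, which combined with the full-plane quasi-multiplicativity yields \eqref{eq: mixed arm exp} with $\varepsilon=\log_{2}(1/\rho)$.

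The principal obstacle is the single-scale estimate \eqref{eq:one-scale gain}: the separation-of-arms machinery must be deployed in parallel for the full-plane and the half-plane events, and the uniform constant $\rho<1$ has to be extracted from a rotation/surgery step that is insensitive to the colour sequence and to the boundary data. The one-sided quasi-multiplicativity of $\pi_{k,1,\sigma}$ is a secondary but more routine point, reducing to standard gluing combined with FKG and BK.
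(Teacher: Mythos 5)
The paper's proof of this proposition is very different from yours: it deduces the bound as an immediate corollary of Proposition A.1, which states that conditionally on $\mathcal{A}_{k,\sigma}(n_0(k),N)$, with probability $\ge 1-(N/n_0(k))^{-\varepsilon}$ the $k$ arms wind around the origin $\Theta(\log N)$ times; since winding is incompatible with any arm staying in a half-annulus, $\pi_{k,l,\sigma}\le (N/n_0(k))^{-\varepsilon}\pi_{k,\sigma}$ for all $l\ge1$ at once. Your reduction to $l=1$ via $\mathcal{A}_{k,l,\sigma}\subseteq\mathcal{A}_{k,1,\sigma}$ is correct, and your single-scale heuristic (a uniform conditional cost $\rho<1$ for confining an arm to a half-annulus, by RSW surgery and rotational symmetry) is morally the same ingredient the paper deploys in its proof of Proposition A.1, where it shows that at any ``well-separated'' dyadic scale the conditional probability of forcing a winding is at least some $h>0$.

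The gap in your proposal is the reassembly step. You need, with a constant $C$ independent of $N$,
\[
\prod_{i=0}^{m-1}\pi_{k,0,\sigma}(r_i,r_{i+1})\;\le\;C\,\pi_{k,0,\sigma}\bigl(n_0(k),N\bigr),
\]
and you assert this as classical quasi-multiplicativity. But the quasi-multiplicativity you can cite (Proposition 2.7) is a two-scale statement with a fixed constant $K\ge1$, and iterating it across the $m\asymp\log_2 N$ dyadic interfaces gives only $\prod_i\pi_{k,0,\sigma}(r_i,r_{i+1})\le K^{m-1}\pi_{k,0,\sigma}(n_0(k),N)=N^{\log_2 K+o(1)}\pi_{k,0,\sigma}(n_0(k),N)$. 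Unless $K\rho<1$ --- which nothing in your argument arranges, since $\rho$ and $K$ are produced by unrelated RSW constructions --- the accumulated gluing cost can overwhelm the gain $\rho^m$. A uniform $m$-scale product formula is a genuinely stronger statement than Proposition 2.7 and would itself require a ``strong separation'' argument; it is not a one-line consequence of two-scale quasi-multiplicativity.

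The paper's winding argument is precisely engineered to avoid this. Rather than comparing unconditional probabilities scale by scale and multiplying, it works conditionally on $\mathcal{A}_{k,\sigma}(n_0(k),N)$: by Hoeffding, with probability $\ge1-N^{-2\varepsilon_1}$ a constant fraction of the dyadic scales are well-separated; conditionally on the arm event and these well-separations, windings at distinct scales stochastically dominate independent Bernoulli trials with success probability $h$, so by Hoeffding again at least $c\log_2 N$ windings occur with conditional probability $\ge1-N^{-\varepsilon_2}$. The small exceptional probability $N^{-2\varepsilon_1}$ is then absorbed against the RSW polynomial lower bound $\pi_{k,\sigma}(n_0(k),N)\ge N^{-\varepsilon_1}$, giving the factor $(1-N^{-\varepsilon_1})\pi_{k,\sigma}$. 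No iterated quasi-multiplicativity is invoked anywhere, and the polynomial gain comes out clean. To fix your proposal you would either have to prove the uniform $m$-scale quasi-multiplicativity independently, or recast the combination of scales as a conditional/concentration argument in the paper's style.
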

\begin{rem}
\label{rem: arms with exponent 2}(i) We do not need the exact values
of the critical exponents of Theorem \ref{thm: arm exponents}. For
our purposes it is enough to show that certain arm events have exponents
at least $2.$

(ii) Proposition \ref{prop: mixed arm exp} and its generalization
also hold for mixed arm events in bond percolation on the square lattice. \end{rem}
\begin{proof}
[Proof of Proposition \ref{prop: mixed arm exp}] Proposition \ref{prop: mixed arm exp}
is a simple corollary of Proposition \ref{prop: arms do wind} of
the Appendix. Loosely speaking, it states that conditioning on the
event that we have $k$ arms in $A\left(a,b\right),$ these arms wind
around the origin in $O\left(\log\left(b/a\right)\right)$ disjoint
sub-annuli of $A\left(a,b\right)$ with probability at least $1-\left(\frac{a}{b}\right)^{\kappa}$
for some $\kappa>0.$ The proof of Proposition \ref{prop: arms do wind}
can be found in the Appendix.\end{proof}
\begin{rem}
\label{rem: ord big arm exp} Recall that we do not know in general
if the exponents $\alpha_{k,l}\left(\sigma\right)$ exist or not.
Nonetheless, on the triangular lattice, Proposition \ref{prop: mixed arm exp}
and Theorem \ref{thm: arm exponents} and the BK inequality (Theorem
\ref{thm: BK}) give that for any colour sequence $\sigma,$ there
is an upper bound with exponent strictly larger than $2$ for $\pi_{k,l,\sigma}\left(n_{0}\left(k\right),N\right)$
when
\begin{itemize}
\item $k\geq6,$ and $l\geq0,$ or
\item $k\geq5$ and $l\geq1,$ or
\item $k\geq4$ and $l\geq3$.
\end{itemize}
For arm events with exponents larger than $2$ in the case of bond
percolation on the square lattice see Remark \ref{rem: arms in square lattice}
below.
\end{rem}
Another well-known attribute of critical arm events is their quasi-multiplicative
property. For the full plane, respectively for half plane, arm events
this property is shown to hold in Proposition 17 of \cite{Nolin2008},
respectively in Section 1.4.6 of \cite{Nolin2008}. Simple modifications
of these arguments apply to mixed arm events. We introduce the notation
$\asymp$ when the ratio of the two quantities is bounded away from
$0$ and $\infty.$ We have:
\begin{prop}
\label{prop: quasi-multiplivativity}Let $k\geq1$ and $\sigma\in\left\{ o,c\right\} ^{k}.$
Then 
\begin{align*}
\pi_{k,l,\sigma}\left(n_{1},n_{2}\right)\pi_{k,l,\sigma}\left(n_{2},n_{3}\right) & \asymp\pi_{k,l,\sigma}\left(n_{1},n_{3}\right)
\end{align*}
uniformly in $n_{0}\left(k\right)\leq n_{1}\leq n_{2}\leq n_{3}.$ 
\end{prop}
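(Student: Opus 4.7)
The plan is to adapt the proofs of \cite{Nolin2008} -- Proposition 17 for the full plane case and Section 1.4.6 for the half plane case -- to the mixed setting. As usual, quasi-multiplicativity consists of two matching inequalities, an upper and a lower bound; the upper bound is elementary, while the lower bound requires a separation-of-arms argument followed by an RSW-FKG gluing procedure.

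For the upper bound, observe that any configuration in $\mathcal{A}_{k,l,\sigma}(n_1, n_3)$ restricts, on each of the sub-annuli $A(n_1, n_2)$ and $A(n_2, n_3)$, to a configuration realizing the corresponding $k,l$ mixed arm event -- in particular, the first $l$ arms stay in the upper half of both sub-annuli. Replacing these by $A(n_1, n_2 - 1)$ and $A(n_2 + 1, n_3)$ (which is harmless up to a constant by an RSW-based comparison in the dyadic annulus around $\partial B(n_2)$), they become vertex-disjoint, hence independent, and we obtain
\[
\pi_{k,l,\sigma}(n_1, n_3) \leq C\,\pi_{k,l,\sigma}(n_1, n_2)\,\pi_{k,l,\sigma}(n_2, n_3).
\]

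For the lower bound, I would first prove the separation-of-arms lemma in the mixed setting: conditional on $\mathcal{A}_{k,l,\sigma}(n_1, n_2)$, with probability bounded away from zero uniformly in $n_0(k) \leq n_1 \leq n_2$, the $k$ arms may be chosen so that their endpoints on $\partial B(n_2)$ are pairwise at $L^{\infty}$ distance at least $\varepsilon n_2$, and in addition the endpoints of the first $l$ arms lie at distance at least $\varepsilon n_2$ from the flat boundary of the upper half annulus. An analogous statement, now for the starting points on $\partial B(n_2)$, holds conditionally on $\mathcal{A}_{k,l,\sigma}(n_2, n_3)$. Given such well-separated endpoints and starting points, one invokes FKG and RSW to construct, inside a thin annulus surrounding $\partial B(n_2)$, open and closed paths of the prescribed colours which connect each endpoint to its matching starting point, with the gluing of the first $l$ arms performed entirely within the upper half annulus. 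Each of these $k$ connections costs only a bounded factor, yielding
\[
\pi_{k,l,\sigma}(n_1, n_3) \geq c\,\pi_{k,l,\sigma}(n_1, n_2)\,\pi_{k,l,\sigma}(n_2, n_3).
\]

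The main technical obstacle is the separation-of-arms lemma in the mixed case. Following the approach in \cite{Nolin2008}, one explores the arms from outside to inside and uses RSW at each step to show that there is a uniformly positive conditional chance that the next arm's endpoint stays away from the previously revealed ones. For the mixed case, one must additionally verify that the upper-half arms do not accumulate near the flat boundary of the half annulus; this is achieved by an analogous inward exploration carried out within the half annulus, using half-plane RSW estimates in place of the full plane ones. The rest of the argument carries over essentially verbatim from \cite{Nolin2008}.
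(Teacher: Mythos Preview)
Your proposal is correct and follows exactly the approach the paper indicates: the paper does not give a detailed proof but simply states that quasi-multiplicativity for full plane and half plane arm events is proved in Proposition~17 and Section~1.4.6 of \cite{Nolin2008}, respectively, and that ``simple modifications of these arguments apply to mixed arm events.'' Your outline --- the trivial upper bound, and the lower bound via separation of arms (with the additional care that the half-plane arms stay away from the flat boundary) followed by RSW--FKG gluing --- is precisely what those simple modifications amount to.
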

In the following lemma we consider arm events where the open arms
are $p$-open and the closed arms are $q$-closed where $p,q\in\left[0,1\right]$
with $p$ not necessarily equal to $q.$ When $p$ and $q$ are of
the form (\ref{eq:def p lambda}), then we call these arm events near
critical arm events. In this case the probabilities of these events
are comparable to critical arm event probabilities. The following
lemma is a generalization of Lemma 2.1 of \cite{Garban2008} and Lemma
6.3 of \cite{Damron2009}.
\begin{lem}
\label{lem: near critical arms} Let $v\in V,$ $\lambda_{1},\lambda_{2}\in\mathbb{R}$
and $a,b\in\left(0,1\right)$ with $a<b.$ Let $\mathcal{A}_{k,l,\sigma}^{\lambda_{1},\lambda_{2},N}\left(v;aN,bN\right)$
denote the modification of the event $\mathcal{A}_{k,l,\sigma}\left(v;aN,bN\right)$
where the open arms are $p_{\lambda_{2}}\left(N\right)$-open and
the closed arms are $p_{\lambda_{1}}\left(N\right)$-closed. Then
there are positive constants $c=c\left(\lambda_{1},\lambda_{2},k\right)$
and $N_{0}=N_{0}\left(\lambda_{1},\lambda_{2},a,b,k\right)$ such
that
\[
\mathbb{P}\left(\mathcal{A}_{k,l,\sigma}^{\lambda_{1},\lambda_{2},N}\left(v;aN,bN\right)\right)\leq c\pi_{k,l,\sigma}\left(aN,bN\right)
\]
for $N\geq N_{0}.$\end{lem}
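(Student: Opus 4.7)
I would follow the standard Russo's formula $+$ arm-separation template from the near-critical percolation literature (cf.\ the proofs of Lemma~2.1 in \cite{Garban2008} and Lemma~6.3 in \cite{Damron2009}). For $t\in[0,1]$ set
\[
p(t):=\tfrac{1}{2}+t\bigl(p_{\lambda_{2}}(N)-\tfrac{1}{2}\bigr),\qquad q(t):=\tfrac{1}{2}+t\bigl(p_{\lambda_{1}}(N)-\tfrac{1}{2}\bigr),
\]
and let $\mathcal{A}_{t}$ denote the modification of $\mathcal{A}_{k,l,\sigma}(v;aN,bN)$ in which the open arms are $p(t)$-open and the closed arms are $q(t)$-closed. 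Thus $\mathbb{P}(\mathcal{A}_{0})=\pi_{k,l,\sigma}(aN,bN)$ and $\mathbb{P}(\mathcal{A}_{1})$ is the probability to be bounded. Russo-type differentiation in both $p$ and $q$, together with $|p'(t)|,|q'(t)|=O\bigl(N^{-2}/\pi_{4,alt}(1,N)\bigr)$ coming from (\ref{eq:def p lambda}), gives
\[
\Bigl|\tfrac{d}{dt}\mathbb{P}(\mathcal{A}_{t})\Bigr|\;\le\;C(\lambda_{1},\lambda_{2})\cdot\frac{N^{-2}}{\pi_{4,alt}(1,N)}\sum_{u\in A(v;aN,bN)}\mathbb{P}\bigl(u\text{ is pivotal for }\mathcal{A}_{t}\bigr).
\]

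The core input is the pivotal estimate
\[
\sum_{u\in A(v;aN,bN)}\mathbb{P}\bigl(u\text{ pivotal for }\mathcal{A}_{t}\bigr)\;\le\;C(k)\,N^{2}\pi_{4,alt}(1,N)\,\mathbb{P}(\mathcal{A}_{t}),
\]
valid uniformly in $t\in[0,1]$. A pivotal $u$ carries a local alternating 4-arm event at scale $r_{u}:=d(u,\partial A(v;aN,bN))$, and the $k$ arms of $\mathcal{A}_{t}$ extend from $B(u;r_{u})$ to both $\partial B(v;aN)$ and $\partial B(v;bN)$. By BK (Theorem~\ref{thm: BK}), quasi-multiplicativity (Proposition~\ref{prop: quasi-multiplivativity}), and Kesten's arm-separation construction (as in \cite{Nolin2008}, adapted to the mixed setting), each single-vertex pivotal probability factors as $C\pi_{4,alt}(1,r_{u})\mathbb{P}(\mathcal{A}_{t})$, and then the standard estimate $\sum_{u}\pi_{4,alt}(1,r_{u})\le CN^{2}\pi_{4,alt}(1,N)$ yields the claim. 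That the bound holds at the perturbed parameters $(p(t),q(t))$ and not only at criticality is a bootstrap: run the argument up to the first $t$ at which $\mathbb{P}(\mathcal{A}_{t})$ would exceed $K\pi_{k,l,\sigma}(aN,bN)$ for a large $K=K(\lambda_1,\lambda_2,k)$, and use the differential inequality to conclude this cannot happen on $[0,1]$.

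Combining the two displays gives $\bigl|\tfrac{d}{dt}\log\mathbb{P}(\mathcal{A}_{t})\bigr|\le C'(\lambda_{1},\lambda_{2},k)$ uniformly in $t\in[0,1]$, and integrating produces $\mathbb{P}(\mathcal{A}_{1})\le e^{C'}\pi_{k,l,\sigma}(aN,bN)$ as required. The chief technical obstacle is the arm-separation step for \emph{mixed} arm events: with $l$ of the arms pinned to the upper half plane, there is less room to wiggle arm endpoints into well-separated configurations near the axis. However, the classical separation construction of Kesten and Nolin can be run in two parts, once within the half-plane portion for the $l$ half-plane arms and once in the complementary region for the remaining $k-l$ arms, so the adaptation is essentially cosmetic and does not affect the final estimate.
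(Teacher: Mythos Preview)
Your proposal is correct and follows exactly the route the paper itself indicates: the paper's proof consists entirely of a pointer to Lemma~2.1 of \cite{Garban2008} and Lemma~6.3 of \cite{Damron2009}, and you have faithfully sketched the Russo-differentiation\,/\,pivotal-factorisation\,/\,Gronwall argument that those references carry out. Your remark about adapting arm separation to the mixed half-plane setting is the only point requiring care beyond the cited lemmas, and your two-region outline is the natural fix.
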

\begin{proof}
[Proof of Lemma \ref{lem: near critical arms}] It follows from
either of the proof of Lemma 2.1 of \cite{Garban2008} or from the
proof of Lemma 6.3 of \cite{Damron2009}.
\end{proof}
In the following events we collect some of the near critical arm events
which have upper bounds with exponents strictly larger than $2.$
These events play a crucial role in our main result.
\begin{defn}
\label{def: NA}Let $a,b\in\left(0,1\right),$ $\lambda_{1},\lambda_{2}\in\mathbb{R},$
$K>0$ and $N\in\mathbb{N}$ with $a<b.$ Let $\mathcal{NA}^{c}\left(a,b,\lambda_{1},\lambda_{2},K,N\right)$
denote the union of the events $\mathcal{A}_{k,l,\sigma}^{\lambda_{1},\lambda_{2},N}\left(v;aN,bN\right)$
for $\left(k,l\right)\in\left\{ \left(4,3\right),\left(5,1\right),\left(6,0\right)\right\} ,$
$\sigma\in\left\{ o,c\right\} ^{k},v\in B\left(KN\right)$ as well
as the versions of these events where the half plane arms can only
use the lower, left or right half of the annulus $A\left(v;aN,bN\right).$
We define $\mathcal{NA}\left(a,b,\lambda_{1},\lambda_{2},K,N\right)$
as the complement of the event above.
\end{defn}
We show that that for fixed $b,K,\lambda_{1}$ and $\lambda_{2},$
we can set $a\in\left(0,1\right)$ so that the probability of $\mathcal{NA}\left(a,b,\lambda_{1},\lambda_{2},K,N\right)$
becomes as close to $1$ as we require for large $N.$ More precisely,
we prove the following:
\begin{cor}
\label{cor: no many arms}There is $\tilde{\varepsilon}>0$ such that
for all $a,b\in\left(0,1\right),$ with $a<b$ and $\lambda_{1},\lambda_{2}\in\mathbb{R}$
there are positive constants $c=c\left(\lambda_{1},\lambda_{2},K\right)$
and $N_{0}=N_{0}\left(a,b,\lambda_{1},\lambda_{2},K\right)$ such
that

\[
\mathbb{P}\left(\mathcal{NA}\left(a,b,\lambda_{1},\lambda_{2},K,N\right)\right)\geq1-c\frac{a^{\tilde{\varepsilon}}}{b^{2+\tilde{\varepsilon}}}
\]
for $N\geq N_{0}.$\end{cor}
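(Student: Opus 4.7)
The plan is to apply a union bound over the events making up $\mathcal{NA}^c$, use Lemma \ref{lem: near critical arms} to replace each near-critical arm probability by a critical one, and then use Remark \ref{rem: ord big arm exp}, Proposition \ref{prop: quasi-multiplivativity} and BK to derive scale-invariant upper bounds on these critical arm probabilities with exponent strictly larger than $2$.

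First I would cover $B(KN)\cap V$ by a set $\{v_1,\ldots,v_M\}$ of $M=O((K/a)^2)$ vertices at mutual spacing of order $aN/10$. For any $v$ in the ball of radius $aN/10$ around some $v_i$, occurrence of $\mathcal{A}_{k,l,\sigma}^{\lambda_1,\lambda_2,N}(v;aN,bN)$ implies that the same arm configuration crosses the slightly shrunk annulus $A(v_i;2aN,bN/2)$, and hence $\mathcal{A}_{k,l,\sigma}^{\lambda_1,\lambda_2,N}(v_i;2aN,bN/2)$ holds. A union bound, combined with Lemma \ref{lem: near critical arms} and Proposition \ref{prop: quasi-multiplivativity} (to replace the radii $(2aN,bN/2)$ by $(aN,bN)$ up to a constant factor), then yields
\[
\mathbb{P}(\mathcal{NA}^c)\;\le\;C_1 (K/a)^2 \sum_{(k,l),\sigma,\mathrm{dir}}\pi_{k,l,\sigma}(aN,bN),
\]
with a universal constant number of summands. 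It therefore suffices to exhibit a single $\tilde\varepsilon>0$ for which $\pi_{k,l,\sigma}(aN,bN)\le C_2(a/b)^{2+\tilde\varepsilon}$ uniformly in the summands, for $N\ge N_0(a,b,\lambda_1,\lambda_2,K)$. Substituting gives $\mathbb{P}(\mathcal{NA}^c)\le cK^2 a^{\tilde\varepsilon}/b^{2+\tilde\varepsilon}$, and $K^2$ is absorbed into $c=c(\lambda_1,\lambda_2,K)$.

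To obtain the scale-invariant arm bound I would treat the three pairs $(k,l)$ separately. For $(6,0)$ Theorem \ref{thm: arm exponents} gives $\alpha_{6,0}=35/12>2$, and Proposition \ref{prop: quasi-multiplivativity} together with matching two-sided base-scale asymptotics gives $\pi_{6,0,\sigma}(aN,bN)\le C(a/b)^{35/12-\delta}$ for any small $\delta>0$. For $(4,3)$ the BK inequality (Theorem \ref{thm: BK}) decomposes the event into a half-plane $3$-arm plus a disjoint full-plane $1$-arm, so
\[
\pi_{4,3,\sigma}(aN,bN)\;\le\;\pi_{3,3,\sigma'}(aN,bN)\cdot\pi_{1,0}(aN,bN)\;\le\;C(a/b)^{2+5/48-\delta},
\]
using $\alpha_{3,3}=2$ and $\alpha_{1,0}=5/48$ from Theorem \ref{thm: arm exponents} together with Proposition \ref{prop: quasi-multiplivativity}. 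For $(5,1)$ the analogous BK split only gives exponent $5/4+1/3=19/12<2$, so I would instead use Proposition \ref{prop: mixed arm exp} in its annulus version (which is obtained from the scale-invariance of the winding argument of Proposition \ref{prop: arms do wind}) to get $\pi_{5,1,\sigma}(aN,bN)\le c(a/b)^{\varepsilon}\pi_{5,0,\sigma}(aN,bN)$, and then combine with $\alpha_{5,0}=2$ and Proposition \ref{prop: quasi-multiplivativity} to conclude $\pi_{5,1,\sigma}(aN,bN)\le C(a/b)^{2+\varepsilon-\delta}$. Taking $\tilde\varepsilon$ smaller than all three exponent gains closes the argument.

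The main obstacle is the passage from the base-scale exponents of Theorem \ref{thm: arm exponents}, which are stated with an $o(1)$ correction, to genuine scale-invariant bounds of the form $\pi(aN,bN)\le C(a/b)^{\alpha-\delta}$: via Proposition \ref{prop: quasi-multiplivativity} this requires matching two-sided base-scale asymptotics, which is available for the exactly-known triangular exponents $\alpha_{1,0},\alpha_{3,3},\alpha_{5,0},\alpha_{6,0}$ entering Remark \ref{rem: ord big arm exp}, and this is why $N_0$ is allowed to depend on $a,b$ in the statement.
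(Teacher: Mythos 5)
Your proposal follows essentially the same route as the paper: tile $B(KN)$ by $O(a^{-2})$ centres, shrink the annulus to decouple the centres from the unknown $v$, apply a union bound, reduce near-critical to critical arm probabilities via Lemma~\ref{lem: near critical arms}, and invoke an exponent-larger-than-two bound for the $(4,3)$, $(5,1)$, $(6,0)$ mixed arm events. The paper delegates this last step to Remark~\ref{rem: ord big arm exp} without further discussion; you unpack it case by case, which is a sensible thing to do.

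One part of your discussion is off, though, and it is exactly at the point you flag as the main obstacle. You propose to produce the scale-invariant bound $\pi_{k,l,\sigma}(aN,bN)\le C(a/b)^{\alpha-\delta}$ by combining Theorem~\ref{thm: arm exponents} (with its $o(1)$ correction in the exponent) with quasi-multiplicativity and ``matching two-sided base-scale asymptotics.'' That route does not literally go through: from $\pi(n_0,n)=n^{-\alpha+o(1)}$ and $\pi(aN,bN)\asymp\pi(n_0,bN)/\pi(n_0,aN)$ the $o(1)$ errors at scales $aN$ and $bN$ do not cancel, and the naive substitution leaves a spurious factor $N^{2\delta}$. Moreover you list $\alpha_{1,0}$ and $\alpha_{6,0}$ among ``exactly-known'' exponents, which they are not; on $\mathbb{T}$ only the five-arm alternating exponent and the two- and three-arm half-plane exponents come with genuine two-sided estimates $\pi(M,N)\asymp(M/N)^{\alpha}$ uniformly in $M\le N$, not just $o(1)$ asymptotics. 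Fortunately your case analysis does not actually need Theorem~\ref{thm: arm exponents} at all. For $(4,3)$ use BK with the exact $\pi_{3,3}(M,N)\asymp(M/N)^{2}$ and the RSW bound $\pi_{1}(M,N)\le C(M/N)^{c_1}$. For $(6,0)$ use BK with the exact five-arm bound $\pi_{5,0,\sigma'}(M,N)\le C(M/N)^{2}$ (valid for every $\sigma'$, the monochromatic case being dominated by the alternating one) and the RSW one-arm bound. For $(5,1)$ apply the annulus form of Proposition~\ref{prop: mixed arm exp}, which does follow because Proposition~\ref{prop: arms do wind} is already stated for a general annulus $A(N,aN)$, and then the exact five-arm bound. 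This delivers a universal $\tilde\varepsilon>0$ with no $o(1)$ corrections anywhere, and is closer to what the author must have had in mind when writing Remark~\ref{rem: ord big arm exp}. With that substitution your proof is correct, and the remaining dependence of $N_0$ on $a$ and $b$ comes solely from Lemma~\ref{lem: near critical arms}, not from exponent asymptotics.
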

\begin{proof}
[Proof of Corollary \ref{cor: no many arms}] Suppose that one of
the arm events in Definition \ref{def: NA}, for example $\mathcal{A}_{k,l,\sigma}^{\lambda_{1},\lambda_{2},N}\left(v;aN,bN\right)$
for some $v\in B\left(KN\right),$ occurs. Then the event $\mathcal{A}_{k,l,\sigma}^{\lambda_{1},\lambda_{2},N}\left(\left\lfloor 2aN\right\rfloor z;2aN,\frac{b}{2}N\right)$
occurs for some $z\in V$ with $z\in B\left(\left\lceil \frac{a+K}{2a}\right\rceil \right).$ 

Combination of Remark \ref{rem: ord big arm exp} and Lemma \ref{lem: near critical arms}
gives that there are constants $c'=c'\left(\lambda_{1},\lambda_{2}\right),$
$N_{0}=N_{0}\left(a,b,\lambda_{1},\lambda_{2}\right),$ and a universal
constant $\tilde{\varepsilon}>0$ such that the probability of one
of these events is at most 
\begin{equation}
c'\left(\frac{2a}{b/2}\right)^{2+\varepsilon}\label{eq: pf cor no many arms}
\end{equation}
for $N\geq N_{0}.$ The same argument works for other arm events which
appear in Definition \ref{def: NA}, and provide an upper bound similar
to (\ref{eq: pf cor no many arms}). Hence (\ref{eq: pf cor no many arms})
combined with $\left|B\left(\left\lceil \frac{a+K}{2a}\right\rceil \right)\right|=O\left(a^{-2}\right)$
concludes the proof of Corollary \ref{cor: no many arms}.\end{proof}
\begin{rem}
\label{rem: arms in square lattice}To our knowledge it is not known
if the direct analogue of Corollary \ref{cor: no many arms} holds
on the square lattice. The reason is that the exponent $\alpha_{5,0}\left(\sigma\right)$
and $\alpha_{3,3}\left(\sigma\right)$ is not known for general $\sigma.$
See Remark 26 of \cite{Nolin2008}.

We recall the proof of Theorem 24 and Remark 26 of \cite{Nolin2008},
where it is shown that $\alpha_{5,0}\left(o,c,o,o,c\right)=2$ and
$\alpha_{3,3}\left(c,o,c\right)=2$ on the square lattice. This implies
that a version of Corollary \ref{cor: no many arms} holds for the
square lattice if we modify Definition \ref{def: NA} so that we only
forbid the occurrence of those arm events where the required set of
arms contain
\begin{itemize}
\item three half plane arm events with colour sequence $\left(o,c,o\right)$
or $\left(c,o,c\right),$ or
\item five full plane arms with colour sequence $\left(o,c,o,o,c\right)$
or $\left(c,o,c,c,o\right)$
\end{itemize}
as a subset.
\end{rem}

\subsection{\label{sub: near crit scale - corr length}Near-critical scaling
and correlation length}

Recall that in Section \ref{sec: introduction} we already gave an
explanation for the near critical parameter scale (\ref{eq:def p lambda}).
In this section we give a different interpretation of this parameter
scale, which is connected to the correlation length introduced by
Kesten in \cite{Kesten1987}.

We say that there is an open (closed) horizontal crossing of a parallelogram
$B:=\left[a,b\right]\boxtimes\left[c,d\right]$ if there is an open
(closed) path connecting $\left\{ \left\lceil a\right\rceil \right\} \boxtimes\left[c,d\right]$
and $\left\{ \left\lfloor b\right\rfloor \right\} \boxtimes\left[c,d\right]$
in$\left[a,b\right]\boxtimes\left[c,d\right].$ For the event that
there is an open (closed) horizontal crossing of $B$ we use the notation
$\mathcal{H}_{o}\left(B\right)$ ($\mathcal{H}_{c}\left(B\right)$).
One can define similar events for vertical crossings, which we denote
by $\mathcal{V}_{o}\left(B\right)$ and $\mathcal{V}_{c}\left(B\right).$
For $\varepsilon\in\left(0,1/2\right)$ the correlation length is
defined as

\[
L_{\varepsilon}\left(p\right)=\begin{cases}
\min\left\{ n\,|\,\mathbb{P}_{p}\left(\mathcal{H}_{o}\left(B\left(n\right)\right)\right)\leq\varepsilon\right\}  & \mbox{when }p<p_{c}\\
\min\left\{ n\,|\,\mathbb{P}_{p}\left(\mathcal{H}_{o}\left(B\left(n\right)\right)\right)\geq1-\varepsilon\right\}  & \mbox{when }p>p_{c}.
\end{cases}
\]

\begin{rem}
The particular choice of $\varepsilon$ is not important in this definition.
Indeed, Corollary 37 of \cite{Nolin2008}, or alternatively Corollary
2 of \cite{Kesten1987}, gives that 
\[
L_{\varepsilon}\left(p\right)\asymp L_{\varepsilon'}\left(p\right)
\]
for any $\varepsilon,\varepsilon'\in\left(0,1/2\right)$ uniformly
in $p\in\left(0,1\right).$ 
\end{rem}
\medskip

We show that the control over the near critical parameter $\lambda$
gives a control over the correlation length in Corollary \ref{cor: char length is O(N)}
and \ref{cor: small char length} below. Recall the remark after Lemma
8 of \cite{Kesten1987}:
\begin{prop}
\label{prop: Kesten}For any fixed $\varepsilon\in\left(0,1/2\right),$
we have
\[
\left|p-p_{c}\right|\left(L_{\varepsilon}\left(p\right)\right)^{2}\pi_{4,0,alt}\left(1,L_{\varepsilon}\left(p\right)\right)\asymp1
\]
uniformly for $p\neq1/2.$
\end{prop}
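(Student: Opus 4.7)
The plan is to follow Kesten's classical scaling-relation argument, applying Russo's formula to the horizontal crossing event of a box of side length $n := L_\varepsilon(p)$ and then identifying pivotal vertices with the alternating four-arm event. For definiteness assume $p > 1/2$; the case $p < 1/2$ is symmetric by duality. By definition of $L_\varepsilon$ one has $\mathbb{P}_p(\mathcal{H}_o(B(n))) \geq 1 - \varepsilon$, while RSW combined with the defining bound $\mathbb{P}_{p_c}(\mathcal{H}_o(B(n))) \asymp 1$ (in fact $=1/2$ by self-duality up to boundary corrections) shows that the crossing probability $\mathbb{P}_{p'}(\mathcal{H}_o(B(n)))$ differs by a quantity of order $1$ between $p' = 1/2$ and $p' = p$.

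Next I would integrate Russo's formula
\[
\mathbb{P}_p(\mathcal{H}_o(B(n))) - \mathbb{P}_{1/2}(\mathcal{H}_o(B(n))) = \int_{1/2}^{p} \sum_{v \in B(n)} \mathbb{P}_{p'}\bigl(v \text{ is pivotal for } \mathcal{H}_o(B(n))\bigr)\, dp'.
\]
A vertex $v$ is pivotal for $\mathcal{H}_o(B(n))$ exactly when there are two $p'$-open arms connecting $v$ to the left and right sides of $B(n)$ and two $p'$-closed dual arms connecting $v$ to the top and bottom. Standard gluing and RSW arguments (as used in Section~1.4.6 of \cite{Nolin2008}) identify this probability, up to constants, with the near-critical four-arm probability from $1$ to $\mathrm{dist}(v,\partial B(n))$. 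Then Lemma~\ref{lem: near critical arms} lets me replace the near-critical arm probability by the critical one $\pi_{4,0,alt}(1, \mathrm{dist}(v,\partial B(n)))$, provided $p'$ stays within the near-critical window down to scale $n$. Summing via quasi-multiplicativity (Proposition~\ref{prop: quasi-multiplivativity}) and a dyadic decomposition around $\partial B(n)$ yields
\[
\sum_{v \in B(n)} \mathbb{P}_{p'}(v \text{ pivotal}) \asymp n^2\, \pi_{4,0,alt}(1,n)
\]
uniformly in such $p'$. Since the integrand is essentially constant on $[1/2, p]$ and the left-hand side of Russo's formula is of order $1$, integrating over an interval of length $p - 1/2 = |p - p_c|$ gives the claim.

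The main obstacle is justifying that Lemma~\ref{lem: near critical arms} can be applied uniformly for \emph{all} $p' \in [1/2, p]$, not just at the endpoint. This requires showing that $L_\varepsilon(p') \gtrsim n$ for every such $p'$; in Kesten's treatment this is handled by a bootstrap (or continuity) argument: one first proves the estimate on a small near-critical interval around $1/2$ where the comparison is automatic, and then extends it along $p'$ by using the already-obtained bound to control how much $L_\varepsilon(p')$ can shrink as $p'$ increases. Once this monotone control on $L_\varepsilon$ is in place, the pivotal-count computation above converts the order-$1$ change of crossing probabilities directly into the asserted relation
\[
|p - p_c|\, (L_\varepsilon(p))^2\, \pi_{4,0,alt}(1, L_\varepsilon(p)) \asymp 1,
\]
uniformly in $p \neq 1/2$, as claimed.
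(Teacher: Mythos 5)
The paper does not prove Proposition \ref{prop: Kesten} at all: it is quoted verbatim from Kesten's 1987 paper (``Recall the remark after Lemma 8 of [Kesten1987]'') and used as a black box. Your proposal reconstructs Kesten's own argument, which is the content of the cited reference, so in that sense you and the paper are relying on the same source; what follows therefore reviews your sketch on its own terms.

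The outline is essentially right: Russo's formula applied to $\mathcal{H}_o(B(n))$ with $n=L_\varepsilon(p)$, the identification of pivotal sites with the alternating four-arm event, quasi-multiplicativity plus a dyadic decomposition near $\partial B(n)$ to get $\sum_v \mathbb{P}_{p'}(v\text{ pivotal}) \asymp n^2\pi_{4,0,alt}(1,n)$, and the observation that the crossing probability changes by a quantity of order $1$ over $[1/2,p]$. Two remarks on the last paragraph, though. First, the lower bound $L_\varepsilon(p')\geq n$ for every $p'\in[1/2,p]$ does \emph{not} require a bootstrap: as the paper itself records just before the proposition, $L_\varepsilon$ is monotone decreasing on $(p_c,1)$, so $L_\varepsilon(p')\geq L_\varepsilon(p)=n$ is automatic. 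The genuine bootstrap in Kesten's treatment appears inside the proof of the near-critical/critical arm comparison itself (the statement the paper packages as Lemma~\ref{lem: near critical arms}), not in the step where you invoke it. Second, ``of order $1$'' for the crossing-probability increment needs a word of care: by RSW $\mathbb{P}_{1/2}(\mathcal{H}_o(B(n)))$ is bounded away from $1$ only by some RSW constant, which may be smaller than $\varepsilon$; the standard fix is either to prove the estimate for $\varepsilon$ small and then use the $\varepsilon$-independence of $L_\varepsilon$ up to constants (the remark preceding the proposition), or to use minimality of $n=L_\varepsilon(p)$ at scale $n-1$. With these two points tightened, the argument is the correct one and matches the cited source.
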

Note that for fixed $\varepsilon>0,$ the correlation length $L_{\varepsilon}\left(p\right)$
is a decreasing (increasing) function of $p$ for $p>p_{c}$ ($p<p_{c}$).
Combination of this and Proposition \ref{prop: quasi-multiplivativity}
we get:
\begin{cor}
\label{cor: char length is O(N)}For all $\lambda\in\mathbb{R}\setminus\left\{ 0\right\} $
and $\varepsilon\in\left(0,1/2\right),$ 
\begin{equation}
L_{\varepsilon}\left(p_{\lambda}\left(N\right)\right)\asymp N.\label{eq: bd on corr length}
\end{equation}

\end{cor}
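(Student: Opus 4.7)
The plan is to invoke Proposition \ref{prop: Kesten} at $p=p_{\lambda}(N)$ and read the order of the correlation length off from the resulting identity. Setting $L:=L_{\varepsilon}(p_{\lambda}(N))$ and substituting the definition \eqref{eq:def p lambda} of $p_{\lambda}(N)$, the proposition collapses to
\[
\frac{L^{2}\,\pi_{4,alt}(1,L)}{N^{2}\,\pi_{4,alt}(1,N)}\;\asymp\;\frac{1}{|\lambda|}.
\]
The whole task is then to show that this relation pins the ratio $L/N$ into a compact subinterval of $(0,\infty)$ whose endpoints depend only on $\lambda$.

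To do this I would split into the two cases $L\geq N$ and $L\leq N$. In each case Proposition \ref{prop: quasi-multiplivativity} lets me cancel a common factor $\pi_{4,alt}(1,\min(L,N))$ between numerator and denominator, reducing the displayed relation to
\[
|\lambda|(L/N)^{2}\,\pi_{4,alt}(N,L)\asymp 1 \quad(L\geq N),\qquad |\lambda|(L/N)^{2}/\pi_{4,alt}(L,N)\asymp 1\quad(L\leq N).
\]
Both sides are now functions of the single ratio $r=L/N$, and the remaining task is purely analytic: show that $r \mapsto r^{2}\pi_{4,alt}(N,rN)$ grows without bound as $r\to\infty$, and that $r\mapsto r^{2}/\pi_{4,alt}(rN,N)$ tends to $0$ as $r\to 0^{+}$.

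Both monotonicity-type statements follow from a single polynomial lower bound of the form $\pi_{4,alt}(n,m)\geq c(n/m)^{2-\delta}$, valid for some absolute $\delta>0$ and all $n_{0}(4)\leq n\leq m$. I would extract this bound from Theorem \ref{thm: arm exponents}, which provides the exact exponent $\alpha_{4,alt}=5/4<2$, combined with Proposition \ref{prop: quasi-multiplivativity}: write $\pi_{4,alt}(n,m)$ as a product of $\asymp\log_{2}(m/n)$ dyadic factors and use the asymptotic $\pi_{4,alt}(n_{0},2^{k}n_{0})=2^{-5k/4+o(k)}$ to promote it into a clean uniform power law with exponent strictly less than $2$. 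Substituting this bound into the two cases above gives $r^{\delta}\leq C_{1}/|\lambda|$ in the first case and $r^{\delta}\geq c_{1}/|\lambda|$ in the second, which together yield $L\asymp N$ with constants depending only on $\lambda$. The main (and only minor) obstacle is this routine conversion of the asymptotic exponent statement of Theorem \ref{thm: arm exponents} into a genuinely uniform power-law bound; no fundamentally new idea is required.
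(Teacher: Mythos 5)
Your high-level strategy is the same as the paper's: substitute $p_\lambda(N)$ into Kesten's relation (Proposition \ref{prop: Kesten}), use quasi-multiplicativity (Proposition \ref{prop: quasi-multiplivativity}) to reduce to a statement about $\pi_{4,alt}(N,L)$, and then invoke that the four-arm exponent is strictly below $2$ to pin $L/N$ into a compact interval. This is the right route and the right quantitative input.

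The gap is in the last step, where you claim that the uniform power-law lower bound $\pi_{4,alt}(n,m)\geq c(n/m)^{2-\delta}$ is a ``routine conversion'' of Theorem \ref{thm: arm exponents} via quasi-multiplicativity and a decomposition into dyadic factors. It is not. Theorem \ref{thm: arm exponents} says $\pi_{4,alt}(n_{0},N)=N^{-5/4+o(1)}$, where the $o(1)$ is a function of $N$ with an unknown rate. Quasi-multiplicativity gives $\pi_{4,alt}(n,m)\asymp\pi_{4,alt}(n_{0},m)/\pi_{4,alt}(n_{0},n)$, and plugging in the two-sided asymptotics yields a bound of the form
\[
\pi_{4,alt}(n,m)\;\geq\; c\,(n/m)^{5/4}\,(nm)^{-\delta'},
\]
in which the spurious factor $(nm)^{-\delta'}$ does not collapse to a function of the ratio $n/m$: the $o(1)$ errors at scales $n$ and $m$ simply do not cancel. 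Likewise, the asymptotic $\pi_{4,alt}(n_0,2^{k}n_0)=2^{-5k/4+o(k)}$ controls a running product, not the individual dyadic factors $\pi_{4,alt}(2^{j}n,2^{j+1}n)$, so the proposed ``promotion'' does not go through. The uniform bound $\pi_{4,alt}(n,m)\geq c(n/m)^{2-\delta}$ is indeed true and standard, but its proof comes from a different input: the universal five-arm exponent $\pi_{5}(n,m)\asymp(n/m)^{2}$ (a topological fact independent of Smirnov--Werner) together with an arm-separation argument showing that deleting one arm gains a polynomial factor --- this is precisely Kesten's Lemma 4 in \cite{Kesten1987}; alternatively, on the triangular lattice one can use the ratio-limit theorem (Proposition 4.7 of \cite{Garban2010}), which the paper invokes in a later remark. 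The paper's own two-line proof sidesteps this by relying on Proposition \ref{prop: Kesten} (whose proof in \cite{Kesten1987} already contains the required quantitative estimate) together with monotonicity of $L_{\varepsilon}$ and Proposition \ref{prop: quasi-multiplivativity}. You should replace your derivation of the power-law bound with a citation to one of these sources; the rest of your argument is sound.
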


\begin{cor}
\label{cor: small char length}For any $C>0$ and $\varepsilon\in\left(0,1/2\right)$
there exits $\lambda_{1}=\lambda_{1}\left(C,\varepsilon\right)>0$
and $N_{1}=N_{1}\left(C,\varepsilon\right)$ such that for any $\lambda\in\mathbb{R}$
with $\left|\lambda\right|\geq\lambda_{1}$ we have 
\[
L_{\varepsilon}\left(p_{\lambda}\left(N\right)\right)\leq CN
\]
for $N\geq N_{1}.$ Also, for any $c>0,$ and $\varepsilon\in\left(0,1/2\right)$
there exists $\lambda_{2}\left(c,\varepsilon\right)>0$ and $N_{2}=N_{2}\left(c,\varepsilon\right)$
such that for any $\lambda\in\mathbb{R}\setminus\left\{ 0\right\} $
with $\left|\lambda\right|\leq\lambda_{2}$ we have 
\[
L_{\varepsilon}\left(p_{\lambda}\left(N\right)\right)\geq cN
\]
for $N\geq N_{2}.$\end{cor}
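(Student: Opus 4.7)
The plan is to combine the Kesten-type relation in Proposition \ref{prop: Kesten} with the definition \eqref{eq:def p lambda} of the near-critical scale. Setting $L := L_{\varepsilon}(p_{\lambda}(N))$ and substituting $p = p_{\lambda}(N)$ into Proposition \ref{prop: Kesten}, the factor $\pi_{4,alt}(1,N)$ coming from $|p_{\lambda}(N)-1/2|$ cancels, giving the fundamental relation
\[
|\lambda| \;\asymp\; \frac{N^{2}\,\pi_{4,alt}(1,N)}{L^{2}\,\pi_{4,alt}(1,L)},
\]
uniformly in $\lambda \in \mathbb{R}\setminus\{0\}$ and $N \geq N_{0}(\varepsilon)$.

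Next, I would convert the right-hand side into a monotone function of $L/N$. Splitting according to whether $L \leq N$ or $L \geq N$ and applying quasi-multiplicativity (Proposition \ref{prop: quasi-multiplivativity}), the right-hand side reduces to $(N/L)^{2}\,\pi_{4,alt}(L,N)$ or to $(N/L)^{2}/\pi_{4,alt}(N,L)$ respectively. Using Theorem \ref{thm: arm exponents}, which gives $\pi_{4,alt}(1,n) = n^{-5/4+o(1)}$, together with quasi-multiplicativity, one obtains $\pi_{4,alt}(m,n) = (m/n)^{5/4+o(1)}$ for $n_{0}(4) \leq m \leq n \to \infty$. Substituting in either case yields
\[
|\lambda| \;\asymp\; (N/L)^{3/4+o(1)}\qquad\text{as }N\to\infty,
\]
so that $|\lambda|$ is, up to sub-polynomial factors, a monotone decreasing function of $L/N$ whose range sweeps from $\infty$ (as $L/N \to 0$) down to $0$ (as $L/N \to \infty$).

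Both parts of the corollary now follow by inversion. For the first, the assumption $L > CN$ would force $|\lambda| \leq c_{1}\,C^{-3/4+o(1)}$, so any $\lambda_{1}(C,\varepsilon)$ strictly larger than this uniform-in-$N$ bound rules it out. The second claim is symmetric: $L < cN$ forces $|\lambda| \geq c_{2}\,c^{-3/4+o(1)}$, so any sufficiently small $\lambda_{2}(c,\varepsilon)$ works. The main technical point I anticipate is keeping the implicit constants in each $\asymp$ uniform in $N$; this is what produces the thresholds $N_{1}$ and $N_{2}$ in the statement. One also needs to treat separately the degenerate regime $L < n_{0}(4)$, where quasi-multiplicativity no longer applies but the conclusion $L \leq CN$ is automatic once $N$ is large.
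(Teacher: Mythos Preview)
Your argument is correct, and your fundamental relation $|\lambda|\asymp N^{2}\pi_{4,alt}(1,N)/\bigl(L^{2}\pi_{4,alt}(1,L)\bigr)$ is exactly the starting point the paper intends. The difference is in how the right-hand side is controlled. The paper does \emph{not} invoke Theorem~\ref{thm: arm exponents}; instead it uses only the monotonicity of $p\mapsto L_{\varepsilon}(p)$ together with quasi-multiplicativity (Proposition~\ref{prop: quasi-multiplivativity}) and the universal a priori bounds $c(m/n)^{2-\eta}\le \pi_{4,alt}(m,n)\le C(m/n)^{\beta}$ for some $\eta,\beta>0$, which hold on any planar lattice. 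This already shows that $n\mapsto n^{2}\pi_{4,alt}(1,n)$ grows polynomially, so the ratio above can be inverted to bound $L/N$ in terms of $|\lambda|$ without knowing the value $5/4$.

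Your route via the explicit exponent $\alpha_{4,0}(alt)=5/4$ gives the sharper relation $|\lambda|\asymp (N/L)^{3/4+o(1)}$, which is precisely what appears in the Remark following the corollary, but it is more than is needed and is specific to the triangular lattice (cf.\ Remark~\ref{rem: mod to square}). The one place to be careful in your version is the meaning of the $o(1)$: it is $o_{N}(1)+o_{L}(1)$, and in the second part of the corollary you cannot assume a priori that $L\to\infty$. This is easily handled (if $L$ stays bounded then $|\lambda|\to\infty$, contradicting $|\lambda|\le\lambda_{2}$), but it is exactly the kind of case-splitting that the paper's monotonicity argument sidesteps.
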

\begin{rem}
On the triangular lattice, a ratio limit theorem for $\pi_{4,0,alt},$
Proposition 4.7 of \cite{Garban2010} holds. This combined with the
definition of $L_{\varepsilon}\left(p\right),$ and Proposition \ref{prop: Kesten}
shows that the following stronger statement holds on the triangular
lattice: 
\begin{claim*}
For all $\lambda_{1},\lambda_{2}\in\mathbb{R}$ with $\lambda_{1}\leq\lambda_{2},$
$\lambda_{1}\lambda_{2}>0$ and $\varepsilon\in\left(0,1/2\right)$
there are positive constants $c=c\left(\varepsilon\right),C=C\left(\varepsilon\right)$
and $N_{0}=N_{0}\left(\varepsilon,\lambda_{1},\lambda_{2}\right)$
such that
\[
cN\left|\lambda\right|^{-4/3}\leq L_{\varepsilon}\left(p_{\lambda}\left(N\right)\right)\leq CN\left|\lambda\right|^{-4/3}
\]
for all $\lambda\in$$\left[\lambda_{1},\lambda_{2}\right]$ and $N\geq N_{0}.$
\end{claim*}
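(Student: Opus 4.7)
The plan is to combine Kesten's relation (Proposition~\ref{prop: Kesten}) with the ratio limit theorem, Proposition~4.7 of~\cite{Garban2010}, which on the triangular lattice asserts
\[
\frac{\pi_{4,0,alt}(1,rn)}{\pi_{4,0,alt}(1,n)}\xrightarrow[n\to\infty]{}r^{-5/4}
\]
for each fixed $r>0$. Together with Proposition~\ref{prop: quasi-multiplivativity} this convergence should be upgradable to be uniform for $r$ in any compact subset of $(0,\infty)$.

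First I would apply Proposition~\ref{prop: Kesten} at $p=p_\lambda(N)$ and, setting $L=L_\varepsilon(p_\lambda(N))$ and using the definition~\eqref{eq:def p lambda}, rewrite the resulting relation as
\[
|\lambda|\cdot\frac{L^{2}}{N^{2}}\cdot\frac{\pi_{4,0,alt}(1,L)}{\pi_{4,0,alt}(1,N)}\asymp 1,
\]
with the implicit constants depending only on $\varepsilon$. By Corollary~\ref{cor: char length is O(N)}, for every $\lambda\in[\lambda_{1},\lambda_{2}]$ and all $N$ sufficiently large, the ratio $M:=L/N$ lies in a bounded interval $[\alpha,\beta]\subset(0,\infty)$ whose endpoints depend only on $\lambda_{1},\lambda_{2},\varepsilon$. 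Writing $L=MN$, the displayed relation becomes
\[
|\lambda|\,M^{2}\cdot\frac{\pi_{4,0,alt}(1,MN)}{\pi_{4,0,alt}(1,N)}\asymp 1.
\]

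The uniform ratio limit theorem would then replace the middle factor by $M^{-5/4}(1+o(1))$ as $N\to\infty$, uniformly for $M\in[\alpha,\beta]$. Taking $N$ large enough, the relation reduces to $|\lambda|\,M^{3/4}\asymp 1$, with the implicit constants still depending only on~$\varepsilon$; solving yields $M\asymp|\lambda|^{-4/3}$, i.e.\ $L_{\varepsilon}(p_{\lambda}(N))\asymp N|\lambda|^{-4/3}$, as required.

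The main obstacle is upgrading the pointwise statement in Proposition~4.7 of~\cite{Garban2010} to a uniform one on compact subsets of $(0,\infty)$; this is not immediate from the pointwise formulation. My plan would be to exploit quasi-multiplicativity together with the monotonicity of $r\mapsto\pi_{4,0,alt}(1,rn)$: pick a fine discretisation $r_{1}<\cdots<r_{K}$ of $[\alpha,\beta]$, apply the pointwise ratio limit at each $r_{i}$, and sandwich $\pi_{4,0,alt}(1,MN)/\pi_{4,0,alt}(1,N)$ between consecutive values for $M\in[r_{i},r_{i+1}]$, using Proposition~\ref{prop: quasi-multiplivativity} to control the multiplicative error introduced. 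Once this uniformity is secured, the remainder of the argument is the algebraic inversion above.
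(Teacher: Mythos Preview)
Your proposal is correct and follows the same route the paper indicates: combine Kesten's relation (Proposition~\ref{prop: Kesten}) with the ratio limit theorem for $\pi_{4,0,alt}$ from \cite{Garban2010}, after first pinning $L/N$ to a compact interval via Corollary~\ref{cor: char length is O(N)} (or more precisely Corollary~\ref{cor: small char length}). The paper gives only this one-line sketch, and the detail you flag---upgrading the pointwise ratio limit to a uniform one on compacta via monotonicity of $m\mapsto\pi_{4,0,alt}(1,m)$ and a finite discretisation---is exactly the right way to fill the gap; note that monotonicity alone already suffices for the sandwich, so quasi-multiplicativity is not even needed there.
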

\end{rem}
Standard Russo-Seymour-Welsh (RSW) techniques and the definition of
the correlation length give that the control over the correlation
length gives a control over the crossing probabilities of parallelograms.
This combined with the two corollaries above show that the control
over the near critical parameter gives control over the crossing probabilities.
See Corollary \ref{cor: posprob of crossing} and \ref{cor: set prob of crossing}
below:
\begin{cor}
\label{cor: posprob of crossing}For all $\lambda\in\mathbb{R}$ and
$a,b\in\left(0,\infty\right),$ there are constants $c=c\left(a,b,\lambda\right)\in\left(0,1\right),$
$C=C\left(a,b,\lambda\right)\in\left(0,1\right)$ and $N_{0}=N_{0}\left(a,b,\lambda\right)$
such that 
\begin{align*}
c & <\mathbb{P}_{p_{\lambda}\left(N\right)}\left(\mathcal{H}_{o}\left(\left[0,aN\right]\boxtimes\left[0,bN\right]\right)\right)<C\\
c & <\mathbb{P}_{p_{\lambda}\left(N\right)}\left(\mathcal{H}_{c}\left(\left[0,aN\right]\boxtimes\left[0,bN\right]\right)\right)<C
\end{align*}
for $N\geq N_{0}.$
\end{cor}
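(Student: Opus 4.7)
The plan is to use Corollary \ref{cor: char length is O(N)} together with standard near-critical RSW estimates to reduce the crossing probability bounds at parameter $p_\lambda(N)$ to crossings at the scale of the correlation length, where they are well controlled.

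First I would invoke the near-critical RSW theorem (Theorem 27 and surrounding material of \cite{Nolin2008}, ultimately going back to \cite{Kesten1987}): for each aspect ratio $(a,b) \in (0,\infty)^2$ and each $\varepsilon \in (0,1/2)$ there exist constants $c_1, C_1 \in (0,1)$ and $M>0$ such that
\[
c_1 \;\leq\; \mathbb{P}_p\bigl(\mathcal{H}_o([0,an] \boxtimes [0,bn])\bigr) \;\leq\; C_1
\]
uniformly in $p \in (0,1)$ and $n \leq M L_\varepsilon(p)$. The lower bound is produced by iteratively gluing open crossings of subsquares of side $\Theta(L_\varepsilon(p))$ via FKG and the RSW lemma, starting from the one-box estimate built into the definition of $L_\varepsilon$; the upper bound follows symmetrically by producing a closed dual vertical crossing, using the self-duality of site percolation on the triangular lattice.

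I would then specialize $n$ to be of order $N$ (for instance $n = \max(a,b)\, N$). By Corollary \ref{cor: char length is O(N)}, $L_\varepsilon(p_\lambda(N)) \asymp N$ whenever $\lambda \neq 0$, so for $N$ large the constraint $n \leq M L_\varepsilon(p_\lambda(N))$ is satisfied and one obtains the desired two-sided bounds on $\mathbb{P}_{p_\lambda(N)}(\mathcal{H}_o([0,aN] \boxtimes [0,bN]))$. The case $\lambda=0$ reduces to classical RSW at $p_c = 1/2$. For the closed crossing I would use the colour-swap symmetry of site percolation on the triangular lattice, $\mathbb{P}_p(\mathcal{H}_c(B)) = \mathbb{P}_{1-p}(\mathcal{H}_o(B))$, which combined with the identity $1 - p_\lambda(N) = p_{-\lambda}(N)$ reduces the closed-crossing bound at $\lambda$ to the open-crossing bound at $-\lambda$ already handled.

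The substantive ingredient is the near-critical RSW estimate itself, which is by now standard and I would simply cite rather than re-derive; this is the only nontrivial step. Once it is in hand, the remaining work is the routine verification that the fixed-shape parallelogram $[0,aN] \boxtimes [0,bN]$ lies within the RSW-accessible scale $M L_\varepsilon(p_\lambda(N))$, and that verification is immediate from the two-sided bound $L_\varepsilon(p_\lambda(N)) \asymp N$.
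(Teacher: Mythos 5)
Your proposal is correct and matches the paper's approach: the paper simply remarks that Corollaries \ref{cor: posprob of crossing} and \ref{cor: set prob of crossing} follow from standard RSW techniques, the definition of $L_\varepsilon(p)$, and Corollary \ref{cor: char length is O(N)} (respectively \ref{cor: small char length}), which is exactly the route you take. The one small caveat is that the near-critical RSW estimate you invoke should be stated as ``for every $M>0$ there exist $c_1(M),C_1(M)\in(0,1)$ such that the two-sided bound holds for all $n\leq M L_\varepsilon(p)$'' (with constants depending on $M$), so that after Corollary \ref{cor: char length is O(N)} forces $n/L_\varepsilon(p_\lambda(N))$ into a fixed bounded range you may choose $M$ large enough; your phrasing ``there exist $c_1,C_1$ and $M$'' quietly assumes the $M$ produced happens to be large enough, but this is a matter of bookkeeping rather than a genuine gap.
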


\begin{cor}
\label{cor: set prob of crossing}Let $\delta\in\left(0,1\right),$
and $a,b\in\left(0,\infty\right).$ There exists $\lambda_{1}=\lambda_{1}\left(\delta,a,b\right)>0$
and $N_{1}=N_{1}\left(\delta,a,b\right)$ such that for all $\lambda\geq\lambda_{1}$

\[
\mathbb{P}_{p_{\lambda}\left(N\right)}\left(\mathcal{H}_{o}\left(\left[0,aN\right]\boxtimes\left[0,bN\right]\right)\right)>1-\delta
\]
for $N\geq N_{1}.$ Furthermore, there exists $\lambda_{2}=\lambda_{2}\left(\delta,a,b\right)<0$
and $N_{2}=N_{2}\left(\delta,a,b\right)$ such that for all $\lambda\leq\lambda_{2}$ 

\[
\mathbb{P}_{p_{\lambda}\left(N\right)}\left(\mathcal{H}_{c}\left(\left[0,aN\right]\boxtimes\left[0,bN\right]\right)\right)>1-\delta
\]
for $N\geq N_{2}.$
\end{cor}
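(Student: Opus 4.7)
The plan has two parts: a short duality argument reducing the closed-crossing statement to the open one, and then a standard supercritical RSW-gluing argument for the open-crossing statement.

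First I would handle the duality step. Because site percolation on the triangular lattice is self-matching, a vertex is open at parameter $p$ has the same law as a vertex being closed at parameter $1-p$; combined with the identity $1 - p_\lambda(N) = p_{-\lambda}(N)$ read off from \eqref{eq:def p lambda}, this yields
\[
\mathbb{P}_{p_\lambda(N)}\bigl(\mathcal{H}_c([0,aN]\boxtimes[0,bN])\bigr) = \mathbb{P}_{p_{-\lambda}(N)}\bigl(\mathcal{H}_o([0,aN]\boxtimes[0,bN])\bigr).
\]
Hence the closed-crossing conclusion (valid for $\lambda \leq \lambda_2$) follows from the open one (valid for $\lambda \geq \lambda_1$) on setting $\lambda_2 := -\lambda_1$, and I focus on the open case below.

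For the open case I would fix $\varepsilon_0 := 1/4$ once and for all, and, given $\delta, a, b$, pick a small constant $C_0 > 0$ to be specified later. Corollary \ref{cor: small char length} then furnishes $\lambda_1' = \lambda_1'(C_0, \varepsilon_0)$ and $N_1'$ with
\[
L := L_{\varepsilon_0}(p_\lambda(N)) \leq C_0 N \qquad \text{for all } \lambda \geq \lambda_1',\ N \geq N_1'.
\]
Since $p_\lambda(N) > p_c = 1/2$ for positive $\lambda$, the definition of the correlation length directly gives $\mathbb{P}_{p_\lambda(N)}(\mathcal{H}_o(B(L))) \geq 1 - \varepsilon_0 = 3/4$, so open crossings are already likely at the microscopic scale $L$.

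The main step is then a standard supercritical RSW-and-FKG construction: once horizontal open crossings of a square of side $L$ have probability bounded away from $0$ at parameter $p > p_c$, one iteratively combines the Russo--Seymour--Welsh theorem, the square-root trick and the FKG inequality to upgrade the estimate first to highly-likely horizontal crossings of $kL \times L$ rectangles (by gluing $O(k)$ overlapping crossings via vertical crossings of intermediate $L \times L$ squares), and then to stack $O(b/a)$ such horizontal crossings to produce an open horizontal crossing of $[0, aN] \boxtimes [0, bN]$. This yields an estimate of the form
\[
\mathbb{P}_{p_\lambda(N)}\bigl(\mathcal{H}_o([0,aN]\boxtimes[0,bN])\bigr) \geq 1 - g(N/L,a,b)
\]
with $g(x,a,b) \to 0$ as $x \to \infty$ for fixed $a, b$; choosing $C_0$ small enough that $1/C_0$ forces $g < \delta$ then closes the argument with $\lambda_1 := \lambda_1'$ and a suitable $N_1 \geq N_1'$. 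The only non-routine ingredient is this supercritical RSW-gluing step, which is a well-documented feature of near-critical percolation implicit in the arguments of \cite{Nolin2008} and \cite{Kesten1987}; everything else is bookkeeping around the corollaries already proved.
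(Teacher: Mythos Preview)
Your proposal is correct and follows exactly the route the paper indicates: the paper gives no detailed proof of this corollary, saying only that it follows from ``standard Russo--Seymour--Welsh (RSW) techniques and the definition of the correlation length'' combined with Corollaries~\ref{cor: char length is O(N)} and~\ref{cor: small char length}, which is precisely your strategy of using Corollary~\ref{cor: small char length} to make $L_{\varepsilon_0}(p_\lambda(N))$ small relative to $N$ and then invoking RSW/FKG gluing. Your duality reduction of the closed case to the open case is a clean way to handle the second statement and is consistent with the paper's implicit appeal to symmetry.
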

Similar RSW techniques show that it is unlikely to have crossing in
a thin and long parallelogram in the hard direction in the critical
window. See Remark 40 \cite{Nolin2008} for more details.
\begin{cor}
\label{cor: thin crossing}Let $\lambda\in\mathbb{R},$ and $a,b\in\left(0,1\right).$
There exists positive constants $c=c\left(\lambda\right),C=C\left(\lambda\right)$
and $N_{0}=N_{0}\left(\lambda,a,b\right)$ such that

\[
\mathbb{P}_{p_{\lambda}\left(N\right)}\left(\mathcal{H}_{o}\left(\left[0,aN\right]\boxtimes\left[0,bN\right]\right)\right)\leq C\exp\left(-c\frac{a}{b}\right)
\]
for $N\geq N_{0}.$ 
\end{cor}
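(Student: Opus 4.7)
The plan is to apply the standard RSW dual blocking argument, converting the uniform two-sided crossing estimates of Corollary \ref{cor: posprob of crossing} into a bound that decays exponentially in the aspect ratio $a/b$.

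Set $M := \lfloor a/b \rfloor$ and, for $k = 0, 1, \ldots, M-1$, define
\[
R_k := [kbN,(k+1)bN] \boxtimes [0,bN].
\]
The $R_k$ are pairwise disjoint, each is essentially a square of side $bN$ contained in $[0,aN]\boxtimes[0,bN]$, and each spans the full vertical extent of the outer parallelogram. If $\mathcal{V}_c(R_k)$ occurs for some $k$, the associated closed path goes from the bottom edge to the top edge of $[0,aN]\boxtimes[0,bN]$ inside $R_k$, so it separates the left side $\{0\}\boxtimes[0,bN]$ from the right side $\{\lfloor aN\rfloor\}\boxtimes[0,bN]$ within $[0,aN]\boxtimes[0,bN]$; any open horizontal crossing would have to share a vertex with this closed path, which is impossible. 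Hence
\[
\mathcal{H}_o([0,aN]\boxtimes[0,bN]) \subseteq \bigcap_{k=0}^{M-1} \mathcal{V}_c(R_k)^c.
\]

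By Corollary \ref{cor: posprob of crossing} (or, more precisely, its vertical-crossing analogue for a square, which follows from the same RSW input), there exist $c_1 = c_1(\lambda) \in (0,1)$ and $N_0 = N_0(\lambda,b)$ such that $\mathbb{P}_{p_\lambda(N)}(\mathcal{V}_c(R_k)) \geq c_1$ uniformly in $k$ as soon as $N \geq N_0$. Since the events $\mathcal{V}_c(R_k)$ are determined by disjoint sets of vertices, they are independent, giving
\[
\mathbb{P}_{p_\lambda(N)}\bigl(\mathcal{H}_o([0,aN]\boxtimes[0,bN])\bigr) \leq (1-c_1)^M \leq \exp(-c_1 M) \leq e^{c_1}\exp\bigl(-c_1 (a/b)\bigr),
\]
where I used $M \geq a/b - 1$. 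Taking $c := c_1$ and $C := e^{c_1}$ finishes the argument.

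The only bookkeeping concerns small $a/b$ (the case $M = 0$, where the claimed bound is trivial provided $C \geq e^c$) and the value of $N_0$, which must be large enough that the sub-squares $R_k$ of side $bN$ fit the hypotheses of Corollary \ref{cor: posprob of crossing}; thus $N_0$ depends on $b$ (and on $\lambda$), not on $a$. There is no substantive obstacle here: this is precisely the textbook RSW consequence that crossings in the hard direction are exponentially rare throughout the critical window, and it is the reason the author merely cites Remark 40 of \cite{Nolin2008}.
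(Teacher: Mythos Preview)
Your proof is correct and is exactly the standard RSW blocking argument the paper has in mind; the paper itself gives no proof and simply points to Remark~40 of \cite{Nolin2008}, which records precisely this computation. One cosmetic point: consecutive squares $R_k$ and $R_{k+1}$ can share a column of vertices when $(k{+}1)bN\in\mathbb{Z}$, so the independence step is not literally valid as written; taking the $R_k$ with a one-vertex gap (or using only every other square) fixes this at the cost of adjusting the constants, and nothing else changes.
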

The following event plays a crucial role in the proof of our main
result.
\begin{defn}
Let $a,b\in\left(0,1\right),$ $\lambda_{1},\lambda_{2}\in\mathbb{R},$
and $N\in\mathbb{N}$ with $a<b.$ Let $\mathcal{NC}\left(a,b,\lambda_{1},\lambda_{2},K,N\right)$
denote the event that for all parallelograms $B=\left[0,aN\right]\boxtimes\left[0,bN\right]+z$
with $z\in B\left(KN\right),$ there is neither a $p_{\lambda_{1}}\left(N\right)$-open
nor a $p_{\lambda_{2}}\left(N\right)$-closed horizontal crossing
in $B.$
\end{defn}
The following Corollary \ref{cor: no thin crossing} follows from
Corollary \ref{cor: thin crossing} by arguments analogous to the
proof of Corollary \ref{cor: no many arms}. 
\begin{cor}
\label{cor: no thin crossing}Let $a,b\in\left(0,1\right),$ $\lambda_{1},\lambda_{2}\in\mathbb{R},$
and $N\in\mathbb{N}$ with $a<b.$ There are positive constants $c=c\left(\lambda_{1},\lambda_{2}\right),\, C=C\left(\lambda_{1},\lambda_{2}\right)$
and $N_{0}=N_{0}\left(a,b,\lambda_{1},\lambda_{2}\right)$ such that
\[
\mathbb{P}\left(\mathcal{NC}\left(a,b,\lambda_{1},\lambda_{2},K,N\right)\right)\geq1-Ca^{-2}\exp\left(-c\frac{b}{a}\right)
\]
for $N\geq N_{0}.$
\end{cor}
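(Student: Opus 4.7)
The plan is to mimic directly the proof of Corollary~\ref{cor: no many arms}, replacing the arm-event inputs by the single-parallelogram estimate of Corollary~\ref{cor: thin crossing}. The structure of the target bound $Ca^{-2}\exp(-cb/a)$ is telling: the factor $\exp(-cb/a)$ will come from Corollary~\ref{cor: thin crossing} applied to one parallelogram of shape $aN\times bN$, while the factor $a^{-2}$ will come from a union bound over a coarse grid of spacing of order $aN$ inside a slight enlargement of $B(KN)$, a grid which contains $O((K/a)^{2})$ points (the $K$-dependence being absorbed into the constant $C=C(\lambda_{1},\lambda_{2})$, exactly as in Corollary~\ref{cor: no many arms}).

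First I would fix $z\in V$ and apply Corollary~\ref{cor: thin crossing} to $B+z=[0,aN]\boxtimes[0,bN]+z$ to obtain $\mathbb{P}_{p_{\lambda_{1}}(N)}(\mathcal{H}_{o}(B+z))\le C'\exp(-c'b/a)$. By the open--closed symmetry of site percolation on $\mathbb{T}$, which identifies the law of the $p_{\lambda}(N)$-closed configuration with that of the $p_{-\lambda}(N)$-open configuration, the same bound holds for $\mathbb{P}_{p_{\lambda_{2}}(N)}(\mathcal{H}_{c}(B+z))$ after replacing $\lambda_{2}$ by $-\lambda_{2}$ in the constants. This handles a single position $z$.

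Next I would discretize the positions. Fix a sub-lattice $V'\subset V$ of spacing of order $aN$, intersected with a slight enlargement of $B(KN)$, so that $|V'|=O((K/a)^{2})$. For any $z\in B(KN)$, let $z^{*}$ be a nearest point of $V'$; then $|z-z^{*}|_{\infty}=O(aN)$ and there is a modestly enlarged parallelogram $B^{*}+z^{*}$ of the same aspect ratio as $B$ which contains $B+z$. A horizontal open (resp.\ closed) crossing of $B+z$ then gives an open (resp.\ closed) left-to-right path in $B^{*}+z^{*}$ joining its left and right buffer strips; this latter event is still controlled by Corollary~\ref{cor: thin crossing} with the same exponential bound, after adjusting the constants. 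This is precisely the geometric reduction used in the proof of Corollary~\ref{cor: no many arms}, now applied to crossing events rather than arm events.

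A union bound over the two colours and over $V'$ then yields
\[
\mathbb{P}(\mathcal{NC}^{c})\le 2|V'|\cdot C''\exp(-c''b/a)\le Ca^{-2}\exp(-cb/a)
\]
for suitable positive $c,C$ depending only on $\lambda_{1},\lambda_{2}$, which is exactly the advertised inequality. The only mildly delicate point, and hence the main obstacle, is the geometric bookkeeping in the grid reduction: one must set up the enlarged parallelograms $B^{*}+z^{*}$ so that any left-to-right crossing of $B+z$ for some $z\in B(KN)$ really does induce a left-to-right crossing in one fixed member of the $O(a^{-2})$-sized family, and so that these members still have the aspect ratio for which Corollary~\ref{cor: thin crossing} applies with exponent $-cb/a$. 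Once this is in place, the estimate is immediate.
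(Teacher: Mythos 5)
Your argument follows precisely the route the paper indicates for this corollary: invoke Corollary~\ref{cor: thin crossing} for a single parallelogram, then carry over the grid-and-union-bound reduction from the proof of Corollary~\ref{cor: no many arms}, treating the second colour by open--closed duality. One bookkeeping point worth being aware of: Corollary~\ref{cor: thin crossing} as printed bounds $\mathbb{P}_{p_\lambda(N)}\bigl(\mathcal{H}_o([0,aN]\boxtimes[0,bN])\bigr)$ by $C\exp(-c\,a/b)$, which is only informative when $a>b$, whereas the present corollary assumes $a<b$; to obtain the stated $\exp(-c\,b/a)$ you must therefore apply that estimate with the two side lengths swapped, i.e.\ to a horizontal crossing of $[0,bN]\boxtimes[0,aN]$, exactly as you implicitly did. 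This is in fact the only reading compatible with how $\mathcal{NC}$ is actually used later in the paper (for instance in Claim~\ref{claim: bdary lines of F are far from x} the relevant parallelogram has width $2\alpha_1 N>2\beta_2 N=$ height, so the horizontal crossing is the hard one, and the event cited is $\mathcal{NC}^c(2\beta_2,2\alpha_1)$), so the printed definition of $\mathcal{NC}$ has its side lengths interchanged and your correction is the intended one; with that understood, the rest of the proposal (grid of spacing $O(aN)$, $O(a^{-2})$ grid points, and aspect-ratio preservation up to constants in the enlargement step) is sound.
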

We finish this section by stating two lemmas which will be used explicitly
in the proof of our main result.
\begin{lem}
\label{lem: not too many small clusters at lambda}For any fixed $\lambda\in\mathbb{R},$
for any $a,b\in\left(0,\infty\right)$ and $\varepsilon>0,$ there
is are positive integer $k=k\left(\lambda,a,b,\varepsilon\right)$
and $N_{0}=N_{0}\left(\lambda,a,b,\varepsilon\right)$ such that
\[
\mathbb{P}_{p_{\lambda}\left(N\right)}\left(\mbox{there are at least }k\mbox{ disjoint closed arms in }A\left(aN,bN\right)\right)<\varepsilon
\]
for $N\geq N_{0}.$\end{lem}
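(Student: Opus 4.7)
The plan is to combine the BK inequality with the near-critical RSW bounds already developed in the preceding subsections. Let $A$ denote the event that $A(aN,bN)$ contains a closed arm (a closed path from $\partial B(aN)$ to $\partial B(bN)$). Since $K$ vertex-disjoint closed arms constitute $K$ disjoint witnesses of $A$, the BK inequality (Theorem \ref{thm: BK}) gives
\[
\mathbb{P}_{p_{\lambda}(N)}\bigl(\text{at least } K \text{ disjoint closed arms in } A(aN,bN)\bigr) \leq \mathbb{P}_{p_{\lambda}(N)}(A)^{K}.
\]
Thus the task reduces to producing a constant $q = q(\lambda,a,b) < 1$ with $\mathbb{P}_{p_{\lambda}(N)}(A) \leq q$ for all $N$ sufficiently large, and then choosing $K$ so that $q^{K} < \varepsilon$.

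To exhibit such a $q$, I would fix any $a',b'$ with $a < a' < b' < b$ and build a $p_{\lambda}(N)$-open circuit around $\partial B(a'N)$ inside the sub-annulus $A(a'N,b'N)$. A standard construction assembles such a circuit from four $p_{\lambda}(N)$-open horizontal or vertical crossings of parallelograms of fixed aspect ratio whose side lengths are proportional to $N$ (with ratios depending only on $a'$ and $b'$). By Corollary \ref{cor: posprob of crossing} each of these four crossings has probability bounded below by a positive constant depending only on $\lambda, a', b'$, uniformly in $N \geq N_{1}$ for some $N_{1} = N_{1}(\lambda,a,b)$. Applying FKG (Theorem \ref{thm: FKG}) to the four increasing crossing events, the full circuit exists with probability at least some $\delta = \delta(\lambda,a,b) > 0$. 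Any such $p_{\lambda}(N)$-open circuit around $\partial B(a'N)$ inside $A(a'N,b'N) \subset A(aN,bN)$ blocks every closed path from $\partial B(aN)$ to $\partial B(bN)$, which forces the complement of $A$. Hence $\mathbb{P}_{p_{\lambda}(N)}(A) \leq 1 - \delta =: q < 1$, and taking $K \geq \lceil \log\varepsilon / \log q \rceil$ together with $N_{0} \geq N_{1}$ completes the proof.

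There is no substantial obstacle in this argument; the only point requiring attention is that the RSW-type lower bound on the open crossings has to be genuinely uniform in $N$ at the \emph{shifted} parameter $p_{\lambda}(N)$ rather than at the critical value $1/2$, but this is exactly what Corollary \ref{cor: posprob of crossing} delivers. The same strategy applies verbatim to bond percolation on the square lattice, since all of BK, FKG, and the RSW corollary have analogues in that setting.
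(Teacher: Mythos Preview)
Your proposal is correct and follows essentially the same approach as the paper's own proof, which simply cites the BK inequality together with Corollary~\ref{cor: posprob of crossing} (and refers to Lemma~15 of \cite{Nolin2008} for the details). You have merely spelled out the standard circuit construction via FKG that the paper leaves implicit.
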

\begin{proof}
[Proof of Lemma \ref{lem: not too many small clusters at lambda}]
This is a consequence of Corollary \ref{cor: posprob of crossing}
and the BK inequality (Theorem \ref{thm: BK}). The proof also appears
in the proof of Lemma 15 of \cite{Nolin2008}. \end{proof}
\begin{defn}
\label{def: N_c} Let $a,b,c,d,f\in\mathbb{R}$ with $a\leq b,$ $c\leq d$
and $f>0.$ We say that there is an open (closed) $f$-net in $B=\left[a,b\right]\boxtimes\left[c,d\right]$
if there is an open (closed) vertical crossing in the parallelograms
$\left[a+i\left\lfloor f\right\rfloor ,a+\left(i+1\right)\left\lfloor f\right\rfloor -1\right]\boxtimes\left[c,d\right],$
and there is an open (closed) horizontal crossing in the parallelograms
$\left[a,b\right]\boxtimes\left[c+j\left\lfloor f\right\rfloor ,c+\left(j+1\right)\left\lfloor f\right\rfloor -1\right]$
for $i=0,1,\ldots,\left\lfloor \left(b-a\right)/\left\lfloor f\right\rfloor \right\rfloor $
and $j=0,1,\ldots,\left\lfloor \left(d-c\right)/\left\lfloor f\right\rfloor \right\rfloor .$

For $\lambda\in\mathbb{R}$ and $\delta\in\left(0,\infty\right),$
$\mathcal{N}_{c}\left(\lambda,\delta,K,N\right)$ ($\mathcal{N}_{o}\left(\lambda,\delta,K,N\right)$)
denotes the event that there is a $p_{\lambda}\left(N\right)$-closed
($p_{\lambda}\left(N\right)$-open) $\delta N$-net in $B\left(KN\right).$\end{defn}
\begin{lem}
\label{lem: there is a net}Let $\varepsilon,\delta,K>0.$ There exists
$\lambda_{1}=\lambda_{1}\left(\varepsilon,\delta,K\right)\in\mathbb{R}$
and $N_{1}=N_{1}\left(\varepsilon,\delta,K\right)$ such that 
\[
\mathbb{P}\left(\mathcal{N}_{o}\left(\lambda_{1},\delta,K,N\right)\right)>1-\varepsilon
\]
for $N\geq N_{1}.$ Moreover there exists $\lambda_{2}=\lambda_{2}\left(\varepsilon,\delta,K\right)\in\mathbb{R}$
and $N_{2}=N_{2}\left(\varepsilon,\delta,K\right)$ such that 
\[
\mathbb{P}\left(\mathcal{N}_{c}\left(\lambda_{2},\delta,K,N\right)\right)>1-\varepsilon
\]
for $N\geq N_{2}.$\end{lem}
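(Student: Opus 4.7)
The plan is to reduce the existence of a net to the conjunction of $O(K/\delta)$ individual crossing events, each covered by Corollary \ref{cor: set prob of crossing}, and then apply a union bound. I will describe the argument for $\mathcal{N}_o$; the case of $\mathcal{N}_c$ is entirely analogous, with Corollary \ref{cor: set prob of crossing} applied in its closed-crossing form and $\lambda_2$ chosen very negative instead of $\lambda_1$ very positive.

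First, unpack Definition \ref{def: N_c} for $B(KN)=[-KN,KN]\boxtimes[-KN,KN]$ and step size $\delta N$. The event $\mathcal{N}_o(\lambda,\delta,K,N)$ is the intersection of at most $M=2\lceil 2K/\delta\rceil+2$ events, each of which asks for a $p_\lambda(N)$-open crossing of a parallelogram $R_j$ of dimensions $2KN\times\lfloor\delta N\rfloor$ (in the easy direction) or $\lfloor\delta N\rfloor\times 2KN$. Write this intersection as $\bigcap_{j=1}^M E_j$, where $E_j=\mathcal{H}_o(R_j)$ (or $\mathcal{V}_o(R_j)$, handled the same way by symmetry).

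For each fixed $j$, the parallelogram $R_j$ is a translate of one of a constant-size collection of parallelograms with aspect ratio $2K/\delta$ or $\delta/(2K)$ and diameter of order $N$, and the crossing is asked in the easy direction. Apply Corollary \ref{cor: set prob of crossing} with $a=2K$, $b=\delta$ (and the translation-invariance of $\mathbb{P}_{p_\lambda(N)}$, together with the fact that changing $B(KN)$'s location does not affect the validity of the corollary): for any prescribed threshold $\eta\in(0,1)$ we can choose $\lambda_1=\lambda_1(\eta,K,\delta)$ and $N_1=N_1(\eta,K,\delta)$ so that $\mathbb{P}(E_j^c)<\eta$ for every $j$ and every $N\geq N_1$. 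Take $\eta=\varepsilon/M$.

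A union bound then gives
\[
\mathbb{P}\bigl(\mathcal{N}_o(\lambda_1,\delta,K,N)^c\bigr)
\leq \sum_{j=1}^M \mathbb{P}(E_j^c) \leq M\cdot \frac{\varepsilon}{M}=\varepsilon,
\]
which is the desired bound. For $\mathcal{N}_c$, the same argument using the second inequality of Corollary \ref{cor: set prob of crossing} produces $\lambda_2=\lambda_2(\varepsilon,\delta,K)<0$ and $N_2$. There is no real obstacle here; the only thing to check carefully is the bookkeeping for the number $M$ of strips (which depends on $\delta$ and $K$ but not on $N$), and the mild extension of Corollary \ref{cor: set prob of crossing} to arbitrary translates of the parallelograms, which is immediate from the translation invariance of $\mathbb{P}_{p_\lambda(N)}$.
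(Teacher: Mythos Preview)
Your proof is correct. One small slip: the crossings required by Definition~\ref{def: N_c} are in the \emph{hard} direction (a vertical crossing of a thin vertical strip of width $\lfloor\delta N\rfloor$ and height $2KN$, and similarly for the horizontal strips), not the easy direction. This does not affect your argument, since Corollary~\ref{cor: set prob of crossing} applies to any fixed aspect ratio $a,b>0$.

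The paper's proof is even terser: it invokes Corollary~\ref{cor: set prob of crossing} together with FKG (Theorem~\ref{thm: FKG}) rather than a union bound. Both routes work here because the number $M$ of crossing events depends only on $K$ and $\delta$, not on $N$. The FKG route uses that all the crossing events are increasing (respectively decreasing, for $\mathcal{N}_c$), giving $\mathbb{P}(\bigcap_j E_j)\geq\prod_j\mathbb{P}(E_j)\geq(1-\eta)^M$; your union bound gives $\mathbb{P}(\bigcap_j E_j)\geq 1-M\eta$ and does not need any monotonicity. For this lemma the difference is cosmetic.
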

\begin{proof}
[Proof of Lemma \ref{lem: there is a net}] This is a consequence
of Corollary $\ref{cor: set prob of crossing}$ and the FKG inequality
(Theorem \ref{thm: FKG}).
\end{proof}

\section{\label{sec: pf main}Proof of the main results}

We prove our main results Theorem \ref{thm: main} and Corollary \ref{cor: macroscopic clusters}
in this section assuming Proposition \ref{prop: active diameter}
and \ref{prop: big then freeze}.
\begin{defn}
In the $N$-parameter frozen percolation process we call a vertex
\emph{frozen} at some time $t\in\left[0,1\right]$, if either it or
one of its neighbours have an open cluster with diameter bigger than
$N$ at time $t.$ If a site is not frozen at time $t,$ then we say
it is \emph{active} at time $t.$ Note that both frozen and active
sites can be open or closed. We say that $F$ is a \emph{(open) frozen
cluster} at time $t\in\left[0,1\right]$ if it is a connected component
of the open vertices at time $t$ with $\diam\left(F\right)\geq N.$
In the case where $t=1,$ we simply say that $F$ is a frozen cluster.
\end{defn}
Recall Definition \ref{def: N_c}. We observe the following.

\begin{obs} \label{obs: closed grid-> no freezing}Let $K>0$ and
$N\in\mathbb{N}.$ Then in the $N$-parameter frozen percolation process
there is no frozen cluster at time $p_{\lambda}\left(N\right)$ in
$B\left(KN\right)$ on the event $\mathcal{N}_{c}\left(\lambda,1/6,K+2,N\right).$
Hence on $\mathcal{N}_{c}\left(\lambda,1/6,K+2,N\right),$ a vertex
in $B\left(KN\right)$ is open (closed) in the $N$-parameter frozen
percolation process at time $p_{\lambda}\left(N\right)$ if and only
if it is $p_{\lambda}\left(N\right)$-open ($p_{\lambda}\left(N\right)$-closed).

\end{obs}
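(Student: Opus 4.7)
The plan is to use the mechanical barrier provided by the $p_\lambda(N)$-closed net to confine every $p_\lambda(N)$-open cluster that meets $B(KN)$ to a cell of $L^\infty$-diameter strictly less than $N$, which will rule out freezing near $B(KN)$ and force the frozen process to coincide with the percolation process on $B(KN)$ throughout $[0,p_\lambda(N)]$.

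First I would observe that every vertex of the closed net has $\tau > p_\lambda(N)$ and is therefore closed in the frozen process at every time $s \leq p_\lambda(N)$, regardless of what happens elsewhere, since the freezing rule can only ever prevent a vertex from opening, never cause it to open earlier. Next I would check the purely geometric fact that two $p_\lambda(N)$-closed vertical crossings living in adjacent vertical strips, together with two such horizontal crossings in adjacent horizontal strips, bound a ``cell'' of $L^\infty$-diameter at most $2\lfloor N/6\rfloor \leq N/3 < N$, and that these cells tile $B((K+2)N)$ up to the incomplete border strips. The ``$+2$'' buffer is then used to point out that for every $v \in B(KN)$ the cell containing $v$, and also the cells containing the neighbours of $v$, lie inside $B(KN+1+N/3) \subset B((K+2)N)$ and therefore have their full topological boundary inside the closed net.

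From this point the conclusion follows in two stages. First, since a $p_\lambda(N)$-open path cannot traverse a $p_\lambda(N)$-closed vertex, the $p_\lambda(N)$-open cluster of any $v \in B(KN)$ is contained in its confining cell, hence has diameter $< N$; the frozen-process cluster of $v$ at time $p_\lambda(N)$ is a subset of this cluster, so it also has diameter $< N$, which immediately gives the first assertion. Second, to get the equivalence of open/closed status, I would run a simple induction over the finitely many attempted opening times $s_0 < s_1 < \cdots$ in $[0, p_\lambda(N)]$: at each $s_k$ the frozen-process clusters intersecting $B(KN+1)$ are subsets of the corresponding $s_k$-open clusters (which by the previous step are already trapped in cells of diameter $< N$), so the ``all neighbouring clusters have diameter $< N$'' test is automatically passed for every vertex of $B(KN+1)$, and every $v \in B(KN)$ with $\tau_v \leq p_\lambda(N)$ opens in the frozen process exactly at time $\tau_v$.

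The only non-routine point I anticipate is the bookkeeping around the ``$+2$'' buffer: this is precisely what is needed to make sure that the cell containing a given $v \in B(KN)$ (and the cells containing its neighbours) are entirely surrounded by the closed net, so that no sneaky escape around the net through $\partial B((K+2)N)$ is possible. Everything else reduces to the standard planar fact that in site percolation a closed circuit site-separates open paths, combined with the straightforward time induction above.
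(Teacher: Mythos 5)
Your proof is correct. The paper records this as an observation without supplying a written proof, and your argument is exactly the intended reasoning: the net vertices stay closed in the frozen process throughout $[0,p_\lambda(N)]$ since freezing can only inhibit openings, the cells they bound have $L^\infty$-diameter $<N$ and (thanks to the $+2$ buffer) every cell touching $B(KN+1)$ lies fully inside $B((K+2)N)$, so no cluster meeting $B(KN)$ can reach diameter $N$ by time $p_\lambda(N)$, and the induction over the opening attempts then forces the frozen process to agree with $p_\lambda(N)$-percolation on $B(KN)$.
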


We show that the number of frozen clusters intersecting $B\left(KN\right)$
in the $N$-parameter frozen percolation process is tight in $N.$
\begin{lem}
\label{lem: few frozen clusters}Let $K>0$ and $N\in\mathbb{N}.$
Let $FC\left(t,K,N\right)$ denote the number of frozen clusters intersecting
$B\left(KN\right)$ at time $t\in\left[0,1\right]$ in the $N$-parameter
frozen percolation process. Then for all $\varepsilon>0$ there exists
$L=L\left(\varepsilon,K\right)$ and $N_{0}=N_{0}\left(\varepsilon,K\right)$
such that 
\[
\mathbb{P}_{N}\left(FC\left(1,K,N\right)>L\right)<\varepsilon
\]
for $N\geq N_{0}.$\end{lem}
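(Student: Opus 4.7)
The plan is to reduce the random variable $FC(1,K,N)$ to a quantity which can be controlled by the near-critical arm estimate of Lemma~\ref{lem: not too many small clusters at lambda} applied at a single, fixed subcritical near-critical parameter $p_{\lambda_{1}}(N)$.

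First I would fix $\lambda_{1}<0$ with $|\lambda_{1}|$ large enough, using Lemma~\ref{lem: there is a net}, so that the event $\mathcal{E}_{1}:=\mathcal{N}_{c}(\lambda_{1},1/6,K+2,N)$ satisfies $\mathbb{P}(\mathcal{E}_{1})\geq 1-\varepsilon/2$ for every sufficiently large $N$. On $\mathcal{E}_{1}$, Observation~\ref{obs: closed grid-> no freezing} yields that no frozen cluster intersects $B(KN)$ at time $p_{\lambda_{1}}(N)$ and that the frozen-percolation configuration in $B(KN)$ at that time coincides with the $p_{\lambda_{1}}(N)$-configuration. Since in the frozen process any vertex that has opened stays open forever, every $p_{\lambda_{1}}(N)$-open vertex of $B(KN)$ is open in the frozen percolation at time $1$; equivalently, every vertex of $B(KN)$ closed in the frozen percolation at time $1$ is $p_{\lambda_{1}}(N)$-closed.

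Next, by planarity of site percolation on $\mathbb{T}$, any two distinct open clusters of the frozen percolation at time $1$ that are both contained in $B(KN)$ are separated by a closed circuit in a slight enlargement of $B(KN)$; by the inclusion just established, the sites of this circuit are all $p_{\lambda_{1}}(N)$-closed. The main obstacle is now to convert the statement ``there are $L$ disjoint frozen clusters intersecting $B(KN)$, each of diameter at least $N$'' into the statement ``there are at least $L/C_{K}$ disjoint $p_{\lambda_{1}}(N)$-closed arms in a common annulus $A(aN,bN)$ around the origin'' for some constants $a,b,C_{K}$ depending only on $K$. My plan for this is a geometric pigeonhole argument: since each frozen cluster has diameter at least $N$, every separating closed circuit has diameter at least $N$, hence a radial extent of order $N$ in some angular sector around the origin; distributing the $L$ circuits over $O(K)$ angular sectors yields at least $\lfloor L/C_{K}\rfloor$ disjoint $p_{\lambda_{1}}(N)$-closed arms in a fixed annulus. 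Frozen clusters that extend outside $B((K+1)N)$ are treated separately, as they directly furnish open arms across $A(KN,(K+1)N)$ whose number is controlled by the colour-symmetric analogue of Lemma~\ref{lem: not too many small clusters at lambda} on the triangular lattice.

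Finally, by Lemma~\ref{lem: not too many small clusters at lambda} one can choose $L=L(\lambda_{1},K,\varepsilon)$ large enough that the probability of having at least $\lfloor L/C_{K}\rfloor$ disjoint $p_{\lambda_{1}}(N)$-closed arms in $A(aN,bN)$ is at most $\varepsilon/2$. A union bound over $\mathcal{E}_{1}^{c}$ and this arm event then gives $\mathbb{P}_{N}(FC(1,K,N)>L)<\varepsilon$ for $N$ sufficiently large, which is the desired tightness.
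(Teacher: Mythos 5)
Your overall strategy coincides with the paper's: fix a subcritical near-critical parameter $\lambda$ via the closed net of Lemma \ref{lem: there is a net} and Observation \ref{obs: closed grid-> no freezing}, so that with probability at least $1-\varepsilon/2$ every site of $B(KN)$ that is closed in the frozen process at time $1$ is already $p_{\lambda}(N)$-closed; then turn a large number $L$ of frozen clusters into a large number of disjoint $p_{\lambda}(N)$-closed arms across an annulus of scale $N$; and conclude with Lemma \ref{lem: not too many small clusters at lambda}. Steps one and three are fine, including your remark that the colour-symmetric analogue of Lemma \ref{lem: not too many small clusters at lambda} also holds at $p_{\lambda}(N)$ by Corollary \ref{cor: posprob of crossing}.

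The gap is in the localization step. You want order $L/C_K$ disjoint $p_{\lambda}(N)$-closed arms in a \emph{single} annulus $A(aN,bN)$ centred at the origin, and your mechanism is a pigeonhole over angular sectors, using that each separating closed circuit has diameter at least $N$ and ``hence a radial extent of order $N$ in some angular sector around the origin.'' This does not hold: a closed circuit surrounding a frozen cluster that lives far from the origin, say entirely inside $B((K+1)N)\setminus B((K-1)N)$, stays in that same region and need not meet $\partial B(aN)$ or $\partial B(bN)$ at all, no matter how $a<b$ are chosen; many such clusters crowded in one part of $B(KN)$ then produce no arms in $A(aN,bN)$, so the arm event you appeal to can fail even though $FC(1)>L$. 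Your separate treatment of clusters leaving $B((K+1)N)$ does not cure this, since the offending clusters are entirely inside $B((K+1)N)$. The paper avoids the problem by replacing the single annulus around the origin with a \emph{covering} of $B(KN)$ by the $O(K^2)$ translated annuli $A_z=A(\lfloor N/20\rfloor z;\lfloor N/20\rfloor,\lfloor N/10\rfloor)$, $z\in B(\lceil 20K\rceil)$: the inner boxes cover $B(KN)$, and the outer box has diameter strictly less than $N$, so every frozen cluster intersecting $B(KN)$ has a vertex in some inner box and, being too large to fit in the outer box, must cross that $A_z$; if $k\geq 2$ distinct frozen clusters cross a common $A_z$, then $A_z$ carries at least $k$ disjoint $p_{\lambda}(N)$-closed arms, so $FC(1)\leq\sum_z(1\vee l_z)$ with $l_z$ the number of disjoint $p_{\lambda}(N)$-closed arms in $A_z$. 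A union bound over the $O(K^2)$ translates plus Lemma \ref{lem: not too many small clusters at lambda} at a single $A_z$ (using translation invariance) then yields the tightness. You should replace the angular-sector pigeonhole by this small-annulus covering.
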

\begin{proof}
[Proof of Lemma \ref{lem: few frozen clusters}] By Lemma \ref{lem: there is a net}
we set $\lambda=\lambda\left(\varepsilon,K\right)\in\mathbb{R}$ such
that 
\begin{align}
\mathbb{P}_{N}\left(\mathcal{N}_{c}\left(\lambda,1/6,K+4,N\right)\right) & >1-\frac{1}{2}\varepsilon\label{eq: pf few frozen clusters - 1}
\end{align}
for $N\geq N_{1}\left(\varepsilon,K\right).$ Let $F$ be an open
frozen cluster which intersects $B\left(KN\right).$ From Observation
\ref{obs: closed grid-> no freezing} we get the vertices of $\partial F$
are closed at $p_{\lambda}\left(N\right)$ in the $N$-parameter percolation
process on the event $\mathcal{N}_{c}\left(\lambda,1/6,K+4,N\right).$

Let us cover the parallelogram $B\left(KN\right)$ with the annuli
\[
A_{z}=A\left(\left\lfloor N/20\right\rfloor z;\left\lfloor N/20\right\rfloor ,\left\lfloor N/10\right\rfloor \right)\mbox{ with }z\in B\left(\left\lceil 20K\right\rceil \right).
\]
Suppose that there is an open frozen cluster in the $N$-parameter
frozen percolation which has a vertex in $B\left(KN\right).$ The
construction of the annuli above gives that there is $z\in B\left(\left\lceil 20K\right\rceil \right)$
such that $B\left(\left\lfloor N/20\right\rfloor z;\left\lfloor N/20\right\rfloor \right),$
the inner parallelogram of $A_{z},$ contains a vertex of this open
frozen cluster. Since the diameter of $B\left(\left\lfloor N/20\right\rfloor z;\left\lfloor N/10\right\rfloor \right)$
is less than $N,$ this cluster has to cross the annulus $A_{z}.$
Hence for each open frozen cluster intersecting $B\left(KN\right),$
we find at least one open frozen crossing of an annulus $A_{z}$.
Moreover, if there are $k\geq2$ different frozen clusters crossing
the annulus $A_{z},$ then there are at least $k$ disjoint closed
frozen arms which separate the open frozen clusters in $A_{z}$ at
time $1.$ By the arguments above, these arms are $p_{\lambda}\left(N\right)$-closed.
Thus the number of different frozen clusters intersecting $B\left(\left\lfloor N/20\right\rfloor z;\left\lfloor N/20\right\rfloor \right)$
is bounded above by $1\vee l_{z},$ where $l_{z}$ is the number of
disjoint $p_{\lambda}\left(N\right)$-closed arms of $A_{z}.$ Hence
by the translation variance of the $N$-parameter frozen percolation
process we have 
\begin{align}
\mathbb{P}_{N}\left(FC\left(1,K,N\right)\geq L,\,\mathcal{N}_{c}\left(\lambda,1/24\right)\right) & \leq\mathbb{P}_{p_{\lambda}\left(N\right)}\left(\sum_{z\in B\left(\left\lceil 20K\right\rceil \right)}\left(1\vee l_{z}\right)\geq L\right)\nonumber \\
 & \leq\mathbb{P}_{p_{\lambda}\left(N\right)}\left(\exists z\in B\left(\left\lceil 20K\right\rceil \right)\mbox{ such that }l_{z}\geq\left(2\left\lceil 20K\right\rceil +1\right)^{-2}L\right)\nonumber \\
 & \leq\left(2\left\lceil 20K\right\rceil +1\right)^{2}\mathbb{P}_{p_{\lambda}\left(N\right)}\left(l_{0}\geq\left(2\left\lceil 20K\right\rceil +1\right)^{-2}L\right)\label{eq: pf few frozen clusters - 2}
\end{align}

By Lemma \ref{lem: not too many small clusters at lambda} we set
$L=L\left(\varepsilon,K\right)\geq\left(2\left\lceil 20K\right\rceil +1\right)^{2}$
and $N_{2}=N_{2}\left(\varepsilon,K\right)$ such that
\[
\mathbb{P}_{p_{\lambda}\left(N\right)}\left(l_{0}\geq L/200^{2}\right)<\frac{1}{2}\left(2\left\lceil 20K\right\rceil +1\right)^{-2}\varepsilon
\]
for $N\geq N_{2}.$ This combined with (\ref{eq: pf few frozen clusters - 2})
gives that 
\begin{equation}
\mathbb{P}_{N}\left(FC\left(1,K,N\right)\geq L,\,\mathcal{N}_{c}\left(\lambda,1/6,K+4,N\right)\right)<\frac{1}{2}\varepsilon\label{eq: pf few frozen clusters - 3}
\end{equation}
for $N\geq N_{2}.$ We set $N_{0}:=N_{1}\vee N_{2}.$ A combination
of (\ref{eq: pf few frozen clusters - 1}) and (\ref{eq: pf few frozen clusters - 3})
finishes the proof of Lemma \ref{lem: few frozen clusters}.\end{proof}
\begin{defn}
\label{def: C_a} For $v\in V$ and $\lambda\in\mathbb{R}$ let $\mathcal{C}_{a}\left(v;\lambda\right)=\mathcal{C}_{a}\left(v;\lambda,N\right)$
denote the active cluster of $v$ in the $N$-parameter frozen percolation
process at time $p_{\lambda}\left(N\right).$ We omit the first argument
from the notation above when $v=\underline{0}.$
\end{defn}
We state the two propositions below which play a crucial role in the
proof of Theorem \ref{thm: main}. The proof of these propositions
are rather technical, so we postpone them to the next section. The
first proposition shows that for $\alpha>0,$ it is unlikely to have
an active cluster at time $p_{\lambda}\left(N\right)$ which intersects
$B\left(KN\right)$ and has diameter close to $\alpha N.$ 
\begin{prop}
\label{prop: active diameter}For all $\lambda\in\mathbb{R}$ and
$\varepsilon,K,\alpha>0,$ there exist $\theta=\theta\left(\lambda,\alpha,\varepsilon,K\right)\in\left(0,1/2\right)$
and $N_{0}=N_{0}\left(\lambda,\alpha,\varepsilon,K\right)$ such that

\[
\mathbb{P}_{N}\left(\exists v\in B\left(KN\right)\mbox{ s.t. }\diam\left(\mathcal{C}_{a}\left(v;\lambda\right)\right)\in\left(\left(\alpha-\theta\right)N,\left(\alpha+\theta\right)N\right)\right)<\varepsilon
\]
for $N\geq N_{0}.$
\end{prop}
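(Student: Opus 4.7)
The plan is to combine a geometric ``bounding box is pinned'' event with a non-concentration estimate for the position of a ``lowest closed crossing'' in certain parallelograms; the latter, which is the main obstacle, is the content of the upcoming Sections \ref{sub: lowest of lowest in parallelograms}--\ref{sub: lowesr of lowest in reular regions}.

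First I would set up the geometry. Suppose $v\in B(KN)$ is active at time $p_\lambda(N)$ and $diam(\mathcal{C}_a(v;\lambda))\in((\alpha-\theta)N,(\alpha+\theta)N)$; the $L^{\infty}$-bounding box of the active cluster has at least one side with length in the same window. By the rotational symmetry of the lattice I may assume this is the horizontal side, i.e.\ $x_{\max}-x_{\min}\in((\alpha-\theta)N,(\alpha+\theta)N)$. I would then apply Lemma \ref{lem: there is a net} with $\lambda_{-}<\lambda$ sufficiently negative to secure, with probability at least $1-\varepsilon/3$, a $p_{\lambda_-}(N)$-closed $\eta N$-net inside $B((K+4)N)$ for some small $\eta=\eta(\alpha)\in(0,1)$; this rules out any freezing in $B((K+2)N)$ before time $p_{\lambda_-}(N)$ and, combined with Observation \ref{obs: closed grid-> no freezing} applied cell by cell, supplies boundary data on each net cell so that inside every cell the frozen-percolation dynamics agrees with pure $p_\lambda(N)$-site-percolation.

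The crucial consequence is that, on this good event, there must exist $p_\lambda(N)$-closed vertical paths immediately to the left of $x_{\min}$ and immediately to the right of $x_{\max}$, each crossing a tall thin parallelogram that blocks the active cluster from extending horizontally beyond $[x_{\min},x_{\max}]$. Sections \ref{sub: lowest of lowest in parallelograms}--\ref{sub: lowesr of lowest in reular regions} will establish the following non-concentration statement: conditional on the existence of such a $p_\lambda(N)$-closed vertical crossing and on arbitrary mixed open/closed boundary conditions outside the parallelogram, the $x$-coordinate of the leftmost such crossing falls in any prescribed $\theta N$-window with probability at most $f(\theta)$, where $f(\theta)\to 0$ uniformly in $N$ as $\theta\to 0$.

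A union bound then finishes the argument. There are $O(N^{2})$ candidate positions for $(x_{\min},x_{\max})$ satisfying the width constraint, and $O(N)$ vertical positions; the baseline probability that a near-critical cluster of horizontal extent $\asymp\alpha N$ is present at a given location contributes an $O(N^{-2})$ factor by Corollary \ref{cor: posprob of crossing}, Proposition \ref{prop: quasi-multiplivativity} and Lemma \ref{lem: near critical arms}, while the non-concentration estimate contributes the factor $f(\theta)$. Multiplying, the total bound reads $C(\lambda,\alpha,K)\cdot f(\theta)$ independent of $N$, and choosing $\theta$ small enough forces this below $\varepsilon/2$; combined with the $\varepsilon/3$ from the net reduction this completes the proof. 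The main difficulty, to which essentially all of Section \ref{sec: pf of prop active diameter} is devoted, is the non-concentration statement: it rests on a careful RSW-style analysis of lowest closed crossings in parallelograms with mixed open/closed boundary conditions.
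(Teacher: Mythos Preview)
Your proposal has a genuine gap at its core: the claim that ``there must exist $p_\lambda(N)$-closed vertical paths immediately to the left of $x_{\min}$ and immediately to the right of $x_{\max}$'' is false. The boundary $\partial\mathcal{C}_a(v;\lambda)$ consists of vertices that are closed \emph{in the frozen percolation process at time $p_\lambda(N)$}, and on the closed-net event this only guarantees that they are $p_{\lambda_-}(N)$-closed, not $p_\lambda(N)$-closed. Between times $p_{\lambda_-}(N)$ and $p_\lambda(N)$ frozen clusters can and typically do form; a vertex $w$ with $\tau_w\in(p_{\lambda_-}(N),p_\lambda(N)]$ that tried to open after a neighbouring cluster froze is closed at time $p_\lambda(N)$ in the dynamics, yet $p_\lambda(N)$-open. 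So the object whose position you want to control is not a $p_\lambda(N)$-closed crossing at all, and the non-concentration estimate of Sections~\ref{sub: lowest of lowest in parallelograms}--\ref{sub: lowesr of lowest in reular regions} cannot be applied as you describe. The sentence ``inside every cell the frozen-percolation dynamics agrees with pure $p_\lambda(N)$-site-percolation'' is exactly where this is swept under the rug.

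The paper's proof confronts precisely this difficulty. Near the extremal point $x$ there is, with high probability, a unique frozen cluster $F$ which froze at a \emph{random} time $p_{\lambda_F}(N)$ with $\lambda_F\in[\lambda_0,\lambda]$. Through a long arm-hunting procedure (Steps~2--6) the paper builds an \emph{open-closed} circuit $\mathcal{OC}$ around $x$ whose open arc lies in $F$ and whose closed arc lies in $\partial\mathcal{C}_a$, and---this is the crux, Step~3---shows that altering the $\tau$-values inside this circuit cannot change the frozen percolation outside it up to time $p_\lambda(N)$. Only this decoupling legitimises conditioning on everything outside the enclosed region $\mathcal{R}$ (including $y$, $\lambda_F$, and the shape of $\mathcal{R}$) and treating the configuration inside as i.i.d.\ Bernoulli$(p_{\lambda_F}(N))$; Corollary~\ref{cor: lowest of lowest regular regions} is then applied at the random parameter $\lambda_F$ to say that the lowest vertex with two closed arms to $r_c$ lies in a given $\theta N$-strip with probability $O(k^{-1})$. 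Your union-bound bookkeeping also does not close as written: there is no honest $O(N^{-2})$ factor for ``a near-critical cluster of extent $\asymp\alpha N$ present at a given location'', and the paper avoids any such counting by arguing conditionally and summing only over the $O(\alpha_3^{-2})$ coarse-grained locations of $x$.
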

The second proposition claims that if there is a vertex $v$ such
that $\diam\left(\mathcal{C}_{a}\left(v;\lambda_{1},N\right)\right)\geq\left(1+\theta\right)N$
then some part of $\mathcal{C}_{a}\left(v;\lambda_{1},N\right)$ freezes
`soon`:
\begin{prop}
\label{prop: big then freeze} Let $\theta\in\left(0,1\right),$ $\varepsilon>0$
and $\lambda_{1},K,\in\mathbb{R}.$ Recall the notation $FC\left(t,K+2,N\right)$
from Lemma \ref{lem: few frozen clusters}. There exists $\lambda_{2}=\lambda_{2}\left(\lambda_{1},\theta,\varepsilon\right)$
and $N_{0}=N_{0}\left(\lambda_{1},\theta,\varepsilon\right)$ such
that the probability of the intersection of the events
\begin{itemize}
\item $\exists v\in B\left(KN\right)$ such that $\diam\left(\mathcal{C}_{a}\left(v;\lambda_{1},N\right)\right)\geq\left(1+\theta\right)N,$
and 
\item none of the clusters intersecting $B\left(\left(K+2\right)N\right)$
freeze in the time interval $\left(p_{\lambda_{1}}\left(N\right),p_{\lambda_{2}}\left(N\right)\right],$
i.e. 
\[
FC\left(p_{\lambda_{1}}\left(N\right),K+2,N\right)=FC\left(p_{\lambda_{2}}\left(N\right),K+2,N\right)
\]

\end{itemize}
is less than $\varepsilon$ for $N\geq N_{0}.$
\end{prop}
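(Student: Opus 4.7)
The plan is an argument by contradiction. Let $A_1$ denote the event that some $v\in B(KN)$ has $diam(\mathcal{C}_a(v;\lambda_1,N))\geq(1+\theta)N$, and let $A_2$ denote the no-freezing event for clusters intersecting $B((K+2)N)$ during the interval $(p_{\lambda_1}(N),p_{\lambda_2}(N)]$. The crucial observation is that on $A_2$ the $N$-parameter frozen percolation inside $B((K+2)N)$ evolves exactly as ordinary site percolation throughout $(p_{\lambda_1}(N),p_{\lambda_2}(N)]$, since freezing is the only obstruction to the rule ``open at time $\tau_v$'' and $A_2$ forbids it. Hence on $A_1\cap A_2$ the configuration inside $B((K+2)N)$ at time $p_{\lambda_2}(N)$ agrees with the $p_{\lambda_2}(N)$-percolation configuration there. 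The aim is to show that, once $\lambda_2$ is large enough, this forces with high probability an open cluster of diameter at least $N$ inside $B((K+2)N)$, which would itself have frozen during $(p_{\lambda_1}(N),p_{\lambda_2}(N)]$ and contradict $A_2$.

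To produce such a macroscopic open cluster I would apply the deterministic Lemma~\ref{lem: det gridpath} to $\mathcal{C}_a(v;\lambda_1,N)\cap B((K+2)N)$. This is a connected active region of $L^\infty$ diameter at least $(1+\theta)N$: either the whole cluster is contained in $B((K+2)N)$, or it exits the box, in which case the sub-cluster joining $v\in B(KN)$ to $\partial B((K+2)N)$ already spans $L^\infty$ distance at least $2N$. The lemma should then supply a ``thick path'' $R$ inside this region, namely a chain of overlapping parallelograms of side-length of order $\theta N$ whose two extremes are at $L^\infty$ distance at least $N$ apart. All vertices of $R$ are active at time $p_{\lambda_1}(N)$, and $R$ is measurable with respect to the configuration at that time; conditionally on that configuration, the opening times of the $p_{\lambda_1}(N)$-closed vertices of $R$ remain independent and uniform on $(p_{\lambda_1}(N),1]$.

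For $\lambda_2$ large in terms of $\lambda_1$, $\theta$ and $\varepsilon$ (and the uniform bound on the number of parallelograms composing $R$), Corollary~\ref{cor: set prob of crossing} gives that each parallelogram of $R$ admits a $p_{\lambda_2}(N)$-open crossing in its hard direction with probability arbitrarily close to $1$; combining these crossings via FKG (Theorem~\ref{thm: FKG}) through the overlaps of the chain produces a $p_{\lambda_2}(N)$-open path traversing $R$ with probability at least $1-\varepsilon/2$. On $A_2$ this is also an open path in the $N$-parameter frozen percolation at time $p_{\lambda_2}(N)$, and its endpoints lie at $L^\infty$ distance at least $N$, so it belongs to an open cluster of diameter at least $N$. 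Such a cluster must have frozen at some moment in $(p_{\lambda_1}(N),p_{\lambda_2}(N)]$, contradicting $A_2$. Hence $\mathbb{P}(A_1\cap A_2)<\varepsilon$ for $N\geq N_0$.

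The main obstacle is the deterministic Lemma~\ref{lem: det gridpath}: it must produce a thick path with a uniformly controllable structure (number, side-length and aspect ratio of parallelograms) inside an arbitrary connected active region of diameter at least $(1+\theta)N$. Once that combinatorial statement is available, the probabilistic part decouples cleanly, since the measurability of $R$ with respect to the time-$p_{\lambda_1}(N)$ configuration and the independence of the subsequent opening times reduce the crossing estimate to a standard near-critical RSW computation applied parallelogram by parallelogram.
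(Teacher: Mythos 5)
Your outline reproduces the right skeleton of the paper's argument -- find a thick path inside the active cluster, force a large open crossing through it with a near-supercritical parameter $\lambda_2$, and conclude that something must have frozen -- but there is a genuine gap at the step you call ``the main obstacle.''

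You apply Lemma \ref{lem: det gridpath} to ``an arbitrary connected active region of diameter at least $(1+\theta)N$,'' expecting it to supply a thick gridpath. But the lemma does nothing of the sort for arbitrary connected sets: it is stated for $(a,b)$-\emph{nice} subgraphs (Definition \ref{def: (a,b)-nice}), and the niceness hypothesis is essential. A thin spiraling region of macroscopic diameter contains no parallelogram of mesoscopic side length, so no amount of combinatorial ingenuity can extract a gridpath from it. The real work in the paper's proof of Proposition \ref{prop: big then freeze} is therefore not the deterministic lemma but Claim \ref{calim: pf big then freeze}: on a high-probability event $E = \mathcal{N}_{c}(\lambda_0,\ldots)\cap\mathcal{NA}(2\eta,\theta/10,\ldots)$, the active cluster $\mathcal{C}_a(u;\lambda_1,N)$ is $(\eta N,\frac{\theta}{10}N)$-nice in $B(u;2N)$. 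This in turn rests on the near-critical arm estimates of Corollary \ref{cor: no many arms}, hence on the fact that the relevant mixed arm events (six full-plane arms, five arms with one forced into a half-plane, four arms with three in a half-plane) have exponents strictly larger than $2$ (Remark \ref{rem: ord big arm exp}, Proposition \ref{prop: mixed arm exp}, and ultimately the winding argument in the Appendix). Without establishing $(a,b)$-niceness, the approach does not close, and that probabilistic input is entirely absent from your proposal.

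Two smaller points. First, even for a nice cluster, $\mathcal{C}_a(v;\lambda_1,N)\cap B((K+2)N)$ need not itself be nice, which is why the paper uses the local version, Lemma \ref{lem: local det gridpath}, applied to the connected component inside $B(u;2N)$. Second, for the crossing step the paper uses the net event $\mathcal{N}_o(\lambda_2,\eta/2,K+4,N)$ of Lemma \ref{lem: there is a net}, which is defined over the whole box and hence couples automatically to whatever (random) gridpath emerges; your parallelogram-by-parallelogram crossing plus FKG would work but needs care about the random location and number of grid cells, and when done cleanly it essentially reconstructs the net event. The net is the cleaner route, but this is a presentational difference, not a gap.
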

Before we turn to the proof of our main results we make a remark on
how to adapt the proofs for the $N$-parameter frozen bond percolation
process on the square lattice.
\begin{rem}
\label{rem: mod to square} The arguments in Section \ref{sec: pf main},
\ref{sec: pf prop big then freeze}, \ref{sec: pf of prop active diameter}
and in the Appendix can be easily adapted to the $N$-parameter frozen
bond percolation on the square lattice. Some care is required when
we use Corollary \ref{cor: no many arms}: As we already noted in
Remark \ref{rem: arms in square lattice}, the direct analogue of
Corollary \ref{cor: no many arms} does not hold on the square lattice.
However, one can check that the version of Corollary \ref{cor: no many arms}
which was proposed in Remark \ref{rem: arms in square lattice} is
enough for the proofs appearing in Section \ref{sec: pf main}, \ref{sec: pf prop big then freeze},
\ref{sec: pf of prop active diameter}.
\end{rem}

\subsection{\label{sub: pf main}Proof of Theorem \ref{thm: main}}
\begin{proof}
[Proof of Theorem \ref{thm: main}] The proof follows the following
informal strategy. Consider the following procedure. We set $\lambda_{1}=0.$
We look at the $N$-parameter percolation process at time $p_{\lambda_{1}}\left(N\right).$
We have two cases.

In the first case all the active clusters at time $p_{\lambda_{1}}\left(N\right)$
intersecting $B\left(KN\right)$ have diameter less than $N.$ Hence
no cluster intersecting $B\left(KN\right)$ can freeze after $p_{\lambda_{1}}\left(N\right).$
We terminate the procedure. 

In the second case there is $v\in B\left(KN\right)$ such that the
active cluster $\mathcal{C}_{a}\left(v;\lambda_{1},N\right)$ has
diameter at least $N.$ Using Proposition \ref{prop: active diameter}
we set $\theta_{1}$ such that the diameter of this cluster is at
least $\left(1+\theta_{1}\right)N$ with probability close to $1.$
If $\diam\left(\mathcal{C}_{a}\left(v;\lambda_{1},N\right)\right)\leq\left(1+\theta_{1}\right)N,$
then we stop the procedure. If $\diam\left(\mathcal{C}_{a}\left(v;\lambda_{1},N\right)\right)>\left(1+\theta_{1}\right)N,$
then using Proposition \ref{prop: big then freeze} we set $\lambda_{2}\geq\lambda_{1}$
such that some part of $\mathcal{C}_{a}\left(v;\lambda_{1},N\right)\cap B\left(\left(K+2\right)N\right)$
freezes in the time interval $\left[p_{\lambda_{1}}\left(N\right),p_{\lambda_{2}}\left(N\right)\right]$
with probability close to $1.$ If indeed some part of $\mathcal{C}_{a}\left(v;\lambda_{1},N\right)\cap B\left(\left(K+2\right)N\right)$
freezes in the time interval $\left[p_{\lambda_{1}}\left(N\right),p_{\lambda_{2}}\left(N\right)\right],$
then we iterate the procedure starting from time $p_{\lambda_{2}}\left(N\right).$
Otherwise we terminate the procedure.

Using Lemma \ref{lem: few frozen clusters} we set $L$ such that
the event where there are at least $L$ frozen clusters intersecting
$B\left(\left(K+2\right)N\right)$ at time $1$ has probability smaller
than $\varepsilon/2.$ In each step of the procedure either the procedure
stops, or the number of frozen clusters intersecting $B\left(\left(K+2\right)N\right)$
increases by at least $1.$ Hence the event that the procedure runs
for at least $L$ steps has probability at most $\varepsilon/2.$ 

Moreover, we set the parameters $\lambda_{i},\theta_{i}$ for $i\geq1$
above such that with probability at least $1-\varepsilon/2$ we terminate
the procedure when there are no active clusters intersecting $B\left(KN\right)$
with diameter at least $N.$ Thus with probability at least $1-\varepsilon$
the procedure stops within $L$ steps, and we stop when there are
no active clusters with diameter at least $N$ intersecting $B\left(KN\right).$
Hence $\lambda=\lambda_{L+1}$ satisfies the conditions of Theorem
\ref{thm: main}, which finishes the proof of Theorem \ref{thm: main}.

\medskip

Let us turn to the precise proof. By Lemma \ref{lem: few frozen clusters},
there is $L=L\left(\varepsilon,K\right)$ and $N_{1}'=N_{1}'\left(\varepsilon,K\right)$
such that 
\begin{equation}
\mathbb{P}_{N}\left(FC\left(1,K+2,N\right)\geq L\right)\leq\varepsilon/2,\label{eq: pf main - 1}
\end{equation}
where $F\left(t,K+2,N\right)$ counts the number of frozen clusters
intersecting $B\left(\left(K+2\right)N\right)$ at time $t\in\left[0,1\right].$ 

We define the deterministic sequence $\left(\lambda_{i},N_{i}',\theta_{i},N_{i}''\right)_{i\in\mathbb{N}}$
inductively as follows. We start by setting $\lambda_{1}=0.$ 

Suppose that we have already defined $\lambda_{i}$ for some $i\in\mathbb{N}.$
We use Proposition \ref{prop: active diameter} to set $\theta_{i}=\theta_{i}\left(\varepsilon\right)$
and $N_{i}''=N_{i}''\left(\varepsilon\right)$ such that 
\[
\mathbb{P}_{N}\left(\exists v\in B\left(KN\right)\mbox{ s.t. }\diam\left(\mathcal{C}_{a}\left(v,\lambda_{i}\right)\right)\in\left[N,\left(1+\theta_{i}\right)N\right)\right)<\varepsilon2^{-i-2}
\]
 for $N\geq N_{i}''.$

Suppose that we have already defined $\theta_{i}$ for some $i\in\mathbb{N}.$
Then by Proposition \ref{prop: big then freeze} we set $\lambda_{i+1}=\lambda_{i+1}\left(\varepsilon\right)$
and $N_{i+1}'=N_{i+1}'\left(\varepsilon\right)$ such that the probability
of the intersection of the events
\begin{itemize}
\item $\exists v\in B\left(KN\right)$ such that $\diam\left(\mathcal{C}_{a}\left(v;\lambda_{i}\right)\right)\geq\left(1+\theta_{i}\right)N,$
and 
\item $FC\left(p_{\lambda_{i}}\left(N\right),K+2,N\right)=FC\left(p_{\lambda_{i+1}}\left(N\right),K+2,N\right)$
\end{itemize}
is less than $2^{-i-2}\varepsilon$ for $N\geq N_{i+1}'.$ Note that
the event 
\[
\left\{ FC\left(p_{\lambda_{i}}\left(N\right),K+2,N\right)=FC\left(p_{\lambda_{i+1}}\left(N\right),K+2,N\right),\, FC\left(p_{\lambda_{i}}\left(N\right),K,N\right)<FC\left(1,K,N\right)\right\} 
\]
is a subset of the union of the events appearing in the definition
of $\theta_{i}$ and $\lambda_{i+1}$ for $i\geq1.$ Thus the construction
above gives that 
\begin{align}
\mathbb{P}_{N}\left(FC\left(p_{\lambda_{i}}\left(N\right),K+2,N\right)=FC\left(p_{\lambda_{i+1}}\left(N\right),K+2,N\right),\, FC\left(p_{\lambda_{i}}\left(N\right),K,N\right)<FC\left(1,K,N\right)\right) & \leq2^{-i-1}\varepsilon\label{eq: pf main - 2}
\end{align}
for $i\geq1.$

We set $N_{0}=\bigvee_{i=1}^{L+1}\left(N_{i}'\vee N_{i}''\right).$
By (\ref{eq: pf main - 1}) we have
\begin{align*}
\mathbb{P}_{N}(\mbox{a cluster intersecting} & \left.B\left(KN\right)\mbox{ freezes after time }p_{\lambda_{L+1}}\left(N\right)\right)\\
 & =\mathbb{P}_{N}\left(FC\left(p_{\lambda_{L+1}}\left(N\right),K,N\right)<FC\left(1,K,N\right)\right)\\
 & \leq\mathbb{P}_{N}\left(L<F\left(1,K+2,N\right)\right)+\mathbb{P}_{N}\left(\begin{array}{c}
FC\left(p_{\lambda_{L+1}}\left(N\right),K+2,N\right)\leq L\\
FC\left(p_{\lambda_{L+1}}\left(N\right),K,N\right)<FC\left(1,K,N\right)
\end{array}\right)\\
 & \leq\varepsilon/2+\mathbb{P}_{N}\left(\bigcup_{i=1}^{L+1}\left\{ \begin{array}{c}
FC\left(p_{\lambda_{i}}\left(N\right),K+2,N\right)=FC\left(p_{\lambda_{i+1}}\left(N\right),K+2,N\right)\\
FC\left(p_{\lambda_{i+1}}\left(N\right),K,N\right)<FC\left(1,K,N\right)
\end{array}\right\} \right)\\
 & \leq\varepsilon/2+\sum_{i=1}^{L+1}\mathbb{P}_{N}\left(\begin{array}{c}
FC\left(p_{\lambda_{i}}\left(N\right),K+2,N\right)=FC\left(p_{\lambda_{i+1}}\left(N\right),K+2,N\right)\\
FC\left(p_{\lambda_{i+1}}\left(N\right),K,N\right)<FC\left(1,K,N\right)
\end{array}\right)\\
 & \leq\varepsilon/2+\sum_{i=1}^{L+1}2^{-i-1}\varepsilon<\varepsilon
\end{align*}
for $N\geq N_{0}$ where we applied (\ref{eq: pf main - 2}) in the
last line. This finishes the proof of Theorem \ref{thm: main}.
\end{proof}

\subsection{\label{sub: pf main cor}Proof of Corollary \ref{cor: macroscopic clusters}}
\begin{proof}
[Proof of Corollary \ref{cor: macroscopic clusters} ] For $\lambda\in\mathbb{R}$
and $N\in\mathbb{N}$ let $NF\left(\lambda\right)=NF\left(\lambda,N\right)$
denote the event that no cluster intersecting $B\left(5N\right)$
freezes after time $p_{\lambda}\left(N\right).$ By Theorem \ref{thm: main}
there is $\lambda=\lambda\left(\varepsilon\right)$ and $N_{1}=N_{1}\left(\varepsilon\right)$
such that 
\begin{equation}
\mathbb{P}_{N}\left(NF\left(\lambda\right)\right)>1-\varepsilon/3\label{eq: pf main cor - 1}
\end{equation}
for $N\geq N_{1}.$

First we consider the case where the origin is in an open frozen cluster
at time $1,$ that is $\diam\left(C\left(1\right)\right)\geq N.$
Note that on the event $NF\left(\lambda\right),$ this frozen cluster
was formed before or at $p_{\lambda}\left(N\right).$ Hence on this
event there is a $p_{\lambda}\left(N\right)$-open path from the origin
to distance at least $N/2.$ Hence the event $\mathcal{A}_{1,0,o}^{\lambda,\lambda,N}\left(1,N/2\right)$
defined in Lemma \ref{lem: near critical arms} occurs. 

Let us turn to the case where $\diam\left(C\left(1\right)\right)<N.$
Recall the notation $\mathcal{C}_{a}\left(\lambda\right)$ from Definition
\ref{def: C_a}. It is easy to check that $C\left(1\right)=\mathcal{C}_{a}\left(\lambda\right)$
on the event$\left\{ \diam\left(C\left(1\right)\right)<N\right\} \cap NF\left(\lambda\right).$

If $\diam\left(\mathcal{C}_{a}\left(\lambda\right)\right)<aN,$ then
$\partial\mathcal{C}_{a}\left(\lambda\right)\cap B\left(2aN\right)\neq\emptyset$
for large $N.$ Since $v\in\partial\mathcal{C}_{a}\left(\lambda\right)\cap B\left(2aN\right)$
is frozen, it has a neighbour which has an open frozen path to distance
at least $N/2.$ On the event $NF\left(\lambda\right),$ this path
is $p_{\lambda}\left(N\right)$-open. Hence the event $\mathcal{A}_{1,0,o}^{\lambda,\lambda,N}\left(2aN,N/2\right)$
occurs. This combined with the argument above, for $a\in\left(0,1\right)$
and $N>N_{2}=1/a$ we have 
\[
\left\{ \diam\left(C\left(1\right)\right)\in\left[0,aN\right)\cup\left[N,\infty\right)\right\} \cap NF\left(\lambda\right)\subseteq\mathcal{A}_{1,0,o}^{\lambda,\lambda,N}\left(2aN,N/2\right).
\]

Hence by Lemma \ref{lem: near critical arms} there is $c=c\left(\lambda\right)$
and $N_{3}=N_{3}\left(\lambda\right)$ such that 
\begin{align*}
\mathbb{P}_{N}\left(\diam\left(C\left(1\right)\right)\in\left[0,aN\right)\cup\left[N,\infty\right),\, NF\left(\lambda\right)\right) & \leq\mathbb{P}\left(\mathcal{A}_{1,0,o}^{\lambda,\lambda,N}\left(2aN,N/2\right)\right)\\
 & \leq c\mathbb{P}_{1/2}\left(\mathcal{A}_{1,o}\left(2aN,N/2\right)\right)
\end{align*}
for $N\geq N_{3}.$ Theorem \ref{thm: arm exponents} gives that there
is $a=a\left(\varepsilon\right)$ and $N_{4}=N_{4}\left(\varepsilon\right)$
such that 
\begin{equation}
\mathbb{P}_{N}\left(\diam\left(C\left(1\right)\right)\in\left[0,aN\right)\cup\left[N,\infty\right),\, NF\left(\lambda\right)\right)\leq c\mathbb{P}_{1/2}\left(\mathcal{A}_{1,o}\left(2aN,N/2\right)\right)<\varepsilon/3.\label{eq: pf main cor - 3}
\end{equation}
for $N\geq N_{4}.$

Finally, Proposition \ref{prop: active diameter} gives $b=b\left(\varepsilon\right)$
and $N_{5}=N_{5}\left(\varepsilon\right)$ such that 
\begin{align}
\mathbb{P}_{N}\left(\diam\left(\mathcal{C}_{a}\left(\lambda\right)\right)\in\left[bN,N\right),\, NF\left(\lambda\right)\right) & \leq\mathbb{P}_{N}\left(\diam\left(\mathcal{C}_{a}\left(\lambda\right)\right)\in\left[bN,N\right)\right)\nonumber \\
 & \leq\varepsilon/3\label{eq: pf main cor - 4}
\end{align}
for $N\geq N_{5}.$ 

Since $C\left(1\right)=\mathcal{C}_{a}\left(\lambda\right)$ on the
event $\left\{ \diam\left(C\left(1\right)\right)<N\right\} \cap NF\left(\lambda\right),$
a combination of (\ref{eq: pf main cor - 1}), (\ref{eq: pf main cor - 3})
and (\ref{eq: pf main cor - 4}) finishes the proof of Corollary \ref{cor: macroscopic clusters}. 
\end{proof}

\section{\label{sec: pf prop big then freeze}Proof of Proposition \ref{prop: big then freeze}}

\subsection{\label{sub: notation}Notation}

Let us introduce some more notation. For $u=\left(u_{1},u_{2}\right),v=\left(v_{1},v_{2}\right)\in V,$
we say that $u$ is left (right) of $v$ if $u_{1}\leq v_{1}$ ($u_{1}\geq v_{1}$).
Similarly we say that $u$ is below (above) $v$ if $u_{2}\leq v_{2}$
($u_{2}\geq v_{2}$). For a finite set of vertices $W\subseteq V$
we say that $v=\left(v_{1},v_{2}\right)\in W$ is a leftmost (rightmost)
vertex of $W$ if for all $w=\left(w_{1},w_{2}\right)\in W,$ $v_{1}\leq w_{1}$
($v_{1}\geq w_{1}$). We define the lowest and highest vertices of
$W$ in an analogous way.

Recall that $v,w\in V$, $v\sim w$ denotes that $v$ and $w$ are
neighbours in $\mathbb{T}.$ We extend this notation for subsets of
$V:$ For $S,U\subset V,$ $S\sim U$ denotes that $\exists s\in S,\exists u\in U$
such that $s\sim u.$ Moreover, $S\nsim U$ denotes that $S\sim U$
does not hold.
\begin{defn}
\label{def: path}Let $n\in\mathbb{N}.$ We say that a sequence of
vertices $v^{1},v^{2},\ldots,v^{n},$ denoted by $\rho,$ is a path
if
\begin{itemize}
\item $v^{i}\sim v^{i+1}$ for $i=1,2,\ldots,\left(n-1\right),$ and 
\item $v^{i}\neq v^{j}$ when $i\neq j$ for $i,j=1,2,\ldots,n.$ 
\end{itemize}
We say that $\rho$ is\emph{ }non self touching, if $u,w\in\rho$
with $u\sim w$ then there is some $i\in\mathbb{N}$ with $1\leq i\leq n-1$
such that either $u=v^{i}$ and $w=v^{i+1}$ or $u=v^{i+1}$ and $w=v^{i}.$
We consider our paths to be ordered: $v^{1}$ is the starting point
and $v^{n}$ is the ending point of $\rho.$ For $u,w\in\rho$ we
say that $u$ is after $w$ in $\rho,$ and denote it by $w\prec_{\rho}u$
if $u=v^{i}$ and $w=v^{j}$ for some $i,j\in\mathbb{N}$ with $1\leq j<i\leq n.$
For $u,w\in\rho,$ $u\preceq_{\rho}w$ denotes that either $u=w$
or $u\prec_{\rho}w.$ When it is clear from the context which path
we are considering, we omit the subscript $\rho.$ For $u,w,z\in\rho$
we say that $w$ is in between $u$ and $z$ if $u\preceq w\preceq z$
or $u\succeq w\succeq z.$ For $u,z\in\rho$ with $u\preceq_{\rho}z$
let $\rho_{u,z}$ denote the subpath of $\rho$ consisting of the
vertices between $u$ and $z.$

We say that two paths $\rho_{1},\rho_{2}$ are non-touching, if $\rho_{1}\nsim\rho_{2}.$
\end{defn}

\begin{defn}
\label{def: loop}Let $n\in\mathbb{N}$ and sequence of vertices $v^{1},v^{2},\ldots,v^{n},$
satisfying
\begin{itemize}
\item $v^{i}\sim v^{i+1\mod n}$ for $i=1,2,\ldots,n,$ and 
\item $v^{i}\neq v^{j}$ when $i\neq j$ for $i,j=1,2,\ldots,n.$ 
\end{itemize}
A loop $\nu$ is the equivalence class of the sequence $\left(v^{1},v^{2},\ldots,v^{n}\right)$
under cyclic permutations, i.e $\nu$ is the set of sequences $\left(v^{j},v^{j+1\mod n},\ldots,v^{j+n-1\mod n}\right)$
for $j=1,2,\ldots,n.$ $\nu$ is non-self touching if for all $\left(w^{1},w^{2},\ldots,w^{n}\right)\in\nu,$
the path $\left(w^{1},w^{2},\ldots,w^{n-1}\right)$ is non-self touching.

With a slight abuse of notation, we say that a loop $\nu$ contains
a vertex $v$ and denote it by $v\in\nu$ if $v=v^{i}$ for some $i\in\left\{ 1,2,\ldots,n\right\} .$
Let $v,w\in\nu$ with $v\neq w$ and let $\rho$ denote the unique
path which starts at $v$ and represents $\nu.$ With the notation
of Definition \ref{def: path}, let $\nu_{v,w}:=\rho_{v,w}$ denote
the arc of $\nu$ starting at $v$ and ending at $w.$
\end{defn}

\subsection{\label{sub: thick paths}Thick paths}
\begin{defn}
\label{def: gridpath}Let $M\in\mathbb{N}$ be fixed. The $M$-grid
is the set of parallelograms $B\left(\left(2M+1\right)z;M\right)$
for $z\in V.$ Let $\pi$ be a sequence consisting of some parallelograms
of the $M$-grid. We say that $\pi$ is an $M$-gridpath, if for any
two consecutive parallelograms $B,B'$ of $\pi$ share a side, i.e
$\left|\partial B\cap B'\right|\geq2.$
\end{defn}

\begin{defn}
\label{def: (a,b)-nice}Let $C$ be a subgraph of $\mathbb{T},$ $D\subset V$
and $a,b\in\mathbb{N}.$ We say that $C$ is $\left(a,b\right)$-nice
in $D$, if it satisfies the conditions
\begin{enumerate}
\item $C$ is a connected induced subgraph of $\mathbb{T},$
\item $\partial C$ is a disjoint union of non-touching loops, each with
diameter bigger than $2b.$ 
\item \label{cond nice: 6 arms, then dead end}Let $u,v\in\partial C\cap D$
with $d\left(u,v\right)\leq a.$ Then $u,v$ are contained in the
same loop $\gamma$ of $\partial C,$ and $\diam\left(\gamma_{u,v}\right)\wedge\diam\left(\gamma_{v,u}\right)\leq b.$ 
\end{enumerate}
In the case where $D=V,$ we say that $C$ is $\left(a,b\right)$-nice.
\end{defn}
Let $C$ be $\left(a,b\right)$-nice for some $a,b\in\mathbb{N}.$
Condition \ref{cond nice: 6 arms, then dead end} of Definition \ref{def: (a,b)-nice},
roughly speaking, says that if there is a corridor in $C$ with width
less than $a,$ then it connects two parts of $C$ such that one part
has diameter at most $b.$ This suggests that when $b$ is small compared
to $\diam\left(C\right),$ then we can move a parallelogram with side
length $O\left(a\right)$ in $C$ between two distant points of $C.$
This intuitive argument leads us to the following lemma. 
\begin{lem}
\label{lem: det gridpath}Let $a,b\in\mathbb{N}$ with $a\geq2000.$
Let $C$ be an $\left(a,b\right)$-nice subgraph of $\mathbb{T}.$
Then there is a $\left\lfloor a/200-10\right\rfloor $-gridpath contained
in $C$ with diameter at least $\diam\left(C\right)-2b-2a-12.$
\end{lem}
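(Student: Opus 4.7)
The plan is to build the desired $M$-gridpath ($M:=\lfloor a/200-10\rfloor$) by tracking a path in $C$ between diameter-realizing endpoints and using condition (iii) of $(a,b)$-niceness to detour around narrow passages. Fix $u^{*},v^{*}\in C$ with $d(u^{*},v^{*})=diam(C)$, and call a lattice point $z\in V$ \emph{safe} if $B((2M+1)z;M)\subseteq C$. An $M$-gridpath inside $C$ corresponds to a sequence of safe centres which are neighbours in the coarser grid generated by $(2M+1)\underline{e}_{1}$ and $(2M+1)\underline{e}_{2}$, so the goal is to exhibit such a sequence whose extreme centres lie within distance roughly $a+b$ of $u^{*}$ and $v^{*}$.

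First I would carry out an endpoint reduction: show that there is a safe $z_{u}$ with $d((2M+1)z_{u},u^{*})\le a+b+O(1)$, and similarly $z_{v}$ near $v^{*}$. The point $u^{*}$ may sit at the tip of a thin peninsula of $C$, but condition (iii) of Definition \ref{def: (a,b)-nice} rules this out at any scale larger than $b$: if every box of radius $O(a)$ around $u^{*}$ meets $\partial C$ in two points within $L^{\infty}$-distance $\le a$, these two points lie on the same loop $\gamma\subset\partial C$, and one arc between them has diameter $\le b$, so the component of $C$ on the $u^{*}$-side of that arc has diameter $\le b+O(1)$. Walking $b+O(1)$ into $C$ from $u^{*}$ clears the dead-end; an additional $M+O(1)$ away from $\partial C$ clears the collar and produces the required safe $z_{u}$.

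The bulk step is to join $z_{u}$ to $z_{v}$ by grid-adjacent safe centres. I would fix a path $P$ in $C$ from $(2M+1)z_{u}$ to $(2M+1)z_{v}$ and traverse it, inductively maintaining a current safe centre $z$. When the natural grid-neighbour $z'$ of $z$ along $P$ fails to be safe, the parallelogram $B((2M+1)z';M)$ contains some $w\in\partial C$; following the loop $\gamma\subset\partial C$ through $w$ forward and backward until it exits the $a$-neighbourhood of $w$, the two exit points have $L^{\infty}$-distance $\le a$, so condition (iii) forces one arc of $\gamma$ between them to have diameter $\le b$. This short arc cuts off a dead-end of diameter $\le b$, and I would detour $z$ along grid neighbours on the other side of $\gamma$. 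The detour stays safe because all other loops of $\partial C$ are at distance $>a\gg M$ from $\gamma$ by condition (iii) again, and it uses $O(b/M)$ grid steps, so it does not shorten the projection of $P$ onto the direction realising $diam(C)$.

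The main obstacle will be making the detour step precise: one needs a discrete Jordan-curve-type argument showing that the short arc of $\gamma$ genuinely bounds a region of diameter $\le b$ inside $C$, so that the detour sequence both remains inside $C$ and stays at distance $>M$ from every loop of $\partial C$. Once this is established, the only loss in diameter comes from the displacement at the two endpoints, and summing these contributions yields a gridpath of diameter at least $diam(C)-2(a+b+O(1))\ge diam(C)-2a-2b-12$ after choosing the $O(1)$ constants carefully.
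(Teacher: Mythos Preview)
Your overall architecture---find safe grid centres near the two extremal points, then connect them by a chain of safe centres---is sound, and the endpoint reduction is on the right track. But the detour step, which you rightly flag as the crux, has a genuine gap that is not easily closed with the ingredients you list.

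A minor point first: when you follow the boundary loop $\gamma$ forward and backward from $w$ until it exits $B(w;a)$, the two exit points lie on $\partial B(w;a)$ and can be at $L^{\infty}$-distance up to $2a$, so Condition~\ref{cond nice: 6 arms, then dead end} of Definition~\ref{def: (a,b)-nice} does not apply to them as stated. This is repaired by halving the radius, so it is not the real obstacle.

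The substantive problem is your claim that the detour ``stays safe because all other loops of $\partial C$ are at distance $>a$ from $\gamma$''. This only rules out interference from \emph{other} loops; the long arc of the \emph{same} loop $\gamma$ can curve back and meet your detour. In effect you are asserting that the erosion of $C$ by an $M$-box, after pruning dead-ends of diameter $\le b$, is connected. That is exactly the content of the lemma, and an induction along an \emph{arbitrary} path $P$ does not establish it: $P$ may weave in and out of the $M$-collar of a single boundary loop many times, and a local detour can push you back into that collar elsewhere. Condition~\ref{cond nice: 6 arms, then dead end} controls pairs of nearby boundary points, but it does not by itself produce a tube around $P$ disjoint from $\partial C$.

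The paper's proof supplies the missing idea by taking $P$ to be a \emph{shortest} path $\gamma$ in $C$ (further straightened into a concatenation of near-segments) and then exploiting minimality: if a suitably defined one-sided neighbourhood $\mathcal G_v$ of $\gamma$ met $\partial C$, one could cut across and produce a strictly shorter path, a contradiction (Claim~\ref{claim: G_v is good}). This shortening argument is what replaces your unproved safety of the detour. The remaining work---labelling vertices of $\gamma$ by which side $\partial C$ lies on, pushing $\gamma$ to the opposite side, and stitching the left- and right-pushed versions into a single path $\zeta$ with $\zeta+B(\varepsilon)\subset C$---is then a careful but routine case analysis. The diameter loss comes only from trimming $b+O(a)$ at each end, as you anticipated.
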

We use the following `local' version of Lemma \ref{lem: det gridpath}:
\begin{lem}
\label{lem: local det gridpath}Let $a,b,c\in\mathbb{N}$ with $a\geq2000.$
Let $C$ be subgraph of $\mathbb{T}$ which is $\left(a,b\right)$-nice
in $B\left(c\right).$ Let $C'$ be a connected component of $C\cap B\left(c\right).$
Then there is a $\left\lfloor a/200-10\right\rfloor $-gridpath contained
in $C'$ with diameter at least $\diam\left(C'\right)-2b-2a-12.$\end{lem}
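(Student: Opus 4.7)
The plan is to reduce Lemma \ref{lem: local det gridpath} to its global counterpart Lemma \ref{lem: det gridpath} by distinguishing two cases, according to whether $C'$ touches $\partial B(c)$ or not.

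Case 1: no vertex of $C'$ is adjacent to $\partial B(c)$. Then any $v \in \partial C'$ satisfies $v \notin B(c)^{c}$ (else some vertex of $C'$ would be adjacent to $\partial B(c)$) and $v \notin C$ (else $v$ would lie in the same component of $C \cap B(c)$ as $C'$, contradicting $v \notin C'$). Hence $\partial C' \subseteq \partial C \cap B(c)$, and in fact $\partial C'$ is a union of complete loops from $\partial C$: condition~(2) of $(a,b)$-niceness for $C'$ is inherited from that of $C$, and condition~(3) for $C'$ reduces to condition~(3) for $C$ restricted to pairs in $B(c)$. Therefore $C'$ is globally $(a,b)$-nice, and Lemma~\ref{lem: det gridpath} applied to $C'$ yields a $\lfloor a/200-10\rfloor$-gridpath of the required diameter.

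Case 2: $C'$ has a vertex adjacent to $\partial B(c)$. Here $\partial C'$ acquires ``artificial'' pieces on $\partial B(c)$ where the hypothesis does not apply, so the direct reduction fails and I re-run the construction from the Appendix proof of Lemma~\ref{lem: det gridpath} inside the window. Pick $u^{*}, v^{*} \in C'$ with $d(u^{*}, v^{*}) = diam(C')$; both lie in $B(c)$, and they are connected by a path in $C \cap B(c)$. The Appendix construction advances a $\lfloor a/200-10\rfloor$-box greedily through $C$, invoking the $(a,b)$-niceness condition only on pairs of $\partial C$ vertices within $L^{\infty}$ distance $a$ of the current box position. As long as the box remains in $B(c)$, every such invoked pair lies in $\partial C \cap B(c)$, where the hypothesis does apply. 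Starting and terminating the construction at box positions $a+b+O(1)$ inward of $u^{*}$ and $v^{*}$ costs at most $2a+2b+12$ in the final diameter, which is exactly the loss already present in the conclusion.

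The main obstacle is the trajectory-containment estimate needed in Case~2, which is absent in the global setting of Lemma~\ref{lem: det gridpath}. I establish it by tracking the forward progress of the box against its perpendicular excursion from the segment $[u^{*}, v^{*}]$: each detour the procedure performs goes around a dead-end of $\partial C$ of diameter $\leq b$ and returns to the geodesic after contributing at most $b + O(a)$ to the perpendicular excursion. Summing over the at most $O(diam(C')/a)$ detours along the trajectory gives a uniform bound that, together with the inward shift of the endpoints, keeps the entire box trajectory inside $B(c)$; hence the resulting gridpath lies in $C' = C \cap B(c)$ (the component of $u^{*}$). Apart from this trajectory-control check, the argument is a verbatim re-reading of the Appendix proof, so no new combinatorial ingredients are required.
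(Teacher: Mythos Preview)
Your Case~1 is fine, and the key observation in Case~2 --- that Condition~3 is only ever invoked for pairs of $\partial C$ vertices close to the running construction --- is exactly right and is in fact the entire content of the paper's proof. Where you go wrong is in thinking there is anything further to do.

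The Appendix construction does not ``advance a box greedily through $C$.'' It fixes a \emph{shortest} path $\tilde\gamma$ between the two extremal points, straightens it to a specific shortest path $\gamma$, and then builds $\zeta$ by local detours of size $O(a)$ near $\gamma$; the gridpath is the grid approximation of $\zeta$. The paper's proof of Lemma~\ref{lem: local det gridpath} is therefore one line: take $\tilde\gamma$ to be a shortest path \emph{in $C'$} rather than in $C$. Then $\gamma\subset C'\subset B(c)$ automatically, every $\partial C$ vertex the proof ever inspects lies within distance $\alpha<a$ of some point of $\gamma$, and Condition~3 is only applied to pairs in $\partial C\cap B(c)$, where the local hypothesis supplies it. There is no trajectory to control because the anchor path never left $B(c)$ to begin with; your case split is likewise unnecessary.

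Your perpendicular-excursion estimate is also unsound as stated. Summing $O(\mathrm{diam}(C')/a)$ detours each of size $b+O(a)$ yields something of order $b\cdot\mathrm{diam}(C')/a$, which is not uniform and certainly need not be $\le c$. If instead you meant that the \emph{maximum} excursion from the Euclidean segment $[u^*,v^*]$ is $b+O(a)$, that is false: an $(a,b)$-nice set may contain holes of arbitrarily large diameter (Condition~3 constrains only narrow passages, not hole sizes), so shortest paths can be forced arbitrarily far from straight segments. None of this matters once the path is chosen inside $C'$.
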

\begin{proof}
[Proof of Lemma \ref{lem: det gridpath} and \ref{lem: local det gridpath}]
The proof of Lemma \ref{lem: det gridpath} and \ref{lem: local det gridpath}
have geometric/topologic nature, hence it is moved to Section \ref{sub: there are thick paths}
of the Appendix.
\end{proof}
We recall and prove Proposition \ref{prop: big then freeze} in the
following.
\begin{namedthm}
[Proposition \ref{prop: big then freeze}] Let $\theta\in\left(0,1\right),$
$\varepsilon>0$ and $\lambda_{1}K,\in\mathbb{R}.$ Recall the notation
$FC\left(t,K+2,N\right)$ from Lemma \ref{lem: few frozen clusters}.
There exists $\lambda_{2}=\lambda_{2}\left(\lambda_{1},\theta,\varepsilon\right)$
and $N_{0}=N_{0}\left(\lambda_{1},\theta,\varepsilon\right)$ such
that the probability of the intersection of the events
\begin{itemize}
\item $\exists v\in B\left(KN\right)$ such that $\diam\left(\mathcal{C}_{a}\left(v;\lambda_{1},N\right)\right)\geq\left(1+\theta\right)N,$
and 
\item none of the clusters intersecting $B\left(\left(K+2\right)N\right)$
freeze in the time interval $\left(p_{\lambda_{1}}\left(N\right),p_{\lambda_{2}}\left(N\right)\right],$
i.e. 
\[
FC\left(p_{\lambda_{1}}\left(N\right),K+2,N\right)=FC\left(p_{\lambda_{2}}\left(N\right),K+2,N\right)
\]

\end{itemize}
\end{namedthm}
is less than $\varepsilon$ for $N\geq N_{0}.$
\begin{proof}
[Proof of Proposition \ref{prop: big then freeze}] By Lemma \ref{lem: there is a net}
we choose $\lambda_{0}=\lambda_{0}\left(\varepsilon,K\right)\leq\lambda_{1}$
and $N_{1}=N_{1}\left(\varepsilon,K\right)$ such that 
\begin{equation}
\mathbb{P}\left(\mathcal{N}_{c}\left(\lambda_{0},1/6,K+6,N\right)\right)>1-\varepsilon/3.\label{eq: thich path - 1}
\end{equation}
By Corollary \ref{cor: no many arms} we choose $\eta<\theta/10$
and $N_{2}=N_{2}\left(\eta,\theta,\lambda_{0},\lambda_{1},K\right)$
such that 
\begin{equation}
\mathbb{P}\left(\mathcal{NA}\left(2\eta,\theta/10,\lambda_{0},\lambda_{1},K+4,N\right)\right)>1-\varepsilon/3\label{eq: thich path - 2}
\end{equation}
for all $N\geq N_{2}.$ Let 
\[
E:=\mathcal{N}_{c}\left(\lambda_{0},1/6,K+6,N\right)\cap\mathcal{NA}\left(2\eta,\theta/10,\lambda_{0},\lambda_{1},K+4,N\right).
\]

\begin{claim}
\label{calim: pf big then freeze}Let $u\in B\left(KN\right)$ with
$\diam\left(\mathcal{C}_{a}\left(u;\lambda_{1},N\right)\right)\geq\left(1+\theta\right)N.$
Then $\mathcal{C}_{a}\left(u;\lambda_{1},N\right)$ is $\left(\eta N,\frac{\theta}{10}N\right)$-nice
in $B\left(u;2N\right)$ on the event $E.$\end{claim}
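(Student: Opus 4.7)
The plan is to verify each of the three conditions of Definition~\ref{def: (a,b)-nice} for $C=\mathcal{C}_{a}\left(u;\lambda_{1},N\right)$ and $D=B\left(u;2N\right)$ with $a=\eta N$ and $b=\theta N/10$. Condition~1 is immediate since $\mathcal{C}_{a}$ is a connected component of open vertices in the frozen process, hence a connected induced subgraph of $\mathbb{T}$.

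A preparatory step matches the colouring of the frozen process with the one used in $\mathcal{NA}$. I claim that on $\mathcal{N}_{c}\left(\lambda_{0},1/6,K+6,N\right)$ every vertex $v\in\partial\mathcal{C}_{a}\cap B\left(\left(K+4\right)N\right)$ is $p_{\lambda_{0}}$-closed, and every vertex of $\mathcal{C}_{a}\cap B\left(\left(K+4\right)N\right)$ is $p_{\lambda_{1}}$-open. The latter is immediate. For the former, if instead $\tau_{v}\leq p_{\lambda_{0}}$, then by Observation~\ref{obs: closed grid-> no freezing} applied at parameter $\lambda_{0}$ no freezing has occurred in $B\left(\left(K+4\right)N\right)$ by time $p_{\lambda_{0}}$, so $v$ would have opened in the frozen process at time $\tau_{v}$ and hence $v\in\mathcal{C}_{a}$, contradicting $v\in\partial\mathcal{C}_{a}$. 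Consequently, any arms exhibited from $\partial\mathcal{C}_{a}$ (respectively $\mathcal{C}_{a}$) lying inside $B\left(\left(K+4\right)N\right)$ are of the $p_{\lambda_{0}}$-closed (respectively $p_{\lambda_{1}}$-open) kind used in $\mathcal{NA}\left(2\eta,\theta/10,\lambda_{0},\lambda_{1},K+4,N\right)$.

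Conditions~2 and 3 are then proved together by a single arm-counting argument. Suppose that one of them fails inside $B\left(u;2N\right)$; at a suitable boundary vertex $v$ I exhibit, in the annulus $A\left(v;2\eta N,\theta N/10\right)$, four disjoint closed arms coming from the offending boundary loop(s): two from each of the two loops in the "touching loops" or "different loops with $d(v,w)\leq\eta N$" scenarios (each loop has diameter exceeding $\theta N/5$, so crosses the annulus); two from each arc of the common loop in the "common loop with both arcs of diameter $>\theta N/10$" scenario (each such arc must exit and re-enter $\partial B\left(v;\theta N/10\right)$). These four closed arms cut the annulus into four sectors that alternate between the $\mathcal{C}_{a}$-side and its complement. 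Because $\mathcal{C}_{a}$ is connected with $diam\left(\mathcal{C}_{a}\right)\geq\left(1+\theta\right)N$, either $\mathcal{C}_{a}$ reaches $\partial B\left(v;\theta N/10\right)$ from both $\mathcal{C}_{a}$-side sectors (producing two further disjoint open arms), or a $\mathcal{C}_{a}$-pouch is sealed off in one sector by a further piece of $\partial\mathcal{C}_{a}$ which itself crosses the annulus and supplies two additional disjoint closed arms. In either case we reach at least six disjoint arms, producing an arm event listed in $\mathcal{NA}^{c}$ and contradicting the event $E$. The outer boundary loop of $\mathcal{C}_{a}$ has diameter at least $\left(1+\theta\right)N>\theta N/5$, so in condition~2 only hole-bounding loops need be considered, and these are treated by centring the annulus on a vertex of such a loop.

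The main obstacle is the topological bookkeeping: one has to check, case by case, that the boundary loops really supply four disjoint closed arm pieces crossing the annulus (in particular controlling how a single loop can wind in and out), and then argue that when $\mathcal{C}_{a}$ fails to supply two open arms the missing one is always replaced by two further closed arms coming from another boundary piece, so that the total arm count never drops below six and always lands inside the forbidden family collected in Definition~\ref{def: NA}.
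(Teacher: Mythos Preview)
Your argument has a genuine gap rooted in a misreading of $\mathcal{C}_{a}(u;\lambda_{1},N)$. By Definition~\ref{def: C_a} this is the \emph{active} cluster of $u$, i.e.\ the connected component of $u$ in the set of non-frozen vertices; it is \emph{not} an open cluster, and active vertices can be either open or closed in the frozen process at time $p_{\lambda_{1}}(N)$. Consequently your preparatory assertion that ``every vertex of $\mathcal{C}_{a}\cap B((K+4)N)$ is $p_{\lambda_{1}}$-open'' is simply false, and the two ``open arms'' you extract from $\mathcal{C}_{a}$ crossing the annulus are not monochromatic arms at all: a path through $\mathcal{C}_{a}$ will in general contain both $p_{\lambda_{1}}$-open and $p_{\lambda_{1}}$-closed vertices, so it does not contribute to any of the arm events collected in $\mathcal{NA}^{c}$.

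The paper obtains the two extra arms from the correct source: the \emph{open frozen clusters} bordering $x$ and $y$. Every $v\in\partial\mathcal{C}_{a}$ is frozen but not itself in a frozen open cluster (otherwise its active neighbour would also be frozen), hence $v$ is closed and has a neighbour in an open frozen cluster of diameter $\geq N$; that cluster supplies a genuine $p_{\lambda_{1}}(N)$-open arm to distance $N/2$. In Case~1 (different loops) these frozen clusters lie in different holes and so are automatically disjoint, yielding two open arms on top of the four closed arms from the two loops. Case~2 is handled ``similarly''. Your dichotomy ``either $\mathcal{C}_{a}$ reaches out or another boundary piece seals the pouch'' is not needed and, as written, does not work. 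Finally, note that the paper disposes of Condition~2 of Definition~\ref{def: (a,b)-nice} in one line (each hole contains a frozen cluster of diameter $\geq N>2\cdot\theta N/10$), so folding it into the arm-counting argument only adds unnecessary topological bookkeeping.
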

\begin{proof}
[Proof of Claim \ref{calim: pf big then freeze}]Let us check the
conditions of Definition \ref{def: (a,b)-nice}. The Condition 1 is
satisfied by the definition of $\mathcal{C}_{a}\left(u;\lambda_{1},N\right).$

All the holes of $\mathcal{C}_{a}\left(u;\lambda_{1},N\right)$ contain
a frozen cluster, which have diameter at least $N.$ This combined
with $2\frac{\theta}{10}N<N,$ shows that Condition 2 of Definition
\ref{def: (a,b)-nice} holds. 

Let $x,y\in\partial\mathcal{C}_{a}\left(u;\lambda_{1},K\right)\cap B\left(u;2N\right)$
with $d\left(x,y\right)\leq\eta N.$ We have two cases.

\emph{Case 1.} $x,y$ lie in different loops of $\partial\mathcal{C}_{a}\left(u;\lambda_{1},N\right).$
For $i=x,y,$ let $\gamma_{i}$ denote the loop containing $i.$ Furthermore,
let $\tilde{\gamma}_{i}$ denote the connected component of $i$ in
$\gamma_{i}\cap B\left(i;2N\right).$ We have $\diam\left(\tilde{\gamma}_{i}\right)\geq N.$
Moreover, $\tilde{\gamma}_{i}\subset B\left(i;2N\right)\subset B\left(\left(K+4\right)N\right).$
Observation \ref{obs: closed grid-> no freezing} gives that on the
event $\mathcal{N}_{c}\left(\lambda_{0},1/6,K+6,N\right),$ $\tilde{\gamma}_{i}$
is $p_{\lambda_{0}}\left(N\right)$-closed. Hence each of $\tilde{\gamma}_{x}$
and $\tilde{\gamma}_{y}$ gives two closed $p_{\lambda_{0}}\left(N\right)$-closed
arms in $A\left(x;2\eta N,N/2\right).$ Moreover, the frozen clusters
neighbouring $x$ and $y$ provide two disjoint $p_{\lambda_{1}}\left(N\right)$-open
arms. Hence there are $6$ disjoint arms in $A\left(x;2\eta N,N/2\right),$
thus $\mathcal{NA}^{c}\left(2\eta,\theta/10,\lambda_{0},\lambda_{1},K+4,N\right)$
occurs. 

\emph{Case 2. }$x,y$ lie on the same loop of $\partial\mathcal{C}_{a}\left(u;\lambda_{1},N\right).$
This case can be treated similarly to Case 1, with the difference
that if $x,y$ violate Condition \ref{cond nice: 6 arms, then dead end}
of Definition \ref{def: (a,b)-nice} then we get $6$ arms in $A\left(x;2\eta N,\frac{\theta}{10}N\right).$
Hence $\mathcal{NA}^{c}\left(2\eta,\theta/10,\lambda_{0},\lambda_{1},K+4,N\right)$
occurs.

Hence in both cases $E^{c}$ occurs. Thus on the event $E$ all the
conditions of Definition \ref{def: (a,b)-nice} are satisfied for
$\mathcal{C}_{a}\left(u;\lambda_{1},N\right),$ which finishes the
proof of Claim \ref{calim: pf big then freeze}.
\end{proof}
Let us turn back to the proof of Proposition \ref{prop: big then freeze}.
Let $u\in B\left(KN\right)$ with $\diam\left(\mathcal{C}_{a}\left(u;\lambda_{1},N\right)\right)\geq\left(1+\theta\right)N.$
Let $\tilde{\mathcal{C}}_{a}\left(u,\lambda_{1},N\right)$ denote
the connected component of $u$ in $\mathcal{C}_{a}\left(u,\lambda_{1},N\right)\cap B\left(u;2N\right).$
Since $\diam\left(\mathcal{C}_{a}\left(u;\lambda_{1},N\right)\right)\geq\left(1+\theta\right)N$
and $\theta<1,$ we have $\diam\left(\tilde{\mathcal{C}}_{a}\left(u;\lambda_{1},N\right)\right)\geq\left(1+\theta\right)N.$
By Lemma \ref{lem: local det gridpath} we set $\eta=\eta\left(\theta\right)\in\left(0,\theta/100\right)$
and $N_{3}=N_{3}\left(\theta\right)$ such that on the event $E$
for all $u\in B\left(KN\right),$ with $\diam\left(\mathcal{C}_{a}\left(u;\lambda_{1},N\right)\right)\geq\left(1+\theta\right)N$
there is a $\left\lfloor \eta N\right\rfloor $-gridpath $\rho_{u}\subset\tilde{\mathcal{C}}_{a}\left(u;\lambda_{1},N\right)$
with $\diam\left(\rho_{u}\right)\geq\left(1+\theta/2\right)N$ for
$N\geq N_{3}.$ 

Lemma \ref{lem: there is a net} gives that there is $\lambda_{2}=\lambda_{2}\left(\varepsilon,\eta,K\right)$
and $N_{4}=N_{4}\left(\varepsilon,\eta,K\right)$ such that 
\begin{equation}
\mathbb{P}\left(\mathcal{N}_{o}\left(\lambda_{2},\eta/2,K+4,N\right)\right)>1-\varepsilon/3\label{eq: thich path - 3}
\end{equation}
for $N\geq N_{4}\left(\varepsilon,\eta,K\right).$ We set $N_{0}:=\bigvee_{i=1}^{4}N_{i}.$
Let 
\begin{align*}
G:= & E\cap\mathcal{N}_{o}\left(\lambda_{2},\eta/2,K+4,N\right),\\
M:= & \left\{ \exists v\in B\left(KN\right)\mbox{ s.t. }\diam\left(\mathcal{C}_{a}\left(v;\lambda_{1},N\right)\right)\geq\left(1+\theta\right)N\right\} \cap G.
\end{align*}
Combination of (\ref{eq: thich path - 1}), (\ref{eq: thich path - 2})
and (\ref{eq: thich path - 3}) gives that 
\begin{equation}
\mathbb{P}\left(G^{c}\right)<\varepsilon\label{eq: thich path - 4}
\end{equation}
for $N\geq N_{0}.$

Recall that for $N\geq N_{0},$ on the event $E$ for $u\in B\left(KN\right),$
with $\diam\left(\mathcal{C}_{a}\left(u;\lambda_{1},N\right)\right)\geq\left(1+\theta\right)N$
there is a $\left\lfloor \eta N\right\rfloor $-gridpath $\rho_{u}\subset\tilde{\mathcal{C}}_{a}\left(u;\lambda_{1},N\right)$
with $\diam\left(\rho_{u}\right)\geq\left(1+\theta/2\right)N.$ On
the event $\mathcal{N}_{o}\left(\lambda_{2},\eta/2,K+4,N\right),$
this gridpath $\rho_{u}\subseteq B\left(\left(K+2\right)N\right)$
contains a $p_{\lambda_{2}}\left(N\right)$-open component with diameter
at least $N.$ Hence on the event $M,$ at least one cluster intersecting
$B\left(\left(K+2\right)N\right)$ freezes in the time interval $\left(p_{\lambda_{1}}\left(N\right),p_{\lambda_{2}}\left(N\right)\right].$
That is
\[
M\subseteq\left\{ FC\left(p_{\lambda_{1}}\left(N\right),K+2,N\right)<FC\left(p_{\lambda_{2}}\left(N\right),K+2,N\right)\right\} .
\]
Thus 
\[
\left\{ \exists v\in B\left(KN\right)\mbox{ s.t. }\diam\left(\mathcal{C}_{a}\left(v;\lambda_{1},N\right)\right)\geq\left(1+\theta\right)N\right\} \cap\left\{ FC\left(p_{\lambda_{1}}\left(N\right),K+2,N\right)=FC\left(p_{\lambda_{2}}\left(N\right),K+2,N\right)\right\} \subset G^{c},
\]
which together with (\ref{eq: thich path - 4}) finishes the proof
of Proposition \ref{prop: big then freeze}.
\end{proof}

\section{\label{sec: pf of prop active diameter}Proof of Proposition \ref{prop: active diameter}}

\subsection{\label{sub: lowest of lowest in parallelograms}Lowest point of the
lowest crossing in parallelograms}

Recall the notation of Section \ref{sub: notation}.
\begin{defn}
\label{def: cal L}Let $R$ be a connected subgraph of $\mathbb{T}$
and let $r\subset\partial R$. We define $\mathcal{L}\left(R,r\right)$
as the (random) set of lowest vertices $v\in R$ such that $v$ is
closed, and there are two non-touching closed paths in $R$ starting
at a vertex neighbouring to $v$ and ending at $r.$
\end{defn}
Consider the site percolation model on the triangular lattice with
parameter $p\in\left[0,1\right].$ We investigate the distribution
of $\mathcal{L}\left(R,r\right)$ in the case where $p=p_{\lambda}\left(N\right),$
$R=B\left(bN\right)$ and $r=top\left(B\left(bN\right)\right):=\left[-bN,bN\right]\boxtimes\left\{ \left\lfloor bN\right\rfloor +1\right\} $
for $\lambda\in\mathbb{R}$ and $b>0.$ 
\begin{defn}
\label{def: HCr}For a parallelogram $B,$ let $HCr\left(B\right)$
denote set of paths in $B$ which connect the left and the right sides
of $B.$ For $\rho\in HCr\left(B\right),$ let $Be\left(\rho\right)=Be\left(\rho,B\right)$
denote the set of vertices in $B$ which are `under' $\rho.$ It is
the set of vertices $v\in B\setminus\rho$ which are connected to
the bottom side of $B.$ Furthermore, we define $Ab\left(\rho\right)=Ab\left(\rho,B\right):=B\setminus\left(\rho\cup Be\left(\rho,B\right)\right).$\end{defn}
\begin{lem}
\label{lem: lowest of lowest}Let $a,b\in\left(0,1\right)$ with $5a<b.$
For $k,l,N\in\mathbb{N}$ with $l<k$ we define the parallelogram
\begin{equation}
B_{l,k}:=\left[-aN,aN\right]\boxtimes\left(\left(2\frac{l}{k}-1\right)aN,\left(2\frac{l+1}{k}-1\right)aN\right]\label{eq: def B_l,k}
\end{equation}
and the event 
\begin{equation}
L_{l,k}=:\left\{ \mathcal{L}\left(B\left(bN\right),\, top\left(bN\right)\right)\cap B_{l,k}\neq\emptyset\right\} .\label{eq: def L_l,k}
\end{equation}
That is, $L_{l,k}$ is the event that at least one of the lowest vertices
of $B\left(bN\right)$ with two non-touching closed paths $B\left(bN\right)$
to the top side of $B\left(bN\right)$ is in the parallelogram $B_{l,k}.$ 

Let $\lambda_{1},\lambda_{2}\in\mathbb{R}.$ Then there exist $C=C\left(a,b,\lambda_{1},\lambda_{2}\right)$
and $N_{0}=N_{0}\left(a,b,\lambda_{1},\lambda_{2},k\right)$ such
that for all $\lambda\in\left[\lambda_{1},\lambda_{2}\right]$ and
$k,l\in\mathbb{N}$ with $l\leq k-1$ we have 
\begin{equation}
\mathbb{P}_{p_{\lambda}\left(N\right)}\left(L_{l,k}\right)\leq Ck^{-1}\label{eq: lowest of lowest}
\end{equation}
for $N\geq N_{0}.$ In particular, the upper bound in (\ref{eq: lowest of lowest})
is uniform in $l.$\end{lem}
\begin{proof}
[Proof of Lemma \ref{lem: lowest of lowest}] For $k\leq5$ the
statement is trivial, hence we assume that $k\geq5$ in the following.
We extend the notation in (\ref{eq: def B_l,k}) and (\ref{eq: def L_l,k})
for $l\in\left\{ -k,-k+1,\ldots,-1\right\} .$ 

First we show that there exist $c=c\left(a,b,\lambda_{1},\lambda_{2}\right)>0$
and $N_{0}=N_{0}\left(a,b,\lambda_{1},\lambda_{2}\right)$ such that
for all $l,m\in\left[-k,k-1\right]\cap\mathbb{Z}$ with $m+1\leq l$
we have
\begin{equation}
c\mathbb{P}_{p_{\lambda}\left(N\right)}\left(L_{l,k}\right)\leq\mathbb{P}_{p_{\lambda}\left(N\right)}\left(L_{m,k}\cup L_{m+1,k}\right)\label{eq: pf lowest of lowest - 0}
\end{equation}
for $N\geq N_{0}.$ Let $S=S\left(l,m,k\right):V\rightarrow V$ denote
a shift which moves the parallelogram $B_{l,k}$ to a subset of $B_{m,k}\cup B_{m+1,k}.$
The shift $S$ naturally induces a map on the configurations $\omega\in\left\{ o,c\right\} ^{V}$
by $S\left(\omega\right)\left(v\right)=\omega\left(S^{-1}\left(v\right)\right).$
Roughly speaking, we prove (\ref{eq: pf lowest of lowest - 0}) by
showing that positive proportion of the configurations $\omega\in L_{l,k}$
satisfy $S\left(\omega\right)\in L_{m,k}\cup L_{m+1,k}.$ We achieve
this by showing that, conditioning on $L_{l,k},$ all the crossing
events of Figure \ref{fig: lowest of lowest} occur with probability
bounded away from $0.$ Let us turn to the precise proof.

Let $k,l$ be given. Let $s_{L}$ ($s_{R}$) denote the left (right)
endpoint of $top\left(bN\right).$ We say that a path $\rho\subseteq B\left(bN\right)\cup top\left(bN\right)$
is good, if it
\begin{itemize}
\item starts at $s_{L}$ and ends at $s_{R},$
\item it is non-self touching
\item and one of its lowest points is in $B_{l,k}.$
\end{itemize}
Let $\rho$ be some given good path. Recall Definition \ref{def: HCr}
and let $Be\left(\rho\right)=Be\left(\rho,\left(B\left(bN\right)\right)\right).$
Let $H_{\rho}$ denote the event that there are two open paths in
$Be\left(\rho\right)\cap\left[-bN,bN\right]\boxtimes\left[aN,\frac{b-2a}{2}N\right]$
from the left and right sides of the parallelogram $\left[-bN,bN\right]\boxtimes\left[aN,\frac{b-2a}{2}N\right]$
to $\rho.$ Let $\gamma$ denote the lowest non-self touching path
in $B\left(bN\right)\cup top\left(bN\right)$ which starts at $s_{L}$
and ends at $s_{R},$ and of which all the vertices outside of $top\left(bN\right)$
are closed. On the event $L_{l,k}$ $\gamma$ is good.

Let $\rho$ be a fixed good path. Let $O_{\rho}$ denote the event
that there is path $\nu$ such that 
\begin{itemize}
\item $\nu\subseteq B_{0}:=\left[-bN,bN\right]\boxtimes\left[-bN,\frac{b}{4}N\right]$,
\item $\nu$ connects the left and the right sides of the parallelogram
$B_{1}:=\left[-bN,bN\right]\boxtimes\left[aN,\frac{b}{4}N\right],$
\item $\nu$ is a concatenation of some open paths which lie in $Be\left(\rho\right)\cap B_{1},$
and of some subpaths of $\rho.$
\end{itemize}
Clearly, $O_{\rho}$ is an increasing event. On $O_{\rho},$ let $\xi\left(\rho\right)$
denote the lowest path which satisfies the conditions in the definition
of $O_{\rho}.$ Recall the definition of decreasing events from Definition
\ref{def: inc events}, and the definition of $\gamma$ from Case
1. Let us condition on the event that all the vertices of $\rho\setminus top\left(bN\right)$
are closed. Then the event $\left\{ \gamma=\rho\right\} $ is increasing
on the configuration in $B\left(bN\right)\setminus\rho,$ and it only
depends on the configuration in $Be\left(\rho\right).$ Hence a combination
of FKG and Corollary \ref{cor: posprob of crossing} give that 
\begin{align}
\mathbb{P}_{p_{\lambda}\left(N\right)} & \left(L_{l,k}\cap O_{\gamma}\right)\nonumber \\
 & =\sum_{\rho\mbox{ good}}\mathbb{P}_{p_{\lambda}\left(N\right)}\left(\left.O_{\rho}\cap\left\{ \gamma=\rho\right\} \,\right|\,\rho\setminus top\left(bN\right)\mbox{ is closed}\right)\mathbb{P}_{p_{\lambda}\left(N\right)}\left(\rho\setminus top\left(bN\right)\mbox{ is closed}\right)\nonumber \\
 & \!\begin{multlined}[t][15.5cm]
	\geq\sum_{\rho\mbox{ good}}\mathbb{P}_{p_{\lambda}\left(N\right)}\left(\left.\left\{ \gamma=\rho\right\} \,\right|\,\rho\setminus top\left(bN\right)\mbox{ is closed}\right) \\[-0.5cm]
	\mathbb{P}_{p_{\lambda}\left(N\right)}\left(\left.O_{\rho}\,\right|\,\rho\setminus top\left(bN\right)\mbox{ is closed}\right)\mathbb{P}_{p_{\lambda}\left(N\right)}\left(\rho\setminus top\left(bN\right)\mbox{ is closed}\right)
     \end{multlined} \nonumber \\
 & \geq\sum_{\rho\mbox{ good}}\mathbb{P}_{p_{\lambda}\left(N\right)}\left(\left.\left\{ \gamma=\rho\right\} \,\right|\,\rho\setminus top\left(bN\right)\mbox{ is closed}\right)\mathbb{P}_{p_{\lambda}\left(N\right)}\left(\mathcal{H}_{o}\left(B_{1}\right)\right)\mathbb{P}_{p_{\lambda}\left(N\right)}\left(\rho\setminus top\left(bN\right)\mbox{ is closed}\right)\nonumber \\
 & \geq c_{1}\left(\lambda_{1},\lambda_{2},a,b\right)\mathbb{P}_{p_{\lambda}\left(N\right)}\left(L_{l,k}\right)\label{eq: pf lowest of lowest - 3}
\end{align}
for $c_{1}=c_{1}\left(a,b,\lambda_{1},\lambda_{2}\right)>0$ and for
$N\geq N_{1}=N_{1}\left(a,b,\lambda_{1},\lambda_{2}\right).$

For $W\subseteq V$ and $\omega\in\left\{ o,c\right\} ^{V},$ $\omega_{W}\in\left\{ o,c\right\} ^{W}$
denotes the restriction of $\omega$ to the configuration in $W.$
That is $\omega_{W}\left(v\right)=\omega\left(v\right)$ for $v\in W.$
Recall Definition \ref{def: HCr}. Let $\zeta\in HCr\left(B_{0}\right)$
be arbitrary. It is easy to check that the event $L_{l,k}\cap O_{\gamma}\cap\left\{ \xi\left(\gamma\right)=\zeta\right\} $
is decreasing in the configuration in $Ab\left(\zeta\right).$ Let
us take the parallelograms $B_{2}=\left[-bN,bN\right]\boxtimes\left[\frac{b}{4}N,\frac{b}{2}N\right],$
$B_{3}=\left[-bN,bN\right]\boxtimes\left[\frac{3}{4}bN,bN\right],$
$B_{4}=\left[-bN,-\frac{1}{2}bN\right]\boxtimes\left[\frac{1}{4}bN,\left(b+4a\right)N\right]$
and $B_{5}=\left[\frac{1}{2}bN,bN\right]\boxtimes\left[\frac{1}{4}bN,\left(b+4a\right)N\right].$
Let $\mathcal{D}=\mathcal{H}_{c}\left(B_{2}\right)\cap\mathcal{H}_{c}\left(B_{3}\right)\cap\mathcal{V}_{c}\left(B_{4}\right)\cap\mathcal{V}_{c}\left(B_{5}\right).$
Clearly, $\mathcal{D}$ is a decreasing event. Hence a combination
of FKG and Corollary \ref{cor: posprob of crossing} give that for
$c_{2}=c_{2}\left(a,b,\lambda_{1},\lambda_{2}\right)>0$ and $N\geq N_{2}=N_{2}\left(a,b,\lambda_{1},\lambda_{2}\right)$
we have
\begin{align}
\mathbb{P}_{p_{\lambda}\left(N\right)} & \left(L_{k,l}\cap O_{\gamma}\cap\mathcal{D}\right)\nonumber \\
 & =\sum_{\zeta}\sum_{\sigma}\mathbb{P}_{p_{\lambda}\left(N\right)}\left(\left.L_{k,l}\cap O_{\gamma}\cap\left\{ \xi\left(\gamma\right)=\zeta\right\} \cap\mathcal{D}\,\right|\,\omega_{\zeta\cup Be\left(\zeta\right)}=\sigma\right)\mathbb{P}_{p_{\lambda}\left(N\right)}\left(\omega_{\zeta\cup Be\left(\zeta\right)}=\sigma\right)\nonumber \\
 & \geq\sum_{\zeta}\sum_{\sigma}\mathbb{P}_{p_{\lambda}\left(N\right)}\left(\left.L_{k,l}\cap O_{\gamma}\cap\left\{ \xi\left(\gamma\right)=\zeta\right\} \,\right|\,\omega_{\zeta\cup Be\left(\zeta\right)}=\sigma\right)\mathbb{P}_{p_{\lambda}\left(N\right)}\left(\left.\mathcal{D}\,\right|\,\omega_{\zeta\cup Be\left(\zeta\right)}=\sigma\right)\mathbb{P}_{p_{\lambda}\left(N\right)}\left(\omega_{\zeta\cup Be\left(\zeta\right)}=\sigma\right)\nonumber \\
 & =\sum_{\zeta}\sum_{\sigma}\mathbb{P}_{p_{\lambda}\left(N\right)}\left(\left.L_{k,l}\cap O_{\gamma}\cap\left\{ \xi\left(\gamma\right)=\zeta\right\} \,\right|\,\omega_{\zeta\cup Be\left(\zeta\right)}=\sigma\right)\mathbb{P}_{p_{\lambda}\left(N\right)}\left(\mathcal{D}\right)\mathbb{P}_{p_{\lambda}\left(N\right)}\left(\omega_{\zeta\cup Be\left(\zeta\right)}=\sigma\right)\nonumber \\
 & \geq c_{2}\left(a,b,\lambda_{1},\lambda_{2}\right)\sum_{\zeta}\sum_{\sigma}\mathbb{P}_{p_{\lambda}\left(N\right)}\left(\left.L_{k,l}\cap O_{\gamma}\cap\left\{ \xi\left(\gamma\right)=\zeta\right\} \,\right|\,\omega_{\zeta\cup Be\left(\zeta\right)}=\sigma\right)\mathbb{P}_{p_{\lambda}\left(N\right)}\left(\omega_{\zeta\cup Be\left(\zeta\right)}=\sigma\right)\nonumber \\
 & =c_{2}\left(a,b,\lambda_{1},\lambda_{2}\right)\mathbb{P}_{p_{\lambda}\left(N\right)}\left(L_{k,l}\cap O_{\gamma}\right)\label{eq: pf lowest of lowest - 4}
\end{align}
where the summation in $\zeta$ is over $HCr\left(B_{0}\right)$ and
the summation in $\sigma$ is over $\left\{ o,c\right\} ^{\zeta\cup Be\left(\zeta\right)}.$
In the third line we used that $\mathcal{D}$ does not depend on the
configuration in $\zeta\cup Be\left(\zeta\right).$

\begin{figure}
\centering{}\includegraphics[scale=0.8]{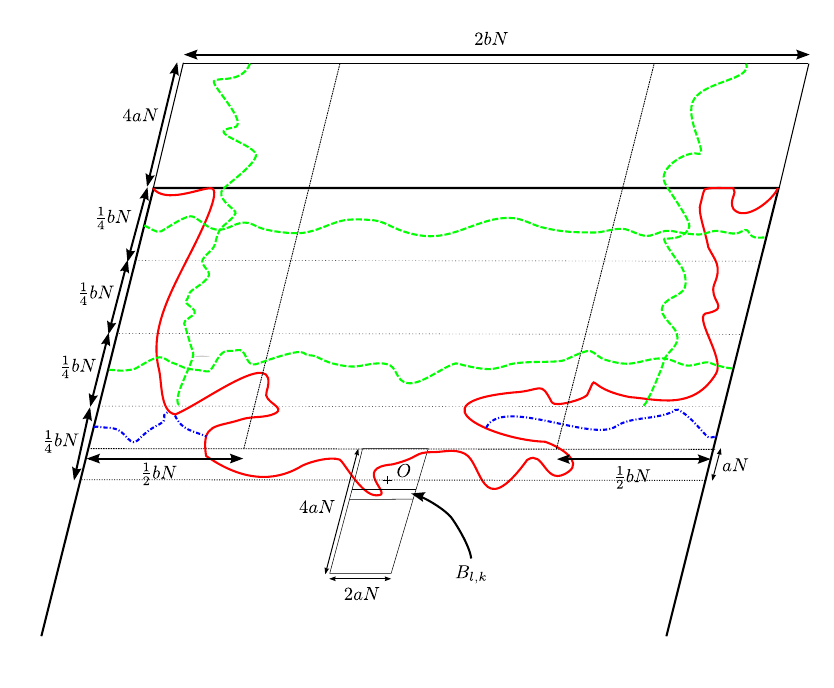}\caption{\label{fig: lowest of lowest}The continuous line represents $\gamma.$
The dashed paths are the closed crossings of $\mathcal{D},$ which
allow us to prolong $\gamma.$ The dashed-dotted paths are the open
parts of $\xi\left(\gamma\right).$ They, together with $\gamma,$
prevent the occurrence of closed vertices below the lowest point of
$\gamma$ with two closed arms to the top side of $B\left(bN\right)$
after the shift.}
\end{figure}

There is $N_{3}=N_{3}\left(k\right)$ such that for $N\geq N_{3}$
and for all $l,m\in\left[0,k-1\right]\cap\mathbb{Z}$ with $l>m$
there is a shift $S=S\left(l,m,k\right)$ which moves the parallelogram
$B_{l,k}$ to a subset of $B_{m,k}\cup B_{m+1,k}.$ Let us take a
configuration $\omega\in\left\{ o,c\right\} ^{V}$ which satisfies
$L_{k,l}\cap O_{\gamma}\cap\mathcal{D}.$ Then the shifted configuration
$S\left(\omega\right)$ satisfies $L_{m,k}\cup L_{m+1,k}.$ See Figure
\ref{fig: lowest of lowest} for more details. Hence for $N\geq N_{1}\vee N_{2}\vee N_{3}$
we have 
\begin{align}
\mathbb{P}_{p_{\lambda}\left(N\right)}\left(L_{m,k}\cup L_{m+1,k}\right) & \geq\mathbb{P}_{p_{\lambda}\left(N\right)}\left(L_{k,l}\cap O_{\gamma}\cap\mathcal{D}\right)\nonumber \\
 & \geq c_{1}c_{2}\mathbb{P}_{p_{\lambda}\left(N\right)}\left(L_{l,k}\right)\label{eq: pf lowest of lowest - 5}
\end{align}
by a combination of (\ref{eq: pf lowest of lowest - 3}) and (\ref{eq: pf lowest of lowest - 4}).
This finishes the proof of (\ref{eq: pf lowest of lowest - 0}).

Now we conclude the proof of Lemma \ref{lem: lowest of lowest}. By
summing over $m\in\left\{ -k,-k+1,\ldots,-2\right\} $ in (\ref{eq: pf lowest of lowest - 5})
we get that
\begin{align*}
\mathbb{P}_{p_{\lambda}\left(N\right)}\left(L_{k,l}\right) & \leq\left(k-1\right)^{-1}c_{1}c_{2}\sum_{m=-k}^{-2}\mathbb{P}_{p_{\lambda}\left(N\right)}\left(L_{m,k}\cup L_{m+1,k}\right)\\
 & \leq2c_{1}c_{2}k^{-1}\sum_{m=-k}^{-1}\mathbb{P}_{p_{\lambda}\left(N\right)}\left(L_{m,k}\right)\\
 & \leq Ck^{-1}
\end{align*}
for some $C=C\left(a,b,\lambda_{1},\lambda_{2}\right).$ In the last
line we used that $L_{m,k}\cap L_{m',k}=\emptyset$ for $m\neq m'.$
This finishes the proof of Lemma \ref{lem: lowest of lowest}.\end{proof}
\begin{rem}
Let $a,b,\lambda,\lambda_{1},\lambda_{2}$ be as in Lemma \ref{lem: lowest of lowest}.
Standard RSW techniques give that there is $c'=c'\left(a,b,\lambda_{1},\lambda_{2}\right)>0$
and $N_{0}=N_{0}\left(a,b,\lambda_{1},\lambda_{2}\right)$ such that
\[
\mathbb{P}_{p_{\lambda}\left(N\right)}\left(\mathcal{L}\left(B\left(bN\right),t\left(bN\right)\right)\cap B\left(aN\right)\neq\emptyset\right)\geq c'
\]
for $N\geq N_{0}.$ This, combined with arguments similar to the proof
of Lemma \ref{lem: lowest of lowest}, gives that there is $C'=C'\left(a,b,\lambda_{1},\lambda_{2}\right)>0$
and $N_{1}=N_{1}\left(a,b,\lambda_{1},\lambda_{2},k\right)$ such
that
\[
\mathbb{P}_{p_{\lambda}\left(N\right)}\left(L_{l,k}\right)\geq C'k^{-1}
\]
for $N\geq N_{1}$ uniformly for $l\leq k.$
\end{rem}

\subsection{\label{sub: lowesr of lowest in reular regions}Lowest point of the
lowest crossing in regular regions}

Recall Definition \ref{def: cal L}. Let $B\subset B'$ be parallelograms,
and let $R$ be a subgraph of $\mathbb{T}$ with $B\subset R\subset B'.$
Furthermore let $r\subset\partial R.$ Our next aim is to compare
the event $\mathcal{L}\left(R,r\right)\cap B\neq\emptyset$ to $\mathcal{L}\left(B',top\left(B'\right)\right)\cap B\neq\emptyset$
in the case where the pair $\left(R,r\right)$ is `regular'. We make
this precise in the following.

We say that a subgraph $H\subseteq\mathbb{T}$ is simply connected,
if it is connected and for all loops $\sigma\subseteq H,$ all of
the finite components of $\mathbb{T}\setminus\sigma$ are contained
in $H.$ 
\begin{defn}
\label{def: (a,b)-regular}Let $a,b\in\mathbb{N}$ such that $5a<b.$
A pair $\left(R,r\right)$ is $\left(a,b\right)$-regular, if
\begin{enumerate}
\item $R$ is a connected induced subgraph of $\mathbb{T},$
\item $B\left(a\right)\subseteq R\subseteq B\left(b\right),$
\item $r\subset\partial R,$ such that $\emptyset\neq r\subsetneqq\partial R.$
Furthermore, $r$ and $\partial R\setminus r$ are self-avoiding paths
such that $R$ is on the right hand side, as we walk along them.
\item \label{cond: r is high}$r\subseteq\left[-b,b\right]\boxtimes\left[5a,b\right].$
\end{enumerate}
\end{defn}
\begin{lem}
\label{lem: compare L}Let $a,b\in\left(0,1\right)$ with $5a<b$
and $\lambda\in\mathbb{R}.$ Let $\left(R,r\right)$ be $\left(aN,bN\right)$-regular.
For $k,l,N\in\mathbb{N}$ with $l<k$ we define the events 
\begin{align}
L_{l,k}\left(B\left(2bN\right),top\left(B\left(2bN\right)\right)\right): & =\left\{ \mathcal{L}\left(B\left(2bN\right),\, top\left(2bN\right)\right)\cap B_{l,k}\neq\emptyset\right\} ,\nonumber \\
L_{l,k}\left(R,r\right): & =\left\{ \mathcal{L}\left(R,r\right)\cap B_{l,k}\neq\emptyset\right\} ,\label{eq: def L_l,k(R,r)}
\end{align}
where 
\begin{equation}
B_{l,k}:=\left[-aN,aN\right]\boxtimes\left(\left(2\frac{l}{k}-1\right)aN,\left(2\frac{l+1}{k}-1\right)aN\right].\label{eq: def B_l,k - 1}
\end{equation}
Let $\lambda_{1},\lambda_{2}\in\mathbb{R}.$ Then there exist $C=C\left(a,b,\lambda_{1},\lambda_{2}\right)$
and $N_{0}=N_{0}\left(a,b,\lambda_{1},\lambda_{2},k\right)$ such
that for all $\lambda\in\left[\lambda_{1},\lambda_{2}\right]$ and
$k,l\in\mathbb{N}$ with $l\leq k-1$ we have 
\begin{equation}
\mathbb{P}_{p_{\lambda}\left(N\right)}\left(L_{l,k}\left(R,r\right)\right)\leq C\mathbb{P}_{p_{\lambda}\left(N\right)}\left(L_{l,k}\left(B\left(2bN\right),top\left(B\left(2bN\right)\right)\right)\right)\label{eq: compare L}
\end{equation}
for $N\geq N_{0}.$\end{lem}
\begin{proof}
[Proof of Lemma \ref{lem: compare L}] The proof follows the arguments
of the proof of Lemma \ref{lem: lowest of lowest}. Our aim is to
show that, conditioning on $L_{l,k}\left(R,r\right),$ the open and
closed crossings of Figure \ref{fig: lowest of lowest regular region}
occur with probability bounded away from $0$ cf. Figure \ref{fig: lowest of lowest}. 

\begin{figure}
\centering{}\includegraphics[scale=0.8]{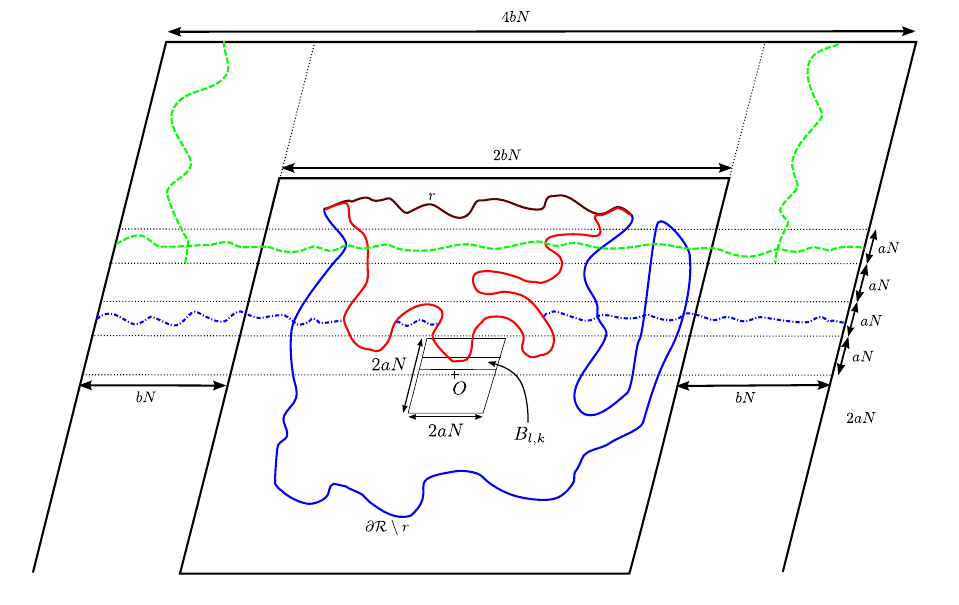}\caption{\label{fig: lowest of lowest regular region} The dashed paths are
the closed crossings of the event $\mathcal{D}$ which allow us to
prolong $\gamma.$ The dashed-dotted paths are the open parts of $\xi\left(\gamma\right).$
They, together with $\gamma,$ prevent the occurrence of closed vertices
below the lowest point of $\gamma$ with two closed arms to the top
side of $B\left(2bN\right).$}
\end{figure}

Let $s_{L}$ ($s_{R}$) denote the starting (ending) vertex of $r.$
We say that a path $\rho\subseteq R\cup r$ is good, if it
\begin{itemize}
\item starts at $s_{L}$ and ends at $s_{R},$
\item it is non-self touching
\item and one of its lowest points is in $B_{l,k}.$
\end{itemize}
Let $\rho$ be a fixed good path. Let $Be\left(\rho,R\right)$ denote
the set of vertices in $R$ `under' $\rho.$ It is the intersection
of $R$ with the connected component of $\partial R\setminus r$ in
$cl\left(R\right)\setminus\rho.$ Let $Ab\left(\rho,R\right):=R\setminus Be\left(\rho,R\right).$
Recall Definition \ref{def: HCr}.

Let $O_{\rho}$ denote the event that there is path $\nu$ such that 
\begin{itemize}
\item $\nu$ is non self-touching,
\item $\nu\subseteq B_{0}:=\left[-2bN,2bN\right]\boxtimes\left[-aN,2aN\right]$,
\item $\nu$ connects the left and the right side of the parallelogram $B_{1}:=\left[-2bN,2bN\right]\boxtimes\left[aN,2aN\right],$
\item $\nu\setminus R\subset B_{1}$ and the vertices in $\nu\setminus R$
are open,
\item each of the paths of $\nu\cap R$ is a concatenation of some open
paths which lie in $Be\left(\rho,B\left(bN\right)\right)\cap B_{1},$
and of some subpaths of $\rho.$
\end{itemize}
Let $\gamma$ denote the lowest non-self touching path in $R\cup r$
which starts at $s_{L}$ and ends at $s_{R},$ and of which all the
vertices outside of $r$ are closed. Note that on the event $L_{l,k}\left(R,r\right),$
$\gamma$ is good. By simple modifications of the arguments in the
proof of Lemma \ref{lem: lowest of lowest} we get that there are
$c_{1}=c_{1}\left(a,b,\lambda_{1},\lambda_{2}\right)>0$ and $N_{1}=N_{1}\left(a,b,\lambda_{1},\lambda_{2}\right)$
such that 
\begin{equation}
\mathbb{P}_{p_{\lambda}\left(N\right)}\left(L_{l,k}\left(R,r\right)\cap O_{\gamma}\right)\geq c_{1}\mathbb{P}_{p_{\lambda}\left(N\right)}\left(L_{l,k}\left(R,r\right)\right)\label{eq: pf compare L - 1}
\end{equation}
for $l,k\in\mathbb{N},$ $0\leq l\leq k-1,$ $\lambda\in\left[\lambda_{1},\lambda_{2}\right]$
for $N\geq N_{1}.$ 

Recall Definition \ref{def: HCr}. Let $\zeta\in HCr\left(B\left(2bN\right)\right).$
On the event $L_{l,k}\left(R,r\right)\cap O_{\gamma}$ we have $R\cap\left(\mathbb{Z}\boxtimes\left[3aN,bN\right]\right)\subset Ab\left(\xi\left(\gamma\right),B\left(2bN\right)\right).$
Hence the event $L_{l,k}\left(R,r\right)\cap O_{\gamma}\cap\left\{ \xi\left(\gamma\right)=\zeta\right\} $
is decreasing on the configuration in $Ab\left(\zeta,B\left(2bN\right)\right).$
Let $B_{2}=\left[-2bN,2bN\right]\boxtimes\left[3aN,4aN\right],$ $B_{3}=\left[-2bN,-bN\right]\boxtimes\left[3aN,2bN\right],$
$B_{4}=\left[bN,2bN\right]\boxtimes\left[3aN,2bN\right]$ and $\mathcal{D}=\mathcal{H}_{c}\left(B_{2}\right)\cap\mathcal{V}_{c}\left(B_{3}\right)\cap\mathcal{V}_{c}\left(B_{4}\right).$
The arguments of the proof of Lemma \ref{lem: lowest of lowest} give
that there exist $c_{2}=c_{2}\left(a,b,\lambda_{1},\lambda_{2}\right)>0$
and $N_{2}=N_{2}\left(a,b,\lambda_{1},\lambda_{2},k\right)$ such
that 
\begin{equation}
\mathbb{P}_{p_{\lambda}\left(N\right)}\left(L_{l,k}\left(R,r\right)\cap O_{\gamma}\cap\mathcal{D}\right)\geq c_{2}\mathbb{P}_{p_{\lambda}\left(N\right)}\left(L_{l,k}\left(R,r\right)\cap O_{\gamma}\right)\label{eq: pf compare L - 2}
\end{equation}
for $l,k\in\mathbb{N},$ $0\leq l\leq k-1,$ $\lambda\in\left[\lambda_{1},\lambda_{2}\right]$
for $N\geq N_{2}.$ Note that $L_{l,k}\left(R,r\right)\cap O_{\gamma}\cap\mathcal{D}\subset L_{l,k}\left(B\left(2bN\right),top\left(2bN\right)\right).$
See Figure \ref{fig: lowest of lowest regular region} for more details.
This combined with (\ref{eq: pf compare L - 1}) and (\ref{eq: pf compare L - 2})
finishes the proof of Lemma \ref{lem: compare L}.
\end{proof}
A combination of Lemma \ref{lem: lowest of lowest} and \ref{lem: compare L}
gives the following:
\begin{cor}
\label{cor: lowest of lowest regular regions} Suppose that the conditions
of Lemma \ref{lem: lowest of lowest} hold. Then there exist $c=c\left(a,b,\lambda_{1},\lambda_{2}\right)$
and $N_{0}=N_{0}\left(a,b,\lambda_{1},\lambda_{2},k\right)$ such
that 
\[
\mathbb{P}_{p_{\lambda}\left(N\right)}\left(L_{l,k}\left(R,r\right)\right)\leq ck^{-1}
\]
for $l=0,1,\ldots,k-1,$ $\lambda\in\left[\lambda_{1},\lambda_{2}\right]$
and $N\geq N_{0}.$
\end{cor}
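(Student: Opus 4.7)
The plan is to obtain this corollary as an immediate composition of Lemma \ref{lem: lowest of lowest} and Lemma \ref{lem: compare L}. The first lemma bounds the probability that $\mathcal{L}$ hits the thin horizontal strip $B_{l,k}$ inside a centred parallelogram by $O(k^{-1})$, and the second lemma allows us to replace the regular pair $(R,r)$ by the canonical pair $(B(2bN), top(B(2bN)))$ at the cost of a multiplicative constant. Chaining these two inequalities yields the corollary.

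Concretely, I would proceed as follows. First, given the hypotheses ($a,b \in (0,1)$ with $5a < b$, $(R,r)$ $(aN,bN)$-regular, $\lambda \in [\lambda_1,\lambda_2]$, $l \in \{0,1,\ldots,k-1\}$) apply Lemma \ref{lem: compare L} to obtain constants $C_1 = C_1(a,b,\lambda_1,\lambda_2)$ and $N_1 = N_1(a,b,\lambda_1,\lambda_2,k)$ such that
\[
\mathbb{P}_{p_{\lambda}(N)}\bigl(L_{l,k}(R,r)\bigr) \leq C_1 \, \mathbb{P}_{p_{\lambda}(N)}\bigl(L_{l,k}(B(2bN), top(B(2bN)))\bigr)
\]
for $N \geq N_1$. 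Second, apply Lemma \ref{lem: lowest of lowest} with $b$ replaced by $2b$ (note that the hypothesis $5a < 2b$ is automatic from $5a < b$, so the lemma applies verbatim to the parallelogram $B(2bN)$ with top boundary $top(B(2bN))$); this yields constants $C_2 = C_2(a,b,\lambda_1,\lambda_2)$ and $N_2 = N_2(a,b,\lambda_1,\lambda_2,k)$ such that
\[
\mathbb{P}_{p_{\lambda}(N)}\bigl(L_{l,k}(B(2bN), top(B(2bN)))\bigr) \leq C_2 k^{-1}
\]
for $N \geq N_2$. Combining the two displays with $c := C_1 C_2$ and $N_0 := N_1 \vee N_2$ gives the required bound.

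There is essentially no new obstacle, since both inputs have already been proved; the only small point to verify is that the parallelograms $B_{l,k}$ appearing in the two lemmas are defined identically (both are $[-aN,aN] \boxtimes ((2l/k-1)aN, (2(l+1)/k-1)aN]$), so no rescaling of the strip is needed when one replaces $b$ by $2b$ in Lemma \ref{lem: lowest of lowest}. The constants produced depend only on $a,b,\lambda_1,\lambda_2$, exactly as claimed, and the $N$-threshold depends additionally on $k$ through the thresholds of Lemmas \ref{lem: lowest of lowest} and \ref{lem: compare L}.
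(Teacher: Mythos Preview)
Your proof is correct and is exactly the approach the paper takes: the corollary is stated immediately after the sentence ``A combination of Lemma \ref{lem: lowest of lowest} and \ref{lem: compare L} gives the following,'' with no further argument. Your check that replacing $b$ by $2b$ in Lemma \ref{lem: lowest of lowest} is legitimate (since $5a<b<2b$) and that the strips $B_{l,k}$ coincide in the two lemmas is precisely the small verification the paper leaves implicit.
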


\subsection{\label{sub: diam of active clusters}The diameter of the active clusters
close to time $1/2$}

We turn to the $N$-parameter frozen percolation process. In the introduction
we indicated that the $N$-parameter frozen percolation process exists
since it is a finite range interacting particle system. It is also
true that the process is measurable with respect to the $\tau$ values. 
\begin{defn}
\label{def: filtration of tau} For $t\in\left[0,1\right]$ and $J\subset V$
let 
\[
\mathcal{F}_{t}\left(J\right):=\sigma\left(\left\{ \tau_{w}<s\right\} \left|w\in J,\, s\in\left[0,1\right]\right.\right)
\]
denote the $\sigma$-algebra generated by the $\tau$ values of the
vertices in $J$ up to time $t.$
\end{defn}
The following lemma follows from the arguments in the second lecture
of \cite{Durrett1995}.
\begin{lem}
\label{lem: measurable w.r.t tau-s}For $N\in\mathbb{N},$ the $N$-parameter
frozen percolation process is adapted to the filtration $\mathcal{F}_{t}\left(V\right).$

\end{lem}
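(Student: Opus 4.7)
The plan is to realize the process as a deterministic function of the $\tau$-values via the standard Harris-type graphical construction for finite range interacting particle systems. The finite range structure is evident from the definition: whether a vertex $v$ opens at its event time $\tau_v$ depends only on the open-cluster diameters of its neighbours at time $\tau_v^-$, and since every open cluster has diameter at most $N+O(1)$, the outcome at $v$ is a deterministic function of the configuration on the ball $B(v;N+1)$ at time $\tau_v^-$.

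First I would construct finite-volume approximations. For each $n\in\mathbb{N}$ I would define an auxiliary process $X^n$ on the finite subgraph induced by $B(n)$ using the same $\tau$-values, with every vertex outside $B(n)$ permanently declared closed. Since $B(n)$ has only finitely many vertices and the $\tau_v$ are almost surely distinct, one can simply sort the event times $\{\tau_v : v \in B(n)\}$ in increasing order and apply the deterministic update rule one at a time. This exhibits $X^n$ as an explicit measurable function of $(\tau_v)_{v\in B(n)}$, hence of $(\tau_v)_{v\in V}$.

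The main step is then to pass to the limit $n\to\infty$ and show that $X^n_t(v)=X_t(v)$ almost surely for every fixed $v\in V$ and every $t\in[0,1)$ once $n$ is large enough. This is the standard finite-speed-of-propagation statement for finite range interacting particle systems: one shows that the backward ``ancestor cluster'' of $(v,t)$ in the natural dependency graph, which places an arrow $(w,\tau_w)\to(v,\tau_v)$ whenever $\tau_w<\tau_v$ and $d(v,w)\le N+1$, is almost surely finite; the standard proof compares this cluster with a subcritical oriented percolation or branching process and is carried out in the second lecture of \cite{springerlink:10.1007/BFb0095747}. Once this is granted, $X_t$ is the pointwise almost sure limit of the measurable functions $X^n_t$ and therefore itself measurable with respect to $\sigma(\tau_v : v\in V)$, which is the conclusion of the lemma.

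The principal obstacle is precisely the almost sure finiteness of the ancestor cluster: in continuous time one cannot iterate a discrete-time construction and must argue carefully that influence does not propagate arbitrarily far in bounded time. The a priori bound on cluster diameters by $N+O(1)$ is exactly what one needs to reduce the problem to the standard finite range template, so once this observation is made the argument goes through by citing \cite{springerlink:10.1007/BFb0095747}.
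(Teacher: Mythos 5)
Your proposal is correct and matches the paper's approach: the paper's entire proof is the citation to the second lecture of Liggett's notes \cite{springerlink:10.1007/BFb0095747}, and you have correctly unpacked the Harris graphical construction and finite-speed-of-propagation argument to which it refers, citing the same source for the subcritical ancestor-cluster comparison. (The dependency range is $O(N)$ rather than literally $N+1$, since an open cluster can overshoot diameter $N$ by roughly a factor of $2$ when several sub-$N$ clusters merge through a single opening vertex, but this does not affect the finite-range argument.)
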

Recall the notation $\mathcal{C}_{a}\left(v;\lambda\right)$ from
Definition \ref{def: C_a}. We prove the following proposition.
\begin{namedthm}
[Proposition \ref{prop: active diameter}] For all $\lambda\in\mathbb{R}$
and $\varepsilon,K,\alpha>0,$ there exist $\theta=\theta\left(\lambda,\alpha,\varepsilon,K\right)>0$
and $N_{0}=N_{0}\left(\lambda,\alpha,\varepsilon,K\right)$ such that

\begin{equation}
\mathbb{P}_{N}\left(\exists v\in B\left(KN\right)\mbox{ s.t. }\diam\left(\mathcal{C}_{a}\left(v;\lambda\right)\right)\in\left(\left(\alpha-\theta\right)N,\left(\alpha+\theta\right)N\right)\right)<\varepsilon\label{eq: prop active diameter}
\end{equation}
for $N\geq N_{0}.$\end{namedthm}
\begin{proof}
[Proof of Proposition \ref{prop: active diameter}] Due to the length
of the proof, we first give an outline. Let $\lambda,\varepsilon,K,\alpha$
as in the statement of Proposition \ref{prop: active diameter}.

For simplicity, we only give a sketch which shows that we can choose
$\theta\in\left(0,\frac{1\wedge\alpha}{2}\right)$ such that 
\begin{equation}
\mathbb{P}_{N}\left(\diam\left(\mathcal{C}_{a}\left(\lambda\right)\right)\in\left(\left(\alpha-\theta\right)N,\left(\alpha+\theta\right)N\right)\right)<\varepsilon\label{eq: pf active diameter - 0}
\end{equation}
for large $N.$

Let us denote by $\tilde{x},\tilde{y}$ a pair of sites in the active
cluster of the origin for which $d\left(\tilde{x},\tilde{y}\right)=\diam\left(\mathcal{C}_{a}\left(\lambda\right)\right).$
We consider the case where $\tilde{x}$ is one of the lowest and $\tilde{y}$
is one of the highest vertices of the active cluster. The other case
where the diameter is achieved as a distance between a leftmost and
rightmost vertex can be treated in a similar way. Let $x$ ($y$)
denote a vertex which is a neighbour of $\tilde{x}$ ($\tilde{y}$),
and lies below (above) it. Note that $x$ and $y$ are closed frozen
vertices at time $p_{\lambda}\left(N\right).$ 

In Step 1 we apply Observation \ref{obs: closed grid-> no freezing}
and Lemma \ref{lem: there is a net} to set $\lambda_{0}$ so that
with probability close to $1,$ there are no frozen clusters at time
$p_{\lambda_{0}}\left(N\right)$ in $B\left(\left(\alpha+2\right)N\right).$
Hence in the case where $\lambda_{0}\geq\lambda$ the statement of
Proposition \ref{prop: active diameter} follows. In the following
we assume that $\lambda_{0}<\lambda,$ and the event in (\ref{eq: prop active diameter})
is non-empty. We investigate the configuration close to $x.$ In Step
2, we show that with probability close to $1,$ there is a unique
frozen cluster $F$ close to $x.$ By Step 1, we can assume that it
froze at time $p_{\lambda_{F}}\left(N\right)$ for $\lambda_{F}\in\left[\lambda_{0},\lambda\right].$
In Step 4, we show that with probability close to $1,$ there is a
graph $\mathcal{R}\subseteq\mathbb{T}$ such that its boundary consists
of a $p_{\lambda_{F}}\left(N\right)$-closed arc, denoted by $r_{c},$
and a $p_{\lambda_{F}}\left(N\right)$-open arc. In Step 3,5 and 6
we show that with probability close to $1,$ we can impose some extra
conditions on $\mathcal{R}$ and $r_{c}$ and on the configuration
in $\mathcal{R}.$ We get a pair $\left(\mathcal{R},r_{c}\right)$
with the following properties:
\begin{itemize}
\item $\partial\mathcal{R}$ is a certain outermost circuit, which is measurable
with respect to the $\tau$-values in $\mathbb{T\setminus\mathcal{R}},$
(Step 4)
\item $x$ is one of the lowest vertices of $\mathcal{R}$ with two non-touching
$p_{\lambda_{F}}\left(N\right)$-closed arms in $\mathcal{R}$ to
$r_{c},$ (Step 4)
\item no matter how we change the $\tau$ values in $\mathcal{R},$ the
$N$-parameter frozen percolation outside $\mathcal{R}$ does not
change up to time $p_{\lambda}\left(N\right),$ (Step 3)
\item satisfies a technical condition (Step 5)
\item $y\in\mathbb{T\setminus}cl\left(\mathcal{R}\right)$ (Step 4).
\end{itemize}
Let us condition on the $\tau$-values in $\mathbb{T}\setminus\mathcal{R}.$
The first and the third property of $\left(\mathcal{R},r_{c}\right)$
implies that at time $p_{\lambda_{F}}\left(N\right),$ the vertices
in $\mathcal{R}$ are open with probability $p_{\lambda_{F}}\left(N\right)$
and closed with probability $1-p_{\lambda_{F}}\left(N\right)$ \emph{independently}
from each other. This combined with $y\in\mathbb{T}\setminus\mathcal{R}$
allows us to decouple the locations of $x$ and $y.$ Since $d\left(\tilde{x},\tilde{y}\right)=\diam\left(\mathcal{C}_{a}\left(\lambda\right)\right),$
to prove (\ref{eq: pf active diameter - 0}), it is enough to show
that the second coordinate of $x$ is not concentrated when we condition
on the configuration in $\mathbb{T}\setminus\mathcal{R}.$ We would
like to use Corollary \ref{cor: lowest of lowest regular regions}
for the pair $\left(\mathcal{R},r_{c}\right).$ Unfortunately, this
pair $\left(\mathcal{R},r_{c}\right)$ might not satisfy all the conditions
of Definition \ref{def: (a,b)-regular}. To solve this problem we
use the technical condition of Step 5 and we construct the pair $\left(\tilde{\mathcal{R}},\tilde{r}_{c}\right)$
from $\left(\mathcal{R},r_{c}\right)$ using a deterministic procedure
in Step 6 such that
\begin{itemize}
\item $\tilde{\mathcal{R}}\subset\mathcal{R},$
\item a translated version of $\left(\tilde{\mathcal{R}},\tilde{r}_{c}\right)$
is $\left(\alpha_{3}N,\alpha_{2}N\right)$-regular as of Definition
\ref{def: (a,b)-regular} for some $\alpha_{2},\alpha_{3}>0,$ and
\item $x$ is one of the lowest vertices of $\tilde{\mathcal{R}}$ with
two non-touching $p_{\lambda_{F}}\left(N\right)$-closed arms in $\tilde{\mathcal{R}}$
to $\tilde{r}_{c}.$
\end{itemize}
We apply Corollary \ref{cor: lowest of lowest regular regions} to
$\left(\tilde{\mathcal{R}},\tilde{r}_{c}\right)$ and get the required
non-concentration result and finish the proof of Proposition \ref{prop: active diameter}.
We make this argument precise in Step 7.
\begin{rem*}
The structure of the proofs in Step 2-6 is an arm event hunting procedure.
We take a some small neighbourhood of $x.$ We deduce that if the
required condition is violated, then certain mixed near-critical arm
events or crossing events of thin parallelograms occur. These events
have upper bounds with exponents strictly larger than $2.$ This implies
that by choosing the neighbourhood small enough, we can set their
probability as small as we want. In particular, we get that the probability
of the event where the condition of the step is not satisfied is as
small as required, and finishes the proof of the step.
\end{rem*}
\medskip

Let us turn to the precise proof.

\textbf{Step 1. }\emph{We set $\lambda_{0}$ such that with probability
close to $1,$ at time $p_{\lambda_{0}}\left(N\right),$ none of the
open clusters intersecting $B\left(\left(2\alpha+K+2\right)N\right)$
are frozen.}

By Lemma \ref{lem: there is a net} we choose $\lambda_{0}=\lambda_{0}\left(\alpha,\varepsilon,K\right)$
and $N_{0}=N_{0}\left(\alpha,\varepsilon,K\right)$ such that the
event
\begin{equation}
E_{0}:=\mathcal{N}_{c}\left(\lambda_{0},1/24,2\alpha+K+4,N\right)\label{eq: pf active diameter - 0.5}
\end{equation}
has probability at least $1-\varepsilon/20$ for $N\geq N_{0}.$ Then
by Observation \ref{obs: closed grid-> no freezing}  we have that
none of open clusters intersecting $B\left(\left(2\alpha+K+2\right)N\right)$
are frozen. In particular, if a vertex $v\in B\left(\left(2\alpha+K+2\right)N\right)$
is closed at time $p_{\lambda}\left(N\right),$ then it is $p_{\lambda_{0}}\left(N\right)$-closed.
Moreover, if $v\in B\left(\left(2\alpha+K+2\right)N\right)$ is open
at time $p_{\lambda}\left(N\right),$ then it is $p_{\lambda}\left(N\right)$-open.
This finishes Step 1.
\begin{rem*}
Note that in the definition of $E_{0}$ above, we set the second argument
of $\mathcal{N}_{c}$ to $1/24,$ which is smaller than $1/6$ which
appears in Observation \ref{obs: closed grid-> no freezing}. The
reason for this choice will become clear in Step 3.
\end{rem*}
\medskip

Let $\theta\in\left(0,\frac{1\wedge\alpha}{2}\right).$ For $i=1,2,$
let $BA^{i}=BA^{i}\left(\theta\right)$ denote the set of vertices
$v\in B\left(KN\right)$ such that there are $\tilde{x}\left(v\right)=\left(\tilde{x}_{1}\left(v\right),\tilde{x}_{2}\left(v\right)\right),\,\tilde{y}\left(v\right)=\left(\tilde{y}_{1}\left(v\right),\tilde{y}_{2}\left(v\right)\right)\in\mathcal{C}_{a}\left(v;\lambda\right)$
such that 
\begin{equation}
\tilde{y}_{i}\left(v\right)-\tilde{x}_{i}\left(v\right)=d\left(\tilde{x}\left(v\right),\tilde{y}\left(v\right)\right)=\diam\left(\mathcal{C}_{a}\left(v;\lambda\right)\right)\in\left(\left(\alpha-\theta\right)N,\left(\alpha+\theta\right)N\right).\label{eq: pf active dimeter - 1}
\end{equation}

Note that 
\begin{equation}
\left\{ \exists v\in B\left(KN\right)\mbox{ s.t. }\diam\left(\mathcal{C}_{a}\left(v;\lambda\right)\right)\in\left(\left(\alpha-\theta\right)N,\left(\alpha+\theta\right)N\right)\right\} =\left\{ BA^{1}\cup BA^{2}\neq\emptyset\right\} .\label{eq: pf active diameter - 1.5}
\end{equation}
Let $u\in BA^{2}.$ In the following we define quantities which depend
on the value of $u.$ In notation we indicate the dependence on $u$
in the first appearance of these quantities, or when we want to emphasize
this dependence. For each $u\in BA^{2}$ we fix a pair $\left(\tilde{x},\tilde{y}\right)=\left(\tilde{x},\tilde{y}\right)\left(u\right)$
which satisfies (\ref{eq: pf active dimeter - 1}). It can happen
that there are more than one candidates for $\tilde{x}$ or $\tilde{y}.$
In this case we choose one of them in some deterministic way. (E.g
we can set $\tilde{x}$ ($\tilde{y}$) as the leftmost vertex among
the candidates.) Let $x=x\left(u\right)$ ($y\left(u\right)$) denote
a (deterministically chosen) neighbour of $\tilde{x}$ ($\tilde{y}$)
below $\tilde{x}$ (above $\tilde{y}$). The active cluster $\mathcal{C}_{a}\left(u;\lambda\right)$
lies between the horizontal lines passing through $x$ and $y$ denoted
by $e_{x}$ and $e_{y}.$ Since $\theta<\alpha/2,$ the outer boundary
of $\mathcal{C}_{a}\left(u;\lambda\right)$ provides two non-touching
closed half plane arms in $x+\mathbb{Z}\boxtimes\left[0,\infty\right)$
to distance $\alpha N/2$ starting from $x.$ Since $\partial\mathcal{C}_{a}\left(u;\lambda\right)\subset B\left(\left(2\alpha+K+2\right)N\right),$
by Step 1, on the event $E_{0}$ these arms are $p_{\lambda_{0}}\left(N\right)$-closed.
We denote the one on the left (right) hand side by $c_{L}=c_{L}\left(u\right)$
($c_{R}=c_{r}\left(u\right)$). Apart from their common starting point,
$c_{L}$ and $c_{R}$ do not even touch, since any active path connecting
$\tilde{x}$ to $\tilde{y}$ separates them. Since $x$ is a closed
frozen vertex, there is at least one open frozen neighbour of $x.$
From this vertex there is a $p_{\lambda}\left(N\right)$-open arm
$o_{B}=o_{B}\left(u\right)$ to distance at least $N/2.$ See Figure
\ref{fig: mixed arms} for more details.

\begin{figure}
\centering{}\includegraphics{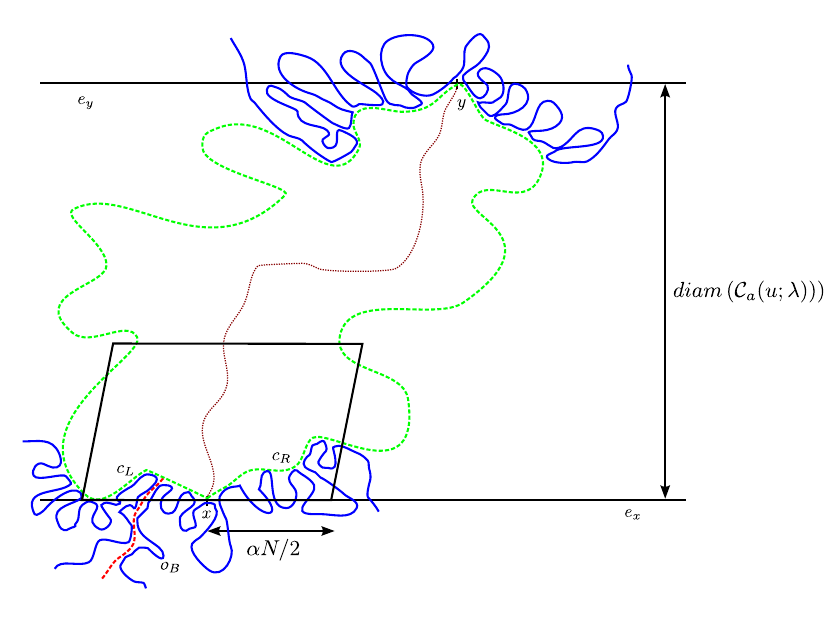}\caption{\label{fig: mixed arms} The closed boundary of $\mathcal{C}_{a}\left(\lambda\right)$
give rise to the closed arms $c_{L}$ and $c_{R}$ from $x$ to $\partial B\left(x;\alpha N/2\right).$
The frozen vertex neighbouring $x$ provides the arm $o_{B}.$ }
\end{figure}

Let $\beta,\beta'\in\left(0,1\right)$ with $\beta<\beta'.$ Recall
the definition of the events $\mathcal{NA}\left(\beta,\beta'\right):=\mathcal{NA}\left(\beta,\beta',\lambda,\lambda_{0},2\alpha+K+2,N\right)$
and $\mathcal{NC}\left(\beta,\beta'\right):=\mathcal{NC}\left(\beta,\beta',\lambda,\lambda_{0},2\alpha+K+2,N\right)$
from Corollary \ref{cor: no many arms} and \ref{cor: thin crossing}.
In the following we introduce the constants $\alpha_{i}>0$ for $i=1,2,3$
such that $\alpha_{i}/\alpha_{i+1}\gg1.$ Let $\alpha_{3}\in\left(0,\frac{\alpha\wedge1}{2}\right).$
Let $z=z\left(u\right)\in V$ such that $x=x\left(u\right)\in\left[-\alpha_{3}N,\alpha_{3}N\right]\boxtimes\left(-\alpha_{3}N,\alpha_{3}N\right]+\left\lfloor \alpha_{3}N\right\rfloor z.$
Note that $z\in B\left(\left\lceil \frac{2\alpha+K+2}{\alpha_{3}}\right\rceil \right).$
We define $B_{3}=B_{3}\left(u\right):=B\left(\left\lfloor \alpha_{3}N\right\rfloor z;\alpha_{3}N\right).$
Note that throughout the arguments below, we will assume that $\alpha_{1}>\alpha_{2}>\alpha_{3},$
however, we will set their precise values only in later stages of
the proof.

\medskip

\textbf{Step 2.} \emph{We show that with probability close to $1,$
there is only one frozen cluster close to $x=x\left(u\right)$ for
all $u\in BA^{2}.$}

Let $\alpha_{1}\in\left(0,\frac{\alpha\wedge1}{2}\right),$ $B_{1}=B_{1}\left(u\right):=B\left(\left\lfloor \alpha_{3}N\right\rfloor z;\alpha_{1}N\right)$
and $A_{1}=A_{1}\left(u\right):=A\left(\left\lfloor \alpha_{3}N\right\rfloor z;\alpha_{1}N,\frac{\alpha\wedge1}{2}N\right).$
Suppose that there are at least two different frozen clusters in $B_{1}.$
On the event $E_{0}$ we find $5,2$ mixed near critical arms in $A_{1}:$
the two $p_{\lambda_{0}}\left(N\right)$-closed arms $c_{L}$ and
$c_{R},$ the two $p_{\lambda}\left(N\right)$-open arms from the
two frozen clusters, and a $p_{\lambda_{0}}\left(N\right)$-closed
arm separating them. Let $E_{1}:=\mathcal{NA}\left(\alpha_{1},\frac{\alpha\wedge1}{2}\right).$
Hence we get:
\begin{claim}
\label{claim: unique F}On the event $E_{0}\cap E_{1},$ $\forall u\in BA^{2},$
there is a unique frozen cluster denoted by $F=F\left(u\right)$ which
intersects $B_{1}\left(u\right).$ Let $\lambda_{F}=\lambda_{F}\left(u\right)\in\left[\lambda_{0},\lambda\right]$
such that $F$ froze at $p_{\lambda_{F}}\left(N\right).$ On $E_{0}\cap E_{1},$
a vertex in $B_{1}\left(u\right)$ is open in the $N$-parameter frozen
percolation process at time $p_{\lambda_{F}}\left(N\right)$ if and
only if it is $p_{\lambda_{F}}\left(N\right)$-open. 
\end{claim}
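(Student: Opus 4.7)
The plan is to prove the two assertions of the claim separately: uniqueness of a frozen cluster intersecting $B_1(u)$, and the coincidence on $B_1(u)$ between openness in the frozen process at time $p_{\lambda_F}(N)$ and $p_{\lambda_F}(N)$-openness (the existence of $\lambda_F \in [\lambda_0,\lambda]$ follows for free: by Step~1 and Observation~\ref{obs: closed grid-> no freezing}, on $E_0$ no cluster intersecting $B_1(u)$ is frozen at time $p_{\lambda_0}(N)$, while $F$ is frozen by the final time $p_\lambda(N)$, so the first time $F$ reaches diameter $N$ lies in $[p_{\lambda_0}(N),p_\lambda(N)]$).

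For uniqueness I argue by contradiction. Suppose two distinct frozen clusters $F_1,F_2$ both intersect $B_1(u)$. Because each $F_i$ has diameter at least $N$ and $B_1(u)$ has diameter only $2\alpha_1 N \ll N$, each $F_i$ contains a vertex outside the outer parallelogram of $A_1(u)$, hence crosses $A_1(u)$ via an open arm; on $E_0$ (by Step~1) this arm is $p_\lambda(N)$-open. Since $F_1,F_2$ are distinct open components, they must be separated inside $A_1(u)$ by a closed path, which on $E_0$ is $p_{\lambda_0}(N)$-closed. Together with the two non-touching $p_{\lambda_0}(N)$-closed upper-half-plane arms $c_L(u),c_R(u)$ emanating from $x(u)$ and crossing $A_1(u)$ (which they do since $x(u)$ is within distance $\alpha_3 N \ll \alpha_1 N$ of the annulus centre), this yields five disjoint near-critical arms in $A_1(u)$ of which at least two lie in a half-plane. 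Relaxing one half-plane constraint produces a $(5,1)$ mixed near-critical arm event at some centre in $B((2\alpha+K+2)N)$, which is precisely one of the configurations forbidden by $E_1 = \mathcal{NA}(\alpha_1,(\alpha\wedge1)/2,\lambda,\lambda_0,2\alpha+K+2,N)$, contradiction.

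For the openness equivalence, I exploit that frozen clusters are dynamically stable: once a cluster freezes its vertices can neither be removed nor merge with another cluster, so the collection of frozen clusters intersecting $B_1(u)$ is non-decreasing in time. Combined with uniqueness at the final time $p_\lambda(N)$, this implies that at every time $t < p_{\lambda_F}(N)$ no frozen cluster intersects $B_1(u)$. Consequently, for any $v \in B_1(u)$ with $\tau_v < p_{\lambda_F}(N)$, at the attempt time $\tau_v$ no neighbour of $v$ has an open cluster of diameter $\geq N$, so $v$ opens and remains open through $p_{\lambda_F}(N)$; conversely, being open at $p_{\lambda_F}(N)$ in the frozen process forces $\tau_v \leq p_{\lambda_F}(N)$. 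The boundary event $\{\tau_v = p_{\lambda_F}(N)\}$ has probability zero, giving the stated equivalence. The main technical point is the first part, specifically checking that the five-arm configuration fits the exact form of the $(5,1)$ mixed arm event in Definition~\ref{def: NA}: the arms $c_L,c_R$ are half-plane with respect to $x(u)$ rather than the annulus centre $\lfloor \alpha_3 N\rfloor z(u)$, but because $\alpha_3 \ll \alpha_1$ this is absorbed by a small recentring (or by choosing a slightly different reference vertex in $B((2\alpha+K+2)N)$ in the definition of $\mathcal{NA}$).
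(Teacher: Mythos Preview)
Your proof is correct and follows essentially the same approach as the paper. The paper's Step~2 argues exactly as you do for uniqueness (the $(5,2)$ mixed arm event in $A_1$, which of course implies the $(5,1)$ event forbidden by $E_1$), and then simply states the openness equivalence as part of the claim without further comment; your explicit derivation of that second part via monotonicity of the frozen-cluster collection is the intended reasoning spelled out. Your flagging of the recentring issue (half-plane relative to $x(u)$ versus the grid point $\lfloor\alpha_3 N\rfloor z(u)$) is a legitimate technicality that the paper glosses over; since $\alpha_3\ll\alpha_1$ and the $\mathcal{NA}$ event in Definition~\ref{def: NA} ranges over all centres $v\in B((2\alpha+K+2)N)$, one may take $v=x(u)$ and absorb the $O(\alpha_3 N)$ shift into the annulus radii, exactly as you indicate.
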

In the following two steps we write open (closed) for $p_{\lambda_{F}}\left(N\right)$-open
($p_{\lambda_{F}}\left(N\right)$-closed) if it is not stated otherwise.
We finish Step 2 by applying Corollary \ref{cor: no many arms} and
we set $\alpha_{1}$ such that
\begin{equation}
\mathbb{P}\left(E_{1}\right)\geq1-\varepsilon/20\label{eq: pf active dimeter - 2}
\end{equation}
for $N\geq N_{1}\left(\varepsilon,\lambda_{0},\lambda,\alpha,K\right).$

\medskip

\textbf{Step 3.} \emph{We say that a circuit is $p_{\lambda_{F}}\left(N\right)$-open-closed,
or simply open-closed, if it consists of a $p_{\lambda_{F}}\left(N\right)$-open
and a $p_{\lambda_{F}}\left(N\right)$-closed arc. Suppose that there
is a $p_{\lambda_{F}}\left(N\right)$-open-closed circuit close to
and around $x$. We show that with probability close to $1,$ no matter
how we change the $\tau$ values inside this circuit, the $N$-parameter
frozen percolation process does not change till time $p_{\lambda}\left(N\right)$
outside of the circuit.}

Let $\alpha_{2}\in\left(0,\alpha_{1}\wedge\frac{1}{4}\right),$ and
$\beta_{2}\in\left(\alpha_{2},\alpha_{1}\right)$ be some intermediate
scale. We define the parallelograms
\begin{align*}
  B_{2} = & B_{2}\left(u\right):=B\left(\left\lfloor \alpha_{3}N\right\rfloor z;\alpha_{2}N\right),\\
  B_{2}'= & B_{2}'\left(u\right):=B\left(\left\lfloor \alpha_{3}N\right\rfloor z;\beta_{2}N\right),\\
  A_{2} = & A_{2}\left(u\right):=A\left(\left\lfloor \alpha_{3}N\right\rfloor z;\alpha_{2}N,\alpha_{1}N\right),\\
  A_{2}'= & A_{2}'\left(u\right):=A\left(\left\lfloor \alpha_{3}N\right\rfloor z;\beta_{2}N,\alpha_{1}N\right).
\end{align*}
Let $BL=BL\left(u\right)$ denote the set of bordering lines of $F\setminus B_{2}',$
that is the top- and bottom-most horizontal, left- and rightmost vertical
lines which intersect $F\setminus B_{2}'.$ We rule out the case where
there is a line in $BL$ which intersects $B_{2}'$ in the following
technical claim.
\begin{claim}
\label{claim: bdary lines of F are far from x}Let
\begin{equation}
E'_{2}=\mathcal{NA}\left(2\beta_{2},\alpha_{1}-2\beta_{2}\right)\cap\mathcal{NC}\left(2\beta_{2},2\alpha_{1}\right).\label{eq: pf active diameter - 2.1}
\end{equation}
Then 
\[
E_{0}\cap E_{1}\cap E_{2}'\subset E_{0}\cap E_{1}\cap\left\{ \forall u\in BA^{2},\forall e\in BL\left(u\right)\mbox{ we have }e\cap\left(F\setminus B'_{2}\right)=\emptyset\right\} .
\]
\end{claim}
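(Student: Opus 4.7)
The plan is a proof by contradiction. Fix $u\in BA^{2}$ and suppose some $e\in BL(u)$ intersects $B_{2}'(u)$. By the reflection symmetries of the configuration I may assume $e$ is the topmost horizontal bordering line of $F\setminus B_{2}'(u)$ and that a witness vertex $w\in F\setminus B_{2}'(u)$ on $e$ lies to the right of $B_{2}'$; the other seven symmetric cases are identical. Writing $z'=\lfloor\alpha_{3}N\rfloor z(u)$, this hypothesis forces every vertex of $F$ with $y$-coordinate exceeding $z_{2}'+\beta_{2}N$ to lie in $B_{2}'(u)$, so the part of $F$ outside $B_{2}'$ is confined to the horizontal half-plane $\{y\le z_{2}'+\beta_{2}N\}$.

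I will work in the annulus $\mathcal{A}=A(x(u);2\beta_{2}N,(\alpha_{1}-2\beta_{2})N)$, which is centred in $B((2\alpha+K+2)N)$ and is a proper annulus because $\beta_{2}<\alpha_{1}/4$. Two observations drive the argument. First, every vertex $v\in\partial F$ is $p_{\lambda_{0}}(N)$-closed on $E_{0}$: if $\tau_{v}\le p_{\lambda_{0}}(N)$ then, by Observation \ref{obs: closed grid-> no freezing}, no frozen cluster exists at time $p_{\lambda_{0}}(N)$, so $v$ would have opened successfully at time $\tau_{v}$ and joined the predecessor of $F$, contradicting $v\in\partial F$. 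Second, $F$ itself is $p_{\lambda_{F}}(N)$-open and hence $p_{\lambda}(N)$-open. I then assemble six disjoint arms crossing $\mathcal{A}$: the two half-plane $p_{\lambda_{0}}(N)$-closed arms $c_{L},c_{R}$ inherited from $\partial\mathcal{C}_{a}(u;\lambda)$, whose length $\alpha N/2>(\alpha_{1}-2\beta_{2})N$ ensures that they cross $\mathcal{A}$; one $p_{\lambda}(N)$-open arm produced by the $F$-path from $x'$ (the frozen neighbour of $x$) to a vertex of $F$ at distance at least $N/2>(\alpha_{1}-2\beta_{2})N$ from $x$, whose existence follows from $diam(F)\ge N$; and two further $p_{\lambda_{0}}(N)$-closed arms produced by the closed boundary loop of $F$ passing through $x\in\partial F$ and extending along $\partial F$ in both directions until it exits $\mathcal{A}$ (again using $diam(F)\ge N$). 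This realizes an event of the form $\mathcal{A}_{6,0,\sigma}^{\lambda_{0},\lambda,N}(x;\cdot,\cdot)$ for some $\sigma$, contradicting $\mathcal{NA}(2\beta_{2},\alpha_{1}-2\beta_{2})\subset E_{2}'$.

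In degenerate configurations where the two arms contributed by the $\partial F$-loop cannot be taken disjoint from $c_{L},c_{R}$ (for instance when $\partial F$ locally coincides with $\partial\mathcal{C}_{a}$ near $x$), one falls back on $\mathcal{NC}$: the topmost-line constraint forces the open $F$-path from $x'$ to the distant vertex in $F$ (which must traverse horizontal distance at least $(\alpha_{1}-\beta_{2})N$) to remain inside the parallelogram $[z_{1}'+\beta_{2}N,z_{1}'+\alpha_{1}N]\boxtimes[z_{2}'-\beta_{2}N,z_{2}'+\beta_{2}N]$, producing a $p_{\lambda}(N)$-open horizontal crossing of a parallelogram of vertical width $2\beta_{2}N$ and horizontal length $(\alpha_{1}-\beta_{2})N$; after a translation of the centre into $B((2\alpha+K+2)N)$ this violates $\mathcal{NC}(2\beta_{2},2\alpha_{1})\subset E_{2}'$. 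The main obstacle is the planar-topology step in the arm-counting argument: verifying that the two $\partial F$-arms are genuinely disjoint from $c_{L},c_{R}$ (since both colour families are closed and all emanate from the vicinity of $x$), and separating cleanly the "six-arm" case from the degenerate "thin-crossing" case along the topmost-vertex hypothesis. Summing the eight symmetric cases completes the proof.
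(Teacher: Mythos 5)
The proposal relies on a reflection symmetry to reduce to the topmost-line case, but the configuration around $x$ has no such symmetry: by construction in the preceding step, the two closed arms $c_L,c_R$ lie in the upper half plane $x+\mathbb{Z}\boxtimes[0,\infty)$ (they are the outer boundary of $\mathcal{C}_a(u;\lambda)$ above $x$), while $F$ and the open arm $o_B$ lie on the other side of $x$. The four bordering-line cases therefore produce genuinely different geometries and the paper argues them separately. The topmost case you chose as ``representative'' is precisely the one where an arm event around $x$ is unavailable; the paper handles it with $\mathcal{NC}$ via a \emph{closed} crossing: on $E_1$ the arms $c_L,c_R$ run along $\partial F$ inside $B_1$, so confinement of $F\setminus B_2'$ below $z_2'+\beta_2 N$ traps $c_L\cup c_R$ (which also stay above $x$) inside the slab $[-\alpha_1 N,\alpha_1 N]\boxtimes[-\beta_2 N,\beta_2 N]+\lfloor\alpha_3 N\rfloor z$, giving a $p_{\lambda_0}(N)$-closed crossing and hence $\mathcal{NC}^c(2\beta_2,2\alpha_1)$. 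Your fallback aims instead for an \emph{open} crossing by the $F$-path, and it fails: the topmost-line hypothesis only bounds $F\setminus B_2'$ from above, so the open path in $F$ from $x'$ may exit the slab through the bottom, and there is no reason it stays in the parallelogram you name.

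The arm-counting part also does not close. You list five arms ($c_L$, $c_R$, one open arm in $F$, two closed arms along $\partial F$ through $x$) but invoke an $\mathcal{A}_{6,0,\sigma}$ event; and, more seriously, the two $\partial F$-arms emanating from $x$ are generically \emph{not} disjoint from $c_L,c_R$, since near $x$ the loop $\partial F$ and the path $\partial\mathcal{C}_a(u;\lambda)$ (of which $c_L,c_R$ are pieces) both pass through $x$ and along the layer of closed vertices separating $F$ from $\mathcal{C}_a(u;\lambda)$, which is typically a single path. This is not a ``degenerate'' configuration but the default one, so the six-arm route breaks down across the board. The paper avoids this completely: for the bottom, left and right bordering lines it produces a \emph{single} additional closed arm separating $F$ from a far-away lattice line (rather than the arc of $\partial F$ through $x$), so that extra arm lives on the opposite side of $x$ from $c_L,c_R$, disjointness is automatic, and one obtains a four-half-plane-arm event inside $\mathcal{NA}^c(2\beta_2,\alpha_1-2\beta_2)$.
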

\begin{proof}
[Proof of Claim \ref{claim: bdary lines of F are far from x}] Let
$u\in BA^{2}.$ When the bottom-most line of $F\setminus B_{2}'$
intersects $B_{2}',$ then $F\subseteq\left(\mathbb{Z}\boxtimes\left[-\beta_{2}N,\infty\right)\right)+\left\lfloor \alpha_{3}N\right\rfloor z.$
We see $4$ half plane arms: $c_{L},c_{R}$ give two closed and $o_{B}$
gives an open arm, a fourth closed half plane arm separates $F$ from
the line $\mathbb{Z}\boxtimes\left\{ \left\lfloor \beta_{2}N\right\rfloor \right\} +\left\lfloor \alpha_{3}N\right\rfloor z$.
Hence $\mathcal{NA}^{c}\left(2\beta_{2},\alpha_{1}-2\beta_{2}\right)$
occurs.

If the topmost line of $F\setminus B_{2}'$ intersects $B_{2}',$
then the closed arms $c_{L}$ and $c_{R}$ stay in the parallelogram
\[
\left[-\alpha_{1}N,\alpha_{1}N\right]\boxtimes\left[-\beta_{2}N,\beta_{2}N\right]+\left\lfloor \alpha_{3}N\right\rfloor z.
\]
In particular, $c_{L}$ gives a closed crossing of one of the parallelograms
\[
\left[-\alpha_{1}N,-\beta_{2}N\right]\boxtimes\left[-\beta_{2}N,\beta_{2}N\right]+\left\lfloor \alpha_{3}N\right\rfloor z\mbox{ or }\left[\beta_{2}N,\alpha_{1}N\right]\boxtimes\left[-\beta_{2}N,\beta_{2}N\right]+\left\lfloor \alpha_{3}N\right\rfloor z.
\]
That is, the event $\mathcal{NC}^{c}\left(2\beta_{2},2\alpha_{1}-2\beta_{2}\right)$
occurs.

When a leftmost bordering line of $F\setminus B_{2}'$ intersects
$B_{2}',$ then we find that the arms in $A\left(\left\lfloor \alpha_{3}N\right\rfloor z;\beta_{2}N,\alpha_{1}N\right)$
induced by $c_{L},c_{R}$ and $o_{B}$ stay in half plane 
\begin{equation}
\left[-2\beta_{2}N,\infty\right)\times\mathbb{R}+\left\lfloor \alpha_{3}N\right\rfloor z.\label{eq: pf claim bdary lines of F are far from x}
\end{equation}
The frozen cluster $F$ is separated from the line $\left\{ -2\beta_{2}N\right\} \times\mathbb{R}+\left\lfloor \alpha_{3}N\right\rfloor z.$
This provides an additional closed arm in the half plane (\ref{eq: pf claim bdary lines of F are far from x}),
which together the arms induced by $c_{L},c_{R}$ and $o_{B}$ give
$4$ half plane arms, hence the event $\mathcal{NA}^{c}\left(\beta_{2},\alpha_{1}-2\beta_{2}\right)$
occurs.

The case when the rightmost bordering line of $F\setminus B_{2}'$
intersects $B_{2}'$ can be treated similarly. 

With the notation (\ref{eq: pf active diameter - 2.1}) we get that
on the event $E_{0}\cap E_{1}\cap E'_{2},$ none of the lines of $BL$
intersect $B'_{2},$ which finishes the proof of Claim \ref{claim: bdary lines of F are far from x}. 
\end{proof}
Now we proceed with Step 3. Let $u\in BA^{2}.$ Suppose that there
is an open-closed circuit $OC=OC\left(u\right)$ around $x$ in $B_{2}.$
Let $I=I\left(u\right)$ denote the union of the finite connected
components of $\mathbb{T}\backslash OC.$ Let us change the $\tau$
values of the vertices in $I$ in some arbitrary non-degenerate way
(that is, the new $\tau$ values are all different), but keep the
original values outside $I.$ Let us run the $N$-parameter frozen
percolation dynamics for this modified set of $\tau$ values. We denote
this new process by $FPP'$ and $FPP$ denotes the original process.
Our next aim is to show that the processes $FPP$ and $FPP'$ coincide
on $V\setminus B_{2}$ till time $p_{\lambda}\left(N\right)$ on some
event $E_{2}$ independently from the choice of the new $\tau$ values.

Recall the definition of $E_{0}$ from (\ref{eq: pf active diameter - 0.5})
and the remark after Step 1. Since $\alpha_{2}<\alpha_{1}<1/24$ and
$I\subseteq B_{2},$ the definition of $E_{0}$ and Observation \ref{obs: closed grid-> no freezing}
gives that the processes $FPP$ and $FPP'$ coincide on $V\setminus I$
up to time $p_{\lambda_{0}}\left(N\right).$ In particular, the closed
arc of $OC$ stays closed till time $p_{\lambda_{F}}\left(N\right)$
in both processes. Hence it acts as a barrier for the effect of $\tau$
values in $I.$ By Step 2, the open arc of $OC$ is a subset of $F.$ 

\emph{Case 1.} The process $FPP'$ differs from $FPP$ outside of
$R$ at some time $t\in\left[0,p_{\lambda_{F}}\left(N\right)\right].$
\\
By Claim \ref{claim: bdary lines of F are far from x} on the event
$E_{0}\cap E_{1}\cap E_{2}'$ if these two processes differ outside
$I,$ then in the process $FPP'$ a frozen cluster $F'$ emerged before
time $p_{\lambda_{F}}\left(N\right)$ such that $F'\setminus I\neq F\setminus I.$
By the arguments above, we get that $F'$ froze in at time $p_{\lambda_{F'}}\left(N\right)$
with $\lambda_{F'}\in\left[\lambda_{0},\lambda_{F}\right].$ Let $BL'$
denote the set of bordering lines of $F'\setminus B_{2}'.$ With careful
examination of the proof of Claim \ref{claim: bdary lines of F are far from x}
one can see that the arguments applied there can also be applied to
the new process $FPP'.$ We get that, on the event $E_{0}\cap E_{1}\cap E_{2}'$
none of the lines of $BL'$ intersect $B_{2}'$ no matter how we modify
the $\tau$ values in $I.$ This implies that $F'\setminus I$ has
two connected components $F'_{1}$ and $F'_{2}$ such that $\diam\left(F_{i}'\right)<N$
for $i\in\left\{ 1,2\right\} ,$ but $\diam\left(F_{1}'\cup F_{2}'\right)\geq N.$
Since $I\subset B_{2},$ each of $F_{1}',F_{2}'$ contains a $p_{\lambda_{F'}}\left(N\right)$-open
arm in the annulus $A_{2}''=A_{2}''\left(u\right):=A\left(\left\lfloor \alpha_{3}N\right\rfloor z;\alpha_{2}N,\beta_{2}N\right).$
When for some $i\in\left\{ 1,2\right\} $ $F_{i}'$ lies above $c_{L}$
and $c_{R},$ then we get a $4,3$ near critical arm event: the closed
arms induced by $c_{L},c_{R}$ and the open arm induced by $F_{i}'$
stay above $e_{x},$ and $o_{B}$ provides the fourth arm in $A_{2}''.$
Hence $\mathcal{NA}^{c}\left(\alpha_{2},\beta_{2}\right)$ occurs.
If both of $F_{1}',F_{2}'$ lie below $c_{L}$ and $c_{R}$ then we
get a $5,2$ near critical mixed arm event in $A_{2}'':$ $c_{L},c_{R}$
induce closed half plane arms in $A_{2}''.$ $F_{1}',F_{2}'$ induce
two open arms. Since $F_{1}'$ and $F_{2}'$ are different connected
components of $F'\setminus I$, there is a fifth, $p_{\lambda_{F'}}\left(N\right)$-closed,
arm separating $F_{1}'$ and $F_{2}'$ in $A_{2}''.$ Hence $\mathcal{NA}^{c}\left(\alpha_{2},\beta_{2}\right)$
occurs. Let $E_{2}=E_{2}'\cap\mathcal{NA}\left(\alpha_{2},\beta_{2}\right).$ 

\emph{Case 2. }$FPP$ and $FPP'$ coincide on $V\setminus I$ till
$p_{\lambda_{F}}\left(N\right),$ but differ outside of $R$ at some
time $t\in\left(p_{\lambda_{F}}\left(N\right),p_{\lambda}\left(N\right)\right].$
\\
By Claim \ref{claim: bdary lines of F are far from x} and from that
the two processes coincide outside of $R,$ we get that a frozen cluster
$F'$ is formed at time $p_{\lambda_{F}}\left(N\right)$ in the new
process. Moreover, $F'\setminus I=F\setminus I.$ However, the two
processes differ at some time $t\in\left(p_{\lambda_{F}}\left(N\right),p_{\lambda}\left(N\right)\right],$
hence an additional frozen cluster $F''$ has to emerge in this time
period using some of the vertices in $I.$ This induces the $5,2$
near critical mixed arm event of Step 2. Hence we proved the following
claim.
\begin{claim}
\label{claim: conf in R is indep from rest}On the event $E_{0}\cap E_{1}\cap E_{2},$
we have that $\forall u\in BA^{2},$ if there is a $p_{\lambda_{F}}\left(N\right)$-open-closed
circuit around $x=x\left(u\right)$ in $B_{2}\left(u\right)$ then
no matter how we change the $\tau$ values inside this circuit, the
frozen percolation process outside it does not change till time $p_{\lambda}\left(N\right).$
\end{claim}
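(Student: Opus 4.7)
The plan is to prove the contrapositive: show that if the $N$-parameter frozen percolation process outside $R$ can be altered (before time $p_\lambda(N)$) by changing the $\tau$-values inside $R$, then on $E_0 \cap E_1 \cap E_2'$ one of the near-critical arm events forbidden by $\mathcal{NA}(\alpha_2,\beta_2)$ must occur in the annulus $A_2'' = A_2''(u)$. The key point is that the closed arc of $OC$ consists of $p_{\lambda_F}(N)$-closed vertices and $\lambda_F \le \lambda$, so these vertices are still $p_\lambda(N)$-closed; hence this arc acts as a hard barrier up to time $p_\lambda(N)$, and no open cluster outside $R$ can be directly influenced by an open path through this arc. The only way alterations inside $R$ can propagate outside is therefore via the open arc of $OC$, which is part of $F$.

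I would split into two cases according to when the first discrepancy occurs. In the first case the two processes differ outside $R$ at some time $t \le p_{\lambda_F}(N)$. Since the two processes agree outside $R$ on $[0, t)$ but differ at $t$, a new frozen cluster $F'$ must form in the modified process strictly before $p_{\lambda_F}(N)$, with $F' \setminus R \neq F \setminus R$; on $E_0$ this cluster froze at some $\lambda_{F'} \in [\lambda_0, \lambda_F]$. Applying the reasoning of Claim \ref{claim: bdary lines of F are far from x} to $F'$ in the new process (its validity only uses $\lambda_{F'} \le \lambda$ and the events $E_0 \cap E_1 \cap E_2'$), the bordering lines of $F' \setminus B_2'$ do not enter $B_2'$. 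Since $R \subseteq B_2$, this forces $F' \setminus R$ to split into two components $F_1', F_2'$, each of diameter $< N$ but of joint diameter $\ge N$. Each $F_i'$ produces a $p_{\lambda_{F'}}(N)$-open arm crossing $A_2''$, and combined with the $p_{\lambda_0}(N)$-closed arms $c_L, c_R$ and the $p_\lambda(N)$-open arm $o_B$ coming from the bordering of $\mathcal{C}_a(u;\lambda)$, I obtain either a $(4,3)$ half-plane configuration (when some $F_i'$ lies above the line $e_x$) or a $(5,2)$ full-plane mixed arm configuration with a $p_{\lambda_{F'}}(N)$-closed arm separating $F_1'$ from $F_2'$.

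In the second case the processes agree outside $R$ up to and including $p_{\lambda_F}(N)$ but differ at some later time $t \in (p_{\lambda_F}(N), p_\lambda(N)]$. Agreement up to $p_{\lambda_F}(N)$ implies that $F$ still freezes at the original time in the modified process and that $F' \setminus R = F \setminus R$; the only way for the outside configurations to diverge later is that an additional frozen cluster $F''$ emerges in the time interval $(p_{\lambda_F}(N), p_\lambda(N)]$ using some vertices of $R$. This again produces a $(5,2)$ mixed arm event in $A_2''$ with two $p_\lambda(N)$-open arms (from $F$ and $F''$), the closed arms $c_L, c_R$, and a $p_\lambda(N)$-closed separating arm; once again $\mathcal{NA}^c(\alpha_2, \beta_2)$ occurs.

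On $E_0 \cap E_1 \cap E_2 = E_0 \cap E_1 \cap E_2' \cap \mathcal{NA}(\alpha_2, \beta_2)$ both cases are excluded simultaneously, proving the claim. The subtle step is the second case: one has to argue that agreement up to $p_{\lambda_F}(N)$ really persists through the freezing event of $F$ itself (so that $F$ is produced identically outside $R$), which relies on the fact that the closed arc of $OC$ survives as a $p_\lambda(N)$-closed barrier throughout and that the open arc, being a subset of $F$, is already present at the freezing time; this in turn needs Claim \ref{claim: bdary lines of F are far from x} to ensure $F$ does not extend past the bordering lines through $B_2'$, so that no new influence from $R$ sneaks out during the freezing instant.
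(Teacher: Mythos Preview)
Your proof is correct and follows essentially the same two-case decomposition as the paper: the closed arc of $OC$ is a $p_{\lambda}(N)$-closed barrier, so any discrepancy must propagate through the open arc and hence through a new freezing event, and you then extract either a $(4,3)$ or a $(5,2)$ near-critical mixed arm event in $A_2''$, contradicting $\mathcal{NA}(\alpha_2,\beta_2)\subset E_2$. The only cosmetic difference is in your second case: the paper refers the $(5,2)$ arm event back to the uniqueness argument of Step~2 (so to the larger annulus $A_1$, excluded by $E_1$), whereas you locate it in $A_2''$; both work on $E_0\cap E_1\cap E_2$, and your explicit remark about why Claim~\ref{claim: bdary lines of F are far from x} is needed to ensure the freezing of $F$ is reproduced outside $R$ at time $p_{\lambda_F}(N)$ is a point the paper leaves more implicit.
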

We finish Step 3 by applying Corollary \ref{cor: no many arms} and
\ref{cor: no thin crossing}: we fix the value of $\beta_{2}$ and
$\alpha_{2}$ such that

\begin{equation}
\mathbb{P}\left(E_{2}\right)\geq1-\varepsilon/20\label{eq: pf active dimeter - 3}
\end{equation}
for $N\geq N_{2}\left(\varepsilon,\lambda_{0},\lambda,\alpha,K\right).$

\medskip

\textbf{Step 4.} \emph{We show that with probability close to $1,$
there is a $p_{\lambda_{F}}\left(N\right)$-open-closed circuit around
$x,$ such that the location where its colour changes in the circuit
is `far` above $x.$ }

Let $u\in BA^{2}.$ Let $\alpha_{3}\in\left(0,\alpha_{2}\right),$
$B_{3}=B_{3}\left(u\right):=B\left(\left\lfloor \alpha_{3}N\right\rfloor z;\alpha_{3}N\right)$
and $A_{3}=A_{3}\left(u\right):=A\left(\left\lfloor \alpha_{3}N\right\rfloor z;\alpha_{3}N,\alpha_{2}N\right).$
Let $\delta_{3}\in\left(\alpha_{3},\alpha_{2}\right)$ be an intermediate
scale. We cut the annulus $A_{3}$ into three subannuli using two
other intermediate scales $\beta_{3},\beta_{3}'$ with $\alpha_{3}<\delta_{3}<\beta_{3}<\beta_{3}'<\alpha_{2}:$
\begin{align*}
A_{3,0}=A_{3,0}\left(u\right) & :=A\left(\left\lfloor \alpha_{3}N\right\rfloor z;\alpha_{3}N,\beta_{3}N\right),\\
A_{3,1}=A_{3,1}\left(u\right) & :=A\left(\left\lfloor \alpha_{3}N\right\rfloor z;\beta_{3}N,\beta_{3}'N\right),\\
A_{3,2}=A_{3,2}\left(u\right) & :=A\left(\left\lfloor \alpha_{3}N\right\rfloor z;\beta_{3}'N,\alpha_{2}N\right).
\end{align*}
Let $\bar{c}_{L}$ ($\bar{c}{}_{R}$) denote the closed arm induced
by $c_{L}$ ($c_{R}$) in $A_{3,1}.$

If $c_{L}$ and $c_{R}$ are not connected by a closed path in $A_{3,0}\cap\mathcal{C}_{a}\left(u;\lambda\right),$
then there is a open arm separating them. Hence we see a near critical
$4,3$ arm event: $c_{L},c_{R}$ and the separating open arm induce
disjoint half plane arms in $A_{3,0},$ and the fourth arm in $A_{3,0}$
is induced by $o_{B}$. Thus the event $\mathcal{NA}^{c}\left(\alpha_{3},\beta_{3}\right)$
occurs.

If $\bar{c}_{L}\subseteq\left[-\beta_{3}'N,-\beta_{3}N\right]\boxtimes\left[-\alpha_{3}N,\delta_{3}N\right]$
or $\bar{c}_{R}\in\left[\beta_{3}N,\beta_{3}'N\right]\boxtimes\left[-\alpha_{3}N,\delta_{3}N\right],$
then we find a closed horizontal crossing in a narrow parallelogram.
Hence the event $\mathcal{NC}^{c}\left(\alpha_{3}+\delta_{3},\beta_{3}'-\beta_{3}\right)$
occurs.

In the following we assume that both $\bar{c}_{L}$ and $\bar{c}_{R}$
leave the corresponding parallelograms. Let $w_{L}$ ($w_{R}$) be
an open frozen vertex neighbouring a vertex of $\bar{c}_{L}$ ($\bar{c}{}_{R}$)
which is outside of the aforementioned parallelogram.

Suppose that there is no open arc in $A_{3}$ connecting $w_{L}$
to $o_{B}.$ Since $w_{L}$ is open frozen at time $p_{\lambda_{F}}\left(N\right),$
it has a $p_{\lambda_{F}}\left(N\right)$-open path to distance $N/2.$
Let $o_{L}$ denote the part of this path till the first time it exits
$A_{3}.$ Note that $o_{L}$ and $o_{B}$ are disjoint, and they are
not connected by an open path inside $A_{3}.$ We have two cases depending
on where $o_{L}$ leaves $A_{3}.$ 

When it leaves $A_{3}$ by exiting its outer parallelogram, than we
get a $5,2$ near critical arm event in $A_{3,2}:$ two half plane
closed arms induced by $c_{L}$ and $c_{R},$ two open arms induced
by $o_{L}$ and $o_{B}$ an extra closed arm separates $o_{L}$ and
$o_{B}$ in $A_{3,2}.$ Hence the event $\mathcal{NA}^{c}\left(\beta_{3}',\alpha_{2}\right)$
occurs. See Figure \ref{fig:4and1/2arms}.

\begin{figure}
\begin{centering}
\includegraphics[scale=0.8]{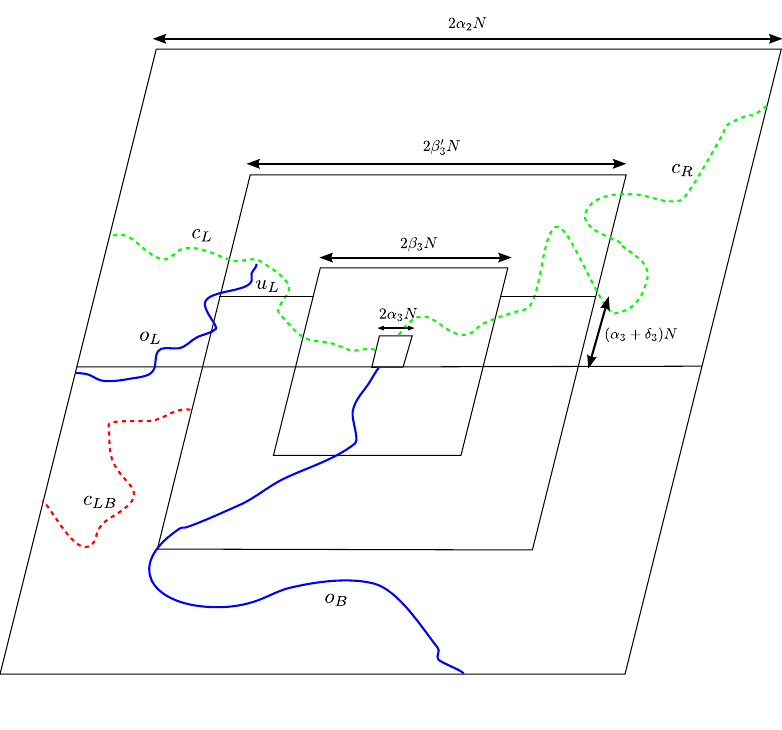}
\par\end{centering}

\caption{\label{fig:4and1/2arms} The closed arm $c_{LB}$ separates $o_{L}$
and $o_{B}$ in $A_{3,2}.$ Hence $c_{L},c_{R},o_{B},c_{LB},o_{L}$
give $5,2$ near critical mixed arms.}
\end{figure}

When $o_{L}$ leaves $A_{3}$ by entering its inner parallelogram,
then we get a similar $5,2$ arm event in $A_{3,0}.$ Thus $\mathcal{NA}^{c}\left(\alpha_{3},\beta_{3}\right)$
happens. In a similar way we can show that when $w_{R}$ is not connected
to $o_{B}$ in $A_{3},$ then $\mathcal{NA}^{c}\left(\beta_{3}',\alpha_{2}\right)\cup\mathcal{NA}^{c}\left(\alpha_{3},\beta_{3}\right)$
occurs. 

Let 
\[
E_{3}:=\mathcal{NC}\left(\alpha_{3}+\delta_{3},\beta_{3}'-\beta_{3}\right)\cap\mathcal{NA}\left(\alpha_{3},\beta_{3}\right)\cap\mathcal{NA}\left(\beta_{3}',\alpha_{2}\right)\cap\mathcal{NA}\left(\alpha_{3},\beta_{3}\right).
\]
Note that $w_{L},w_{R}\in\left(\mathbb{Z}\boxtimes\left[\delta_{2}N,\alpha_{2}N\right]\right)+\left\lfloor \alpha_{3}N\right\rfloor z,$
and that some parts of $c_{L}$ and $c_{R}$ are parts of the closed
arc of the open-closed circuit we constructed. See Figure \ref{fig:OC}
for more details. We arrive to the following claim.
\begin{claim}
\label{claim: nice open-closed circuit}On the event $E_{0}\cap E_{1}\cap E_{2}\cap E_{3},$
$\forall u\in BA^{2}$ there is a $p_{\lambda_{F}\left(u\right)}\left(N\right)$-open-closed
circuit $OC=OC\left(u\right)$ with the following properties:
\begin{enumerate}
\item it is contained in $A_{3}\left(u\right)$ and surrounds $B_{3}\left(u\right),$
\item the locations where the colour changes in $OC$ is contained $\left(\mathbb{Z}\boxtimes\left[\delta_{3}N,\alpha_{2}N\right]\right)+\left\lfloor \alpha_{3}N\right\rfloor z$
\item the endpoints of the closed part of $OC$ lie in the parallelogram
$\left[-\alpha_{2}N,\alpha_{2}N\right]\boxtimes\left[\delta_{3}N,\alpha_{3}N\right]+\left\lfloor \alpha_{3}N\right\rfloor z,$
\item as we walk from the outside of $B_{2}=B\left(\left\lfloor \alpha_{3}N\right\rfloor z;\alpha_{2}N\right)$
on any of the closed arms $c_{L}$ or $c_{R}$ towards $x,$ we hit
the closed part of $OC$ at its endpoints for the first time.
\end{enumerate}
\end{claim}
We finish Step 4 by choosing the values of $\beta_{3},\beta_{3}'$
and $\delta_{3}.$ The probability of $E_{3}$ is an increasing function
of $\alpha_{3}$ for $\beta_{3},\beta_{3}',\delta_{3}$ fixed. By
Corollary \ref{cor: no many arms} and \ref{cor: no thin crossing}
we choose the value of $\beta_{3},\beta_{3}',\delta_{3},\alpha_{3}$
such that the probability of the event $E_{3}$ is at least $1-\varepsilon/20.$
We only fix $\beta_{3},\beta_{3}',\delta_{3}$ and require $\alpha_{3}$
to be small but unspecified so that 
\begin{equation}
\mathbb{P}\left(E_{3}\right)\geq1-\varepsilon/20\label{eq: pf active dimeter - 4}
\end{equation}
 for $N\geq N_{3}\left(\varepsilon,\alpha_{3},\lambda_{0},\lambda,\alpha,K\right).$
We choose the value of $\alpha_{3}$ in Step 6.

\medskip

\begin{figure}
\centering{}\includegraphics[scale=0.8]{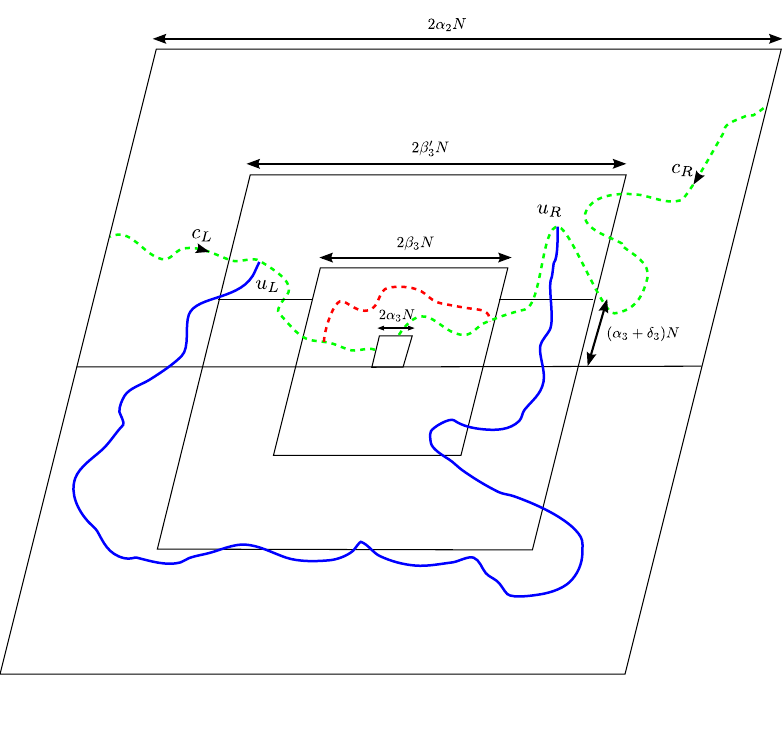}\caption{\label{fig:OC}The circuit around $B_{3}$ consists of the open arc
drawn with continuous line, subpaths of $c_{L}$ and $c_{R}$ and
the closed arc in $A_{3,0}.$ }
\end{figure}

Before Step 5, let us summarize what we have proved up to now. Let
$u\in BA^{2},$ and suppose that the event $E_{0}\cap E_{1}\cap E_{2}\cap E_{3}\left(\alpha_{3}\right)$
holds. It is easy to see that the outermost open-closed circuit which
satisfy the conditions of Claim \ref{claim: nice open-closed circuit}
is well-defined. Let $\mathcal{OC}$ denote this outermost circuit,
and $a_{c}$ ($a_{o}$) denote the closed (open) arcs of $\mathcal{OC}.$
Further simple considerations give:
\begin{claim}
\label{claim: OC measurable}On $E_{0}\cap E_{1}\cap E_{2}\cap E_{3}\left(\alpha_{3}\right),$
for any deterministic open-closed circuit $OC,$ one can check the
occurrence the event $\left\{ \mathcal{OC}=OC\right\} $ by looking
at the $\tau$ values in the closure of the unbounded component of
$V\setminus OC.$
\end{claim}
Let $\mathcal{R}$ denote the connected component of $B_{3}$ in $\mathbb{T}\setminus\mathcal{OC}.$
Let $r_{o}\subseteq a_{o}$ and $r_{c}\subseteq a_{c}$ denote the
open and closed parts of $\partial\mathcal{R}.$ The pair $\left(\mathcal{R},r_{c}\right),$
$\mathcal{OC}$ and the configuration in $\mathbb{T\setminus}\mathcal{R}$
satisfies the following conditions:
\begin{enumerate}
\item $\mathcal{R}$ is a connected induced subgraph of $\mathbb{T}$ (definition
of $\mathcal{R}$)
\item $B\left(\left\lfloor \alpha_{3}N\right\rfloor z;\alpha_{3}N\right)=B_{3}\subseteq\mathcal{R}\subseteq B_{2}=B\left(\left\lfloor \alpha_{3}N\right\rfloor z;\alpha_{2}N\right)$
(by Claim \ref{claim: nice open-closed circuit})
\item \label{cond: conf out R}$\partial\mathcal{R}$ is disjoint union
of non-empty self avoiding paths $r_{c}$ and $r_{o},$ which are
oriented such that $\mathcal{R}$ lies on the right when we walk along
them. We orient $a_{c},a_{o}$ so that the orientations of $a_{c}$
and $r_{c}$ ($a_{o}$ and $r_{o}$) are compatible. ($\mathcal{OC}$
is outermost)
\item \label{cond: position of r_c }$r_{c},a_{c}\subseteq\left[-\alpha_{2}N,\alpha_{2}N\right]\boxtimes\left[-\alpha_{3}N,\alpha_{2}N\right]+\left\lfloor \alpha_{3}N\right\rfloor z,$
(by the proof of Claim \ref{claim: nice open-closed circuit})
\item \label{cond: end of r_c}the endpoints of $a_{c}$ denoted by $s_{L}$
and $s_{R}$ lie in the parallelogram $\left[-\alpha_{2}N,\alpha_{2}N\right]\boxtimes\left[\delta_{3}N,\alpha_{3}N\right]+\left\lfloor \alpha_{3}N\right\rfloor z,$
(by Claim \ref{claim: nice open-closed circuit})
\item \label{cond: c_L hits OC at s_L}when we walk along $c_{L}$ ($c_{R}$)
towards $x,$ we hit $\mathcal{OC}$ first at vertex $s_{L}$ ($s_{R}$),
(by Claim \ref{claim: nice open-closed circuit})
\item \label{cond: open bdry point has closed path}for every vertex $v\in a_{o},$
there is a closed path in $B_{2}\setminus\mathcal{R}$ to $\partial B_{2},$
($\mathcal{OC}$ is outermost)
\item \label{cond: closed bdry point has open path}for every vertex $v\in a_{c},$
there is an open path in $B_{2}\setminus\mathcal{R}$ to $\partial B_{2}$
or to $\left(c_{L}\cup c_{R}\right)\setminus cl\left(\mathcal{R}\right).$
($\mathcal{OC}$ is outermost)
\end{enumerate}
Note that the first three conditions coincide with the first three
conditions for the pair $\left(\mathcal{R}-\left\lfloor \alpha_{3}N\right\rfloor z,r_{c}-\left\lfloor \alpha_{3}N\right\rfloor z\right)$
being $\left(\alpha_{3}N,\alpha_{2}N\right)$-outer-regular of Definition
\ref{def: (a,b)-regular}. We add an extra condition in the next step.

Note that the vertex $x$ has two non-touching closed arms to $r_{c}.$
Moreover, by Condition \ref{cond: c_L hits OC at s_L} above, $x$
is one of the lowest vertices in $\mathcal{R}$ with this property.
With the notation of Definition \ref{def: cal L} we have that $x\in\mathcal{L}\left(\mathcal{R},r_{c}\right)$
in the $N$-parameter frozen percolation process at time $p_{\lambda_{F}}\left(N\right).$

\medskip 

\textbf{Step 5.} Let $u\in BA^{2}.$ \emph{Suppose that the event
$E_{0}\cap E_{1}\cap E_{2}\cap E_{3}$ holds.} Let $\mathcal{W}=\mathcal{W}\left(u\right)$
denote the connected components of $\mathcal{R}\cap\left(\mathbb{Z}\boxtimes\left[-\left\lfloor \alpha_{3}N\right\rfloor +1,\left\lfloor 5\alpha_{3}N\right\rfloor -1\right]\right).$\emph{
Let $S_{mid}\left(\mathcal{R}\right)$ denote the unique element of
$\mathcal{W}$ which contains $B_{3}$ as a subset. We show that with
probability close to $1,$ $\partial S_{M}\cap r_{c}=\emptyset.$}

We define $e_{T}=e_{T}\left(u\right):=\left(\mathbb{Z}\boxtimes\left\{ \left\lfloor 5\alpha_{3}N\right\rfloor +1\right\} \right)+\left\lfloor \alpha_{3}N\right\rfloor z$
and $e_{B}=e_{B}\left(u\right):=\left(\mathbb{Z}\boxtimes\left\{ -\left\lfloor \alpha_{3}N\right\rfloor -1\right\} \right)+\left\lfloor \alpha_{3}N\right\rfloor z.$
Suppose that $\partial S_{mid}\cap r_{c}\neq\emptyset,$ let $w\in\partial S_{mid}\left(\mathcal{R}\right)\cap r_{c}\cap e_{T}.$
Consider the parallelogram $\bar{B}=B\left(w;\delta_{3}N/2\right).$
Let $w_{L}$ and $w_{R}$ denote the vertices of $a_{c}$ where we
exit $\bar{B}$ the first time as we walk on $r_{c}$ starting from
$w$ towards $s_{L}$ and $s_{R}$. The part of $a_{c}$ between $w_{L}$
and $w_{R}$ cuts $\bar{B}$ into two pieces. Let $\bar{B}_{I}$ ($\bar{B}_{E}$)
denote the part which is on the right (left) hand side of $a_{c}$
when we walk from $w_{L}$ to $w_{R}.$ Let $\bar{A}_{I}=\bar{B}_{I}\setminus B\left(w;6\alpha_{3}N\right)$
and $\bar{A}_{E}=\bar{B}_{E}\setminus B\left(w;6\alpha_{3}N\right).$
By Condition \ref{cond: closed bdry point has open path} above $\bar{A}_{E}$
contains an open arm. We claim that $\bar{A}_{I}$ also contains an
open arm. Suppose the contrary. Then there must be a closed non self-touching
arc in $\bar{A}_{I}$ preventing the occurrence of the open arm. Note
that this arc is contained in $\mathcal{R}.$ Then the lowest vertex
of this arc has two disjoint $p_{\lambda_{F}}\left(N\right)$-closed
arms to $a_{c},$ and it lies lower than $x\in B:=B\left(\left\lfloor \alpha_{3}N\right\rfloor z;\alpha_{3}N\right).$
This contradicts $x\in\mathcal{L}\left(\mathcal{R},r_{c}\right)$
which was shown in the lines before Step 4. See Figure \ref{fig: no ugly boundary}.
Hence $\bar{A}_{I}$ has an open arm, which together with the open
arm of $\bar{A}_{E}$ and the two closed arms of $w$ provide a $4,3$
near critical mixed arm event. Hence the event $E_{4}^{c}=\mathcal{NA}^{c}\left(6\alpha_{3},\delta_{3}/2\right)$
occurs. Thus we arrive to the following claim and we finish Step 5.
\begin{claim}
\label{claim: S_M has no closed bdary}On the event $E_{0}\cap E_{1}\cap E_{2}\cap E_{3}\cap E_{4},$
we have $\partial S_{mid}\left(\mathcal{R}\right)\cap r_{c}=\emptyset.$
\end{claim}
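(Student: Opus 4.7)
The plan is a proof by contradiction, formalizing the geometric picture built up in Steps 1--4. Assume that on $E_{0}\cap E_{1}\cap E_{2}\cap E_{3}\cap E_{4}$ there exists some $u\in BA^{2}$ and some $w\in\partial S_{M}\cap r_{c}$. Using Condition~\ref{cond: position of r_c } on the vertical span of $r_{c}$ together with the fact that $S_{M}$ is a connected component of $\mathcal{R}$ intersected with the horizontal strip $\mathbb{Z}\boxtimes\bigl[-\lfloor\alpha_{3}N\rfloor+1,\lfloor 5\alpha_{3}N\rfloor-1\bigr]+\lfloor\alpha_{3}N\rfloor z$, a brief case analysis reduces matters to the situation where $w$ lies on the top line $e_{T}$ of that strip. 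I would then form the mesoscopic parallelogram $\bar{B}:=B(w;\delta_{3}N/2)$ and let $w_{L},w_{R}$ denote the first exit points of $r_{c}$ from $\bar{B}$ as one walks along $r_{c}$ from $w$ toward $s_{L}$ and $s_{R}$ respectively. The arc of $r_{c}$ between $w_{L}$ and $w_{R}$ cuts $\bar{B}$ into two pieces $\bar{B}_{I}$ (the $\mathcal{R}$-side, which locally lies below $r_{c}$) and $\bar{B}_{E}$; set $\bar{A}_{I}:=\bar{B}_{I}\setminus B(w;6\alpha_{3}N)$ and $\bar{A}_{E}:=\bar{B}_{E}\setminus B(w;6\alpha_{3}N)$.

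The heart of the argument is to produce $p_{\lambda_{F}(u)}(N)$-open arms from $\partial B(w;6\alpha_{3}N)$ to $\partial\bar{B}$ inside both $\bar{A}_{I}$ and $\bar{A}_{E}$. The arm in $\bar{A}_{E}$ is immediate from Condition~\ref{cond: closed bdry point has open path}, which furnishes an open path from $w$ through $B_{2}\setminus\mathcal{R}$ to either $\partial B_{2}$ or $(c_{L}\cup c_{R})\setminus cl(\mathcal{R})$. For $\bar{A}_{I}$ I would invoke site duality: if no such open arm existed, there would be a $p_{\lambda_{F}(u)}(N)$-closed non-self-touching arc inside $\bar{A}_{I}\subset\mathcal{R}$ surrounding $B(w;6\alpha_{3}N)$ on the $\bar{B}_{I}$ side and joining the two sides of the $r_{c}$-segment that bounds $\bar{B}_{I}$. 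Because $w\in e_{T}$, the half-disk $\bar{B}_{I}$ lies below $r_{c}$ with its top edge at vertical coordinate roughly $5\alpha_{3}N$, so any such arc must dip below vertical coordinate $-\alpha_{3}N$; since $x\in B_{3}$ has vertical coordinate strictly greater than $-\alpha_{3}N$ by the definition of $z$, the arc's lowest vertex lies strictly below $x$. That vertex then carries two non-touching closed arms in $\mathcal{R}$ ending on $r_{c}$, contradicting the fact $x\in\mathcal{L}(\mathcal{R},r_{c})$ established at the end of Step~4.

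Finally, concatenating these two open arms with the two $p_{\lambda_{F}(u)}(N)$-closed arms of $r_{c}$ that emanate from $w$ toward $w_{L}$ and $w_{R}$ yields four pairwise disjoint arms in $A(w;6\alpha_{3}N,\delta_{3}N/2)$. Since $\bar{B}_{I}$ and $\bar{B}_{E}$ are approximately half-disks separated by the $r_{c}$-segment through $w$, the two closed arms together with the open arm in $\bar{A}_{I}$ all lie in the closed half-plane containing $\bar{B}_{I}$, producing a $(4,3)$ half-plane mixed-arm configuration with colour sequence $(c,o,c)$ in the half-plane. Hence $\mathcal{NA}^{c}(6\alpha_{3},\delta_{3}/2)$ occurs, contradicting $E_{4}=\mathcal{NA}(6\alpha_{3},\delta_{3}/2)$. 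The main obstacle is the planar-duality step producing the open arm in $\bar{A}_{I}$: one must verify rigorously that the hypothetical dual closed cut really descends below $x$, which is a genuinely geometric claim relying both on the precise vertical position of $e_{T}$ above $B_{3}$ and on the inner-radius bound $6\alpha_{3}N$ chosen when defining $\bar{A}_{I}$.
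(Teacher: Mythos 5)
Your argument reproduces the paper's own proof of Claim~\ref{claim: S_M has no closed bdary} essentially step for step: reduce to $w\in\partial S_{M}\cap r_{c}\cap e_{T}$, form the annular region $\bar{A}_{I},\bar{A}_{E}$ around $w$ at scales $6\alpha_{3}N$ and $\delta_{3}N/2$, extract an open arm in $\bar{A}_{E}$ from Condition~\ref{cond: closed bdry point has open path}, derive by duality that the absence of an open arm in $\bar{A}_{I}$ would produce a closed arc in $\mathcal{R}$ whose lowest vertex lies below $x$ and thus contradicts $x\in\mathcal{L}(\mathcal{R},r_{c})$, and conclude with a $(4,3)$ near-critical mixed arm event violating $E_{4}=\mathcal{NA}(6\alpha_{3},\delta_{3}/2)$. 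The two small points you flag as needing more care (the reduction to $w\in e_{T}$, and checking that the dual arc really descends below $x$) are precisely the points the paper itself treats tersely, so your caution is appropriate but does not indicate a deviation from the paper's route.
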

\begin{figure}
\begin{centering}
\includegraphics[scale=0.8]{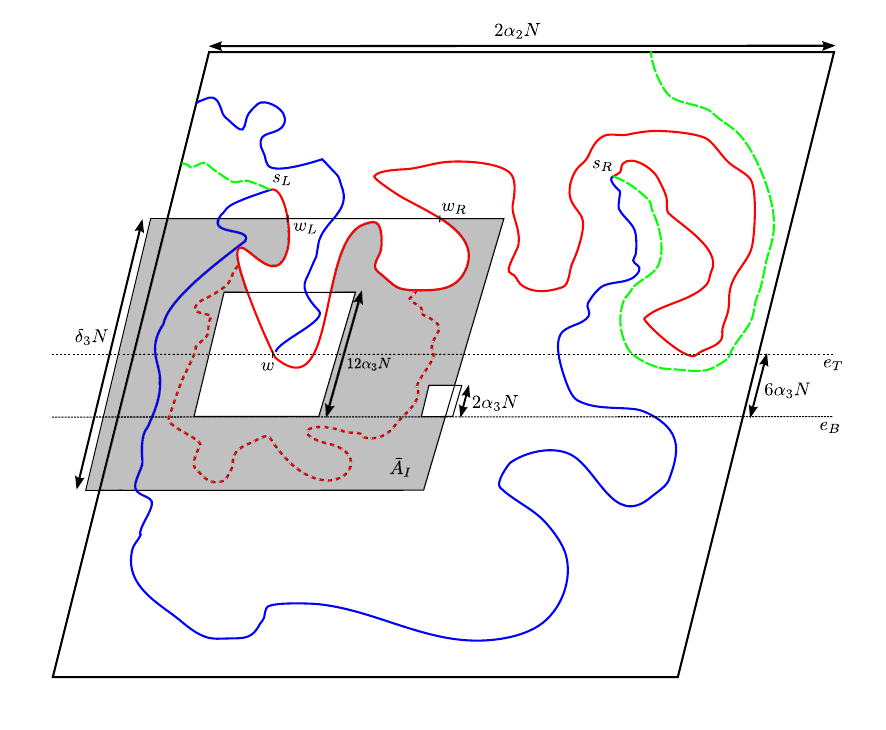}
\par\end{centering}

\caption{\label{fig: no ugly boundary} The grey area represents $\bar{A}_{I}.$
If there is no open arm in $\bar{A}_{I}$ then there is a closed arc
in $\bar{A}_{I}.$ This contradicts with $x$ being one of the lowest
vertices of $\mathcal{C}_{a}\left(\lambda\right).$}
\end{figure}

\medskip 

\textbf{Step 6.}\emph{ Recall Definition \ref{def: (a,b)-regular}.
We show that with probability close to $1,$ we can cut down some
parts of $\mathcal{R}$ and get a pair $\tilde{\mathcal{R}}$ and
$\tilde{r}_{c}$ such that the pair $\left(\tilde{\mathcal{R}}-\left\lfloor \alpha_{3}N\right\rfloor z,\tilde{r}_{c}-\left\lfloor \alpha_{3}N\right\rfloor z\right)$
is $\left(\alpha_{3}N,\alpha_{2}N\right)$-regular and 
\[
\mathcal{L}\left(\mathcal{R},r_{c}\right)\cap B=\mathcal{L}\left(\tilde{\mathcal{R}},\tilde{r}_{c}\right)\cap B.
\]
}

Let $u\in BA^{2}.$ Suppose that the event $E_{0}\cap E_{1}\cap E_{2}\cap E_{3}\cap E_{4}$
occurs. Let $\tilde{\mathcal{R}}=\tilde{\mathcal{R}}\left(u\right)$
be the connected component of $S_{mid}\left(\mathcal{R}\right)$ in
$\mathcal{R}\setminus\bigcup_{S\in\mathcal{W}:\,\partial S\cap r_{c}\neq\emptyset}cl\left(S\right)$
and $\tilde{r}_{c}=\partial\tilde{\mathcal{R}}\setminus r_{o}.$ The
conditions before Step 5 and Claim \ref{claim: S_M has no closed bdary}
gives that the pair\emph{ }$\left(\tilde{\mathcal{R}}-\left\lfloor \alpha_{3}N\right\rfloor z,\tilde{r}_{c}-\left\lfloor \alpha_{3}N\right\rfloor z\right)$
is $\left(\alpha_{3}N,\alpha_{2}N\right)$-regular.

For $R\subset\mathbb{T}$ and $r\subset\partial R$ let $\mathcal{TA}\left(R,r\right)$
denote the set of closed vertices $v\in R$ such that $v$ has two
non-touching closed arms in $R$ to $r.$ Let $M$ denote the connected
component of $S_{mid}\left(R\right)$ in $R\setminus e_{T}.$ We show
the following:
\begin{claim}
\label{claim: cut of R} Let 
\begin{equation}
E_{5}:=\mathcal{NA}\left(6\alpha_{3},\beta_{4}\right)\cup\mathcal{NA}\left(\beta_{4},\delta_{3}/2\right).\label{eq: pf active diameter - 4.1}
\end{equation}
On the event $\bigcap_{i=0}^{5}E_{i}$ $\forall u\in BA^{2},$ the
pair $\left(\tilde{\mathcal{R}}-\left\lfloor \alpha_{3}N\right\rfloor z,\tilde{r}_{c}-\left\lfloor \alpha_{3}N\right\rfloor z\right)$
is $\left(\alpha_{3}N,\alpha_{2}N\right)$-regular, and
\[
\mathcal{TA}\left(\mathcal{R},r_{c}\right)\cap M=\mathcal{TA}\left(\tilde{\mathcal{R}},\tilde{r}_{c}\right)\cap M.
\]
In particular, 
\[
\mathcal{L}\left(\mathcal{R},r_{c}\right)\cap B=\mathcal{L}\left(\tilde{\mathcal{R}},\tilde{r}_{c}\right)\cap B.
\]
\end{claim}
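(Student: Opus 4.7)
The proof proceeds by independently verifying the two assertions of Claim \ref{claim: cut of R}: first, that $\left(\tilde{\mathcal{R}}-\left\lfloor \alpha_{3}N\right\rfloor z,\tilde{r}_{c}-\left\lfloor \alpha_{3}N\right\rfloor z\right)$ is $\left(\alpha_{3}N,\alpha_{2}N\right)$-regular, and second, that $\mathcal{TA}\left(\mathcal{R},r_{c}\right)\cap M=\mathcal{TA}\left(\tilde{\mathcal{R}},\tilde{r}_{c}\right)\cap M$. The ``in particular'' statement for $\mathcal{L}$ then follows at once: $B\subseteq M$ and $x\in B$ witnesses that a lowest vertex is attained inside $B$ (by Step 5), so the $\mathcal{TA}$-equality forces the minimal heights to coincide on $B$, whence the two sets of lowest vertices intersected with $B$ agree.

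For regularity, the eight properties listed just before Step 5 together with Claim \ref{claim: S_M has no closed bdary} supply most of the topological content. Inclusion $B_{3}\subseteq\tilde{\mathcal{R}}\subseteq B_{2}$ follows from $B_{3}\subseteq S_{M}\subseteq\tilde{\mathcal{R}}$ (Claim \ref{claim: S_M has no closed bdary} guarantees $S_{M}$ is not among the deleted components) and $\tilde{\mathcal{R}}\subseteq\mathcal{R}\subseteq B_{2}$. Simple connectedness of $cl\left(\tilde{\mathcal{R}}\right)$ follows topologically, since each removed $cl\left(S\right)$ is simply connected and is attached to $\partial\mathcal{R}$ via $\partial S\cap r_{c}\neq\emptyset$, so excising it from the simply connected $cl\left(\mathcal{R}\right)$ preserves simple connectivity. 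Tracing $\partial\tilde{\mathcal{R}}$ around what remains of the original circuit $\mathcal{OC}$ after cutting the fingers $cl\left(S\right)$ yields the decomposition into two self-avoiding arcs as required by Condition 3 of Definition \ref{def: (a,b)-regular}. The only genuinely new constraint is Condition 4, $\tilde{r}_{c}-\left\lfloor \alpha_{3}N\right\rfloor z\subseteq\left[-\alpha_{2}N,\alpha_{2}N\right]\boxtimes\left[5\alpha_{3}N,\alpha_{2}N\right]$: any vertex $w\in r_{c}$ of translated height at most $5\alpha_{3}N$ is adjacent to a vertex of $\mathcal{R}$ lying in the strip $\mathbb{Z}\boxtimes\left[-\left\lfloor \alpha_{3}N\right\rfloor +1,\left\lfloor 5\alpha_{3}N\right\rfloor -1\right]$; that neighbour lies in some $S\in\mathcal{W}$, and for this $S$ one has $w\in\partial S\cap r_{c}$, so $S$ gets deleted together with $w\in cl\left(S\right)$.

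For the $\mathcal{TA}$-equality, the inclusion $\mathcal{TA}\left(\mathcal{R},r_{c}\right)\cap M\subseteq\mathcal{TA}\left(\tilde{\mathcal{R}},\tilde{r}_{c}\right)\cap M$ is immediate by truncation: given $v\in M$ with two non-touching closed arms $\gamma_{1},\gamma_{2}$ in $\mathcal{R}$ to $r_{c}$, cut each arm at its first vertex outside $\tilde{\mathcal{R}}$; that vertex is closed (being on a closed path) and therefore cannot lie in $r_{o}$, so it lies in $\tilde{r}_{c}=\partial\tilde{\mathcal{R}}\setminus r_{o}$, and the truncations inherit non-touchingness. The reverse inclusion is the place where $E_{5}$ enters. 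Given $v\in\mathcal{TA}\left(\tilde{\mathcal{R}},\tilde{r}_{c}\right)\cap M$, each arm $\gamma_{i}$ in $\tilde{\mathcal{R}}$ ending at some $w_{i}\in\tilde{r}_{c}$ must be extended to a closed path inside $\mathcal{R}$ terminating in $r_{c}$. If $w_{i}\in r_{c}$ there is nothing to do, so suppose $w_{i}$ lies in the closure of a removed $cl\left(S\right)$ with $\partial S\cap r_{c}\neq\emptyset$. Pick $w'\in\partial S\cap r_{c}$; we seek a closed path in $cl\left(S\right)$ from $w_{i}$ to $w'$. Should no such extension exist, $w_{i}$ is separated from $w'$ inside $cl\left(S\right)$ by a dual $p_{\lambda_{F}}\left(N\right)$-open arc. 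Combining this open arc with (a) the closed arm $\gamma_{i}$ leading from $w_{i}$ back to $v$ inside $\tilde{\mathcal{R}}$, (b) the $p_{\lambda_{0}}\left(N\right)$-closed half-plane arms $c_{L},c_{R}$ emanating from $x$, and (c) the $p_{\lambda}\left(N\right)$-open arm $o_{B}$ supplied by the neighbouring frozen cluster $F$, one exhibits either a $(4,3)$ or a $(5,2)$ mixed near-critical arm event in one of the annuli $A\left(\left\lfloor \alpha_{3}N\right\rfloor z;6\alpha_{3}N,\beta_{4}N\right)$ or $A\left(\left\lfloor \alpha_{3}N\right\rfloor z;\beta_{4}N,\delta_{3}N/2\right)$ for an intermediate scale $\beta_{4}\in\left(6\alpha_{3},\delta_{3}/2\right)$. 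This contradicts the definition of $E_{5}$, so the closed extension exists, and concatenating it with $\gamma_{i}$ produces the required closed arm in $\mathcal{R}$ to $r_{c}$; the two extensions can be chosen disjoint since the obstruction between them would itself generate a further forbidden arm event. The main obstacle is precisely this bookkeeping: one must check that every geometric way in which the closed extension through $cl\left(S\right)$ can fail manifests as one of the specific mixed arm configurations ruled out by $E_{5}$, which in turn is what forces the choice of the intermediate scale $\beta_{4}$ and the particular form of the event.
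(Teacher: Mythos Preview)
Your overall scaffolding matches the paper's: regularity is read off from the eight listed properties together with Claim \ref{claim: S_M has no closed bdary}, and the inclusion $\mathcal{TA}(\mathcal{R},r_{c})\cap M\subseteq\mathcal{TA}(\tilde{\mathcal{R}},\tilde{r}_{c})\cap M$ is the easy one, obtained by truncating the arms at first exit from $\tilde{\mathcal{R}}$.

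The genuine gap is in your argument for the reverse inclusion. You center the forbidden arm event at $\left\lfloor \alpha_{3}N\right\rfloor z$ and try to assemble it from $c_{L},c_{R},o_{B}$ (the arms emanating from $x$) together with $\gamma_{i}$ and a dual open arc inside $cl(S)$. But the endpoint $w_{i}\in e_{T}$ sits at translated height about $5\alpha_{3}N$ and can lie anywhere horizontally inside $B_{2}$; in particular it need not fall in $B\left(\left\lfloor \alpha_{3}N\right\rfloor z;6\alpha_{3}N\right)$. The blocking open arc lives in $cl(S)$ above $e_{T}$, also with no control on its horizontal position. Hence neither $\gamma_{i}$ nor the open arc is forced to cross either of the annuli $A\left(\left\lfloor \alpha_{3}N\right\rfloor z;6\alpha_{3}N,\beta_{4}N\right)$ or $A\left(\left\lfloor \alpha_{3}N\right\rfloor z;\beta_{4}N,\delta_{3}N/2\right)$, and the five pieces you list do not coalesce into a single $(4,3)$ or $(5,2)$ event there. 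The arms $c_{L},c_{R},o_{B}$ by themselves only give a $(3,2)$ configuration, which is not forbidden.

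The paper fixes this by localizing at $v^{1}$ (your $w_{i}$) rather than at $z$, and by using a completely different set of arms in which $c_{L},c_{R},o_{B}$ play no role. Since $v^{1}\in\tilde{r}_{c}\setminus r_{c}$ lies on $e_{T}$ adjacent to a deleted $S$, one can walk along $\partial S\cap e_{T}$ from $v^{1}$ to a point $s^{1}\in r_{c}$; thus $r_{c}$ itself passes near $v^{1}$ and contributes \emph{two} closed arms in the annulus $A\left(v^{1};6\alpha_{3}N,\delta_{3}N/2\right)$. A third closed arm is $c_{v}^{1}$, which enters from $v$ and terminates at $v^{1}$. For the fourth, one splits on whether $c_{v}^{2}$ reaches into the inner subannulus $A_{4,0}=A\left(v^{1};6\alpha_{3}N,\beta_{4}N\right)$: if so, $c_{v}^{2}$ supplies a fourth closed arm in the outer subannulus $A_{4,1}$; if not, then in $A_{4,0}$ an open arm must separate $c_{v}^{1}$ from the portion of $r_{c}$ through $s^{1}$ (this is exactly what prevents $c_{v}^{1}$ from being extended to $r_{c}$ without touching $c_{v}^{2}$). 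Either way one obtains four half-plane arms at $v^{1}$, contradicting $E_{5}$.
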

\begin{proof}
[Proof of Claim \ref{claim: cut of R}] From the definition of $\left(\mathcal{\tilde{R}},\tilde{r}_{c}\right)$
it follows that $\left(\mathcal{TA}\left(\mathcal{R},r_{c}\right)\cap M\right)\subset\left(\mathcal{TA}\left(\tilde{\mathcal{R}},\tilde{r}_{c}\right)\cap M\right).$
Hence it is enough to show that $\left(\mathcal{TA}\left(\tilde{\mathcal{R}},\tilde{r}_{c}\right)\backslash\mathcal{TA}\left(\mathcal{R},r_{c}\right)\right)\cap M=\emptyset.$
Suppose the contrary, that is $\exists v\in\left(\mathcal{TA}\left(\tilde{\mathcal{R}},\tilde{r}_{c}\right)\right.\!\allowbreak\backslash\!\left.\mathcal{TA}\left(\mathcal{R},r_{c}\right)\right)\cap M.$
Let $c_{v}^{1}$ and $c_{v}^{2}$ denote two non-touching closed arms
starting from $v$ and ending at $v^{1}\in\tilde{r}_{c}$ and $v^{2}\in\tilde{r}_{c}$
respectively. Since $v\in\mathcal{TA}\left(\mathcal{R},r_{c}\right)\setminus\mathcal{TA}\left(\tilde{\mathcal{R}},\tilde{r}_{c}\right),$
we can assume that $c_{v}^{1}$ cannot be extended in such a way that
it connects to $r_{c}$ and this extension is disjoint from and does
not touch $c_{v}^{2}.$ Hence $v^{1}\in\tilde{r}_{c}\setminus r_{c},$
and $v^{1}\in e_{T}.$ Let $S\in\mathcal{W}$ such that $v^{1}\in\partial S.$
Note that $\partial S\cap r_{c}\neq\emptyset.$ Let $s^{1},s^{2}$
denote the endpoints of the connected component of $v^{1}$ in $\partial S\cap e_{T}.$
At least one of $s^{1}$ and $s^{2}$ is in $r_{c}.$ Let $s^{1}\in r_{c}.$
Let $\beta_{4}\in\left(6\alpha_{3},\delta_{3}/2\right)$ be an intermediate
scale. We divide the annulus $A\left(v^{1};6\alpha_{3}N,\delta_{3}N/2\right)$
into the annuli
\begin{align*}
A_{4,0} & =A\left(v^{1};6\alpha_{3}N,\beta_{4}N\right),\\
A_{4,1} & =A\left(v^{1};\beta_{4}N,\delta_{3}N/2\right).
\end{align*}
We have two cases. If \emph{$c_{v}^{2}\cap A_{4,0}\neq\emptyset,$
}then we see $4$ half plane arms in $A_{4,1}:$ $a_{c}$ provides
two closed arms, and each of $c_{v}^{1}$ and $c_{v}^{2}$ gives one
closed arm. Hence the event $\mathcal{NA}^{c}\left(6\alpha_{3},\beta_{4}\right)$
occurs. If $c_{v}^{2}\cap A_{4,0}=\emptyset,$ we have $4$ half plane
arms in $A_{4,0}:$ $a_{c}$ provides two closed arms, $c_{v}^{1}$
another closed arm, moreover, we get an open arm which separates $c_{v}^{1}$
from $a_{c}.$ See Figure \ref{fig: cut} for more details. Hence
the event $\mathcal{NA}^{c}\left(\beta_{4},\delta_{3}/2\right)$ occurs.
By (\ref{eq: pf active diameter - 4.1}) this finishes the proof of
Claim \ref{claim: cut of R}.
\end{proof}
\begin{figure}
\centering{}\includegraphics[scale=0.8]{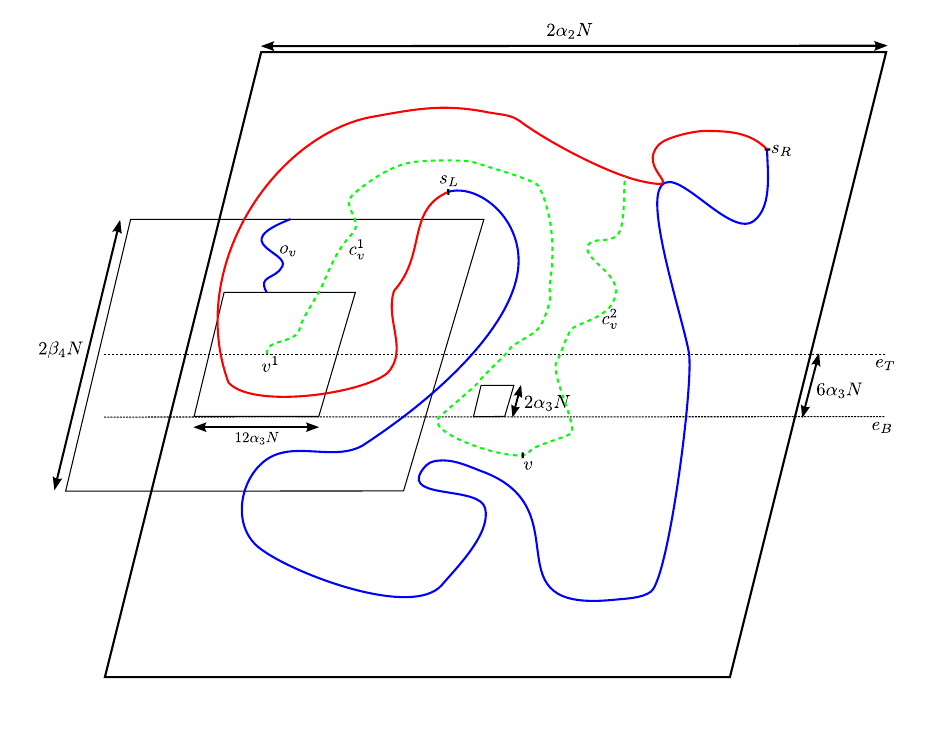}\caption{\label{fig: cut} If $c_{v}^{2}\cap A_{4,0}=\emptyset,$ we see $4$
half plane arms in $A_{4,0}:$ the two closed induced by $a_{c},$
a closed arm $c_{v}^{1},$ and an open arm $o_{v}$ which separates
$c_{v}^{1}$ from $a_{c}.$}
\end{figure}

By Corollary \ref{cor: no many arms} we set $\alpha_{3}$ such that
\begin{equation}
\mathbb{P}\left(E_{4}\cap E_{5}\right)\geq1-\varepsilon/20\label{eq: pf active dimeter - 5}
\end{equation}
for $N\geq N_{5}\left(\varepsilon,\alpha_{3},\lambda_{0},\lambda,\alpha,K\right).$
Let
\[
E=E_{0}\cap E_{1}\cap E_{2}\cap E_{3}\cap E_{4}\cap E_{5}.
\]
The combination of the lines in the beginning of Step 1, (\ref{eq: pf active dimeter - 2}),
(\ref{eq: pf active dimeter - 3}), (\ref{eq: pf active dimeter - 4})
and (\ref{eq: pf active dimeter - 5}) gives that 
\begin{equation}
\mathbb{P}\left(E\right)\geq1-\varepsilon/4\label{eq: pf acitve diameter - 6}
\end{equation}
for $N\geq\bigvee_{i=0}^{5}N_{i}.$ This finishes Step 6. 

\medskip

\textbf{Step 7.} \emph{We set $\theta>0$ such that $\mathbb{P}_{N}\left(BA^{2}\neq\emptyset\right)<\varepsilon/2$
for large $N,$ and conclude the proof of Proposition \ref{prop: active diameter}.}

For $v\in V,$ let 
\[
Z\left(v\right):=\left\{ \exists u\in BA^{2}\mbox{ such that }z\left(u\right)=v\right\} .
\]
Hence
\[
\left\{ BA{}^{2}\neq\emptyset\right\} =\bigcup_{v\in B\left(\left\lceil \frac{2\alpha+K+2}{\alpha_{3}}\right\rceil \right)}Z\left(v\right)
\]
and 
\begin{equation}
\mathbb{P}_{N}\left(BA{}^{2}\neq\emptyset,E\right)\leq\sum_{v\in B\left(\left\lceil \frac{2\alpha+K+2}{\alpha_{3}}\right\rceil \right)}\mathbb{P}_{N}\left(Z\left(v\right)\cap E\right)\label{eq: pf active diameter - 7}
\end{equation}
Note that on the event $Z\left(v\right)\cap E,$ Claim \ref{claim: unique F}
and the arguments above give that $\mathcal{C}_{a}\left(u;\lambda\right),$
$F\left(u\right),$ $\lambda_{F}\left(u\right),$ $\mathcal{R}\left(u\right),$
$r_{c}\left(u\right),$ $\tilde{\mathcal{R}}\left(u\right)$ and $\tilde{r}_{c}\left(u\right)$
do not depend on the choice of $u\in BA^{2}$ as long as $z\left(u\right)=v.$
Except for $\mathcal{C}_{a}\left(u,\lambda\right),$ we omit the argument
$u$ from the notation above. 

We set $k:=\left\lfloor 1/2\theta\right\rfloor .$ Recall that $d\left(x,y\right)=d\left(\tilde{x},\tilde{y}\right)+\sqrt{3}=\diam\left(\mathcal{C}_{a}\left(u;\lambda\right)\right)+\sqrt{3},$
and $\diam\left(\mathcal{C}_{a}\left(u;\lambda\right)\right)\in\left(\left(\alpha-\theta\right)N,\left(\alpha+\theta\right)N\right).$
On the event $Z\left(v\right)$ there is a unique $l=l\left(y\right)\in\left[0,k-1\right]\cap\mathbb{Z}$
such that $x\in B_{l,k}$ where 
\[
B_{l,k}=B_{l,k}\left(v\right):=\left[-\alpha_{3}N,\alpha_{3}N\right]\boxtimes\left(\left(2\frac{l}{k}-1\right)\alpha_{3}N,\left(2\frac{l+1}{k}-1\right)\alpha_{3}N\right]+\left\lfloor \alpha_{3}N\right\rfloor v.
\]

Recall from the lines above Step 5 we have $x\in\mathcal{L}\left(\mathcal{R},r_{c}\right).$
From Claim \ref{claim: cut of R} we have $\mathcal{L}\left(\mathcal{R},r_{c}\right)\cap B=\mathcal{L}\left(\tilde{\mathcal{R}},\tilde{r}_{c}\right)\cap B$
where $B=B\left(\left\lfloor \alpha_{3}N\right\rfloor v;\alpha_{3}N\right).$
Hence on the event $Z\left(v\right)\cap E,$ we have $\mathcal{L}\left(\tilde{\mathcal{R}},\tilde{r}_{c}\right)\cap B_{l,k}\neq\emptyset.$
Let $\left(R,r\right)$ be a fixed pair. Hence 
\begin{align}
\mathbb{P}_{N}\left(Z\left(v\right),E,\left(\mathcal{R},r_{c}\right)=\left(R,r\right)\right) & =\mathbb{P}_{N}\left(Z\left(v\right),E,\left(\mathcal{R},r_{c}\right)=\left(R,r\right),\,\mathcal{L}\left(\tilde{R},\tilde{r}\right)\cap B_{l,k}\neq\emptyset\mbox{ at time }p_{\lambda_{F}}\left(N\right)\right)\label{eq: pf active diameter - 8}
\end{align}
where $\left(\tilde{R},\tilde{r}\right)$ denotes the pair we get
when we cut down some parts of $R$ as in Step 6.

Recall Definition \ref{def: filtration of tau}. Lemma \ref{lem: measurable w.r.t tau-s}
gives that the $N$-parameter frozen percolation process is adapted
to the filtration $\left(\mathcal{F}_{t}\left(V\right)\right)_{t\in\left[0,1\right]}.$
Hence for all $u\in BA^{2},$ $l$ and $\lambda_{F}$ are $\mathcal{F}_{p_{\lambda}\left(N\right)}\left(V\right)$
-measurable functions, and $\left\{ \left(\mathcal{R},r_{c}\right)=\left(R,r\right)\right\} \in\mathcal{F}_{p_{\lambda}\left(N\right)}\left(V\right).$
By Claim \ref{claim: conf in R is indep from rest} we have that on
the event $Z\left(v\right)\cap E\cap\left\{ \left(\mathcal{R},r_{c}\right)=\left(R,r\right)\right\} $
the $\tau$-values in $R$ do not influence the frozen percolation
process in $V\setminus R$ up to time $p_{\lambda}\left(N\right).$
This combined with Claim \ref{claim: OC measurable} gives that there
is a function $f$ such that $f\left(R,\bar{l},\bar{\lambda}_{F}\right)$
is $\mathcal{F}_{p_{\lambda}\left(N\right)}\left(V\setminus R\right)$-measurable
for all $R,\bar{l},\bar{\lambda_{F}}.$ Moreover, it satisfies 
\begin{align}
\mathbf{1}\left\{ Z\left(v\right),E,\left(\mathcal{R},r_{c}\right)=\left(R,r\right),l=\bar{l},\lambda_{F}\in d\bar{\lambda}_{F}\right\} = & f\left(R,\bar{l},\bar{\lambda}_{F}\right)\mathbf{1}\left\{ Z\left(v\right),E\right\} ,\label{eq: pf active diameter - 8.1}
\end{align}
for $\bar{l}\in\left[0,k-1\right]\cap\mathbb{Z}$ and Lebesgue almost
every $\bar{\lambda}_{F}\in\left[0,1\right].$

Hence

\begin{multline*}
\mathbb{P}_{N}\left(\left.\begin{array}{c}
Z\left(v\right),E,l=\bar{l},\lambda_{F}\in d\bar{\lambda}_{F}\\
\mathcal{L}\left(\tilde{R},\tilde{r}\right)\cap B_{\bar{l},k}\neq\emptyset\mbox{ at time }p_{\bar{\lambda}_{F}}\left(N\right)
\end{array}\right|\mathcal{F}_{p_{\lambda}\left(N\right)}\left(V\setminus R\right)\right)\\
=f\left(R,\bar{l},\bar{\lambda}_{F}\right)\mathbb{P}_{N}\left(\left.\begin{array}{c}
Z\left(v\right),E\\
\mathcal{L}\left(\tilde{R},\tilde{r}\right)\cap B_{\bar{l},k}\neq\emptyset\mbox{ at time }p_{\bar{\lambda}_{F}}\left(N\right)
\end{array}\right|\mathcal{F}_{p_{\lambda}\left(N\right)}\left(V\setminus R\right)\right)
\end{multline*}
for $\bar{l}\in\left[0,k-1\right]\cap\mathbb{Z}$ and Lebesgue almost
every $\bar{\lambda}_{F}\in\left[0,1\right].$

From Step 6, we have that $\tilde{R}\subseteq R.$ Claim \ref{claim: cut of R}
shows that we can apply Corollary \ref{cor: lowest of lowest regular regions}
in the following. We have 

\begin{align}
\mathbb{P}_{N}\left(Z\left(v\right),E,\mathcal{L}\left(\tilde{R},\tilde{r}\right)\cap B_{\bar{l},k}\neq\emptyset\right. & \!\!\left.\left.\mbox{ at time }p_{\bar{\lambda}_{F}}\left(N\right)\right|\mathcal{F}_{p_{\lambda}\left(N\right)}\left(V\setminus R\right)\right)\nonumber \\
 & \leq\mathbb{P}_{N}\left(\left.\mathcal{L}\left(\tilde{R},\tilde{r}\right)\cap B_{\bar{l},k}\neq\emptyset\mbox{ at time }p_{\bar{\lambda}_{F}}\left(N\right)\right|\mathcal{F}_{p_{\lambda}\left(N\right)}\left(V\setminus R\right)\right)\nonumber \\
 & =\mathbb{P}_{p_{\bar{\lambda}_{F}}\left(N\right)}\left(\mathcal{L}\left(\tilde{R},\tilde{r}\right)\cap B_{\bar{l},k}\neq\emptyset\right)\nonumber \\
 & \leq c_{1}k^{-1}\label{eq: pf active diameter - 9}
\end{align}
for $N\geq N_{6}\left(\lambda_{0},\lambda,\alpha_{3},\alpha_{2},k\right)$
with $c_{1}=c_{1}\left(\lambda_{0},\lambda,\alpha_{3},\alpha_{2}\right)$
of Corollary \ref{cor: lowest of lowest regular regions}. A combination
of (\ref{eq: pf active diameter - 9}) and (\ref{eq: pf active diameter - 8.1})
gives that

\begin{align*}
\mathbb{P}_{N}\left(Z\left(v\right),E,\right. & \!\!\left.\left(\mathcal{R},r_{c}\right)=\left(R,r\right),l=\bar{l},\lambda_{F}=\bar{\lambda}_{F},\mathcal{L}\left(\tilde{R},\tilde{r}\right)\cap B_{l,k}\neq\emptyset\mbox{ at time }p_{\lambda_{F}}\left(N\right)\left|\mathcal{F}_{p_{\lambda}\left(N\right)}\left(V\setminus R\right)\right.\right)\\
 & \leq c_{1}k^{-1}f\left(R,\bar{l},\bar{\lambda}_{F}\right)
\end{align*}
for $N\geq N_{6}.$ Hence
\begin{align}
\mathbb{P}_{N}\left(Z\left(v\right)\cap E\right) & \leq c_{1}k^{-1}.\label{eq: pf active diameter - 9.2}
\end{align}
for $N\geq N_{6}.$ 

(\ref{eq: pf active diameter - 9.2}) combined with (\ref{eq: pf active diameter - 7})
gives that

\begin{align}
\mathbb{P}_{N}\left(BA{}^{2}\neq\emptyset,E\right) & \leq\sum_{v\in B\left(\left\lceil \frac{2\alpha+K+2}{\alpha_{3}}\right\rceil \right)}\mathbb{P}_{N}\left(Z\left(v\right)\cap E\right)\nonumber \\
 & \leq c_{2}k^{-1}\label{eq: pf active diameter - 10}
\end{align}
with $c_{2}=c_{2}\left(\lambda_{0},\lambda,\alpha_{3},\alpha_{2},K\right)$
for $N\geq N_{6}.$ We set $\theta$ such that $k=\left\lfloor 1/2\theta\right\rfloor >4c_{2}/\varepsilon.$
A combination of (\ref{eq: pf active diameter - 10}) and (\ref{eq: pf acitve diameter - 6})
gives that 

\begin{align}
\mathbb{P}_{N}\left(BA^{2}\neq\emptyset\right) & \leq\mathbb{P}_{N}\left(BA^{2}\neq\emptyset,\, E\right)+\mathbb{P}_{N}\left(E^{c}\right)\nonumber \\
 & \leq c_{2}k^{-1}+\varepsilon/4<\varepsilon/2\label{eq: pf active diameter - 11}
\end{align}
for $N\geq N'=\bigvee_{i=0}^{6}N_{i}.$

A proof analogous to that of (\ref{eq: pf active diameter - 11})
gives that there is $N''=N''\left(\alpha,\lambda,K\right)$ 
\begin{equation}
\mathbb{P}_{N}\left(BA^{1}\neq\emptyset\right)<\varepsilon/2\label{eq: pf active diameter - 12}
\end{equation}
for $N\geq N''.$ A combination of (\ref{eq: pf active diameter - 1.5}),
(\ref{eq: pf active diameter - 11}) and (\ref{eq: pf active diameter - 12})
finishes the proof of Proposition \ref{prop: active diameter}.\appendix
\end{proof}

\section{\label{sec: appendix}Appendix}

\subsection{\label{sub: winding}Winding number of arms}

Here we prove Proposition \ref{prop: mixed arm exp}. The proof is
motivated by \cite{Beffara2011}. There, among many other things,
it was shown that when there are $k$ disjoint open arms in $A\left(M,aM\right)$
($a>1$), then, with conditional probability at least $1-a^{-\varepsilon},$
and uniformly in $M,$ are also $k$ disjoint open arms which wind
around the origin at least $c\log a$ times where $c,\varepsilon$
are positive constants.

We prove a slightly different result, namely that if we have $k$
disjoint arms with any colour sequence $\sigma\in\left\{ o,c\right\} ^{k}$
in $A\left(M,aM\right),$ than with conditional probability at least
$1-a^{-\varepsilon},$ these arms wind around the origin at in at
least $c\log a$ disjoint subannuli of $A\left(a,b\right)$ for some
$c,\varepsilon>0.$ Following \cite{Nolin2008}, we recall the notion
of well separated arms. We modify Definition 7 of \cite{Nolin2008}
for annuli:
\begin{defn}
\label{def: well sep outside}Consider some annulus $A=A\left(v;M,\tau M\right)$
and a parallelogram $B=B\left(v;\tau M\right)$ for $M\in\mathbb{N},$
$\tau\in\left(1,\infty\right)$ and $v\in V.$ Let $s_{T},s_{B},s_{L},s_{R}$
denote the top, bottom, left and right sides of $B.$ Let $\mathcal{C}=\left\{ c_{i}\right\} _{1\leq i\leq j}$
be a set of $j$ disjoint arms in $A$ such that for each $i,$ all
of the vertices of $c_{i}$ are open or all of them are closed. Let
$z_{i}$ be the endpoint of $c_{i}$ on $\partial B\left(v;\tau M\right).$
Let $\eta\in\left(0,1\right],$ we attach a parallelogram $r_{i}$
to $z_{i}$ as follows: 
\[
r_{i}=\begin{cases}
z_{i}+\left[-\eta M,\eta M\right]\boxtimes\left[0,2\sqrt{\eta}M\right] & \mbox{if }z_{i}\in s_{T}\\
z_{i}+\left[-\eta M,\eta M\right]\boxtimes\left[0,-2\sqrt{\eta}M\right] & \mbox{if }z_{i}\in s_{B}\\
z_{i}+\left[-2\sqrt{\eta}M,0\right]\boxtimes\left[-\eta M,\eta M\right] & \mbox{if }z_{i}\in s_{L}\\
z_{i}+\left[0,2\sqrt{\eta}M\right]\boxtimes\left[-\eta M,\eta M\right] & \mbox{if }z_{i}\in s_{R}.
\end{cases}
\]

We say that $\mathcal{C}$ is $\eta$-well-separated on the outside,
if the two following conditions are satisfied:
\begin{enumerate}
\item The extremities $z_{i}$ $i=1,2,\ldots,j$ are neither too close to
each other: 
\[
\forall i\neq l,\, d\left(z_{i},z_{l}\right)\geq10\sqrt{\eta}M,
\]
nor too close to the corners $Z_{l}$ $l=1,2,3,4$ of $B:$
\[
\forall i,j,\, d\left(z_{i},Z_{l}\right)\geq10\sqrt{\eta}M.
\]

\item Each $r_{i}$ is crossed vertically when $z_{i}\in s_{T}\cup s_{B},$
and horizontally when $z_{i}\in s_{L}\cup s_{R}$ by some crossing
$\tilde{c}_{i}$ of the same colour as $c_{i},$ and 
\[
c_{i}\mbox{ is connected to }\tilde{c}_{i}\mbox{ in }z_{i}+A\left(1,\sqrt{\eta}M\right).
\]
 
\end{enumerate}
\end{defn}
We say that a set $\mathcal{C}=\left\{ c_{i}\right\} _{1\leq i\leq j}$
of disjoint arms in $A$ \emph{can be made $\eta$-well-separated}
on the outside, if there exists an set $\mathcal{C}'=\left\{ c_{i}'\right\} _{1\leq i\leq j}$
of disjoint arms in $A$ which is $\eta$-well-separated on the outside,
and $c_{i}'$ has the same colour and endpoint on $\partial B\left(v;M\right)$
as $c_{i}$ for $i=1,2,\ldots,j.$

Similarly to Definition \ref{def: well sep outside}, we define the
$\eta$-well-separation on the inside. The following statement follows
from Lemma 15 of \cite{Nolin2008}.
\begin{lem}
\label{lem: well sep}For $\tau\in\left(1,\infty\right),$ and $\delta>0,$
there exists $\eta\left(\delta\right)>0$ such that for any positive
integer $N,$ we have 
\[
\mathbb{P}_{1/2}\left(\mbox{any set of disjoint arms in \ensuremath{A\left(N,\tau N\right)}can be made \ensuremath{\eta}-well-separated on the outside}\right)\geq1-\delta.
\]
Moreover, the same statement holds for well separated arms on the
inside.
\end{lem}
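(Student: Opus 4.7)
The plan is to reduce Lemma \ref{lem: well sep} to a direct application of Lemma 15 of \cite{Nolin2008}, which establishes the corresponding well-separation statement in a slightly different geometric setting (essentially for boxes rather than annuli). First, I would verify that Definition \ref{def: well sep outside} matches the corresponding definition in \cite{Nolin2008} up to superficial modifications: the shape of the boundary parallelogram, the use of the triangular lattice, and the fact that well-separation is required at the outer boundary. The notion of a landing pad $r_i$ attached to each endpoint, together with the two defining conditions (spacing of the $z_i$'s and the existence of a same-colour crossing connecting $c_i$ to $\tilde c_i$ inside $z_i + A(1,\sqrt{\eta}M)$), is identical.

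The probabilistic core, which I would import from \cite{Nolin2008}, runs as follows. Partition the outer portion of the annulus $A(N,\tau N)$ into $J \asymp \log(1/\eta)$ disjoint concentric sub-annuli of geometric scales between $\sqrt{\eta}M$ and $M/10$. At each scale, if the arms are not yet $\eta$-well-separated near $\partial B(\tau N)$, one attempts to re-route each arm to a new endpoint via an RSW landing-pad construction: a coloured crossing of a thin rectangle of aspect ratio $2\sqrt{\eta}\times \eta$, connected to the existing arm by a same-colour path in a surrounding small annulus. Each such construction has probability bounded away from zero uniformly in $N$, by Corollary \ref{cor: posprob of crossing} and Theorem \ref{thm: FKG}. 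Since the attempts at distinct scales are supported on disjoint annular regions, they are independent, and the probability that none of the $J$ attempts succeeds is at most $(1-c)^J \le \eta^{c'}$ for positive constants $c,c'$ independent of $N$ and $\eta$. Choosing $\eta$ small enough then yields failure probability at most $\delta$.

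The subtle verification — rather than a genuine obstacle — is that this argument must apply uniformly over all configurations of $j$ disjoint arms with arbitrary colour sequence $\sigma$, without being allowed to condition on a specific arm event that fixes the endpoints. This is handled, as in \cite{Nolin2008}, by an exploration argument: reveal the outermost arms from $\partial B(\tau N)$ inward, and condition on the revealed configuration; the re-routing then takes place in the still-unexplored region, where the conditional law is dominated by unconditional percolation and RSW estimates apply cleanly. Because RSW is colour-oblivious (each landing pad needs only a crossing of a prescribed colour, and both colours have positive crossing probability uniformly), the resulting bound is independent of $\sigma$. The inside version is proved by the entirely symmetric argument, replacing landing pads on $\partial B(\tau N)$ by landing pads on $\partial B(N)$ and exploring from the inner boundary outward.
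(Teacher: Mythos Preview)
Your proposal is correct and matches the paper's approach exactly: the paper does not give an independent proof but simply states that the lemma follows from Lemma~15 of \cite{Nolin2008}, which is precisely what you do. Your additional sketch of the RSW landing-pad argument at geometrically separated scales, together with the exploration to handle uniformity over all arm configurations and colour sequences, accurately summarises the content of that cited result.
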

We prove the following proposition.
\begin{prop}
\label{prop: arms do wind}Let $k,N\in\mathbb{N},$ $a\in\left(10,\infty\right),$
and $\sigma$ a colour sequence of length $k.$ We divide the annulus
$A\left(N,aN\right)$ into the annuli $A_{i}=A\left(2^{i}N,2^{i+1}N\right)$
for $i=0,1,\ldots,\left\lfloor \log_{2}\left(a\right)\right\rfloor -1.$
Let $W$ denote the set of indices $i$ for which all the arms in
$A_{3i+1}$ wind around the origin at least once in the counter-clockwise
direction for $i=0,1,\ldots,\left\lfloor \log_{2}\left(a\right)/3\right\rfloor -1.$
There are positive constants $c=c\left(k\right),$ $\varepsilon=\varepsilon\left(k\right)$
and $N_{0}=N_{0}\left(k\right)$ such that 
\[
\mathbb{P}_{1/2}\left(\mathcal{A}_{k,\sigma}\left(N,aN\right),\,\left|W\right|\geq c\log_{2}a\right)\geq\left(1-a^{-\varepsilon}\right)\pi_{k,\sigma}\left(N,aN\right)
\]
for all $a\in\left(1,\infty\right)$ and $N\geq N_{0}.$ \end{prop}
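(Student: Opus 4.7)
My plan is to decompose $A(N,aN)$ into the disjoint \emph{triple annuli} $U_i = A(2^{3i}N, 2^{3i+3}N)$ for $i=0,1,\ldots,M-1$ with $M=\lfloor \log_2 a / 3\rfloor$, each containing the middle sub-annulus $A_{3i+1}$ sandwiched between two buffers $A_{3i}$ and $A_{3i+2}$. Let $X_i$ be the indicator that all arms realising $\mathcal{A}_{k,\sigma}(N,aN)$ wind at least once counter-clockwise around the origin inside $A_{3i+1}$, so $|W|=\sum_i X_i$. The statement reduces to showing that, conditionally on the arm event, $|W|$ is of linear order in $M$ with probability at least $1-a^{-\varepsilon}$.

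The heart of the proof is a single-scale bound: there exists $p_0=p_0(k)>0$ and $N_0=N_0(k)$ so that, for $N\geq N_0$ and every $i$,
\[
\mathbb{P}_{1/2}\bigl(X_i=1 \,\bigm|\, \mathcal{A}_{k,\sigma}(N,aN),\; \mathcal{F}_{\mathrm{out}(U_i)}\bigr)\geq p_0,
\]
where $\mathcal{F}_{\mathrm{out}(U_i)}$ is the $\sigma$-algebra of the configuration outside $U_i$. To prove this, I would first apply Lemma~\ref{lem: well sep} to $U_i$: with conditional probability at least $1-2\delta$ given the arm event, the induced arms in $U_i$ can be made $\eta$-well-separated at both the inner boundary (radius $2^{3i}N$) and the outer boundary (radius $2^{3i+3}N$). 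Given such well-separated boundary data, a short RSW/FKG construction in the buffers $A_{3i}$ and $A_{3i+2}$ transports the $k$ arms to a fixed "anchor" pattern at the boundary of the middle annulus $A_{3i+1}$, and inside $A_{3i+1}$ one builds $k$ disjoint coloured spirals — one per required arm — that each execute one full counter-clockwise loop. Each spiral is a concatenation of $O(1)$ rectangle crossings of bounded aspect ratio (one per octant, using thin concentric annular strips so that different colours do not touch), and by FKG and RSW the joint occurrence of all these crossings has probability bounded below by a universal constant $p_0$. The crucial decoupling that lets us quote RSW here is that, conditional on the well-separated boundary data, the configuration inside $U_i$ is governed by the unconditional Bernoulli$(1/2)$ law on the complement of the forced boundary arcs.

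With the local estimate in hand, the events $\{X_i=1\}$ depend on disjoint regions, so conditionally on $\mathcal{A}_{k,\sigma}(N,aN)$ the sum $|W|$ stochastically dominates a Binomial$(M,p_0)$ random variable (verified by an inside-out exploration of the arms, applying the local bound annulus by annulus). A standard Chernoff estimate gives $\mathbb{P}(|W|<cM\mid\mathcal{A}_{k,\sigma}(N,aN))\leq e^{-\varepsilon_0 M}$ for $c=p_0/2$ and some $\varepsilon_0=\varepsilon_0(k)>0$. Since $M\geq (\log_2 a)/3-1$, this is bounded by $a^{-\varepsilon}$ for $\varepsilon=\varepsilon_0/4$, and multiplying through by $\pi_{k,\sigma}(N,aN)$ yields the claim with constant $c=p_0/6$.

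The main obstacle is the local estimate, specifically justifying the decoupling under the global conditioning on $\mathcal{A}_{k,\sigma}(N,aN)$ — arms can in principle interact across all scales. The standard route, following the arguments of Beffara–Nolin used to prove quasi-multiplicativity and the separation lemma itself, is to expose the outside arms via an inside-out exploration that reveals only their boundary data on $\partial U_i$ and the separation event, and then apply FKG+RSW inside. Extending this machinery from the alternating-colour or uniform-colour setting to an arbitrary colour sequence $\sigma$ is routine but requires care in constructing the coloured spiral so that successive arms of different colours remain non-touching; this is the only place where the geometry of the spiral construction has to be spelled out in full.
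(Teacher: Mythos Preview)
Your overall strategy---dyadic triple-annuli, a local winding estimate via RSW spiral tubes, then Chernoff---matches the paper's. The gap is in your decoupling. You claim $\mathbb P_{1/2}(X_i=1\mid \mathcal A_{k,\sigma}(N,aN),\mathcal F_{\mathrm{out}(U_i)})\geq p_0$ uniformly, arguing that Lemma~\ref{lem: well sep} gives well-separation with conditional probability $\geq 1-2\delta$ and that then ``the configuration inside $U_i$ is governed by the unconditional Bernoulli$(1/2)$ law''. Neither step is justified: Lemma~\ref{lem: well sep} is an \emph{unconditional} statement and does not transfer under conditioning on the vanishing-probability event $\mathcal A_{k,\sigma}(N,aN)$; and even granting well-separation, conditioning on the global arm event still forces the arms to traverse $U_i$, so the inside is not fresh. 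Concretely, your uniform single-scale bound can fail when the outside configuration has arm-endpoints on $\partial U_i$ that are badly clustered.

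The paper's resolution is a two-stage argument you are missing. First, one introduces the set $WS$ of indices $i$ for which the arms in $A_{3i}$ (resp.\ $A_{3i+2}$) can be made well-separated at the outer (resp.\ inner) boundary. The events $\{i\in WS\}$ live in disjoint regions and are \emph{unconditionally} independent; Hoeffding gives $\mathbb P_{1/2}(|WS|<c_0\log_2 a)\leq a^{-2\varepsilon_1}$, and the a priori RSW lower bound $\pi_{k,\sigma}(N,aN)\geq a^{-\varepsilon_1}$ then yields $\mathbb P_{1/2}(|WS|\geq c_0\log_2 a\mid \mathcal A_{k,\sigma})\geq 1-a^{-\varepsilon_1}$. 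Second, one conditions on the configuration in $A(N,aN)\setminus A_{3i+1}$, so the buffers $A_{3i},A_{3i+2}$ are \emph{revealed}, not fresh as in your scheme: on $\{i\in WS\}$ the arm-stubs at $\partial A_{3i+1}$ are well-separated, the inside of $A_{3i+1}$ is now genuinely fresh Bernoulli, and the spiral event there has probability $\geq h$ and \emph{implies} both $\mathcal A_{k,\sigma}(N,aN)$ and $\{i\in W\}$. This gives $\mathbb P_{1/2}(i\in W\mid \mathcal A_{k,\sigma},\,i\in WS,\,\text{config outside }A_{3i+1})\geq h$ without ever claiming the arm-conditioned law is product Bernoulli. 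The polynomial lower bound on $\pi_{k,\sigma}$ is precisely what lets one pass from the unconditional separation estimate to the conditional world, and it is absent from your proposal.
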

\begin{rem}
Proposition \ref{prop: mixed arm exp} follows from Proposition \ref{prop: arms do wind},
since $W=\emptyset$ on the event $\mathcal{A}_{k,l,\sigma}\left(N,aN\right)$
when $l\geq1.$\end{rem}
\begin{proof}
[Proof of Proposition \ref{prop: arms do wind}] For $a\leq2,$
the statement is trivial. Hence in the rest of the proof we suppose
that $a>2.$ Classical RSW techniques \cite{Grimmett1999} give that
for all $k\in\mathbb{N}$ there is $\varepsilon_{1}=\varepsilon_{1}\left(k\right)>0$
such that 
\begin{equation}
\pi_{k,\sigma}\left(N,aN\right)\geq a^{-\varepsilon_{1}}\label{eq: pf prop arms do wind - 1}
\end{equation}
uniformly in $a\geq2,$ $N\geq1$ and $\sigma\in\left\{ o,c\right\} ^{k}.$

Let $\eta\in\left(0,1/10\right).$ Let $IS_{i}$ ($OS_{i}$) denote
the event that any set of disjoint arms of $A_{i}$ can be made $\eta$-well-separated
on the inside (outside). Let $WS$ denote the set of indices $i\in\left\{ 0,1,\ldots,\left\lfloor \frac{\log_{2}a}{3}\right\rfloor -1\right\} $
for which $OS_{3i}$ and $IS_{3i+2}$ both hold. Notice that the events
$\left\{ i\in WS\right\} $ for $i=1,2,\ldots,\left\lfloor \frac{\log_{2}a}{3}\right\rfloor -1$
are independent. Moreover, by Lemma \ref{lem: well sep}, for any
$\delta>0$ there is $\eta\left(\delta\right)\in\left(0,1/10\right)$
such that
\[
\mathbb{P}_{1/2}\left(i\in WS\right)\geq1-\delta.
\]
Combining this with Hoeffding's inequality we get that $c_{0},\delta,\eta$
such that
\begin{equation}
\mathbb{P}_{1/2}\left(\left|WS\right|\leq c_{0}\log a\right)\leq a^{-2\varepsilon_{1}}.\label{eq: pf prop arms do wind - 2}
\end{equation}

This and (\ref{eq: pf prop arms do wind - 1}) gives that 
\begin{align}
\mathbb{P}_{1/2}\left(\mathcal{A}_{k,\sigma}\left(N,aN\right)\cap\left\{ \left|WS\right|>c_{0}\log\left(a\right)\right\} \right) & \geq\pi_{k,\sigma}\left(N,aN\right)-\mathbb{P}_{1/2}\left(\left|WS\right|\leq c_{0}\log a\right)\nonumber \\
 & \geq\pi_{k,\sigma}\left(N,aN\right)-a^{-2\varepsilon_{1}}\nonumber \\
 & \geq\left(1-a^{-\varepsilon_{1}}\right)\pi_{k,\sigma}\left(N,aN\right)\label{eq: pf prop arms do wind - 3}
\end{align}
 for all $N.$

Let us fix an integer $i\in\left\{ 0,1,\ldots,\left\lfloor \frac{\log_{2}a}{3}\right\rfloor -1\right\} .$
Condition on the event $\mathcal{A}_{k,\sigma}\left(N,2^{3i+1}N\right)\cap\mathcal{A}_{k,\sigma}\left(2^{3i+2}N,aN\right)\cap\left\{ i\in WS\right\} $
and on the configuration in $A\left(N,aN\right)\setminus A_{3i+1}.$
This conditioning gives that all the arms in $A_{3i}$ can be made
$\eta$-well-separated on the outside, and all the arms in $A_{3i+2}$
can be made $\eta$-well-separated on the inside. This imposes some
conditions on the configuration in $A_{3i+1}:$ there is a finite
collection of disjoint parallelograms in which certain crossing events
have to be satisfied. In order to have $k$ arms with colour sequence
$\sigma$ in $A\left(N,aN\right),$ it is enough to connect, with
the right colour, the $k$-tuple of parallelograms corresponding to
the well separated versions of these arms on the inner parallelogram
to those on the outer parallelogram of $A_{3i+1}.$ There might be
more than one choice for this pair of $k$-tuples of parallelograms.
In this case we choose a pair in some deterministic way.

We connect the corresponding pairs of parallelograms by disjoint tubes
of width $\sqrt{\eta}2^{3i+1}N$ in $A_{3i+1}$ as in the proof of
Lemma 4 of \cite{Kesten1987} (see Figure 9 of \cite{Kesten1987}),
with the difference that these connections are special: We chose these
tubes such that each of them winds around the origin at least twice
in the counter-clockwise direction. We add an additional tube which
avoids the ones above, connects the boundaries of the inner and the
outer parallelograms of $A_{3i+1}$ and winds around the origin at
least twice in the counter-clockwise direction.

With standard RSW techniques one can show that the probability of
the event that the original tubes are crossed in the hard direction
by a path with the appropriate colour, and the additional tube is
crossed in the hard direction with an open and a closed path is at
least $h>0.$ Here $h=h\left(k,\eta\right)$ is independent of $i,N$
and the location of the parallelograms we connected. The open and
closed crossings of the additional tube forces all the arms of $A\left(N,aN\right)$
to wind around the origin in $A_{3i+1}$ at least once in the counter-clockwise
direction. Hence the event $\left\{ i\in W\right\} $ occurs.

Thus the probability of $\left\{ i\in W\right\} $ conditioned on
the event $\mathcal{A}_{k,\sigma}\cap\left\{ i\in WS\right\} $ and
on the configuration in $A\left(N,aN\right)\setminus A_{3i+1}$ is
at least $h.$ Note that the event $\left\{ i\in W\right\} $ only
depends on the configuration in $A_{3i+1}.$ Hence, when we condition
on the event $\mathcal{A}_{k,\sigma}\left(N,aN\right)$ and on the
realization of $WS,$ the set $W$ stochastically dominates a set
$Z,$ where the elements of $Z$ are sampled from $WS$ independently
from each other with probability $h.$

Hence for $c>0$ we have
\begin{align}
\mathbb{P}_{1/2} & \left(\left|W\right|\geq c\log_{2}a\left|\mathcal{A}_{k,\sigma}\left(N,aN\right)\right.\right)\geq\mathbb{P}_{1/2}\left(\left|W\right|\geq c\log_{2}a,\left|WS\right|\geq c_{0}\log_{2}a\left|\mathcal{A}_{k,\sigma}\left(N,aN\right)\right.\right)\nonumber \\
 & =\sum_{S}\mathbb{P}_{1/2}\left(\left|W\right|\geq c\log_{2}a\left|\mathcal{A}_{k,\sigma}\left(N,aN\right),\, WS=S\right.\right)\mathbb{P}_{1/2}\left(WS=S\left|\mathcal{A}_{k,\sigma}\left(N,aN\right)\right.\right)\nonumber \\
 & \geq\sum_{S}\mathbb{P}_{1/2}\left(\left|Z\right|\geq c\log_{2}a\left|\mathcal{A}_{k,\sigma}\left(N,aN\right),\, WS=S\right.\right)\mathbb{P}_{1/2}\left(WS=S\left|\mathcal{A}_{k,\sigma}\left(N,aN\right)\right.\right),\label{eq: pf prop arms do wind - 4}
\end{align}
where the summation over $S\subseteq\left\{ 0,1,\ldots\left\lfloor \frac{\log_{2}a}{3}\right\rfloor -1\right\} $
with $\left|S\right|\geq c_{0}\log_{2}a$. We split this sum in (\ref{eq: pf prop arms do wind - 4})
depending on the number of elements of $S,$ and we get 

\begin{align}
\mathbb{P}_{1/2} & \left(\left|W\right|\geq c\log_{2}a\left|\mathcal{A}_{k,\sigma}\left(N,aN\right)\right.\right)\nonumber \\
 & \geq\mathbb{P}\left(Y\geq c\log_{2}a\right)\sum_{l\geq c_{0}\log_{2}a}\mathbb{P}_{1/2}\left(\left|WS\right|=l\left|\mathcal{A}_{k,\sigma}\left(N,aN\right)\right.\right)\nonumber \\
 & =\mathbb{P}\left(Y\geq c\log_{2}a\right)\mathbb{P}_{1/2}\left(\left|WS\right|\geq c_{0}\log_{2}a\left|\mathcal{A}_{k,\sigma}\left(N,aN\right)\right.\right),\label{eq: pf prop arms do wind - 5}
\end{align}
where $Y$ is a random variable with distribution $Binom\left(c_{0}\log_{2}a,h\right).$
Using Hoeffding's inequality, we set $c=c\left(h\right),\varepsilon_{2}\left(h\right)>0$
such that 
\begin{equation}
\mathbb{P}\left(Y\geq c\log_{2}a\right)\geq1-a^{-\varepsilon_{2}}.\label{eq: pf prop arms do wind - 6}
\end{equation}
By substituting (\ref{eq: pf prop arms do wind - 6}) and (\ref{eq: pf prop arms do wind - 3})
to (\ref{eq: pf prop arms do wind - 5}) we get that
\[
\mathbb{P}_{1/2}\left(\left|W\right|\geq c\log_{2}a\left|\mathcal{A}_{k,\sigma}\left(N,aN\right)\right.\right)\geq\left(1-a^{-\varepsilon_{1}}\right)\left(1-a^{-\varepsilon_{2}}\right)
\]
for all $a>2$ and $N,$ which finishes the proof of Proposition \ref{prop: arms do wind}.
\end{proof}
With suitable adjustments of arguments above, one can show that the
following generalization of Proposition \ref{prop: mixed arm exp}
holds.
\begin{prop}
\label{prop: gen mixed arm exp}For any $k\in\mathbb{N},$ there are
positive constants $c=c\left(k\right),\,\varepsilon=\varepsilon\left(k\right)$
such that for all $l,l'\in\mathbb{N}$ with $0\leq l\leq l'\leq k$
\[
\pi_{k,l,\sigma}\left(n_{0}\left(k\right),N\right)\leq cN^{-\varepsilon}\pi_{k,l',\sigma}\left(n_{0}\left(k\right),N\right)
\]
uniformly in $N$ and in the colour sequence $\sigma.$
\end{prop}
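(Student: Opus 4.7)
The plan is to iterate a one-step bound
\[
\pi_{k,l+1,\sigma}(n_{0}(k),N) \;\le\; c_{0}N^{-\varepsilon_{0}}\pi_{k,l,\sigma}(n_{0}(k),N), \qquad l=0,1,\ldots,k-1, \tag{$\ast$}
\]
with constants $c_{0}(k),\varepsilon_{0}(k)$ uniform in $\sigma$. Chaining $(\ast)$ at most $k$ times and absorbing powers of $c_{0}$ into a single constant gives $\pi_{k,l,\sigma}\le cN^{-\varepsilon}\pi_{k,l',\sigma}$ whenever $l\ge l'$ (the case $l=l'$ being trivial). The case $l=0$ of $(\ast)$ is precisely Proposition \ref{prop: mixed arm exp}, so only the general step requires new work.

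For general $l$, I adapt the winding argument behind Proposition \ref{prop: arms do wind}. Decompose $A(n_{0}(k),N)$ into dyadic subannuli $A_{i}=A(2^{i}n_{0}(k),2^{i+1}n_{0}(k))$ and group them into triples. By Lemma \ref{lem: well sep} together with Hoeffding's inequality (exactly as in the proof of Proposition \ref{prop: arms do wind}), with probability $\ge 1-N^{-\varepsilon_{1}}$ at least $c_{1}\log_{2}N$ triples are ``doubly well-separated'', meaning any set of disjoint arms passing through $A_{3i+1}$ can be made $\eta$-well-separated on both of its boundaries. Call this set of indices $WS$.

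Condition on $\mathcal{A}_{k,l,\sigma}$, on $\{i\in WS\}$, and on the configuration in $A(n_{0}(k),N)\setminus A_{3i+1}$. I perform an RSW + tube construction inside $A_{3i+1}$ (of the same type as in the proof of Proposition \ref{prop: arms do wind} and in Lemma 4 of \cite{Kesten1987}) which with probability at least some $h=h(k,\eta)>0$ realises $\mathcal{A}_{k,l,\sigma}$ together with the extra feature that the $(l+1)$-th arm in counter-clockwise cyclic order leaves the upper half plane inside $A_{3i+1}$. Concretely, the first $l$ designated tubes are kept in the upper half of $A_{3i+1}$ (preserving the half-plane restriction on the first $l$ arms), the $(l+1)$-th designated tube is routed so that its hard-direction crossing must cross the real axis, and the remaining tubes respect the cyclic colour sequence $\sigma$ and the fixed well-separated boundary landing zones inherited from the conditioning. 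Since the annuli $A_{3i+1}$ for $i\in WS$ are disjoint, these events are conditionally independent across $i$; hence the conditional probability that the $(l+1)$-th arm stays in the upper half plane in every triple is at most $(1-h)^{c_{1}\log_{2}N}\le N^{-\varepsilon_{2}}$. As $\mathcal{A}_{k,l+1,\sigma}$ is exactly the sub-event of $\mathcal{A}_{k,l,\sigma}$ on which this arm is in the upper half plane on every scale, $(\ast)$ follows with $\varepsilon_{0}=\min(\varepsilon_{1},\varepsilon_{2})$.

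The main obstacle is the topological bookkeeping in the tube step: the construction must simultaneously (i) keep $l$ designated tubes in the upper half of $A_{3i+1}$, (ii) route one further tube across the real axis so that the $(l+1)$-th arm is visibly not in the upper half plane, and (iii) honour the fixed boundary landing zones and cyclic colour sequence $\sigma$. Point (iii) is automatic from well-separation, and for (i) and (ii) there is room to place $k=O(1)$ landing zones in any prescribed upper/lower configuration on boundaries of length $\Theta(2^{3i+1}n_{0}(k))$; the standard Menger-type disjoint tube-crossing argument then applies with an RSW lower bound on $h$ depending only on $k$ and $\eta$.
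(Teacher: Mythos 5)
Your overall strategy --- reducing to a one-step bound $\pi_{k,l+1,\sigma}\le c_{0}N^{-\varepsilon_{0}}\pi_{k,l,\sigma}$ and proving that step by the dyadic decomposition, well-separation (Lemma \ref{lem: well sep}), Hoeffding, and RSW-tube surgery behind Proposition \ref{prop: arms do wind} --- is exactly the ``suitable adjustment'' the paper gestures at (it gives no further details), and your direction of the inequality ($l\ge l'$) is the meaningful one: as displayed in the paper the roles of $l$ and $l'$ appear interchanged, and note also that with the factor $N^{-\varepsilon}$ the case $l=l'$ is not ``trivial'' but has to be excluded. The genuine gap is in the concluding step of your one-step bound. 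You force, at each good scale, the \emph{designated} $(l+1)$-th arm (the one you route through your tubes) to leave the upper half plane, and then assert that $\mathcal{A}_{k,l+1,\sigma}$ is ``exactly the sub-event of $\mathcal{A}_{k,l,\sigma}$ on which this arm is in the upper half plane on every scale''. That identification is false: $\mathcal{A}_{k,l+1,\sigma}$ is an existential event --- it occurs as soon as \emph{some} collection of $k$ disjoint arms with colour sequence $\sigma$ has its first $l+1$ members in the upper half annulus. A configuration in your constructed event can contain, besides the designated collection whose $(l+1)$-th member dips below the axis, a different witnessing collection built from other monochromatic paths in the upper half of $A_{3i+1}$ which your tubes do not control. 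Hence the quantity you bound by $(1-h)^{c_{1}\log_{2}N}$ is the conditional probability that one particular designated arm stays in the upper half plane, not $\mathbb{P}\left(\mathcal{A}_{k,l+1,\sigma}\,\middle|\,\mathcal{A}_{k,l,\sigma}\right)$.

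In the paper's base case ($l'=0$) this issue is precisely what the extra tube crossed by \emph{both} an open and a closed spiral path is for: it forces every crossing of $A_{3i+1}$, of either colour, to wind, so no witnessing collection whatsoever can contain a half-plane arm, and the scale event $\left\{ i\in W\right\}$ is a configuration-level event disjoint from $\mathcal{A}_{k,l,\sigma}$ for $l\geq1$. For the inductive step you cannot reuse that device (forcing all arms to wind would destroy $\mathcal{A}_{k,l,\sigma}$ itself), so you need a blocking structure in the upper half of $A_{3i+1}$ that simultaneously (a) still permits $l$ disjoint upper-half crossings with colours $\sigma_{1},\ldots,\sigma_{l}$, so that the constructed event retains probability $\asymp\pi_{k,l,\sigma}$, and (b) excludes, at the level of the configuration and for \emph{all} possible arm collections, the existence of $l+1$ disjoint full half-plane arms whose counter-clockwise colours are $\sigma_{1},\ldots,\sigma_{l+1}$ --- including colour sequences with repetitions, where a single ``corridor'' between blocking paths can carry two disjoint crossings of the same colour, so the naive corridor picture does not immediately give (b). Designing such a scale event and proving it has conditional probability at least $h\left(k,\eta\right)>0$ given the separated landing zones is the actual content of the generalization; it is absent from your sketch, and the ``topological bookkeeping'' you flag (placement of landing zones for the designated tubes) does not address it.
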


\subsection{\label{sub: there are thick paths}Existence of long thick paths
in nice regions}

Recall the Definition \ref{def: gridpath} and \ref{def: (a,b)-nice}.
First we prove Lemma \ref{lem: det gridpath} which is the special
case of Lemma \ref{lem: local det gridpath} where $C$ is $\left(a,b\right)$-nice.
Then we show how to modify the proof of Lemma \ref{lem: det gridpath}
to deduce Lemma \ref{lem: local det gridpath}.
\begin{namedthm}
[Lemma \ref{lem: det gridpath}] Let $a,b\in\mathbb{N}$ with $a\geq2000.$
Let $C$ be an $\left(a,b\right)$-nice subgraph of $\mathbb{T}.$
Then there is a $\left\lfloor a/200-10\right\rfloor $-gridpath contained
in $C$ with diameter at least $\diam\left(C\right)-2b-2a-12.$\end{namedthm}
\begin{rem}
We believe that the constants in Lemma \ref{lem: det gridpath} are
not optimal.\end{rem}
\begin{proof}
[Proof of Lemma \ref{lem: det gridpath}] Recall the lines below
Definition \ref{def: (a,b)-nice}. To prove Lemma \ref{lem: det gridpath},
it is enough to find a path $\zeta$ in $C$ such that $\diam\left(\zeta\right)\geq d-2b-2a-12$
and $\zeta+B\left(a/100-5\right)\subset C.$ We construct $\zeta$
by the following strategy.

We put hexagons on the vertices of $\mathbb{T}$ in the `usual' way:
The hexagon corresponding to the vertex $v$ is the regular hexagon
with side length $1/\sqrt{3}$ centred around $v$ with one of its
sides is vertical. These hexagons give a tiling of the plane $\mathbb{R}^{2}.$
Using this tiling, we look at $C$ as the region in $\mathbb{R}^{2}$
which is the union of the hexagons which are centred around the vertices
of $C.$ 

Let $x,y\in C$ such that $d\left(x,y\right)=\diam\left(C\right).$
Let $\gamma\subset\mathbb{R}^{2}$ be a shortest curve connecting
$x$ and $y$ in the region $C,$ that is, $\gamma$ is a continuous
map of $\left[0,1\right]$ such that $0$ is mapped to $x$ and $1$
is mapped to $y.$ We get the path $\zeta$ from $\gamma$ as follows.
First we cut down two pieces of $\gamma$ one from its beginning and
one from its end. We call the resulting path $\gamma^{2}.$ Then we
walk along $\gamma^{2},$ and if there is a point of $\partial C$
`close by' on the left (right) of $\gamma^{2},$ then we make a `small'
detour to the right (left). We get the path $\zeta$ from $\gamma^{2}$
after these detours. We show that $\zeta$ indeed satisfies the conditions
above, and finish the proof of Lemma \ref{lem: det gridpath}.

\medskip

We gave a strategy which involved continuous curves and regions in
the plane $\mathbb{R}^{2}.$ We adapt it to the triangular lattice
in the following precise proof.

Let $x=\left(x_{1},x_{2}\right),y=\left(y_{1},y_{2}\right)\in C$
such that $d\left(x,y\right)=\diam\left(C\right).$ We further assume
that $x_{1}<y_{1}$ and $d\left(x,y\right)=y_{1}-x_{1}.$ The other
case where $d\left(x,y\right)=y_{2}-x_{2}$ can be treated similarly.
Let $\tilde{\gamma}$ denote a shortest (having the least number of
vertices) path which starts at $x,$ ends at $y,$ and it is contained
in $C.$

Note that there are $\binom{2n}{n}$ shortest paths between the vertices
$0$ and $n\underline{e}_{1}+n\underline{e}_{2}$ in $\mathbb{T}.$
However, most of them do not follow closely the straight line between
the points $0$ and $n\underline{e}_{1}+n\underline{e}_{2}.$ Hence
$\tilde{\gamma}$ usually does not resemble a shortest continuous
curve connecting $x$ and $y.$

\textbf{Step 1.} \emph{We choose a specific shortest path between
$x$ and $y.$}

For $u,v\in\mathbb{T},$ let $s\left(u,v\right)$ denote the line
segment connecting $u$ and $v$ in $\mathbb{R}^{2}.$ This segment
naturally induces an oriented path $\sigma\left(u,v\right)$ in $\mathbb{T}$
as a sequence of the midpoints of the hexagons which are intersected
by $s\left(u,v\right)$ as we walk along it from $u$ to $v.$ Note
that it can happen that the segment $s\left(u,v\right)$ contains
a side of a hexagon. In this case, we put only one of the neighbouring
hexagons to $\sigma\left(u,v\right).$ We say that $\sigma\left(u,v\right)$
is a triangular grid approximation of the segment $s\left(u,v\right).$
Note that $\sigma\left(u,v\right)$ is a shortest path between $u$
and $v$ in $\mathbb{T}.$

Recall the notation in Section \ref{sub: notation}. Let $v,u,u'\in\tilde{\gamma}$
with $v\prec u,u'$ and $u\sim u'.$ Then for all $w\in\sigma\left(v,u\right)$
there is $w'\in\sigma\left(v,u'\right)$ with $w\sim w'.$ Hence for
$v\in\tilde{\gamma}$ there are two cases: 
\begin{itemize}
\item either $\forall u\in\tilde{\gamma}_{v,y}\setminus\left\{ v\right\} $
we have $\sigma\left(v,u\right)\setminus\left\{ v\right\} \nsim\partial C,$
or
\item $\exists w=w\left(v\right)\in\tilde{\gamma}_{v,y}\setminus\left\{ v\right\} $
such that $\forall u\in\tilde{\gamma}_{v,w}\setminus\left\{ v,w\right\} $
we have $\sigma\left(v,u\right)\setminus\left\{ v\right\} \nsim\partial C,$
but $\sigma\left(v,w\right)\setminus\left\{ v\right\} \sim\partial C.$
\end{itemize}
We perform the following procedure. We start at $x.$ If the first
case above holds for $v=x,$ then we replace $\tilde{\gamma}$ by
$\sigma\left(x,y\right)$ and finish the procedure. In the second
case we replace $\tilde{\gamma}_{x,w\left(x\right)}$ by $\sigma\left(x,w\left(x\right)\right),$
and repeat the procedure for $\tilde{\gamma}_{w\left(x\right),y}$
starting from $w\left(x\right).$ At each step of the procedure, we
move at least one vertex further on $\tilde{\gamma,}$ hence the procedure
terminates in at most $\left|\tilde{\gamma}\right|$ steps. Let $\gamma$
denote the path we get at the end. At each step of the procedure,
we make modifications such that the new path is in $C$ and its length
is the same as the old path's. Hence $\gamma\subset C$ and $\left|\gamma\right|=\left|\tilde{\gamma}\right|.$ 

We finish Step 1 by with the following consequences of the construction
above: $\gamma$ resembles a shortest curve in $\mathbb{R}^{2}:$
It is a sequence of triangular grid approximations of line segments
in $\mathbb{R}^{2}.$ Moreover, we have the following claim.
\begin{claim}
\label{claim: turns of gamma}As we walk along $\gamma,$ we turn
to the left (right) at $v\in\gamma$ if it has a neighbour in $\partial C$
on the left (right) of $\gamma.$ That is, if $u,v,w\in\gamma$ with
$u\prec v\prec w$ and $\sigma\left(u,v\right),\sigma\left(v,w\right)\subset\gamma,$
with $\sigma\left(u,v\right)\cup\sigma\left(v,w\right)\neq\sigma\left(u,v\right),$
then $v\sim\partial C\cap T\left(u,v,w\right),$ where $T\left(u,v,w\right)$
denotes the triangle spanned by the vertices $u,v,w.$ 
\end{claim}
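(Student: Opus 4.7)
The plan is to unfold the iterative construction of $\gamma$ from Step~1 and match the hypothesis of the claim to the output of that procedure. The turning points of $\gamma$ form a sequence $x=v_{0},v_{1},v_{2},\ldots,v_{m}=y$ with $v_{j+1}=w(v_{j})$ at each step where the second case is invoked, and $\gamma$ is the concatenation of the triangular-grid approximations $\sigma(v_{j},v_{j+1})$. Any triple $u\prec v\prec w$ on $\gamma$ satisfying $\sigma(u,v),\sigma(v,w)\subset\gamma$ together with a genuine turn at $v$ (i.e., $\sigma(u,v)\cup\sigma(v,w)\neq\sigma(u,w)$) must have $v=v_{j}$ for some $1\le j\le m-1$, so it is enough to prove the conclusion for such a $v_{j}$ with $u=v_{j-1}$ and $w=v_{j+1}$.

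Fix a turning point $v=v_{j}$. By construction $v=w(v_{j-1})$, and the defining dichotomy for $w(\cdot)$ gives a vertex $z\in\sigma(v_{j-1},v)\setminus\{v_{j-1}\}$ with $z\sim\partial C$, while $\sigma(v_{j-1},u')\setminus\{v_{j-1}\}\nsim\partial C$ for every vertex $u'\in\tilde{\gamma}_{v_{j-1},v}$ strictly between $v_{j-1}$ and $v$. The next step is a short geometric comparison for triangular-grid approximations: if $p,q\in V$ are lattice neighbors and one approximates $s(w_{0},p)$ and $s(w_{0},q)$ from a common base point $w_{0}$, then $\sigma(w_{0},p)$ and $\sigma(w_{0},q)$ agree on all but a bounded number of their final vertices, since the two lines $s(w_{0},p)$ and $s(w_{0},q)$ diverge by a controlled amount and cut through different hexagonal cells only near the endpoint. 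Applying this with $w_{0}=v_{j-1}$ and $(p,q)=(u',v)$, where $u'$ is the predecessor of $v$ on $\tilde{\gamma}$, the offending vertex $z$ cannot sit in the common prefix of $\sigma(v_{j-1},u')$ and $\sigma(v_{j-1},v)$, hence it is pinned in a bounded neighborhood of $v$, and at the scale $a\ge 2000$ one concludes $z=v$.

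Having shown that $v$ itself has a neighbor $z^{\star}\in\partial C$, it remains to place $z^{\star}$ inside the triangle $T(v_{j-1},v,v_{j+1})$. This is the qualitative content of the claim. The procedure halted at $v$ precisely because extending $\sigma(v_{j-1},v)$ straight past $v$ in the same direction would run into $\partial C$; the next approximation $\sigma(v,v_{j+1})$ is therefore forced to deflect to the opposite side. In particular the obstructing portion of $\partial C$ sits on the concave side of the resulting turn at $v$, which is exactly the interior of $T(v_{j-1},v,v_{j+1})$. Comparing the lattice neighbors of $v$ lying on the concave side of $\gamma$ with those on the convex side, at least one concave-side neighbor must lie in $\partial C$ (otherwise the segment approximation could have been prolonged further along $\tilde{\gamma}$ without meeting $\partial C$, contradicting $v=w(v_{j-1})$), which yields the required $z^{\star}\in\partial C\cap T(v_{j-1},v,v_{j+1})$ with $v\sim z^{\star}$.

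The main obstacle is the discrete-geometry bookkeeping in the middle paragraph: verifying rigorously that $\sigma(w_{0},p)$ and $\sigma(w_{0},q)$ for neighboring targets differ only at their last few vertices, and quantifying how close this forces $z$ to be to $v$. None of this is deep, but it requires a careful case analysis of which hexagons two nearly parallel segments can cross. The hypothesis $a\ge 2000$ in Lemma~\ref{lem: det gridpath} is deliberately generous so that any constant-size slack from this comparison is easily absorbed, and no optimization of constants is required.
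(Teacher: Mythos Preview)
Your argument has a genuine gap at the geometric comparison in the middle paragraph. The assertion that for lattice neighbours $p,q$ the approximations $\sigma(w_0,p)$ and $\sigma(w_0,q)$ ``agree on all but a bounded number of their final vertices'' is false. Two line segments from a common basepoint to neighbouring endpoints stay within distance~$1$ of each other everywhere, but this does \emph{not} force them to cross the same hexagonal cells except near the end: a line can sit just below a hexagon edge while the slightly tilted line sits just above, so the two approximations part ways at an intermediate point. Concretely, take $w_0=0$, $p=n\underline{e}_1$, $q=n\underline{e}_1+\underline{e}_2$. Then $\sigma(0,p)$ runs along the $\underline{e}_1$-axis, while $\sigma(0,q)$ already jumps to the upper row of hexagons near position $\approx n/3$, not near $n$. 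Hence the vertex $z\in\sigma(v_{j-1},v)$ that first touches $\partial C$ can lie far from $v$, and your conclusion ``$z=v$'' does not follow.

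The appeal to $a\ge 2000$ is a non sequitur: $a$ governs the $(a,b)$-niceness of $C$ (how narrow a corridor in $C$ can be), and has nothing to do with pinning $z$ to a single vertex. Even granting that $z$ lies in a bounded neighbourhood of $v$, this yields only that $\partial C$ comes within bounded distance of $v$, not that $v\sim\partial C$, which is what the claim asserts.

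The paper itself offers no proof beyond ``a simple consequence of the definition of $\gamma$'', so there is nothing to compare your approach against directly. Your overall plan---reduce to consecutive turning points $v_{j-1},v_j,v_{j+1}$ and read off the obstacle from the stopping rule $v_j=w(v_{j-1})$---is the natural one, but to close the argument you need a different mechanism than a common-prefix claim. One route is to use that $\gamma$ is a \emph{shortest} path in $C$: if no neighbour of $v$ on the concave side lay in $\partial C$, one could locally shortcut $\gamma$ across that neighbour and strictly decrease length, contradicting minimality. Making this precise requires a small case analysis of the local turn geometry on the triangular lattice rather than the segment-comparison you attempted.
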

\medskip

\textbf{Step 2. }\emph{We introduce some notation and assign labels
to some of the vertices of $\gamma.$}

Let 
\[
ST:=\left\{ v=\left(v_{1},v_{2}\right)\in V\,\left|\, x_{1}<v_{1}<y_{1}\right.\right\} .
\]
By possible shortening $\gamma$ and redefining $x$ and $y,$ we
can assume that $\gamma\subset cl\left(ST\right),$ $\gamma\cap\partial ST=\left\{ x,y\right\} $
and $d\left(x,y\right)=\diam\left(C\right).$ 

We set $\alpha:=\left\lfloor a/6\right\rfloor -2>0,$ and define 
\begin{align*}
ST^{i}: & =\left\{ v=\left(v_{1},v_{2}\right)\in V\,\left|\, x_{1}+b+i\alpha<v_{1}<y_{1}-b-i\alpha\right.\right\} 
\end{align*}
for $i\in\left\{ 1,2\right\} .$ Let $x^{i}$ ($y^{i}$) denote the
last (first) vertex of $\gamma$ which is in the half plane $\left\{ v=\left(v_{1},v_{2}\right)\in V\,\left|\, v_{1}\leq x_{1}+b+i\alpha\right.\right\}$
($\left\{ v\,\left|\, v_{1}\geq y_{1}-b-i\alpha\right.\right\} $).
Let $\gamma^{i}=\gamma_{x^{i},y^{i}}.$ Note that $ST^{1}\supset ST^{2}$
and $\gamma^{2}$ is a subpath of $\gamma^{1}.$ 

Let $i\in\left\{ 1,2\right\} .$ Since $\gamma^{i}$ is a shortest
path, it is non self-touching. This combined with $\gamma^{i}\cap\partial ST^{i}=\left\{ x^{i},y^{i}\right\} $
we get that $\gamma^{i},$ cuts $cl\left(ST^{i}\right)$ into two
connected components. Let $ST^{i}{}_{L}$ ($ST^{i}{}_{R}$) denote
connected component $cl\left(ST^{i}\right)\setminus\gamma^{i}$ which
is on the left (right) had side of $\gamma^{i}$ as we walk along
it.

For $v\in\gamma^{2},$ we put a label $l\left(v\right)\in\left\{ L,R,N,G\right\} $
as follows. We denote the set of vertices with label $X\in\left\{ L,R,N,G\right\} $
by $\gamma_{X}^{2}.$ First we define the labels $R$ and $L:$ For
$v\in\gamma^{2},$ we set $l\left(v\right)=L$ ($l\left(v\right)=R$)
if $ST^{1}{}_{L}\cap B\left(v;\alpha\right)\cap\partial C\neq\emptyset$
($ST^{1}{}_{R}\cap B\left(v;\alpha\right)\cap\partial C\neq\emptyset$).
To show that the labels $L,R$ are well-defined, we have to check
that for $v\in\gamma^{2}$ at most one of the sets $ST^{1}{}_{L}\cap B\left(v;\alpha\right)\cap\partial C$
and $ST^{1}{}_{R}\cap B\left(v;\alpha\right)\cap\partial C$ is non-empty.
Since $2\alpha<a,$ this follows from Condition \ref{cond nice: 6 arms, then dead end}
of Definition \ref{def: (a,b)-nice}. Let $\beta:=\left\lfloor \alpha/3\right\rfloor .$
For $v\in\gamma^{2}\setminus\left(\gamma_{L}^{2}\cup\gamma_{R}^{2}\right)$
we set $l\left(v\right)=G$ if $B\left(v;\beta\right)\cap\left(\gamma_{L}^{2}\cup_{R}^{2}\right)=\emptyset,$
and $l\left(v\right)=N$ otherwise.

Since $4\alpha+2\beta<a,$ it is a simple exercise to prove the following
claim using Condition \ref{cond nice: 6 arms, then dead end} of Definition
\ref{def: (a,b)-nice}, which finishes Step 2.
\begin{claim}
\label{claim: there is good between left and right}Let $u\in\gamma_{L}^{1}$
and $v\in\gamma_{R}^{1}.$ Then there is $w\in\gamma_{G}^{1}$ which
is in between $u$ and $v.$
\end{claim}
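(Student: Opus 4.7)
The plan is to argue by contradiction. Suppose no vertex strictly between $u$ and $v$ on $\gamma^1$ carries the label $G$; then every such vertex has label $L$, $R$, or $N$, so by the definition of the labels $B(w;\beta)\cap(\gamma_L^2\cup\gamma_R^2)\neq\emptyset$ for every vertex $w$ on the subpath $\gamma^1_{u,v}$ (including the endpoints). Partition $\gamma^1_{u,v}$ into $T_L:=\{w:B(w;\beta)\cap\gamma_L^2\neq\emptyset\}$ and $T_R:=\{w:B(w;\beta)\cap\gamma_R^2\neq\emptyset\}$; a vertex may belong to both. Since $u\in T_L$ and $v\in T_R$, walking along the subpath from $u$ to $v$ produces either a single vertex in $T_L\cap T_R$ or two consecutive vertices $w_1\prec w_2$ with $w_1\in T_L$ and $w_2\in T_R$. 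Unpacking the definitions of the $L$ and $R$ labels then yields points $p_L\in ST^2_L\cap\partial C$ and $p_R\in ST^2_R\cap\partial C$ with $d(p_L,w_1)\le\alpha+\beta$ and $d(p_R,w_2)\le\alpha+\beta$, so that $d(p_L,p_R)\le 2(\alpha+\beta)+1<4\alpha+2\beta<a$.

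Condition 3 of Definition \ref{def: (a,b)-nice} therefore places $p_L$ and $p_R$ on a common loop of $\partial C$ and produces an arc $\ell_1$ of this loop joining them with $diam(\ell_1)\le b$. Since $p_L\in\ell_1$, this forces $\ell_1\subset B(p_L;b)$. The key geometric observation is that this traps $\ell_1$ inside the strip $ST$: as $p_L\in ST^2$, its first coordinate lies strictly between $x_1+b+2\alpha$ and $y_1-b-2\alpha$, so inflating by $b$ horizontally keeps every first coordinate of $\ell_1$ strictly between $x_1+2\alpha$ and $y_1-2\alpha$, and hence strictly inside $(x_1,y_1)$.

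Finally, $\ell_1\subseteq\partial C$ while $\gamma\subseteq C$, so in the planar embedding of $\mathbb{T}$ the paths $\ell_1$ and $\gamma$ share no vertex and no edge and are therefore disjoint as planar curves. On the other hand, $\gamma$ is a simple arc in $cl(ST)$ joining the two vertical sides of the strip at $x$ and $y$; by a Jordan-type argument it separates $cl(ST)$ into the two components $ST_L$ and $ST_R$. One checks the inclusions $ST^2_L\subset ST_L$ and $ST^2_R\subset ST_R$ by noting that a point above $\gamma^2$ inside $ST^2$ can be joined to $v_2=+\infty$ within $ST$ without meeting $\gamma$, and symmetrically on the other side. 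Therefore $p_L$ and $p_R$ lie in different components of $ST\setminus\gamma$, so any curve in $ST$ connecting them must cross $\gamma$; this contradicts the disjointness of $\ell_1$ and $\gamma$. I expect the main obstacle to be this last step, where the informal ``$\gamma$ separates $ST$'' has to be turned into a rigorous statement at the level of lattice paths using planarity; once the diameter bound $\ell_1\subset ST$ is in hand, the remaining work is essentially routine.
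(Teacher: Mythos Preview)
Your argument is correct and is precisely the ``simple exercise'' the paper has in mind: the paper's entire proof is the sentence ``Since $4\alpha+2\beta<a$, it is a simple exercise to prove the following claim using Condition~\ref{cond nice: 6 arms, then dead end} of Definition~\ref{def: (a,b)-nice}'', and your contradiction via a transition point on $\gamma^1_{u,v}$, the distance bound $d(p_L,p_R)\le 2(\alpha+\beta)+1<4\alpha+2\beta<a$, and the short arc $\ell_1\subset ST$ forced to cross the separating path $\gamma$ is exactly how one fills this in. The topological step you flag as the main obstacle is the same one implicitly used in the paper's well-definedness check for the labels $L,R$ (there with the bound $2\alpha<a$), and it goes through because $\gamma$, being a shortest path, is non-self-touching, so its hexagon ribbon genuinely separates $cl(ST)$ and any lattice path in $ST\setminus\gamma$ stays in a single component.
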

\medskip

\textbf{Step 3. }\emph{We define the neighbourhoods $\mathcal{F}_{v}$
and $\mathcal{G}_{v}$ for $v\in\gamma^{2}.$ }

If $l\left(v\right)\in\left\{ G,N\right\} $ then we set $\mathcal{F}_{v}:=B$$\left(v;\alpha\right)$
and $\mathcal{G}_{v}:=B\left(v;\beta\right).$ 

If $l\left(v\right)\in\left\{ L,R\right\} ,$ let $f^{1}$ ($f^{2}$)
as the last vertex when we go backwards (forward) from $v$ along
$\gamma$ which is in $B\left(v;\alpha\right).$ If it has label $L$
($R$) then we define $\mathcal{F}_{v}$ as the connected component
of $B\left(v;\alpha\right)\setminus\gamma_{f^{1},f^{2}}$ on the right
(left) hand side of $\gamma_{f^{1},f^{2}}.$ Similarly we define $g^{1}$
and $g^{2}$ in the box $B\left(v;\beta\right),$ and $\mathcal{G}_{v}.$ 

The combination of $4\alpha<a,$ Claim \ref{claim: turns of gamma}
and Condition \ref{cond nice: 6 arms, then dead end} of Definition
\ref{def: (a,b)-nice} gives that 
\[
\left(\gamma_{f^{1},g^{1}}\cup\gamma_{g^{2},f^{2}}\right)\cap B\left(v;\beta-1\right)=\emptyset.
\]
Hence we get
\begin{claim}
$\mathcal{F}_{v}\cap B\left(v;\beta\right)=\mathcal{G}_{v}$ for $v\in\gamma^{2}.$
\end{claim}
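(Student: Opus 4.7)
The plan is to split the proof into two cases based on the label $l(v)$. In the easy case $l(v)\in\{G,N\}$, the definitions give $\mathcal{F}_v=B(v;\alpha)$ and $\mathcal{G}_v=B(v;\beta)$ directly, and since $\beta<\alpha$ we have $B(v;\beta)\subseteq B(v;\alpha)$, so
\[
\mathcal{F}_v\cap B(v;\beta)=B(v;\beta)=\mathcal{G}_v
\]
immediately.

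The substantive case is $l(v)\in\{L,R\}$, and I will assume without loss of generality $l(v)=L$, so both $\mathcal{F}_v$ and $\mathcal{G}_v$ are the right-hand components. The key input is the observation proved just before the claim, namely that
\[
\bigl(\gamma_{f^1,g^1}\cup\gamma_{g^2,f^2}\bigr)\cap B(v;\beta-1)=\emptyset.
\]
Since $\gamma_{f^1,f^2}=\gamma_{f^1,g^1}\cup\gamma_{g^1,g^2}\cup\gamma_{g^2,f^2}$ as a concatenation, this observation implies that $\gamma_{f^1,f^2}$ and $\gamma_{g^1,g^2}$ coincide when restricted to $B(v;\beta-1)$. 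The strategy is then to show set equality by proving both inclusions. For a vertex $w$ in either $\mathcal{F}_v\cap B(v;\beta)$ or $\mathcal{G}_v$, one first checks that $w$ lies in $B(v;\beta)$ off both cutting paths, and then produces a path from $w$ to a fixed reference vertex deep inside $B(v;\beta-1)$ that lies on the right-hand side of both $\gamma_{f^1,f^2}$ and $\gamma_{g^1,g^2}$ (any vertex right-adjacent to $v$ along $\gamma_{g^1,g^2}$ works). Because the two cutting paths agree inside $B(v;\beta-1)$, the same witness path certifies membership in the right-hand component for both definitions, and the two components thus coincide inside $B(v;\beta)$.

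The main obstacle I expect is the thin annular strip $B(v;\beta)\setminus B(v;\beta-1)$, since the observation only excludes $\gamma_{f^1,g^1}\cup\gamma_{g^2,f^2}$ from $B(v;\beta-1)$, and a priori these sub-arcs can intrude into this outer layer, where they belong to $\gamma_{f^1,f^2}$ but not to $\gamma_{g^1,g^2}$. To handle this I would re-apply the same ingredients that yield the observation itself -- Claim \ref{claim: turns of gamma} together with Condition \ref{cond nice: 6 arms, then dead end} of Definition \ref{def: (a,b)-nice}, and the quantitative bounds $4\alpha+2\beta<a$ and $a\ge 2000$ -- to show that any such intrusion forms only a short excursion attached to $\partial B(v;\beta)$ near $g^1$ or $g^2$. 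Such an excursion cannot separate a vertex of $\mathcal{G}_v$ from the inner part $B(v;\beta-1)$ where the two cutting paths already coincide, so it does not create a mismatch between the two components. Combining the two inclusions then yields the claimed equality $\mathcal{F}_v\cap B(v;\beta)=\mathcal{G}_v$.
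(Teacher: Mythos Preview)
Your approach matches the paper's: the paper gives no explicit proof beyond ``Hence we get'' after the displayed observation, and you have simply written out the implicit argument (the trivial case $l(v)\in\{G,N\}$, and for $l(v)\in\{L,R\}$ the coincidence of the two cutting paths inside the smaller box forcing the right-hand components to agree). Your boundary-strip worry is more caution than the paper itself exercises; note, though, that the same combination of Claim~\ref{claim: turns of gamma} and Condition~\ref{cond nice: 6 arms, then dead end} that yields the stated observation with $B(v;\beta-1)$ in fact rules out re-entry of $\gamma_{f^1,g^1}\cup\gamma_{g^2,f^2}$ into $B(v;\beta)$ altogether (apart from the endpoints $g^1,g^2$), so no excursion into the strip actually occurs and the issue dissolves.
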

\medskip

\textbf{Step 4.} \emph{We investigate the neighbourhood $\mathcal{G}_{v}.$}
\begin{claim}
\label{claim: G_v is good}$\mathcal{G}_{v}\cap\partial C=\emptyset$
for $v\in\gamma^{2},$ and $\mathcal{G}_{v}\cap\gamma^{1}=\emptyset$
for $v\in\gamma_{L}^{2}\cup\gamma_{R}^{2}.$\end{claim}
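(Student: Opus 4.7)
The plan is to split the argument according to the label $l(v)$. In the two easy cases $l(v)\in\{G,N\}$, the defining conditions for labels $L$ and $R$ are violated, so $\partial C\cap B(v;\alpha)$ contains no vertex of $ST_{L}^{1}\cup ST_{R}^{1}$. Since $\gamma\subseteq C$ is disjoint from $\partial C$ and the arc $\gamma^{1}$ separates $cl(ST^{1})$ into $ST_{L}^{1}$ and $ST_{R}^{1}$, every $\partial C$-vertex adjacent to $\gamma^{1}$ (in particular every one in $B(v;\alpha)$ for $v\in\gamma^{2}$, because $B(v;\alpha)\subseteq cl(ST^{1})$ by the choice of $\alpha$ and the placement of $\gamma^{2}$ inside $\gamma^{1}$) must lie in $ST_{L}^{1}\cup ST_{R}^{1}$. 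Combining these, $\partial C\cap B(v;\alpha)=\emptyset$, and since $\mathcal{G}_{v}=B(v;\beta)\subseteq B(v;\alpha)$ the first assertion follows; the second assertion is vacuous because $v\notin\gamma_{L}^{2}\cup\gamma_{R}^{2}$.

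The substantive case is $l(v)=L$ (the case $R$ is symmetric). My intermediate target is the rigidity statement $\gamma\cap B(v;\beta)=\gamma_{g^{1},g^{2}}$, that is, $\gamma$ leaves $B(v;\beta)$ at $g^{1}$ and $g^{2}$ and never comes back. I would prove this by contradiction: if $\gamma$ re-entered $B(v;\beta)$ at a vertex $w$ after leaving it, then the subarc of $\gamma$ between, say, $g^{1}$ and $w$ contains at least one vertex outside $B(v;\beta)$, whereas $d(g^{1},w)\le 2\beta$ permits a replacement arc of length $\le 2\beta+O(1)$ entirely inside $B(v;\alpha)$. To turn this into an actual shortcut inside $C$ one needs to know that the straight shortcut is not blocked by $\partial C$; this is where I invoke Claim~\ref{claim: turns of gamma}, which identifies turns of $\gamma$ with neighbouring $\partial C$-vertices, together with Condition~\ref{cond nice: 6 arms, then dead end} of Definition~\ref{def: (a,b)-nice}: the condition $4\alpha+2\beta<a$ guarantees that any two $\partial C$-vertices in $B(v;\alpha)$ belong to the same loop with one of the two arcs having diameter $\le b$, and hence cannot form a barrier across $B(v;\beta)$ forcing the detour.

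Once this rigidity is established, the right component of $B(v;\beta)\setminus\gamma_{g^{1},g^{2}}$ coincides with $B(v;\beta)\cap ST_{R}^{1}$; in particular $\mathcal{G}_{v}\subseteq ST_{R}^{1}$. If there were $u\in\partial C\cap\mathcal{G}_{v}$, then $u\in\partial C\cap B(v;\alpha)\cap ST_{R}^{1}$, so $v$ would simultaneously satisfy the defining condition of the label $R$. This contradicts the well-definedness of the labels that was verified in Step~2 (using $2\alpha<a$ and Condition~\ref{cond nice: 6 arms, then dead end}), giving $\mathcal{G}_{v}\cap\partial C=\emptyset$. The second assertion $\mathcal{G}_{v}\cap\gamma^{1}=\emptyset$ is then immediate: $\mathcal{G}_{v}\subseteq B(v;\beta)\setminus\gamma_{g^{1},g^{2}}$, $\gamma^{1}\subseteq\gamma$, and the rigidity statement asserts exactly that $\gamma$ has no vertex in $B(v;\beta)\setminus\gamma_{g^{1},g^{2}}$.

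The main obstacle I anticipate is the rigidity step. Because $\gamma$ is not a continuous Euclidean geodesic but a concatenation of triangular-grid approximations $\sigma(u,u')$ of straight segments produced in Step~1, the naive ``replace by a straight chord'' argument must be carried out carefully on the lattice: one shows that if the replacement chord is not already $\gamma$, then $\gamma$ must turn at some point inside $B(v;\alpha)$, hence (by Claim~\ref{claim: turns of gamma}) have a $\partial C$-neighbour there, and Condition~\ref{cond nice: 6 arms, then dead end} together with the size relations $a\ge 2000$ and $4\alpha+2\beta<a$ makes such a barrier impossible.
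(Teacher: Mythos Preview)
Your plan is in the right spirit---both you and the paper reduce the claim to a shortcut argument exploiting the minimality of $\gamma$---but your rigidity step, as sketched, has a genuine gap. You propose to compare the subarc of $\gamma$ from $g^{1}$ to the re-entry point $w$ with a chord of length $\le 2\beta+O(1)$, noting only that the subarc ``contains at least one vertex outside $B(v;\beta)$.'' That single excursion gives no useful lower bound on the length of the subarc: $\gamma$ could leave $B(v;\beta)$ for just one step and come back, so that both the detour and the chord have length $O(1)$ and there is no saving. The paper's argument avoids this by working with the larger box $B(v;\alpha)$: it shows that the re-entering arc $\nu$ lies in $\gamma^{1}\setminus\gamma_{f^{1},f^{2}}$, so that the portion of $\gamma$ between $v$ and any point $v'\in\nu$ must exit $B(v;\alpha)$ through $f^{1}$ (or $f^{2}$) and return. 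This forces the original arc to have at least $2(\alpha-\beta)$ vertices, against a replacement chord $\sigma(v,v')$ of at most $2\beta$ vertices, giving a saving of at least $\tfrac{2}{3}\alpha>0$. You would need to reorganise your rigidity argument to exploit this two-scale $(\beta,\alpha)$ structure rather than working inside $B(v;\beta)$ alone.

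There is also a second point you underplay: why does the replacement chord stay in $C$? You gesture at Claim~\ref{claim: turns of gamma} and Condition~\ref{cond nice: 6 arms, then dead end}, but these alone do not rule out $\partial C$ on the chord. The paper handles this cleanly by choosing $w$ to be the \emph{closest} vertex of $\mathcal{G}_{v}\cap\partial C$ to $v$, so that the segment $\sigma(v,v')$ toward $w$ meets $\nu$ before it can meet $\partial C$. Once you fix the savings issue above, your final deduction via label well-definedness is fine and is essentially a repackaging of the paper's topological observation that a point in $\mathcal{G}_{v}\cap\partial C$ is locally on the right of $\gamma$ but globally on the left, forcing a re-entry of $\gamma$.
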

\begin{proof}
[Proof of Claim \ref{claim: G_v is good}] First we show that $\mathcal{G}_{v}\cap\partial C=\emptyset$
with a proof by contradiction. Suppose that $\mathcal{G}_{v}\cap\partial C\neq\emptyset.$
The definition of labels give that if $\mathcal{G}_{v}\cap\partial C\neq\emptyset,$
then $l\left(v\right)=L$ or $R.$ We further suppose that $l\left(v\right)=L.$
The case where $l\left(v\right)=R$ can be treated similarly. We choose
$w$ so that it is one of the closest vertices to $v$ among the vertices
of $\mathcal{G}_{v}\cap\partial C.$ See Figure \ref{fig:shorten}. 

\begin{figure}
\centering{}\includegraphics{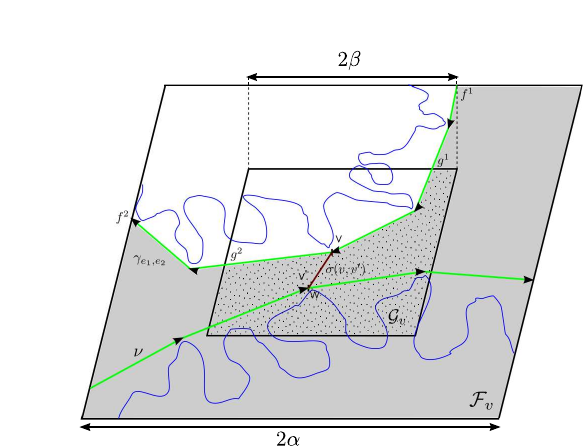}\caption{\label{fig:shorten}The path $\gamma_{x,v}\vee\sigma_{v,w}\vee\gamma_{w,y},$
is shorter than $\gamma$ by at least $\frac{2}{3}\alpha$ vertices.}
\end{figure}
By the definition of the label $L,$ we have that $w\in ST{}_{L}^{2}\cap B\left(v;\beta\right).$
Since $w\in\mathcal{G}_{v},$ i.e. $w$ is on the right hand side
of $\gamma_{f^{1},f^{2}}$ in $B\left(v;\alpha\right).$ Hence some
subpath of $\gamma^{1}\setminus\gamma_{f^{1},f^{2}},$ denoted by
$\nu,$ has to separate $w$ from $v$ in $\mathcal{F}_{v}.$ Let
us walk from $v$ to $w$ on $\sigma\left(v,w\right),$ till we hit
$\nu.$ Let us denote the explored path by $\sigma\left(v,v'\right),$
where $v'$ is the last point of the exploration. Let $\gamma'$ be
the path we get when we replace the part of $\gamma$ between $v$
and $v'$ by $\sigma\left(v,v'\right).$ Consider the case $v'\prec_{\gamma}v.$
The other case where $v'\succ_{\gamma}v$ can be treated similarly.
The number of vertices of $\sigma\left(v,v'\right)$ is at most $2\beta.$
However, the number of vertices in $\nu$ before $v'$ is at least
$\alpha-\beta.$ Moreover, $\left|\gamma_{f^{1},v}\right|\geq\alpha-\beta.$
Hence 
\begin{align}
\left|\gamma\right|-\left|\gamma'\right| & \geq2\left(\alpha-\beta\right)-2\beta\nonumber \\
 & \geq\frac{2}{3}\alpha>0.\label{eq: pf claim G_v is good}
\end{align}
The definition of $w$ gives that $\sigma\left(v,v'\right)\subset C,$
thus $\gamma'\subset C.$ Hence $\gamma'$ connects $x$ and $y$
in $C$ and by \ref{eq: pf claim G_v is good}, it is shorter than
$\gamma.$ This contradicts the definition of $\gamma,$ hence $\mathcal{G}_{v}\cap\partial C=\emptyset$
for $v\in\gamma^{2}.$

The proof of $\mathcal{G}_{v}\cap\gamma^{1}=\emptyset$ for $v\in\gamma^{2}$
is quite similar to the one above, hence we omit it, and finish the
proof of Claim \ref{claim: G_v is good} and conclude Step 4.
\end{proof}
\medskip

\textbf{Step 5. }\emph{We define the path $\zeta.$ }

We set $\varepsilon=\left\lfloor \beta/4\right\rfloor -2.$ For $j\in\left\{ L,R\right\} ,$
let 
\begin{equation}
U_{j}:=\bigcup_{v\in\gamma_{j}^{2}}B\left(v;\varepsilon\right).\label{eq: pf det gridpath - 1}
\end{equation}
$ST{}_{R}^{2}\setminus U_{L}\setminus\gamma^{2}$ ($ST{}_{L}^{2}\setminus U_{R}\setminus\gamma^{2}$
) has one infinite connected component which we denote by $Z_{R}$
($Z_{L}$). Let $\zeta_{j}$ denote the shortest path in $\partial Z_{j}\cap ST^{2}$
which connects the left and the right side of $ST_{2}.$ We orient
$\zeta_{L}$ ($\zeta_{R}$) so that $Z_{L}$ ($Z_{R}$) is on the
left (right) hand side. Note that $\zeta_{L},\zeta_{R}$ are left-right
crossings of $ST^{2}.$ 

Note that $\zeta_{L},\zeta_{R}$ and $\gamma^{2}$ are non self-touching
paths. Since $Z_{R},Z_{L}$ and $\gamma^{2}$ are disjoint, $\gamma^{2}$
is sandwiched between $\zeta_{L}$ and $\zeta_{R}.$ Hence $\zeta_{L},\zeta_{R},\gamma^{2}$
can have common vertices, but they cannot cross each other. Thus we
get the following claim.
\begin{claim}
\label{claim: intersection of zetas}Let $v\in\zeta_{L}\cap\zeta_{R}.$
Then $v\in\gamma^{2}.$
\end{claim}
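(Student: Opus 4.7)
The plan is to argue by contradiction, exploiting the fact that $\gamma^{2}$, being a non self-touching path from the left to the right side of $cl\left(ST^{2}\right)$, acts as a graph-theoretic barrier: no edge of $\mathbb{T}$ joins a vertex of $ST^{2}_{L}$ to a vertex of $ST^{2}_{R}$.

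First I would unpack the membership conditions. Since $\zeta_{L}\subseteq\partial Z_{L}\cap ST^{2}$ and $\zeta_{R}\subseteq\partial Z_{R}\cap ST^{2}$, any vertex $v\in\zeta_{L}\cap\zeta_{R}$ satisfies $v\in ST^{2}$, $v\notin Z_{L}\cup Z_{R}$, and $v$ has at least one neighbour $w_{L}\in Z_{L}$ and at least one neighbour $w_{R}\in Z_{R}$. From the definitions of $Z_{L}$ and $Z_{R}$ in Step~5, $Z_{L}\subseteq ST^{2}_{L}$ and $Z_{R}\subseteq ST^{2}_{R}$, so $v$ has neighbours in both $ST^{2}_{L}$ and $ST^{2}_{R}$.

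Now I would suppose, for contradiction, that $v\notin\gamma^{2}$. Since $cl\left(ST^{2}\right)=ST^{2}_{L}\sqcup\gamma^{2}\sqcup ST^{2}_{R}$ and $v\in ST^{2}\subseteq cl\left(ST^{2}\right)$, the vertex $v$ lies in exactly one of $ST^{2}_{L}$ or $ST^{2}_{R}$; by left-right symmetry I may assume $v\in ST^{2}_{L}$. Then $v\sim w_{R}$ is an edge of $\mathbb{T}$ joining the two components $ST^{2}_{L}$ and $ST^{2}_{R}$ of $cl\left(ST^{2}\right)\setminus\gamma^{2}$, contradicting the barrier property and forcing $v\in\gamma^{2}$.

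The only delicate point, and therefore the main obstacle, is rigorously justifying the barrier property. The intuition is that the non self-touching property of $\gamma^{2}$ is precisely the statement that $\gamma^{2}$ is an induced subpath of $\mathbb{T}$, and drawn as a polygonal arc in $\mathbb{R}^{2}$ with endpoints $x^{2},y^{2}$ on $\partial ST^{2}$, it becomes a simple Jordan arc cutting the simply connected region $cl\left(ST^{2}\right)$ into the two pieces $ST^{2}_{L}$ and $ST^{2}_{R}$. By planarity of $\mathbb{T}$, any candidate edge $vw_{R}$ with $v\in ST^{2}_{L}$ and $w_{R}\in ST^{2}_{R}$, viewed as a straight segment in the plane, can meet another edge of $\mathbb{T}$ only at shared endpoints; but $v,w_{R}\notin\gamma^{2}$, so the segment is an arc from $v$ to $w_{R}$ disjoint from $\gamma^{2}$, connecting the two components of $cl\left(ST^{2}\right)\setminus\gamma^{2}$ in the plane, the desired contradiction. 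This topological step is standard for the triangular lattice, and with it in hand the claim follows immediately from the bookkeeping above.
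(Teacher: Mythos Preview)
Your proof is correct and takes essentially the same approach as the paper. The paper's justification is the single sentence preceding the claim---``Since $Z_{R},Z_{L}$ and $\gamma^{2}$ are disjoint, $\gamma^{2}$ is sandwiched between $\zeta_{L}$ and $\zeta_{R}$. Hence $\zeta_{L},\zeta_{R},\gamma^{2}$ can have common vertices, but they cannot cross each other''---and your argument is a careful unpacking of exactly this: you make precise the ``sandwiched'' statement by noting $Z_{L}\subseteq ST^{2}_{L}$, $Z_{R}\subseteq ST^{2}_{R}$, and then invoke the planar barrier property of the non self-touching path $\gamma^{2}$ to rule out an edge between the two sides.
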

Condition \ref{cond nice: 6 arms, then dead end} of Definition \ref{def: (a,b)-nice}
implies the following claim.
\begin{claim}
\label{claim: vertex after zeta cap zeta on gamma}Let $v\in\zeta_{L}\cap\zeta_{R}.$
If $w,$ the next vertex after $v$ on $\gamma^{2}$ exists, then
$w\in\zeta_{L}\cup\zeta_{R}.$
\end{claim}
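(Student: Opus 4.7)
I would prove Claim \ref{claim: vertex after zeta cap zeta on gamma} by contradiction. Suppose $w \notin \zeta_L \cup \zeta_R$. By Claim \ref{claim: intersection of zetas}, the assumption $v \in \zeta_L \cap \zeta_R$ places $v$ on $\gamma^2$, so $v \sim w$. Since $v \in \zeta_L \subseteq \partial Z_L$, there is a neighbor $a_L \sim v$ with $a_L \in Z_L \subseteq ST^2{}_L \setminus U_R \setminus \gamma^2$; symmetrically there is a neighbor $a_R \sim v$ with $a_R \in Z_R \subseteq ST^2{}_R \setminus U_L \setminus \gamma^2$. Using the definition $U_R = \bigcup_{z \in \gamma_R^2} B(z;\varepsilon)$ and $a_L \notin U_R$, no label-$R$ vertex of $\gamma^2$ lies within distance $\varepsilon-1$ of $v$; symmetrically no label-$L$ vertex of $\gamma^2$ lies that close to $v$. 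This local input drives the rest of the argument.

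Next I would examine the neighbors of $w$. Exactly one of them is $v$, at most one is the subsequent vertex of $\gamma^2$, and the remaining neighbors lie in $ST^2{}_L \cup ST^2{}_R$. Since $\varepsilon \gg 2$ and all such neighbors lie within distance $2$ of $v$, the previous paragraph gives that they avoid $U_L \cup U_R$; they clearly avoid $\gamma^2$ as well. Hence they lie in $(ST^2{}_L \setminus U_R \setminus \gamma^2) \cup (ST^2{}_R \setminus U_L \setminus \gamma^2)$. To conclude that at least one such left-side neighbor lies in the \emph{infinite} component $Z_L$ (and analogously on the right for $Z_R$), I would exhibit a short connecting path near $v$ to $a_L$ staying inside $ST^2{}_L \setminus U_R \setminus \gamma^2$; this uses that a small parallelogram around $v$ intersected with that set is connected, because locally $\gamma^2$ is a short segment and no nearby label-$R$ vertices exist to puncture it via $U_R$. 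This yields $w \in \partial Z_L$ and, symmetrically, $w \in \partial Z_R$.

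The remaining and main obstacle is to promote the statement $w \in \partial Z_L \cup \partial Z_R$ to $w \in \zeta_L \cup \zeta_R$, since $\zeta_L$ is only the \emph{shortest} path in $\partial Z_L \cap ST^2$ joining the two vertical sides of $ST^2$, and could in principle bypass $w$ via a long detour along $\partial Z_L$. Here I would mimic the shortening argument of Claim \ref{claim: G_v is good}: if $\zeta_L$ avoids $w$, then a loop $\ell$ of $\partial C$ must come within distance roughly $\alpha$ of both $v$ and $w$ and be threaded into $\partial Z_L$ so as to push $\zeta_L$ around. Condition \ref{cond nice: 6 arms, then dead end} of Definition \ref{def: (a,b)-nice} forces one of the two sub-arcs of $\ell$ determined by the two closest vertices to $v$ and $w$ to have diameter at most $b$, which is far smaller than $\alpha$; cutting out this short sub-arc from $\zeta_L$ and replacing it by a straight segment through $v$ and $w$ produces a strictly shorter path in $\partial Z_L \cap ST^2$, contradicting minimality. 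The case of $\zeta_R$ is handled symmetrically. The delicate point will be the length bookkeeping in terms of $a, b, \alpha, \beta, \varepsilon$ so that the hypothesis $a \geq 2000$ suffices to make the shortcut strictly shorter.
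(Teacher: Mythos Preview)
Your first two paragraphs are a reasonable start, but the third paragraph contains a genuine gap and a factual error, and it misidentifies the mechanism at play.

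First, the size comparison you rely on is false: nowhere in Definition~\ref{def: (a,b)-nice} or Lemma~\ref{lem: det gridpath} is it assumed that $b$ is small compared to $a$, let alone to $\alpha=\lfloor a/6\rfloor-2$. In fact, in the only application (the proof of Proposition~\ref{prop: big then freeze}) one has $a=\eta N$ and $b=\tfrac{\theta}{10}N$ with $\eta<\theta/10$, so $b>a>\alpha$. The sentence ``diameter at most $b$, which is far smaller than $\alpha$'' therefore cannot be used.

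Second, and more fundamentally, the ``shortening'' argument you propose for $\zeta_L$ is aimed at the wrong object. The path $\zeta_L$ is the shortest path in $\partial Z_L\cap ST^2$, where $Z_L$ is the infinite component of $ST^2_L\setminus U_R\setminus\gamma^2$. A competing path that would contradict minimality must itself lie in $\partial Z_L$; replacing a piece of $\zeta_L$ by a ``straight segment through $v$ and $w$'' has no reason to stay in $\partial Z_L$. Moreover, the obstacle that could force $\zeta_L$ to leave $\gamma^2$ at $v$ is not a loop of $\partial C$ directly but the set $U_R$ (the $\varepsilon$--boxes around vertices of $\gamma^2_R$); loops of $\partial C$ enter only indirectly, through the labels.

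The paper's intended argument is much shorter and does not go through a promotion step at all. One argues by contradiction: if $w\notin\zeta_L\cup\zeta_R$, then at $v$ the path $\zeta_L$ steps into $ST^2_L$ and $\zeta_R$ steps into $ST^2_R$, which forces a vertex of $\gamma^2_R$ and a vertex of $\gamma^2_L$ to lie within distance of order $\varepsilon$ from $v$. Unwinding the label definitions produces two points of $\partial C$, one in $ST_L$ and one in $ST_R$, at mutual distance well below $a$. Condition~\ref{cond nice: 6 arms, then dead end} of Definition~\ref{def: (a,b)-nice} then yields a short arc of $\partial C$ joining them; since this arc stays in $ST$ (by the choice of $ST^1,ST^2$) but must pass from $ST_L$ to $ST_R$ without meeting $\gamma\subset C$, one obtains a contradiction exactly as in the well--definedness argument for the labels in Step~2. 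This is the use of Condition~\ref{cond nice: 6 arms, then dead end} the paper has in mind, and it avoids entirely the issue of promoting $w\in\partial Z_L\cap\partial Z_R$ to $w\in\zeta_L\cup\zeta_R$.
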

Let $\overset{\rightarrow}{G}=\left(\overset{\rightarrow}{V},\overset{\rightarrow}{E}\right)$
be the directed graph induced by the directed paths $\zeta_{L},\zeta_{R}$
and $\gamma^{2}.$ That is $\overset{\rightarrow}{G}=\left(\overset{\rightarrow}{V},\overset{\rightarrow}{E}\right)$
where $\overset{\rightarrow}{V}=\zeta_{L}\cup\zeta_{R}\cup\gamma^{2},$
and $\left(u,v\right)\in\overset{\rightarrow}{E}$ if and only if
$u,v\in\nu,$ $u\sim v$ and $u\prec_{\nu}v$ for some $\nu\in\left\{ \zeta_{L},\zeta_{R},\gamma^{2}\right\} .$
Using the definition of $\zeta_{L}$ and $\zeta_{R}$ it is a simple
exercise to show the following claim. 
\begin{claim}
\label{claim: G has no directed loops}$\overset{\rightarrow}{G}$
has no directed loops. 
\end{claim}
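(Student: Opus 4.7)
The plan is to show $\overset{\rightarrow}{G}$ is acyclic by analyzing how a hypothetical directed cycle would interact with the path $\gamma^2$. The key tool is a planarity observation: because $\zeta_L \subseteq cl(ST^2_L)$ sits on one side of $\gamma^2$ while $\zeta_R \subseteq cl(ST^2_R)$ sits on the other (by their definitions as boundaries of components of $ST^2_L \setminus U_R \setminus \gamma^2$ and $ST^2_R \setminus U_L \setminus \gamma^2$), the three paths are non-crossing simple arcs across the vertical strip $ST^2$ going from its left side to its right side.

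First I would establish the following \emph{consistency lemma}: for any two paths $\nu,\nu' \in \{\gamma^2,\zeta_L,\zeta_R\}$ and any pair of vertices $u,v \in \nu \cap \nu'$, one has $u \prec_\nu v$ iff $u \prec_{\nu'} v$. This follows from a standard Jordan-curve type argument: each path is a simple arc across $ST^2$ going left-to-right, and the containment properties above prevent $\zeta_L,\zeta_R$ from crossing $\gamma^2$ or each other at a shared vertex; they can only touch. Non-crossing simple arcs with common endpoints (here one can extend both paths outside $ST^2$ on both sides to form closed curves) that share vertices must encounter them in the same cyclic order.

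Next, assume for contradiction that $\overset{\rightarrow}{G}$ contains a directed cycle $v_0 \to v_1 \to \cdots \to v_n = v_0$, with edge $(v_i,v_{i+1})$ lying on path $\nu_i$. Split into two cases according to whether the cycle visits $\gamma^2$. If no $v_i$ lies on $\gamma^2$, then no edge of the cycle can lie on $\gamma^2$, and since $\zeta_L \cap \zeta_R \subseteq \gamma^2$ by Claim \ref{claim: intersection of zetas}, the cycle cannot switch between $\zeta_L$ and $\zeta_R$; hence all its edges lie on a single $\zeta_j$, which is a simple path---a contradiction. Otherwise, list the $\gamma^2$-visits of the cycle in cyclic order as $w_0,w_1,\ldots,w_m=w_0$. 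Between consecutive $w_i$ and $w_{i+1}$, the cycle either traverses a single $\gamma^2$-edge (so $w_i \prec_{\gamma^2} w_{i+1}$ directly) or it leaves $\gamma^2$ at $w_i$ and follows a sub-path of a single $\zeta_j$ back to $\gamma^2$ at $w_{i+1}$ (by the same $\zeta_L \cap \zeta_R \subseteq \gamma^2$ argument, no switch between $\zeta_L$ and $\zeta_R$ is possible while avoiding $\gamma^2$). In the latter case $w_i,w_{i+1} \in \gamma^2 \cap \zeta_j$ with $w_i \prec_{\zeta_j} w_{i+1}$, and the consistency lemma upgrades this to $w_i \prec_{\gamma^2} w_{i+1}$. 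Iterating around the cycle yields $w_0 \prec_{\gamma^2} w_0$, contradicting that $\gamma^2$ is a simple path.

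The main obstacle is the consistency lemma: one must carefully verify that $\gamma^2,\zeta_L,\zeta_R$ do not interleave at a shared vertex, so that the ``touching but not crossing'' picture is rigorous on the triangular lattice. This is where the asymmetric containment of $\zeta_L$ and $\zeta_R$ in $cl(ST^2_L)$ and $cl(ST^2_R)$ is used decisively, together with the fact that all three paths are simple left-to-right crossings of the strip. Everything else in the proof is a clean case analysis once the planarity input is in place.
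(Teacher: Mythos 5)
The paper does not actually supply a proof of this claim -- it simply declares it ``a simple exercise'' using the definition of $\zeta_L$ and $\zeta_R$ -- so there is no argument in the text to compare against. Your proof is a legitimate way to discharge that exercise: the two ingredients (the order-consistency of shared vertices along $\gamma^2$, $\zeta_L$, $\zeta_R$, and the cyclic-visit bookkeeping that reduces any hypothetical directed cycle to $w_0 \prec_{\gamma^2} w_0$) are exactly what one needs, and the reduction of the ``cycle never meets $\gamma^2$'' case via Claim on $\zeta_L \cap \zeta_R \subseteq \gamma^2$ is clean.

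Two places where the write-up should be tightened if this were to go into the paper. First, the phrase ``non-crossing simple arcs with common endpoints'' is inaccurate: $\gamma^2$, $\zeta_L$, $\zeta_R$ have distinct endpoints on the two sides of $ST^2$; what you actually want is to close up $cl(ST^2_L)$ (resp.\ $cl(ST^2_R)$) into a topological disk whose boundary contains $\gamma^2$ and the two lateral segments of $\partial ST^2$, and then argue that the simple arc $\zeta_L$ from its left endpoint $b_0$ to its right endpoint $b_1$ cannot visit two points $u \prec_{\gamma^2} v$ of the boundary arc $x^2 \to u \to v \to y^2 \to b_1$ in the wrong order, since the initial sub-arc $\zeta_{L,b_0,v}$ would then separate $u$ from $b_1$, forcing the tail $\zeta_{L,v,b_1}$ to cross it. Second, the direction of the comparison is sensitive to the orientation conventions: the paper orients $\zeta_L$ so that $Z_L$ lies on its left and $\zeta_R$ so that $Z_R$ lies on its right, and since $Z_L \subset ST^2_L$ and $Z_R \subset ST^2_R$ sit on opposite sides of $\gamma^2$, both $\zeta_L$ and $\zeta_R$ come out oriented in the same (left-to-right) sense as $\gamma^2$; this compatibility of orientations is what turns the topological ``same order up to reversal'' into the needed ``$u \prec_\nu v \iff u \prec_{\nu'} v$''. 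With those two points made explicit, the argument is complete.
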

For $j\in\left\{ L,R\right\} $ and $z\in\zeta_{j}$ let $n_{j}\left(z\right)$
be the first vertex of $\zeta_{j}\cap\gamma^{2}$ after $z$ on $\zeta_{j}.$
That is, $n_{j}\left(z\right)\in\zeta_{j}\cap\gamma^{2}$ with $n_{j}\left(z\right)\succeq_{\zeta_{j}}z$
and if $z'\in\zeta_{j}\cap\gamma^{2}$ with $z'\succ_{\zeta_{j}}z$
then $z'\succeq_{\zeta_{j}}n_{j}\left(z\right).$ If there is no such
vertex, then we set $n_{j}\left(z\right)=\emptyset.$

We define a directed path $\zeta$ by the following procedure. Let
$z_{j}$ denote the starting point of $\zeta_{j}$ for $j\in\left\{ L,R\right\} .$
$\zeta$ starts at the vertex $z$ defined as

\[
z:=\begin{cases}
z_{L} & \mbox{when }n_{L}\left(z_{L}\right)=\emptyset,\mbox{ or, when }n_{L}\left(z_{L}\right)\neq\emptyset\neq n_{R}\left(z_{R}\right)\mbox{ and }n_{L}\left(z_{L}\right)\succeq_{\gamma^{2}}n_{R}\left(z_{R}\right)\\
z_{R} & \mbox{otherwise.}
\end{cases}
\]
Suppose that we are at vertex $v$ in $\zeta.$ If $v$ is the endpoint
of $\zeta_{L}$ or $\zeta_{R},$ we terminate the procedure. Otherwise,
we define the next vertex of $\zeta,$ denoted by $w,$ as follows.
For $j\in\left\{ L,R\right\} ,$ if $v\in\zeta_{j},$ then $v_{j}$
denotes the next vertex after $v$ in $\zeta_{j}.$ 
\begin{itemize}
\item If $v\in\zeta_{L}\setminus\zeta_{R},$ then $w=v_{L}$
\item if $v\in\zeta_{R}\setminus\zeta_{L},$ then $w=v_{R}$
\item if $v\in\zeta_{L}\cap\zeta_{R},$ and if

\begin{itemize}
\item $v_{L},v_{R}\in\gamma^{2},$ then the definition of $\zeta_{L}$ and
$\zeta_{R}$ gives that $v_{L}=v_{R}$ and we take $w=v_{L}=v_{R}$
\item $v_{L}\in\gamma^{2},$ $v_{R}\notin\gamma^{2},$ then $w=v_{R}$ 
\item $v_{R}\in\gamma^{2},$ $v_{L}\notin\gamma^{2},$ then $w=v_{L}$ 
\item the case $v_{L},v_{R}\notin\gamma^{2}$ is impossible by Claim \ref{claim: vertex after zeta cap zeta on gamma}. 
\end{itemize}
\end{itemize}
We finish Step 5 by showing that $\zeta$ is well-defined. The definition
of $\zeta$ shows that if we view $\zeta$ as a directed graph, it
is a subgraph of $\overset{\rightarrow}{G}.$ Hence by Claim \ref{claim: G has no directed loops}
$\zeta$ has no directed loops. Thus $\zeta$ is self avoiding, and
the procedure above terminates after finitely many steps, when $\zeta$
reaches the endpoint of $\zeta_{L}$ or $\zeta_{R}.$

\medskip

\textbf{Step 6. }\emph{We prove the following claim and finish the
proof of Lemma}\textbf{\emph{ }}\emph{\ref{lem: det gridpath}.}
\begin{claim}
\label{claim: almost done} $\zeta+B\left(\varepsilon\right)\subset C$
and $\diam\left(\zeta\right)\geq d\left(x,y\right)-2b-4\alpha.$\end{claim}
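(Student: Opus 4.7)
I would prove the two assertions separately. The diameter bound is essentially by construction: $\zeta$ starts at $z \in \{z_L, z_R\}$, which is the starting point of a left--right crossing of $ST^2$, and terminates at the right endpoint of either $\zeta_L$ or $\zeta_R$. Since $ST^2 = \{v \mid x_1 + b + 2\alpha < v_1 < y_1 - b - 2\alpha\}$, the starting and ending horizontal coordinates of $\zeta$ differ by at least $(y_1 - x_1) - 2(b + 2\alpha) = d(x,y) - 2b - 4\alpha$, so $diam(\zeta) \geq d(x,y) - 2b - 4\alpha$.

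For the thickness statement $\zeta + B(\varepsilon) \subset C$, the plan is to exhibit, for each $v \in \zeta$, a witness $w = w(v) \in \gamma^2$ with $B(v;\varepsilon) \subset \mathcal{G}_w$. Once this is done, Claim~\ref{claim: G_v is good} gives $\mathcal{G}_w \cap \partial C = \emptyset$, and since $\mathcal{G}_w$ is a connected subset of $\mathbb{T}$ containing a vertex of $C$ adjacent to $\gamma_{g^1, g^2} \subset \gamma \subset C$, it must satisfy $\mathcal{G}_w \subset C$; hence $B(v;\varepsilon) \subset C$, and a short connectedness argument then promotes this to the full claim. The witness is selected as follows: if $v \in \gamma^2$ with $l(v) \in \{G, N\}$, take $w = v$, so that $\mathcal{G}_v = B(v;\beta) \supset B(v;\varepsilon)$ since $\varepsilon < \beta$. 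If $v \in \zeta \setminus \gamma^2$, then membership in $\partial Z_L \cup \partial Z_R$ inside $ST^2 \setminus \gamma^2$ forces $v \in U_L \cup U_R$, so $v \in B(u;\varepsilon)$ for some $u \in \gamma_L^2 \cup \gamma_R^2$; I would take $w = u$ and verify, using the fact that whether $v \in \zeta_L$ or $v \in \zeta_R$ determines on which side of $\gamma_{g^1, g^2}$ the point $v$ lies, that $B(v;\varepsilon) \subset \mathcal{G}_w$. The quantitative relation $\varepsilon = \lfloor \beta/4 \rfloor - 2$ provides the slack ($B(v;\varepsilon) \subset B(w;2\varepsilon) \subset B(w;\beta)$ with room to spare).

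The main obstacle is the remaining case $v \in \zeta \cap \gamma^2$ with $l(v) \in \{L, R\}$: here $\mathcal{G}_v$ is only one half of $B(v;\beta)$ and the opposite half may meet $\partial C$ within distance $\varepsilon$, so $w = v$ does not work directly. The plan is to exploit the switching mechanism built into $\zeta$: the construction in Step~5 chooses at each common vertex of $\zeta_L \cap \zeta_R$ the option that leaves $\gamma^2$, and Claims~\ref{claim: intersection of zetas}--\ref{claim: G has no directed loops} ensure this is done consistently without introducing loops. Combined with Claim~\ref{claim: there is good between left and right}, which guarantees a $G$-labelled vertex between every pair consisting of an $L$- and an $R$-labelled vertex, I expect to locate a $G$- or $N$-labelled vertex $w \in \gamma^2$ within horizontal distance $\beta$ of $v$ that serves as witness, so that $B(v;\varepsilon) \subset B(w;\beta) = \mathcal{G}_w$. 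Pushing this case analysis through rigorously via the directed-graph $\overset{\rightarrow}{G}$ of Step~5, using the hierarchy of scales $\varepsilon \ll \beta \ll \alpha \ll a,b$, is the delicate bookkeeping step that finishes the proof.
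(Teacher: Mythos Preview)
Your decomposition and the diameter bound match the paper's proof. Your witness strategy for $l(v)\in\{G,N\}$ and for $v\in\zeta\setminus\gamma^{2}$ is also the paper's, although the paper splits the latter further according to whether $\gamma^{2}$ meets $B(v;\varepsilon)$: if it does, any such vertex has label $G$ or $N$ and serves as witness with $\mathcal{G}$ a full box; if not, $B(v;\varepsilon)$ lies entirely on the correct side of the local piece $\gamma_{g^{1},g^{2}}$ and the nearby $u\in\gamma_{L}^{2}\cup\gamma_{R}^{2}$ works.

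The hard case $v\in\zeta\cap\gamma^{2}$ with $l(v)\in\{L,R\}$ is resolved differently. The paper does \emph{not} look for a nearby $G/N$ witness; it proves that this case cannot occur. Concretely, assuming $v\in\zeta\cap\zeta_{L}\cap\gamma_{L}^{2}$, one walks back along $\zeta$ to the first vertex $w$ where $\zeta$ entered $\gamma^{2}$; the predecessor $w'$ lies in $\zeta_{L}\setminus\gamma^{2}$, hence near some $u'\in\gamma_{R}^{2}$. Claim~\ref{claim: there is good between left and right} then produces $u\in\gamma_{G}^{2}$ between $u'$ and $v$ on $\gamma^{2}$, and one checks $u\in\zeta\cap\zeta_{L}$. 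The switching rule in the construction of $\zeta$ would force $\zeta$ to leave $\gamma^{2}$ at $u$ \emph{provided} $u\in\zeta_{R}$ as well, yielding the contradiction. To establish $u\in\zeta_{R}$, the paper introduces an auxiliary two-valued labelling $l'$ on $U_{L}$ (recording whether the nearby $\gamma_{L}^{2}$-vertex precedes or follows $u$ on $\gamma^{2}$), shows $l'$ is constant on connected pieces of $U_{L}$, and obtains a contradiction from any path $\nu\subset U_{L}$ that would separate $u$ from infinity in $ST_{R}^{2}$.

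Your proposed alternative of locating a nearby $G/N$ witness is not obviously workable without this impossibility: nothing bounds the length of the $\gamma_{L}^{2}$-stretch containing $v$, so there is no a priori $G/N$ vertex within $\beta-\varepsilon$ of $v$. The ``delicate bookkeeping'' you defer is exactly this topological argument, and it is the substantive content of the proof.
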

\begin{proof}
[Proof of Claim \ref{claim: almost done}] The definition of $\zeta$
shows that $\zeta$ is a horizontal crossing of $ST^{2}.$ Hence $\diam\left(\zeta\right)\geq d\left(x,y\right)-2b-4a.$
We show that for all $v\in\zeta$ we have $v+B\left(\varepsilon\right)\subset C.$
There are two cases depending on whether $v$ is contained in $\gamma^{2}.$

\medskip

\emph{Case 1:} $v\in\zeta\setminus\gamma^{2}.$ Then $v\in\zeta_{L}\setminus\gamma^{2}$
or $v\in\zeta_{R}\setminus\gamma^{2}.$ We assume that $v\in\zeta_{L}\setminus\gamma^{2}.$
The case where $v\in\zeta_{R}\setminus\gamma^{2}$ can be treated
similarly. The definition of $\zeta_{L}$ gives that there is $w\in\gamma_{R}$
such that $v\in\left(B\left(w;\varepsilon+1\right)\setminus B\left(w;\varepsilon\right)\right)$
and $B\left(v;\varepsilon\right)\cap\gamma_{R}=\emptyset.$ This combined
with $4\alpha+4\varepsilon+2<a$ and Condition \ref{cond nice: 6 arms, then dead end}
of Definition \ref{def: (a,b)-nice} gives that $B\left(v;\varepsilon\right)\cap\left(\gamma_{L}^{2}\cup\gamma_{R}^{2}\right)=\emptyset.$

If $\gamma^{2}\cap B\left(v;\varepsilon\right)\neq\emptyset,$ then
$\exists u\in\left(\gamma_{G}^{2}\cup\gamma_{N}^{2}\right)\cap B\left(v;\varepsilon\right).$
Claim \ref{claim: G_v is good} implies that $C\supset\mathcal{G}_{u}=B\left(u;\beta\right)\supset B\left(v;\varepsilon\right)$
since $4\varepsilon<\beta.$

If $\gamma^{2}\cap B\left(v;\varepsilon\right)=\emptyset,$ then the
definition of $w$ and Claim \ref{claim: G_v is good} shows that
$C\supset\mathcal{G}_{w}\supset B\left(v;\varepsilon\right)$ since
$2\beta+2\varepsilon<\alpha.$ 

Hence $B\left(v;\varepsilon\right)\subset C$ in Case 1.

\emph{Case 2:} $v\in\zeta\cap\gamma^{2}.$ Since $\zeta\subset\zeta_{L}\cup\zeta_{R},$
we assume that $v\in\zeta_{L}.$ The case where $v\in\zeta_{R}$ can
be treated similarly. First we show that $v\notin\gamma_{L}^{2}\cap\zeta_{L}.$ 

Suppose the contrary, that is $v\in\gamma_{L}^{2}\cap\zeta_{L}.$
Let $w$ be the starting point of the connected component of $v$
in $\gamma^{2}\cap\zeta.$ By the definition of $\zeta,$ $w\in\zeta_{L}.$
Moreover, for $w'$ the vertex right before $w$ on $\zeta_{L},$
we have $w'\in\zeta_{L}\setminus\gamma_{2}.$ Hence there is $u'\in\gamma_{R}^{2}$
such that $w'\in B\left(u';\varepsilon+1\right).$ Since $v\in\gamma_{L}$
and $u'\in\gamma_{R}^{2},$ by Claim \ref{claim: there is good between left and right}
$\exists u\in\gamma_{G}^{2}$ which is between $u'$ and $v$ on $\gamma^{2}.$
Note that $w'\in\mathcal{G}_{u'}$. By Claim \ref{claim: G_v is good}
we have that $\gamma_{u',w}^{2}\subset\gamma^{2}\setminus\gamma_{G}^{2}.$
Hence $u$ is between $w$ and $v$ on $\gamma^{2}.$ From the definition
of $w,$ we get that $u\in\zeta\cap\zeta_{L}.$ 

Note that if we show that $u\in\zeta_{R},$ then we get a contradiction
by the definition of $\zeta.$ Hence in order to rule out the case
$v\in\gamma_{L}^{2}\cap\zeta_{L}$ it is enough to show that $u\in\zeta_{R}.$

Suppose the contrary, that is $u\notin\zeta_{R}.$ Recall the definition
of $U_{L}$ from \ref{eq: pf det gridpath - 1}. We introduce a new
set of labels on the vertices of $U_{L}$ as follows. For $q\in U_{L}$
there is a vertex $r\in\gamma_{L}$ such that $q\in B\left(r;\varepsilon\right).$
We define 
\[
l'\left(q\right):=\begin{cases}
B & \mbox{if }r\prec_{\gamma^{2}}u\\
A & \mbox{otherwise.}
\end{cases}
\]
Since the choice of $r$ above is not necessarily unique, we have
to show that $l'\left(q\right)$ is well-defined. It can be easily
checked by combining Claim \ref{claim: G_v is good}, $4\varepsilon+4<\beta$
and $u\in\gamma_{G}^{2}.$ Moreover a similar argument shows that
if $q,q'\in U_{L}$ with $q\sim q',$ then $l'\left(q\right)=l'\left(q'\right).$ 

Since $\gamma^{2}$ is non self-touching, $u\in\gamma^{2}$ is connected
to $\infty$ in $ST_{R}.$ Since $u\notin\zeta_{R}$ it is not connected
to $\infty$ in $Z_{R},$ there is a path $\nu\subset U_{L}$ which
separates $u$ from $\infty$ in $ST_{R}.$ We can choose $\nu$ such
that it starts and ends at a vertex neighbouring $\gamma^{2}.$ By
a possible shortening of $\nu,$ we can assume that if $u'\in\nu$
with $u'\sim\gamma^{2},$ than $u'$ is either the starting or the
endpoint of $\nu.$ Let $u_{1},u_{2}$ be neighbours of the starting
point and the endpoint of $\nu$ which are in $\gamma^{2}.$ The definition
of $\nu$ gives that $u$ is in between $u_{1}$ and $u_{2}$ on $\gamma^{2}.$
Using Condition \ref{cond nice: 6 arms, then dead end} of Definition
\ref{def: (a,b)-nice} and that $u\in\gamma_{G}^{2}$ it is easy to
check that $l'\left(u_{1}\right)\neq l'\left(u_{2}\right).$

On the other hand, $\nu$ is a connected subset of $U_{L},$ hence
$l'$ is constant on $\nu.$ This is a contradiction, thus $u\in\zeta_{R},$
which in turn shows that $v\in\gamma_{L}^{2}\cap\zeta_{L}.$

Hence $v\notin\gamma_{L}^{2}\cap\zeta_{L}$ but $v\in\zeta\cap\gamma^{2}\cap\zeta_{L}.$
The definition of $\zeta_{L}$ gives that $v\notin\gamma_{R}.$ Hence
$v\in\gamma_{N}^{2}\cup\gamma_{G}^{2}.$ By Claim \ref{claim: G_v is good}
we get $C\supset\mathcal{G}_{v}=B\left(v;\beta\right)\supset B\left(v;\varepsilon\right),$
and we are done in Case 2. Since there are no other cases left, the
proof of Claim \ref{claim: almost done} is finished.
\end{proof}
Since $\zeta+B\left(\varepsilon\right)\subset C$ and $\diam\left(\zeta\right)\geq d\left(x,y\right)-2b-4\alpha$
hence the $\left\lfloor \varepsilon/2\right\rfloor $-gridpath approximation
of $\zeta$ is contained in $C.$ It has diameter at least $d\left(x,y\right)-2b-4\alpha-\varepsilon\geq d\left(x,y\right)-2b-2a-12.$
Since $\varepsilon=\left\lfloor \beta/4\right\rfloor -2\geq a/100-5$
this concludes the proof of the Lemma \ref{lem: det gridpath}.
\end{proof}
We finish the appendix by proving Lemma \ref{lem: local det gridpath}.
\begin{namedthm}
[Lemma \ref{lem: local det gridpath}] Let $a,b,c\in\mathbb{N}$
with $a\geq2000.$ Let $C$ be subgraph of $\mathbb{T}$ which is
$\left(a,b\right)$-nice in $B\left(c\right).$ Let $C'$ be a connected
component of $C\cap B\left(c\right).$ Then there is a $\left\lfloor a/200-10\right\rfloor $-gridpath
contained in $C'$ with diameter at least $\diam\left(C'\right)-2b-2a-12.$\end{namedthm}
\begin{proof}
[Proof of Lemma \ref{lem: local det gridpath}] Let $x,y\in C'$
with $d\left(x,y\right)=\diam\left(C'\right).$ We choose $\tilde{\gamma}$
as one of the shortest paths connecting $x,y$ in $C'.$ From this
point on, we can follow the proof of Lemma \ref{lem: det gridpath}
since we will use Condition \ref{cond nice: 6 arms, then dead end}
of Definition \ref{def: (a,b)-nice} for pairs of vertices $u,v\in\partial C$
which are contained in $B\left(c\right).$ 
\end{proof}
\bibliographystyle{plain}
\bibliography{/ufs/demeter/LyX/References/myreflist}

Demeter Kiss

Centrum Wiskunde \& Informatica (CWI)

123 Science Park

1098 XG Amsterdam

The Netherlands

e-mail: D.Kiss@cwi.nl
\end{document}